\newtheorem{thm}{Theorem}[section]
\newtheorem{theorem}[thm]{Theorem}
\newtheorem{proposition}[thm]{Proposition}
\newtheorem{lemma}[thm]{Lemma}
\theoremstyle{definition}
\numberwithin{equation}{section}
\begin{document}


\baselineskip=17pt



\title[Weak invariance principle for dependent random variables]{On the rate of convergence in the weak invariance principle for dependent random variables with applications to Markov chains}

\author[I.~Grama]{Ion~Grama}
\address[I. Grama]{ Universit\'{e} de Bretagne Sud, LMBA, Campus de
Tohannic, BP 573, 56017 Vannes cedex, France}
\email{ion.grama@univ-ubs.fr}

\author[E.~Le~Page]{Emile~Le~Page}
\address[E. Le Page]{ Universit\'{e} de Bretagne Sud, LMBA, Campus de
Tohannic, BP 573 56017 Vannes cedex, France}
\email{emile.lepage@univ-ubs.fr}

\author[M.~Peign\'{e}]{Marc~Peign\'{e}}
\address[M. Peign\'{e} ]{ Universit\'{e} F. Rabelais Tours cedex, LMPT, Parc de
Grandmont, 37200 Tours, France}
\email{peigne@lmpt.univ-tours.fr}

\maketitle


\begin{abstract}
We prove an invariance principle for non-stationary random processes and
establish a rate of convergence under a new type of mixing condition. The
dependence is exponentially decaying in the gap between the past and the
future and is controlled by an assumption on the characteristic function of
the finite dimensional increments of the process. The distinct feature of
the new mixing condition is that the dependence increases exponentially in
the dimension of the increments. The proposed mixing property is
particularly suited for processes whose behavior can be described in terms
of spectral properties of some related family of operators. Several examples
are discussed. We also work out explicit expressions for the constants
involved in the bounds. When applied to Markov chains our result specifies
the dependence of the constants on the properties of the underlying Banach
space and on the initial state of the chain.
\end{abstract}

\subjclass{{\bf Mathematics Subject Classification:} Primary 60F17, 60J05, 60J10. Secondary 37C30 }

\keywords{{\bf Keywords}:  Rate of convergence, invariance principle, Markov chains, mixing,
spectral gap. }

\section{\label{Intro}Introduction}

Let $\left( X_{k}\right) _{k\geq 1}$ be a sequence of real valued random
variables (r.v.'s) defined on the probability space $\left( \Omega, \mathcal{%
F}, \mathbb{P}\right) $ and $S_{N}(t)=N^{-1/2}\sum_{k=1}^{\left[ Nt\right]}X_{k}, $ $t\in \lbrack 0, 1].$
The (weak) invariance principle states that the process
$ \frac{1}{\sqrt{N}} \left( S_{N}(t)\right) _{0\leq t\leq 1}$ converges weakly to the Brownian process
$ \left( W(t)\right) _{0\leq t\leq 1}$ and is a powerful tool for various
applications in probability and statistics. It extends the scope of the
central limit theorem to continuous functionals of the stochastic process $%
\left( S_{N}(t)\right) _{0\leq t\leq 1}, $ such as,  for example,  the maxima
or the $L^{2}$-norm of the trajectory of the process,  considered in the
appropriate functional spaces. The rates of convergence in the (weak)
invariance principle,  for independent r.v.'s under the existence of the moments of order $2+2\delta, $ with $\delta >0, $
have been obtained in Prokhorov
\cite{Prokhorov54},  Borovkov \cite{Bor73},  Koml\'{o}s,  Major and Tusn\'{a}dy
\cite{KMT},  Einmahl \cite{Einm1989},  Sakhanenko \cite{Sakh83},  \cite{Sakh85},
Zaitsev \cite{Zai98}, \cite{Zai07} among others. In the
case of martingale-differences, for $\delta \leq \frac{1}{2}, $ the rates are essentially the same as in the
independent case (see,  for instance Hall and Heyde \cite{Hall Heyde book},
Kubilius \cite{Kub},  Grama \cite{Grama func89}).
The almost sure invariance principle is a reinforcement of the weak invariance principle
which states that the trajectories of a process are approximated with the trajectories of the Brownian motion a.s.\ in the sense that within
a particular negligible error $r_N \rightarrow 0$ it holds
$  \sup_{0\leq t\leq 1} \left| \frac{1}{\sqrt{N}} S_{N}(t) - W(t) \right| =O(r_N)$ a.s.
There are many recent
results concerning the rates of convergence in the strong invariance
principle for weakly dependent r.v.'s under various conditions. We refer to
Wu \cite{Wu},  Zhao and Woodroofe \cite{Zhao Woodr},  Liu and Lin \cite{Liu Lin},  Cuny \cite{Cuny2011}, Merlev\`{e}de and Rio \cite{Merl Rio},  Dedecker,
Doukhan and Merlev\`{e}de \cite{DedDoukhMerl2012} and to the references
therein. However,  in contrast to the case of independent r.v.'s where it is found that
the optimal rate is of order $N^{-\frac{\delta }{2+2\delta }} $ for the strong invariance principle and
$N^{-\frac{\delta }{3+2\delta }} $ for the weak invariance principle,
the problem of obtaining the best rate of convergence in both the weak and strong invariance
principles for dependent variables is not yet settled completely.

Gou\"{e}zel \cite{Gouez} has introduced a new type of mixing condition which
is tied to spectral properties of the sequence $\left( X_{k}\right) _{k\geq
1}.$ Consider the vectors $\overline{X}_{1}=$ $\left(
X_{J_{1}}, ..., X_{J_{M_{1}}}\right) $ and $\overline{X}_{2}=$ $\left(
X_{k_{gap}+J_{M_{1}+1}}, ..., X_{k_{gap}+J_{M_{1}+M_{2}}}\right), $ called the
past and the future,  respectively,  where $X_{k+J_{m}}=\sum_{l\in
J_{m}}X_{k+l}, $ $J_{m}=[j_{m-1}, j_{m}), $ $j_{0}\leq...\leq j_{M_{1}+M_{2}}, $
and $k_{gap}$ is a gap between $\overline{X}_{1}$ and $\overline{X}_{2}.$
Roughly speaking,  the condition used in \cite{Gouez} suppose that the
characteristic function of $\left( \overline{X}_{1}, \overline{X}_{2}\right) $
is exponentially closed to the product of the characteristic functions of
the past $\overline{X}_{1}$ and the future $\overline{X}_{2}, $ with an error
term of the form $A\exp \left( -\lambda k_{gap}\right), $ where $\lambda $
is some non negative constant and $A$ is exponential in terms of the size of
the blocks.
This mixing property is particularly suited for systems whose behavior can
be described in terms of spectral properties of some related family of
operators,  as initiated by Nagaev \cite{Nagaev57},  \cite{Nagaev61} and
Guivarc'h \cite{Guiv84}. Examples are Markov chains,  whose perturbed
transition probability operators $\left( \mathbf{P}_{t}\right) _{\left\vert
t\right\vert \leq \varepsilon _{0}}$ exhibit a spectral gap and enough
regularity in $t, $ and dynamical systems,  whose characteristic functions can
be coded by a family of operators $\left( \mathcal{L}_{t}\right)
_{\left\vert t\right\vert \leq \varepsilon _{0}}$ satisfying similar
properties. Gou\"{e}zel proves in \cite{Gouez} an almost sure invariance
principle with a rate of convergence of order $N^{-\frac{\delta }{2+4\delta }}.$

The scope of the present paper is to improve on the results of Gou\"{e}zel by quantifying the rate of convergence in the (weak) invariance principle
for dependent r.v.'s under the mixing condition
introduced above.
Although the strong and weak invariance principles are
closely related,  it seems that the rate of convergence in the (weak)
invariance principle is less studied under weak dependence constraints. We
refer to Doukhan,  Leon and Portal \cite{DoukhLeonPort},  Merlev\`{e}de and
Rio \cite{Merl Rio} and Grama and Neumann \cite{GrNeumann2006}. However,
these results rely on mixing conditions which are not verified in the
present setting. Under the above mentioned mixing and some further mild
conditions including the moment assumption $\sup_{k\geq 1}\mathbb{E}%
\left\vert X_{k}\right\vert ^{2+2\delta }<\infty $ we obtain a bound of the
order $N^{-\frac{1+\alpha }{1+2\alpha }\frac{\alpha }{3+2\alpha }}, $ for any
$\alpha <\delta.$ Moreover,  we give explicit expressions of some constants
involved in the rate of convergence; for instance,  in the case of Markov
walks we are able to figure out the dependence of the rate of convergence on
the properties of the Banach space related to the corresponding family of
perturbed transition operators $\left( \mathbf{P}_{t}\right) _{\left\vert
t\right\vert \leq \varepsilon _{0}}$ and on the initial state $X_{0}=x$ of
the associated Markov chain. When compared with the rate $N^{-\frac{1}{2}%
\frac{\alpha }{1+2\alpha }}$ in the almost sure invariance principle of \cite%
{Gouez} ours appears with a loss in the power of multiple $\frac{2+2\alpha }{%
3+2\alpha }<1.$ This loss in the power is exactly the same as in the case of
independent r.v.'s,  when we compare the almost sure invariance principle
(rate $N^{-\frac{\delta }{2+2\delta }}$) and the (weak) invariance principle
(rate $N^{-\frac{\delta }{3+2\delta }}$).

As in the paper \cite{Gouez} our proof relies on a progressive
blocking technique (see Bernstein \cite{bernst27}) coupled with a triadic
Cantor like scheme and on the Koml\'{o}s,  Major and Tusn\'{a}dy
approximation type results for independent r.v.'s (see \cite{KMT},  \cite%
{Einm1989},  \cite{Zai98}),  which is in contrast to approaches usually based
on martingale methods.

As a potential application of the obtained results we point out the
asymptotic equivalence of statistical experiments as developed in \cite%
{GrNuss1998},  \cite{GrNuss2001},  \cite{GrNeumann2006},  whose scope can be
extended to various models under weak dependence constraints.

Our paper is organized as follows. In Sections \ref{sec Intro Res} and \ref%
{sec Applic} we formulate our main results and give an application to the
case of Markov chains. In Section \ref{sec Notations} we introduce the
notations to be used in the proofs of the main results. Proofs of the main
results are given in Sections \ref{sec Aux Res},  \ref{sec Proof Main Res} and \ref{sec Proofs res Sec3}.
In Section \ref{sec Lp bounds} we prove some bounds for the $L^{p}$ norm of the increments of the
process $\left( X_{k}\right) _{k\geq 1}$ and, finally,
in Section \ref{sec Appendix} we collect some
auxiliary assertions and general facts.

We conclude this section by setting some notations to be used all over the
paper. For any $x\in \mathbb{R}^{d}, $ denote by $\left\Vert x\right\Vert
_{\infty }=\sup_{1\leq i\leq d}\left\vert x_{i}\right\vert $ the supremum
norm. For any $p>0, $ the $L^{p}$ norm of a random variable $X$ is denoted by
$\left\Vert X\right\Vert _{L^{p}}.$
The equality in distribution of two
stochastic processes $\left( Z_{i}^{\prime }\right) _{i\geq 1}$ and $\left(
Z_{i}^{\prime \prime }\right) _{i\geq 1}$,  possibly defined on two different
probability spaces $\left( \Omega ^{\prime },  \mathcal{F}^{\prime},  \mathbb{P}%
^{\prime }\right) $ and
$\left( \Omega ^{\prime \prime },  \mathcal{F}^{\prime\prime}, \mathbb{P}^{\prime \prime }\right), $ will be denoted
$\mathcal{L}\left( \left(
Z_{i}^{\prime }\right) _{i\geq 1}|\mathbb{P}^{\prime }\right) \overset{d}{=}%
\mathcal{L}\left( \left( Z_{i}^{\prime \prime }\right) _{i\geq 1}|\mathbb{P}%
^{\prime \prime }\right).$
The generalized inverse of a distribution function $F$ on a real line is denoted by $F^{-1},$ i.e. $F^{-1}\left( y\right) =\inf \left\{ x:F\left( x\right) >y\right\} .$
By $c, c^{\prime }, c^{\prime
\prime } ...$,  possibly supplied with indices $1, 2, ...$,  we denote absolute constants
whose values may vary from line to line. The notations $c_{\alpha
_{1}, ..., \alpha _{r}}, c_{\alpha _{1}, ..., \alpha _{r}}^{\prime }...$ will be
used to stress that the constants depend only on the parameters indicated in
their indices: for instance $c_{\alpha, \beta }^{\prime }$ denotes a
constant depending only on the constants $\alpha, \beta.$ All other
constants will be specifically indicated. As usual,  we shall use the
shortcut "standard normal r.v." for a normal random variable of mean $0$ and
variance $1.$

\noindent {  ACKNOWLEDGMENTS.}
We wish to thank   the referee for many useful remarks  and comments; his suggestions have really improved the structure of this manuscript.
\section{Main results\label{sec Intro Res}}

Assume that on the probability space $\left( \Omega, \mathcal{F}, \mathbb{P}%
\right) $ we are given a sequence $\left( X_{i}\right) _{i\geq 1}$ of
dependent r.v.'s with values in the real line $\mathbb{R}.$ The expectation
with respect to $\mathbb{P}$ is denoted by $\mathbb{E}.$

The following condition will be used to ensure that the process $\left(
X_{i}\right) _{i\geq 1}$ has almost independent increments. Given natural
numbers $k_{gap}, M_{1}, M_{2}\in \mathbb{N}$ and a sequence $j_{0}\leq
...\leq j_{M_{1}+M_{2}}$ denote $X_{k+J_{m}}=\sum_{l\in J_{m}}X_{k+l}, $
where $J_{m}=[j_{m-1}, j_{m}), $ $m=1, ..., M_{1}+M_{2}$ and $k\geq 0.$ Consider
the vectors $\overline{X}_{1}=$ $\left( X_{J_{1}}, ..., X_{J_{M_{1}}}\right) $
and
$\overline{X}_{2}=$ $\left(X_{k_{gap}+J_{M_{1}+1}}, ..., \right.$ $\left. X_{k_{gap}+J_{M_{1}+M_{2}}}\right).$
Let $\phi
\left( t_{1}, t_{2}\right) =\mathbb{E}e^{it_{1}\overline{X}_{1}+it_{2}%
\overline{X}_{2}}, $ $\phi _{1}\left( t_{1}\right) =\mathbb{E}e^{it_{1}%
\overline{X}_{1}}$ and $\phi _{2}\left( t_{2}\right) =\mathbb{E}e^{it_{2}%
\overline{X}_{2}}$ be the characteristic functions of $\left( \overline{X}%
_{1}, \overline{X}_{2}\right), $ $\overline{X}_{1}$ and $\overline{X}_{2}$
respectively. We ask that the dependence between the two vectors $\overline{X%
}_{1}$ (the past) and $\overline{X}_{2}$ (the future) decreases
exponentially as a function of the size of the gap $k_{gap}$ in the
following sense.

\vskip0.2cm \noindent \textbf{Condition C1} \label{Cond Dependence}\textit{%
There exist positive constants }$\varepsilon _{0}\leq 1, $\textit{\ }$\lambda
_{0}, \lambda _{1}, \lambda _{2}$\textit{\ such that for any }$%
k_{gap}, $ $M_{1}, $ $M_{2}\in N, $\textit{\ any sequence }$j_{0}<...<j_{M_{1}+M_{2}}$%
\textit{\ and any }$t_{1}\in R^{M_{1}}, t_{2}\in R^{M_{2}}$\textit{\
satisfying }$\left\Vert \left( t_{1}, t_{2}\right) \right\Vert _{\infty }\leq
\varepsilon _{0}, $%
\begin{equation*}
\left\vert \phi \left( t_{1}, t_{2}\right) -\phi _{1}\left( t_{1}\right) \phi
_{2}\left( t_{2}\right) \right\vert \leq \lambda _{0}\exp \left( -\lambda
_{1}k_{gap}\right) \left( 1+\max_{m=1, ..., M_{1}+M_{2}}card\left(
J_{m}\right) \right) ^{\lambda _{2}\left( M_{1}+M_{2}\right) }.
\end{equation*}

All over the paper we suppose that the following moment conditions hold true.

\vskip0.2cm \noindent \textbf{Condition C2}\textit{\ There exist two
constants }$\delta >0$\textit{\ and }$\mu _{\delta }>0$\textit{\ such that }%
\begin{equation*}
\sup_{i\geq 1}\left\Vert X_{i}\right\Vert _{L^{2+2\delta }}\leq \mu _{\delta
}<\infty.
\end{equation*}

We suppose also that the sequence $\left( X_{i}\right) _{i\geq 1}$ possesses
the following asymptotic homogeneity property.

\vskip0.2cm \noindent \textbf{Condition C3} \textit{There exist a constant }$%
\tau >0$\textit{\ and a positive number }$\sigma >0$\textit{\ such that,  for
any }$\gamma >0$\textit{\ and any }$n\geq 1, $\textit{\ }%
\begin{equation*}
\sup_{k\geq 0}\left\vert n^{-1}\mathrm{Var}_{\mathbb{P}}\left(
\sum_{i=k+1}^{k+n}X_{i}\right) -\sigma ^{2}\right\vert \leq \tau \
n^{-1+\gamma }.
\end{equation*}

The main result of the paper is the following theorem. Denote $\mu _{i}=%
\mathbb{E}X_{i}, $ for $i\geq 1.$

\begin{theorem}
\label{Th main res 1}Assume Conditions \textbf{C1},  \textbf{C2} and \textbf{%
C3}. Let $0<\alpha <\delta.$ Then,  on some probability space $(\widetilde{%
\Omega }, \widetilde{\mathcal{F}}, \widetilde{\mathbb{P}})$ there exist
a sequence of random variables $(\widetilde{X}%
_{i})_{i\geq 1}$ such that $\mathcal{L}\left( (\widetilde{X}_{i})_{i\geq 1}|%
\widetilde{\mathbb{P}}\right) \overset{d}{=}\mathcal{L}\left( (X_{i})_{i\geq
1}|\mathbb{P}\right)$ and  a
sequence of independent standard normal random variables $\left(
W_{i}\right) _{i\geq 1}$,  such that  for any $0<\rho <\frac{\alpha }{2\left( 1+2\alpha
\right) }$ and $N\geq 1, $
\begin{equation*}
\widetilde{\mathbb{P}}\left( N^{-1/2}\sup_{k\leq N}\left\vert
\sum_{i=1}^{k}\left( \widetilde{X}_{i}-\mu _{i}-\sigma W_{i}\right)
\right\vert >6N^{-\rho }\right) \leq C_{0}N^{-\alpha \frac{1+\alpha }{%
1+2\alpha }+\rho \left( 2+2\alpha \right) },
\end{equation*}%
where $C_{0}=c_{\lambda _{1}, \lambda _{2}, \alpha, \delta, \sigma }\left(
1+\lambda _{0}+\mu _{\delta }+\sqrt{\tau }\right) ^{2+2\delta }$ and $%
c_{\lambda _{1}, \lambda _{2}, \alpha, \delta, \sigma }$ depends only on the
constants indicated in its indices.
\end{theorem}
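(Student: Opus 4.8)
The plan is to follow the progressive blocking scheme indicated in the introduction, coupled with the triadic Cantor-like construction and the Komlós–Major–Tusnády (KMT) approximation for independent random variables. First I would fix the relevant scales: partition $\{1,\dots,N\}$ into consecutive big blocks of length $p$ separated by small blocks (gaps) of length $q$, where $p$ and $q$ are powers of $N$ chosen so that $q/p\to 0$ but $q$ is still large enough that Condition \textbf{C1} forces the block sums over big blocks to be nearly independent; the exponential decay $\exp(-\lambda_1 q)$ must beat the polynomial blow-up $(1+p)^{\lambda_2(\text{number of blocks})}$, which dictates taking $q$ logarithmic or a small power of $N$ relative to $p$. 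Using \textbf{C1} one shows that the joint characteristic function of the big-block sums is within a negligible error of the product of the marginals, so by a standard characteristic-function/coupling argument (of the type collected in the appendix) one can construct, on an enlarged probability space, a family $(\widetilde X_i)$ with the right law together with \emph{independent} random variables $(Y_j)$ each equal in law to the corresponding big-block sum, with $\sum_j\|(\text{big block }j)-Y_j\|$ controlled.

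Next I would handle the contributions of the small blocks and of the partial block at the end: by Condition \textbf{C2} and the $L^p$-bounds for increments proved in Section \ref{sec Lp bounds}, the sum over all small blocks has $L^{2+2\delta}$ norm of order $(Nq/p)^{1/2}$ up to lower-order corrections, so by Markov's inequality it is $O(N^{-\rho}\sqrt N)$ with the stated exceptional probability once $q/p$ is a suitable negative power of $N$; the same applies to the maximal partial sums inside a single block, using a maximal inequality. Then, to the independent variables $(Y_j)$, which have uniformly bounded $(2+2\delta)$-moments (again via the $L^p$-bounds) and variances close to $\sigma^2 p$ by Condition \textbf{C3}, I would apply the KMT-type strong approximation (Einmahl/Sakhanenko/Zaitsev version) to obtain independent Gaussians $G_j$ with $\sum_j \mathrm{Var}(G_j)$ matching and $\max_k|\sum_{j\le k}(Y_j-G_j)|$ of the order $(\#\text{blocks})^{1/(2+2\delta)}$ on a large-probability event; finally I would re-split each Gaussian increment $G_j$ as a sum of i.i.d.\ standard normals $\sigma W_i$ over the indices $i$ in block $j$, absorbing the discrepancy between $\mathrm{Var}(G_j)$ and $\sigma^2 p$ (controlled by \textbf{C3}) into the error term.

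The bookkeeping then consists of collecting four error contributions — (i) the coupling error from \textbf{C1}, (ii) the small-block remainder, (iii) the KMT approximation error for the independent blocks, and (iv) the variance-correction and Gaussian re-splitting error — each of which is a negative power of $N$ on an event of probability at least $1-C_0 N^{-\alpha\frac{1+\alpha}{1+2\alpha}+\rho(2+2\alpha)}$, and optimizing the exponent over the free parameters $p$, $q$ (and the cutoff level in a truncation step needed to apply KMT with only $2+2\delta$ moments). The condition $\rho<\frac{\alpha}{2(1+2\alpha)}$ and the loss factor $\frac{2+2\alpha}{3+2\alpha}$ in the exponent, compared with Goüezel's almost-sure rate, will come out of balancing (iii) against (ii): the weak principle is cheaper in the block count but one pays for it through the maximal fluctuation inside blocks.

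The main obstacle, as flagged in the introduction, is step (i): the mixing condition \textbf{C1} is not a classical strong-mixing or $\beta$-mixing condition, so the coupling cannot be invoked off the shelf. One has to build the independent copies directly from the characteristic-function estimate — iterating a two-block conditional coupling along the $N/(p+q)$ blocks while keeping track of how the error $\lambda_0 e^{-\lambda_1 q}(1+p)^{\lambda_2(\cdot)}$ accumulates — and simultaneously arrange that the joint law of the resulting $(\widetilde X_i)$ is exactly $\mathcal L((X_i)\,|\,\mathbb P)$, which forces a careful telescoping construction rather than a naive product coupling. Keeping the dependence of the final constant $C_0$ on $\lambda_0,\lambda_1,\lambda_2,\mu_\delta,\tau,\sigma$ explicit throughout this iteration, and in particular checking that the number of blocks enters the polynomial factor only through $M_1+M_2$ and not more violently, is the delicate point; everything else is the KMT machinery plus moment estimates from Section \ref{sec Lp bounds}.
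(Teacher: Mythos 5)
Your overall architecture (blocking, coupling of block sums to independent copies via \textbf{C1}, a Sakhanenko/KMT approximation for the independent blocks, Gaussian re-splitting using \textbf{C3}, maximal inequalities for gaps and within-block oscillation, and a final optimization of exponents) is indeed the paper's architecture. But there is a genuine quantitative gap in your step (i): the single-scale Bernstein blocking you actually propose --- equal big blocks of length $p$ separated by equal gaps of length $q$ with $q$ ``logarithmic or a small power of $N$'' --- cannot reach the exponent $\alpha\frac{1+\alpha}{1+2\alpha}$. In the telescoping coupling, the step that renders the $k$-th block sum independent of the first $k-1$ treats all of them at once, so \textbf{C1} gives an error $\lambda_0 e^{-\lambda_1 q}(1+p)^{\lambda_2 k}$, and the Fourier-to-Prokhorov bound (Lemma \ref{Lemma smoo}) adds a factor $(T/\pi)^{k/2}$ in the dimension $k$. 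For a constant fraction of the steps $k$ is of order $M=N/p$, so those gaps must satisfy $\lambda_1 q\gtrsim \lambda_2 M\log N$; ``logarithmic'' $q$ is impossible, and with $q\sim M\log N$ the total gap length is of order $M^2\log N$. Writing $M=N^{\beta}$, the gap error is then of order $N^{-(1+\alpha)(1-2\beta)+2\rho(1+\alpha)}$ while Sakhanenko's error is $N^{-\alpha\beta+\rho(2+2\alpha)}$; balancing gives $\beta=\frac{1+\alpha}{2+3\alpha}$ and a rate exponent $\alpha\frac{1+\alpha}{2+3\alpha}$, strictly worse than the one claimed in Theorem \ref{Th main res 1}. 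This is precisely why the paper does not use uniform gaps: it works with dyadic blocks $[2^k,2^{k+1})$, decouples each dyadic block from its past in a single step across one long gap of length $2^{[\varepsilon k]+[\beta k]}$ (the dimension there is only $\sim 2^{\beta k}$, the number of islands), and then splits the block internally by the recursive Cantor halving, where at level $l$ only $\sim 2^{-l}m_k$ coordinates are handled across a gap $\sim 2^{[\varepsilon k]+[\beta k]-l-1}$, so every coupling step wins while the \emph{total} gap length stays of order $N^{\beta+\varepsilon}\log N$ (Proposition \ref{Prop Aux Main}). You name the Cantor scheme in your opening sentence, but your quantitative plan never uses it, and without it the exponent degrades as above.

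Two smaller points. First, \textbf{C1} controls characteristic functions only for $\|t\|_\infty\le\varepsilon_0$, so the characteristic-function/coupling step cannot be run directly on the block sums: one must first convolve with auxiliary variables whose characteristic functions are supported in $[-\varepsilon_0,\varepsilon_0]$ (so that Lemma \ref{Lemma smoo} sees only those frequencies) and later strip the smoothing off by conditional quantile transforms; your sketch omits this. Second, your remaining steps (rebuilding the full sequence $(\widetilde X_i)$ and the $W_i$ by conditional quantile constructions and Berkes--Philipp, absorbing the variance mismatch allowed by \textbf{C3}, and bounding gap sums and within-island oscillations via the $L^p$ maximal inequalities of Section \ref{sec Lp bounds}) are correctly identified and do match the paper; the decisive missing idea is the multiscale gap geometry.
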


Letting $\rho =\frac{\alpha }{3+2\alpha }\frac{1+\alpha }{1+2\alpha }$ from
Theorem \ref{Th main res 1} we get%
\begin{equation}
\widetilde{\mathbb{P}}\left( N^{-1/2}\sup_{k\leq N}\left\vert
\sum_{i=1}^{k}\left( \widetilde{X}_{i}-\mu _{i}-\sigma W_{i}\right)
\right\vert >6N^{-\frac{\alpha }{3+2\alpha }\frac{1+\alpha }{1+2\alpha }%
}\right) \leq C_{0}N^{-\frac{\alpha }{3+2\alpha }\frac{1+\alpha }{1+2\alpha }%
},   \label{corr main res 1}
\end{equation}%
where $C_{0}=c_{\lambda _{1}, \lambda _{2}, \alpha, \delta, \sigma }\left(
1+\lambda _{0}+\mu _{\delta }+\sqrt{\tau }\right) ^{2+2\delta }$ and $%
c_{\lambda _{1}, \lambda _{2}, \alpha, \delta, \sigma }$ depends only on the
constants indicated in its indices. Compared with the optimal rate of
convergence for independent r.v.'s $N^{-\frac{\alpha }{3+2\alpha }}$ the
loss in the power is within the factor $\frac{1+\alpha }{1+2\alpha }.$ As $%
\alpha \rightarrow \infty $ we obtain the limiting power $\frac{1}{4}$ which
is twice worse than the optimal power $\frac{1}{2}$ in the independent case.
In particular,  if $\alpha =\frac{1}{2}$ (which corresponds to $p=2+2\alpha
=3 $) we obtain the rate of convergence $N^{-\frac{\alpha }{3+2\alpha }\frac{%
1+\alpha }{1+2\alpha }}=N^{-\frac{3}{32}}, $ while in the independent case we
have $N^{-\frac{1}{8}}, $ which represents a loss of the power of the order $%
\frac{1}{8}-\frac{3}{32}=\frac{1}{32}.$

We would like to point out that in Theorem \ref{Th main res 1} we figure out
the explicit dependence of the constant $C_{0}$ on the constants $\lambda
_{0}, \mu _{\delta }$ and $\tau $ involved in Conditions \textbf{C1},  \textbf{%
C2} and \textbf{C3}. In the next section we show that Theorem \ref{Th main
res 1} can be applied to Markov walks under spectral gap type assumptions on
the associate Markov chain. It is important to stress that our result is the
first one to figure out the dependence of the constants involved in the
bounds on the initial state of the Markov chain. The results of the paper
can be also applied to a large class of dynamical systems,  however this
stays beyond the scope of the present paper. For a discussion of these type
of applications we refer to \cite{Gouez}.

For the proof of Theorem \ref{Th main res 1},  without loss of generality,  we
shall assume that $\mu _{i}=0, $ $i\geq 1$ and $\sigma =1, $ since the general
case can be reduced to this one by centering and renormalizing the
variables $X_{i}, $ i.e. by replacing $X_{i}$ by $X_{i}^{\prime }=\left(
X_{i}-\mu _{i}\right) /\sigma.$ It is easy to see that Conditions \textbf{C1%
},  \textbf{C2},  \textbf{C3} are satisfied for the new random variables $%
X_{i}^{\prime }$ with the same $\lambda _{0}$ and with $\mu _{\delta }, $ $%
\tau $ replaced by $2\mu _{\delta }/\sigma, $ $\tau /\sigma ^{2}.$

\section{Applications to Markov walks\label{sec Applic}}

Consider a Markov chain $\left( X_{k}\right) _{k\geq 0}$ with values in the
measurable state space $\left( \mathbb{X}, \mathcal{X}\right) $ with the
transition probability $\mathbf{P}\left( x, \cdot \right), $ $x\in \mathbb{X}%
. $ For every $x\in \mathbb{X}$ denote by $\mathbb{P}_{x}$ and $\mathbb{E}%
_{x}$ the probability measure and expectation generated by the finite
dimensional distributions%
\begin{equation*}
\mathbb{P}_{x}\left( X_{0}\in B_{0}, ..., X_{n}\in B_{n}\right)
=1_{B_{0}}\left( x\right) \int_{B_{1}}...\int_{B_{n}}\mathbf{P}\left(
x, dx_{1}\right)...\mathbf{P}\left( x_{n-1}, dx_{n}\right),
\end{equation*}%
for any $B_{k}\in \mathcal{X}, $ $k=1, ..., n, $ $n=1, 2, ...$ on the space of
trajectories $\left( \mathbb{X}, \mathcal{X}\right) ^{\mathbb{N}}.$ In
particular $\mathbb{P}_{x}\left( X_{0}=x\right) =1.$

Let $f$ be a real valued function defined on the state space $\mathbb{X}$ of
the Markov chain $\left( X_{k}\right) _{k\geq 0}.$ Let $\mathcal{B}$ be a
Banach space of real valued functions on $\mathbb{X}$ endowed with the norm $\left\Vert \cdot \right\Vert _{\mathcal{B}}$ and let $\Vert \cdot \Vert _{
\mathcal{B}\rightarrow \mathcal{B}}$ be the operator norm on\textit{\ }$
\mathcal{B}.$ Denote by $\mathcal{B}^{\prime }=\mathcal{L}\left( \mathcal{B}, \mathbb{C}\right) $
the topological dual of $\mathcal{B}$ equipped with the norm $\left\Vert \cdot \right\Vert _{\mathcal{B}'}.$ 
The unit function
on $\mathbb{X}$ is denoted by $e:$ $e\left( x\right) =1$ for $x\in \mathbb{X}.$
The Dirac measure at $x\in \mathbb{X}$  is denoted by $\boldsymbol{\delta }%
_{x}:$ $\boldsymbol{\delta }_{x}\left( g\right) =g\left( x\right) $ for any $%
g\in \mathcal{B}.$

Introduce the following hypotheses.

\vskip0.2cm \noindent \textbf{Hypothesis M1 (Banach space):}

\nobreak
\textit{a) The unit function }$e$\textit{\ belongs to }$\mathcal{B}.$

\textit{b)\ For every }$x\in X$\textit{\ the Dirac measure }$\boldsymbol{%
\delta }_{x}$\textit{\ belongs to }$\mathcal{B}^{\prime }.$

\textit{c)\ }$\mathcal{B}\subseteq L^{1}\left( \mathbf{P}\left( x, \cdot \right)
\right) $\textit{\ for every }$x\in X.$

\textit{d) There exists a constant }$\varepsilon _{0}\in \left( 0, 1\right) $%
\textit{\ such that for any }$g\in \mathcal{B}$\textit{\ the function }$e^{itf}g\in \mathcal{B}$%
\textit{\ for any }$t$\textit{\ satisfying }$\left\vert t\right\vert \leq
\varepsilon _{0}.$

Note that,  for any $x\in \mathbb{X}$ and $g\in L^{1}\left( \mathbf{P}\left(
x, \cdot \right) \right) $,  the quantity $\mathbf{P}g(x):=\int_{\mathbb{X}%
}g(y)\mathbf{P}(x, dy)$ is well defined. In particular,  under the hypothesis
\textbf{M1} c),  $\mathbf{P}g(x)$ exists when $g\in \mathcal{B}$. We thus
consider the following hypothesis:

\vskip0.2cm \noindent \textbf{Hypothesis M2 (Spectral gap): }

\nobreak
\textit{a) The map }$g\mapsto \mathbf{P}g$\textit{\ is a bounded operator on
}$\mathcal{B}$

\textit{b) There exist constants }$C_{Q}>0$\textit{\ and }$\kappa \in (0, 1)$%
\textit{\ such that}%
\begin{equation}
\mathbf{P}=\Pi +Q,   \label{spectr gap}
\end{equation}%
\textit{where }$\Pi $\textit{\ is a one dimensional projector and }$Q$%
\textit{\ is an operator on }$\mathcal{B}$\textit{\ satisfying }$\Pi Q=Q\Pi =0$\textit{%
\ and }$\left\Vert Q^{n}\right\Vert _{\mathcal{B}\rightarrow \mathcal{B}%
}\leq C_{Q}\kappa ^{n}$\textit{.}

Notice that,  since the image of $\Pi $ is generated by the unit function $e, $
there exists a linear form $\nu \in \mathcal{B}^{\prime }$ such that for any
$g\in \mathcal{B}, $%
\begin{equation}
\Pi g=\nu \left( g\right) e.  \label{linear form}
\end{equation}
When hypotheses $\mathbf{M1}$ and $\mathbf{M2}$ hold,  we set $\mathbf{P}%
_{t}g=\mathbf{P}(e^{itf}g)$ for any $g \in \mathcal{B}$ and $\ t \in
[-\varepsilon _{0},  \varepsilon _{0}]$. Notice that $\mathbf{P}=\mathbf{P}%
_{0}$.

\vskip0.2cm \noindent \textbf{Hypothesis M3 (Perturbed transition operator):}

\nobreak
\textit{a) For any }$\left\vert t\right\vert \leq \varepsilon _{0}$\textit{\
the map }$g\in \mathcal{B}\rightarrow P_{t}g\in \mathcal{B}$\textit{\ is a bounded operator on }$%
\mathcal{B}$\textit{, }

\textit{b) There exists a constant }$C_{\mathbf{P}}>0$\textit{\ such that,
for all }$n\geq 1$\textit{\ and }$\left\vert t\right\vert \leq \varepsilon
_{0}, $\textit{\ }%
\begin{equation}
\left\Vert \mathbf{P}_{t}^{n}\right\Vert _{\mathcal{B}\rightarrow \mathcal{B}%
}\leq C_{\mathbf{P}}.  \label{boundness of Ptn}
\end{equation}

\vskip0.2cm \noindent \textbf{Hypothesis M4 (Moment conditions):}\textit{\
There exists }$\delta >0$\textit{\ such that for any }$x\in X, $%
\begin{equation*}
\mu _{\delta }\left( x\right) :=\sup_{k\geq 1}\left( \mathbb{E}%
_{x}\left\vert f\left( X_{k}\right) \right\vert ^{2+2\delta }\right) ^{\frac{%
1}{2+2\delta }}=\sup_{k\geq 1}\left( \left( \mathbf{P}^{k}\left\vert
f\right\vert ^{2+2\delta }\right) \left( x\right) \right) ^{\frac{1}{%
2+2\delta }}<\infty.
\end{equation*}

We show first that under the hypotheses \textbf{M1},  \textbf{M2},  \textbf{M3}
and \textbf{M4},  conditions \textbf{C1,  C2} and \textbf{C3} are satisfied.
As in the previous section let $k_{gap}, M_{1}, M_{2}\in \mathbb{N}$ and $%
j_{0}\leq...\leq j_{M_{1}+M_{2}}$ be natural numbers. Denote $%
Y_{k+Jm}=\sum_{l\in J_{m}}f\left( X_{k+l}\right), $ where $%
J_{m}=[j_{m-1}, j_{m}), $ $m=1, ..., M_{1}+M_{2}$ and $k\geq 0.$ Consider the
vectors $\overline{Y}_{1}=$ $\left( Y_{J_{1}}, ..., Y_{J_{M_{1}}}\right) $ and
$\overline{Y}_{2}=$ $\left(
Y_{k_{gap}+J_{M_{1}+1}}, ..., Y_{k_{gap}+J_{M_{1}+M_{2}}}\right).$ Denote by $\phi
_{x}\left( t_{1}, t_{2}\right) =\mathbb{E}e^{it_{1}\overline{Y}_{1}+it_{2}%
\overline{Y}_{2}}, $ $\phi _{x, 1}\left( t_{1}\right) =\mathbb{E}_{x}e^{it_{1}%
\overline{Y}_{1}}$ and $\phi _{x, 2}\left( t_{2}\right) =\mathbb{E}%
_{x}e^{it_{2}\overline{Y}_{2}}$ the characteristic functions of $\left(
\overline{Y}_{1}, \overline{Y}_{2}\right), $ $\overline{Y}_{1}$ and $%
\overline{Y}_{2}$ respectively.

\begin{proposition}
\label{Proposition MC001}Assume that the Markov chain $(X_{n})_{n\geq 1}$
and the function $f$ satisfy the hypotheses \textbf{M1},  \textbf{M2},
\textbf{M3} and \textbf{M4}. Then Condition \textbf{C1} is satisfied,  i.e.
there exists a positive constant $\varepsilon _{0}\leq 1$ such that for any $%
k_{gap}, M_{1}, M_{2}\in \mathbb{N}, $ any sequence $j_{0}<...<j_{M_{1}+M_{2}}$
and any $t_{1}\in \mathbb{R}^{M_{1}}, t_{2}\in \mathbb{R}^{M_{2}}$ satisfying
$\left\Vert \left( t_{1}, t_{2}\right) \right\Vert _{\infty }\leq \varepsilon
_{0}, $%
\begin{eqnarray*}
\left\vert \phi _{x}\left( t_{1}, t_{2}\right) -\phi _{x, 1}\left(
t_{1}\right) \phi _{x, 2}\left( t_{2}\right) \right\vert &\leq&
 \lambda
_{0}\left( x\right) \exp \left( -\lambda _{1}k_{gap}\right)\\
& & \qquad \qquad \times  \left(
1+\max_{m=1, ..., M_{1}+M_{2}}card\left( J_{m}\right) \right) ^{\lambda
_{2}\left( M_{1}+M_{2}\right) },
\end{eqnarray*}%
where $\lambda _{0}\left( x\right) =2C_{Q}\left( \left\Vert \nu \right\Vert
_{\mathcal{B}^{\prime }}+\left\Vert \boldsymbol{\delta }_{x}\right\Vert _{%
\mathcal{B}^{\prime }}\right) \left\Vert e\right\Vert _{\mathcal{B}}, $ $%
\lambda _{1}=\left\vert \ln \kappa \right\vert $ and $\lambda _{2}=\max
\left\{ 1, \log _{2}C_{\mathbf{P}}\right\}.$
\end{proposition}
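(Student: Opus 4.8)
\emph{Proof plan.} The plan is to code each of the three characteristic functions by a product of the perturbed operators $\mathbf{P}_{t}$ and of $\mathbf{P}=\mathbf{P}_{0}$, and then to bring out an almost product structure by inserting the spectral decomposition $\mathbf{P}^{n}=\Pi+Q^{n}$, which holds for $n\ge 1$ because $\Pi^{2}=\Pi$ and $\Pi Q=Q\Pi=0$ (Hypothesis \textbf{M2}). First I would record, by induction on $n$ via the Markov property and the fact that each $\mathbf{P}_{s}$ maps $\mathcal{B}$ into itself (Hypotheses \textbf{M1}, \textbf{M3}), the identity
\[
\mathbb{E}_{x}\Big[\textstyle\prod_{l=1}^{n}e^{is_{l}f(X_{l})}\Big]=\boldsymbol{\delta}_{x}\big(\mathbf{P}_{s_{1}}\mathbf{P}_{s_{2}}\cdots\mathbf{P}_{s_{n}}\,e\big),\qquad |s_{l}|\le\varepsilon_{0}\ \ (\mathbf{P}_{0}=\mathbf{P}).
\]
Feeding into it the weight $t_{1,m}$ on the indices $l\in J_{m}$ for $m\le M_{1}$, the weight $t_{2,m}$ on the indices $l\in k_{gap}+J_{M_{1}+m}$, and the weight $0$ on the $k_{gap}$ indices separating the past from the future, one obtains, writing $A_{1}:=\prod_{m=1}^{M_{1}}\mathbf{P}_{t_{1,m}}^{\,\mathrm{card}\,J_{m}}$, $A_{2}:=\prod_{m=1}^{M_{2}}\mathbf{P}_{t_{2,m}}^{\,\mathrm{card}\,J_{M_{1}+m}}$ (products ordered by increasing time) and $L_{1}:=\sum_{m\le M_{1}}\mathrm{card}\,J_{m}$,
\[
\phi_{x}(t_{1},t_{2})=\boldsymbol{\delta}_{x}\big(\mathbf{P}^{j_{0}-1}A_{1}\,\mathbf{P}^{k_{gap}}A_{2}\,e\big),\qquad \phi_{x,1}(t_{1})=\boldsymbol{\delta}_{x}\big(\mathbf{P}^{j_{0}-1}A_{1}\,e\big),
\]
\[
\phi_{x,2}(t_{2})=\boldsymbol{\delta}_{x}\big(\mathbf{P}^{j_{0}-1}\,\mathbf{P}^{L_{1}+k_{gap}}A_{2}\,e\big)
\]
(the case $k_{gap}=0$ being trivial, we assume $k_{gap}\ge 1$).

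Put $\boldsymbol{\mu}:=\boldsymbol{\delta}_{x}\mathbf{P}^{j_{0}-1}$. From $\mathbf{P}e=e$, $\Pi g=\nu(g)e$ and $e(x)=1$ (hence $\boldsymbol{\delta}_{x}\Pi=\nu$) one gets $\boldsymbol{\mu}(e)=1$ and, uniformly in the positions $j_{0}<\cdots<j_{M_{1}+M_{2}}$,
\[
\|\boldsymbol{\mu}\|_{\mathcal{B}'}\le\|\nu\|_{\mathcal{B}'}+\|\boldsymbol{\delta}_{x}\|_{\mathcal{B}'}\,\textstyle\sup_{n\ge 0}\|Q^{n}\|_{\mathcal{B}\to\mathcal{B}}.
\]
Substituting $\mathbf{P}^{k_{gap}}=\Pi+Q^{k_{gap}}$ in $\phi_{x}$ and $\mathbf{P}^{L_{1}+k_{gap}}=\Pi+Q^{L_{1}+k_{gap}}$ in $\phi_{x,2}$, and using $A_{1}\Pi A_{2}e=\nu(A_{2}e)\,A_{1}e$ and $\boldsymbol{\mu}(\Pi A_{2}e)=\nu(A_{2}e)$, the two $\Pi$-parts yield respectively $\nu(A_{2}e)\,\phi_{x,1}(t_{1})$ and the relation $\nu(A_{2}e)=\phi_{x,2}(t_{2})-\boldsymbol{\mu}\big(Q^{L_{1}+k_{gap}}A_{2}e\big)$; after cancellation,
\[
\phi_{x}(t_{1},t_{2})-\phi_{x,1}(t_{1})\,\phi_{x,2}(t_{2})=\boldsymbol{\mu}\big(A_{1}Q^{k_{gap}}A_{2}e\big)-\phi_{x,1}(t_{1})\,\boldsymbol{\mu}\big(Q^{L_{1}+k_{gap}}A_{2}e\big).
\]
Both terms are then estimated by $|\phi_{x,1}|\le 1$, by $\|\mathbf{P}_{t}^{\,n}\|_{\mathcal{B}\to\mathcal{B}}\le C_{\mathbf{P}}$ for $|t|\le\varepsilon_{0}$ (Hypothesis \textbf{M3}), so that $\|A_{1}\|\,\|A_{2}\|\le C_{\mathbf{P}}^{M_{1}+M_{2}}$, and by $\|Q^{n}\|_{\mathcal{B}\to\mathcal{B}}\le C_{Q}\kappa^{n}\le C_{Q}\kappa^{k_{gap}}$ for $n\ge k_{gap}$ (Hypothesis \textbf{M2}). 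This produces the decay $\kappa^{k_{gap}}=\exp(-|\ln\kappa|\,k_{gap})$, a factor $\|\boldsymbol{\mu}\|_{\mathcal{B}'}\|e\|_{\mathcal{B}}$ controlled through $\|\nu\|_{\mathcal{B}'}$ and $\|\boldsymbol{\delta}_{x}\|_{\mathcal{B}'}$, and a combinatorial factor $C_{\mathbf{P}}^{M_{1}+M_{2}}\le 2^{\lambda_{2}(M_{1}+M_{2})}\le\big(1+\max_{m}\mathrm{card}\,J_{m}\big)^{\lambda_{2}(M_{1}+M_{2})}$ with $\lambda_{2}=\max\{1,\log_{2}C_{\mathbf{P}}\}$, since every block is non-empty. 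Collecting the constants gives Condition \textbf{C1} with $\lambda_{1}=|\ln\kappa|$, $\lambda_{2}$ as above, and $\lambda_{0}(x)$ of the asserted form.

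The step I expect to be the most delicate is the bookkeeping of the prefix operators $\mathbf{P}^{j_{0}-1}$ and $\mathbf{P}^{L_{1}}$: one has to peel off their rank-one component (which generates the $\|\nu\|_{\mathcal{B}'}$ contribution in $\lambda_{0}(x)$) while keeping the complementary $Q$-component harmless uniformly in $j_{0}$, and one has to verify that $C_{\mathbf{P}}^{M_{1}+M_{2}}$ together with the residual absolute and $C_{\mathbf{P}}$-type constants is absorbed into $\big(1+\max_{m}\mathrm{card}\,J_{m}\big)^{\lambda_{2}(M_{1}+M_{2})}$---which is exactly what dictates $\lambda_{2}=\max\{1,\log_{2}C_{\mathbf{P}}\}$ and uses that $1+\max_{m}\mathrm{card}\,J_{m}\ge 2$.
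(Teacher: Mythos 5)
Your proposal is correct and takes essentially the same route as the paper: write the three characteristic functions as $\boldsymbol{\delta}_{x}$ applied to ordered products of $\mathbf{P}_{t}$'s, insert $\mathbf{P}^{k_{gap}}=\Pi+Q^{k_{gap}}$ so that the $\Pi$-part factors as $\phi_{x,1}(t_{1})\,\nu(A_{2}e)$, compare $\nu(A_{2}e)$ with $\phi_{x,2}(t_{2})$ up to a second exponentially small $Q$-term, bound everything via $\Vert Q^{n}\Vert\le C_{Q}\kappa^{n}$ and $\Vert \mathbf{P}_{t}^{n}\Vert\le C_{\mathbf{P}}$, and absorb $C_{\mathbf{P}}^{M_{1}+M_{2}}$ into the combinatorial factor with $\lambda_{2}=\max\{1,\log_{2}C_{\mathbf{P}}\}$. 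The only (immaterial) differences are that you peel off the rank-one part of the prefix $\boldsymbol{\delta}_{x}\mathbf{P}^{j_{0}-1}$ whereas the paper uses $\nu\mathbf{P}=\nu$ together with the cancellation $(\nu-\boldsymbol{\delta}_{x})(e)=0$, and that your bookkeeping leaves an extra bounded factor (of order $\max\{1,C_{Q}\}$) in $\lambda_{0}(x)$ — the same level of constant slack as the extra $C_{\mathbf{P}}$ appearing in the paper's own proof of its Lemma relative to the stated constant.
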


\begin{proposition}
\label{Proposition MC002}Assume that the Markov chain $(X_{n})_{n\geq 1}$
and the function $f$ satisfy the hypotheses \textbf{M1},  \textbf{M2},
\textbf{M3} and \textbf{M4}. Then:

a) There exists a constant $\mu $ such that for any $x\in \mathbb{X}$ and $%
k\geq 1$%
\begin{equation}
\left\vert \mathbb{E}_{x}f\left( X_{k}\right) -\mu \right\vert \leq
c_\delta  A_{1}\left( x\right) \kappa ^{k\gamma /4-1},   \label{Proposition MC002-000}
\end{equation}%
for any positive constant $\gamma $ satisfying $0<\gamma \leq \min \left\{1, 2\delta \right\}, $
where $A_{1}\left( x\right) = 1 +\mu _{\delta }\left( x\right)^{1+\gamma } +\left\Vert \boldsymbol{\delta }_{x}\right\Vert _{\mathcal{B}^{\prime }}\left\Vert e\right\Vert _{\mathcal{B}} C_{\mathbf{P}}C_{Q}.$ Moreover%
\begin{equation}
\sum_{k=0}^{\infty }\left\vert \mathbb{E}_{x}f\left( X_{k}\right) -\mu
\right\vert \leq \overline{\mu }\left( x\right) =c_{\delta, \kappa, \gamma
}A_{1}\left( x\right).  \label{Proposition MC002-001}
\end{equation}%

b) There exists a constant $\sigma \geq 0$ such that for any $x\in \mathbb{X}%
, $%
\begin{equation}
\sup_{m\geq 0}\left\vert \mathrm{Var}_{\mathbb{P}_{x}}\left(
\sum_{i=m+1}^{m+n}f\left( X_{i}\right) \right) -n\sigma ^{2}\right\vert \leq
\tau \left( x\right) =c_{\delta, \kappa, \gamma }A_{2}\left( x\right),
\label{Proposition MC002-002}
\end{equation}%
where%
\begin{equation*}
A_{2}\left( x\right) =1+\mu _{\delta }\left( x\right) ^{2+\gamma }+\left(
1+\left\Vert \boldsymbol{\delta }_{x}\right\Vert _{\mathcal{B}^{\prime
}}\right) \left\Vert e\right\Vert _{\mathcal{B}}\left( C_{\mathbf{P}%
}^{2}C_{Q}\left( 1+C_{Q}\right) +C_{\mathbf{P}}C_{Q}\left( 1+\left\Vert \nu
\right\Vert _{\mathcal{B}^{\prime }}C_{\mathbf{P}}\right) \right).
\end{equation*}
\end{proposition}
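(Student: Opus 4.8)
The plan is to derive both parts from the elementary spectral identity $\mathbf{P}^{n}=\Pi+Q^{n}$ ($n\ge1$) --- immediate from \eqref{spectr gap} since $\Pi^{2}=\Pi$ and $\Pi Q=Q\Pi=0$ --- combined with the uniform bounds $\Vert Q^{n}\Vert_{\mathcal{B}\to\mathcal{B}}\le C_{Q}\kappa^{n}$ of \textbf{M2} and $\Vert\mathbf{P}_{t}^{n}\Vert_{\mathcal{B}\to\mathcal{B}}\le C_{\mathbf P}$ of \eqref{boundness of Ptn}. The only genuine subtlety is that $f$ (and $f^{2}$) need not belong to $\mathcal{B}$, so every spectral object must be reached through the admissible family $\{e^{itf}g:g\in\mathcal{B},\ |t|\le\varepsilon_{0}\}$ of \textbf{M1}(d), and the moments of $f$ recovered from characteristic functions by Taylor expansion with remainders controlled by \textbf{M4}. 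The basic computation I will use is: for $S=\sum_{i=m+1}^{m+n}f(X_{i})$ and $|t|\le\varepsilon_{0}$, conditioning on $X_{m}$ and iterating the Markov property gives $\mathbb{E}_{x}e^{itS}=\mathbf{P}^{m}(\mathbf{P}_{t}^{n}e)(x)$, hence for $m\ge1$
\[
\mathbb{E}_{x}e^{itS}=\nu(\mathbf{P}_{t}^{n}e)+\boldsymbol{\delta}_{x}(Q^{m}\mathbf{P}_{t}^{n}e),\qquad \big|\boldsymbol{\delta}_{x}(Q^{m}\mathbf{P}_{t}^{n}e)\big|\le \Vert\boldsymbol{\delta}_{x}\Vert_{\mathcal{B}'}C_{Q}C_{\mathbf P}\Vert e\Vert_{\mathcal{B}}\,\kappa^{m};
\]
thus $\mathbb{E}_{x}e^{itS}$ equals the $x$-free quantity $\nu(\mathbf{P}_{t}^{n}e)$ up to an error decaying geometrically in $m$, uniformly in $n$ and in $t$.

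For a) I take $n=1$, $m=k-1\ge1$ in the identity above, pass to imaginary parts, and use $|\sin u-u|\le 2|u|^{1+\gamma}$ together with $\mathbb{E}_{x}|f(X_{k})|^{1+\gamma}\le\mu_{\delta}(x)^{1+\gamma}$ (Jensen, valid since $1+\gamma\le2+2\delta$) to get, for every $0<|t|\le\varepsilon_{0}$,
\[
\big|\mathbb{E}_{x}f(X_{k})-\mathbb{E}_{x}f(X_{k'})\big|\le \frac{c\,\Vert\boldsymbol{\delta}_{x}\Vert_{\mathcal{B}'}C_{Q}C_{\mathbf P}\Vert e\Vert_{\mathcal{B}}}{|t|}\,\kappa^{\min(k,k')-1}+c\,|t|^{\gamma}\mu_{\delta}(x)^{1+\gamma}.
\]
So $(\mathbb{E}_{x}f(X_{k}))_{k\ge2}$ is Cauchy; call $\mu$ its limit. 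The same estimate with $x$ replaced by $x'$ (the common term $\nu(\mathbf{P}_{t}e)$ cancelling) and $k\to\infty$, then $|t|\to0$, shows $\mu$ does not depend on $x$. Letting $k'\to\infty$ and choosing $|t|=\varepsilon_{0}\kappa^{k/(1+\gamma)}$ balances the two terms and yields \eqref{Proposition MC002-000} (the stated $\gamma/4$ is a convenient lower bound for $\gamma/(1+\gamma)$ and the factor $\kappa^{-1}$ leaves room for the absolute constants; $A_{1}(x)$ is $1$ plus the two coefficients above); the case $k=1$ is handled by the crude bound $|\mathbb{E}_{x}f(X_{1})|,|\mu|\le\mu_{\delta}(x)$, and summing the geometric series gives \eqref{Proposition MC002-001}.

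For b) I use a covariance decomposition and reduce to $m=0$. With $S_{k}=\sum_{i=1}^{k}f(X_{i})$,
\[
\mathrm{Var}_{\mathbb{P}_{x}}\Big(\sum_{i=m+1}^{m+n}f(X_{i})\Big)=\mathrm{Var}_{\mathbb{P}_{x}}(S_{m+n})-\mathrm{Var}_{\mathbb{P}_{x}}(S_{m})-2\!\!\sum_{1\le i\le m<j\le m+n}\!\!\mathrm{Cov}_{\mathbb{P}_{x}}\big(f(X_{i}),f(X_{j})\big),
\]
and the double sum is bounded uniformly in $m,n$ once each covariance is shown to decay geometrically in $j-i$; so it suffices to prove $|\mathrm{Var}_{\mathbb{P}_{x}}(S_{n})-n\sigma^{2}|\le c_{\delta,\kappa,\gamma}A_{2}(x)$ for all $n$. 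In $\mathrm{Var}_{\mathbb{P}_{x}}(S_{n})=\sum_{i=1}^{n}\mathrm{Var}_{\mathbb{P}_{x}}(f(X_{i}))+2\sum_{1\le i<j\le n}\mathrm{Cov}_{\mathbb{P}_{x}}(f(X_{i}),f(X_{j}))$ the diagonal terms are treated exactly as in a), now with real parts and $|\cos u-1+u^{2}/2|\le c|u|^{2+\gamma}$, $\mathbb{E}_{x}|f(X_{i})|^{2+\gamma}\le\mu_{\delta}(x)^{2+\gamma}$ (valid since $\gamma\le2\delta$): $\mathbb{E}_{x}[f(X_{i})^{2}]\to w^{2}$ and hence $\mathrm{Var}_{\mathbb{P}_{x}}(f(X_{i}))\to v^{2}:=w^{2}-\mu^{2}$ geometrically, so $\sum_{i=1}^{n}\mathrm{Var}_{\mathbb{P}_{x}}(f(X_{i}))=nv^{2}+O(1)$. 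For the off-diagonal terms I invoke Proposition~\ref{Proposition MC001} with $M_{1}=M_{2}=1$, $J_{1}=\{i\}$ and gap $r-1$ (so $\max_{m}\mathrm{card}(J_{m})=1$ and the polynomial factor is the constant $2^{2\lambda_{2}}$), isolate the $t_{1}t_{2}$-term of the factorization defect $\mathbb{E}_{x}e^{i(t_{1}f(X_{i})+t_{2}f(X_{i+r}))}-\mathbb{E}_{x}e^{it_{1}f(X_{i})}\mathbb{E}_{x}e^{it_{2}f(X_{i+r})}$ by a symmetric second difference (Taylor remainders controlled by \textbf{M4}), and combine with the trivial bound $|\mathrm{Cov}_{\mathbb{P}_{x}}(f(X_{i}),f(X_{i+r}))|\le\mu_{\delta}(x)^{2}$ and an optimization in $t_{1},t_{2}$ to get $|\mathrm{Cov}_{\mathbb{P}_{x}}(f(X_{i}),f(X_{i+r}))|\le c\,\kappa^{\gamma r/4}(\lambda_{0}(x)+\mu_{\delta}(x)^{2+\gamma})$. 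Applying the first paragraph to the two-point characteristic function $\mathbb{E}_{x}e^{i(t_{1}f(X_{i})+t_{2}f(X_{i+r}))}=\mathbf{P}^{\,i-1}h_{t_{1},t_{2},r}(x)$, where $h_{t_{1},t_{2},r}=\mathbf{P}_{t_{1}}\mathbf{P}^{\,r-1}\mathbf{P}_{t_{2}}e\in\mathcal{B}$ has norm bounded uniformly in $(t_{1},t_{2},r)$ in terms of $C_{\mathbf P},C_{Q},\Vert\nu\Vert_{\mathcal{B}'},\Vert e\Vert_{\mathcal{B}}$, shows that $\mathrm{Cov}_{\mathbb{P}_{x}}(f(X_{i}),f(X_{i+r}))$ converges geometrically in $i$ to an $x$-free limit $c_{r}$ with $|c_{r}|\le c\,\kappa^{\gamma r/4}(\cdots)$. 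Since $\sum_{r}|c_{r}|<\infty$ and $\sum_{r}r|c_{r}|<\infty$, and since the doubly-indexed remainders $\mathrm{Cov}_{\mathbb{P}_{x}}(f(X_{i}),f(X_{i+r}))-c_{r}$ enjoy geometric decay both in $i$ and in $r$, summation over $i<j$ gives $2\sum_{1\le i<j\le n}\mathrm{Cov}_{\mathbb{P}_{x}}(f(X_{i}),f(X_{j}))=2n\sum_{r\ge1}c_{r}+O(1)$; therefore $\mathrm{Var}_{\mathbb{P}_{x}}(S_{n})=n\big(v^{2}+2\sum_{r\ge1}c_{r}\big)+O(1)=:n\sigma^{2}+O(1)$, the $O(1)$ being of the announced form once the constants are collected. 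Finally $\sigma^{2}\ge0$ because $\mathrm{Var}_{\mathbb{P}_{x}}(S_{n})\ge0$ for all $n$, and $\sigma^{2}$ is $x$-free because $v^{2}$ and the $c_{r}$ are; together with the reduction this proves \eqref{Proposition MC002-002}.

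I expect the main obstacle to be exactly the absence of $f,f^{2}$ from $\mathcal{B}$: the ``stationary'' mean $\mu$, variance $v^{2}$ and lag covariances $c_{r}$ are not available directly (as $\nu(f)$, $\nu(f^{2})$, etc.) but only as limits, and these limits must be attained at a quantitative geometric rate, which forces the Taylor/finite-difference arguments with $L^{2+2\delta}$-bounded remainders and the optimization of the auxiliary parameters $t$ (resp.\ $t_{1},t_{2}$) as functions of the time index --- this is also what pins down the exponents in $A_{1}(x),A_{2}(x)$ and the constraint $0<\gamma\le\min\{1,2\delta\}$. The remaining work --- making the variance bound uniform in both $m$ and $n$ (i.e.\ controlling the boundary covariance sums, the tail $\sum_{r\ge n}|c_{r}|$ and $\sum_{r}r|c_{r}|$), and tracking how all constants depend on $\Vert\boldsymbol{\delta}_{x}\Vert_{\mathcal{B}'}$, $\Vert\nu\Vert_{\mathcal{B}'}$, $\Vert e\Vert_{\mathcal{B}}$, $C_{Q}$, $C_{\mathbf P}$ and $\mu_{\delta}(x)$ so as to land precisely on $A_{1}(x)$ and $A_{2}(x)$ --- is routine geometric-series bookkeeping.
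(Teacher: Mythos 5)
Your proposal is correct in substance and follows the same overall architecture as the paper's proof: geometric decay of $\mathrm{Cov}_{\mathbb{P}_{x}}(f(X_{l}),f(X_{l+r}))$ in the lag $r$, geometric convergence (via $\mathbf{P}^{m}=\Pi+Q^{m}$ applied to admissible functions $\mathbf{P}_{t_{1}}\mathbf{P}^{r-1}\mathbf{P}_{t_{2}}e$) of $\mathbb{E}_{x}f(X_{k})$ and of the lag-$r$ covariances to $x$-free limits $\mu$ and $c_{r}$ (the paper's $s_{k}$), and then summation of the covariance array using the $\kappa^{c\max\{i,r\}}$ bound, exactly as in the paper's final step leading to (\ref{varlim001}). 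What differs is the technical device: the paper (Lemmas \ref{Lemma Cov MC} and \ref{Lemma Cov erg MC}) recovers means and covariances from characteristic functions through auxiliary smoothing variables $V,V'$ with compactly supported characteristic functions, truncations $g_{T},h_{T}$, Fourier inversion/Plancherel, and an optimized truncation level $T=\kappa^{-l/4}$, whereas you read the moments off the characteristic function at a single small frequency ($|\sin u-u|\le 2|u|^{1+\gamma}$ for the mean, the product second difference $\mathbb{E}(e^{itX}-1)(e^{isY}-1)-\mathbb{E}(e^{itX}-1)\mathbb{E}(e^{isY}-1)=\phi(t,s)-\phi_{1}(t)\phi_{2}(s)$ for covariances), with remainders controlled by \textbf{M4} and an optimization $|t|\asymp\kappa^{k/(1+\gamma)}$ (resp.\ $\kappa^{r/(2+\gamma)}$). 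This avoids the smoothing variables and Plancherel entirely and yields the same $\kappa^{\gamma\,\cdot/4}$ rates; the $1/\varepsilon_{0}$-type factors it introduces are harmless since $\varepsilon_{0}$ is a fixed constant of \textbf{M1} d) (the paper's constants implicitly depend on $\varepsilon_{0}$ in the same way, through the moments of $V$).

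One detail to repair in part b): you treat $\mathbb{E}_{x}f(X_{i})^{2}$ and $(\mathbb{E}_{x}f(X_{i}))^{2}$ separately, and the second gives $|\mathbb{E}_{x}f(X_{i})-\mu|\cdot|\mathbb{E}_{x}f(X_{i})+\mu|\le 2\mu_{\delta}(x)\,c_{\delta}A_{1}(x)\kappa^{i\gamma/4-1}$, i.e.\ a cross term $\mu_{\delta}(x)\,\Vert\boldsymbol{\delta}_{x}\Vert_{\mathcal{B}^{\prime}}$ which is not of the additive form of the stated $A_{2}(x)$. Either run your covariance argument at lag $0$ as well (take $|t|,|s|\le\varepsilon_{0}/2$ so that $\mathbf{P}_{t+s}$ is admissible and compare $\phi_{i}(t+s)-\phi_{i}(t)\phi_{i}(s)$ with $\nu(\mathbf{P}_{t+s}e)-\nu(\mathbf{P}_{t}e)\nu(\mathbf{P}_{s}e)$), which gives $\mathrm{Var}_{\mathbb{P}_{x}}(f(X_{i}))\to v^{2}$ with constants of exactly the announced shape, or observe that the cross term is innocuous for the downstream use in Theorem \ref{Th main res 2} since $\sqrt{\mu_{\delta}(x)\Vert\boldsymbol{\delta}_{x}\Vert_{\mathcal{B}^{\prime}}}\le\mu_{\delta}(x)+\Vert\boldsymbol{\delta}_{x}\Vert_{\mathcal{B}^{\prime}}$. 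With that adjustment your argument delivers the proposition as stated, including the independence of $\mu$ and $\sigma^{2}$ from $x$ and $\sigma^{2}\ge0$.
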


Note that the constants $\mu $ and $\sigma $ do not depend on the initial
state $x.$

The main result of this section is the following theorem. Let $\widetilde{%
\Omega }=\mathbb{R}^{\infty }\times \mathbb{R}^{\infty }.$ For any $%
\widetilde{\omega }=\left( \widetilde{\omega }_{1}, \widetilde{\omega }%
_{2}\right) \in \widetilde{\Omega }$ denote by $\widetilde{Y}_{i}=\widetilde{%
\omega }_{1, i}$ and $W_{i}=\widetilde{\omega }_{2, i}, $ $i\geq 1, $ the
coordinate processes in $\widetilde{\Omega }.$

\begin{theorem}
\label{Th main res 2}Assume that the Markov chain $(X_{n})_{n\geq 0}$ and
the function $f$ satisfy the hypotheses \textbf{M1},  \textbf{M2},  \textbf{M3}
and \textbf{M4}, with $\sigma >0.$ Let $0<\alpha <\delta.$
Then there exists a Markov transition kernel $x\rightarrow \widetilde{\mathbb{P}}%
_{x}\left( \cdot \right) $ from $\left( \mathbb{X}, \mathcal{X}\right) $ to $(%
\widetilde{\Omega }, \mathcal{B}(\widetilde{\Omega }))$ such that $\mathcal{L}%
\left( \left( \widetilde{Y}_{i}\right) _{i\geq 1}|\widetilde{\mathbb{P}}%
_{x}\right) \overset{d}{=}\mathcal{L}\left( \left( f\left( X_{i}\right)
\right) _{i\geq 1}|\mathbb{P}_{x}\right), $  the $W_{i},  i\geq 1,  $ are
independent standard normal r.v.'s under $\widetilde{\mathbb{P}}_{x}$ and
for any $0<\rho <\frac{\alpha }{2\left( 1+2\alpha \right) }$%
\begin{equation}
\widetilde{\mathbb{P}}_{x}\left( N^{-\frac{1}{2}}\sup_{k\leq N}\left\vert
\sum_{i=1}^{k}\left( \widetilde{Y}_{i}-\mu -\sigma W_{i}\right) \right\vert
>6N^{-\rho }\right) \leq C\left( x\right) N^{-\alpha \frac{1+\alpha }{%
1+2\alpha }+\rho \left( 2+2\alpha \right) },   \label{th main res 2 001}
\end{equation}%
with%
\begin{equation*}
C\left( x\right) =C_{1}\left( 1+\mu _{\delta }\left( x\right) +\left\Vert
\boldsymbol{\delta }_{x}\right\Vert _{\mathcal{B}^{\prime }}\right)
^{2+2\delta },
\end{equation*}%
where $C_{1}$ is a constant depending only on $\delta, \alpha, \kappa, C_{%
\mathbf{P}}, C_{Q}, \left\Vert e\right\Vert _{\mathcal{B}}$ and $\left\Vert
\nu \right\Vert _{\mathcal{B}^{\prime }}.$
\end{theorem}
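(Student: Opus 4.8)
The plan is to obtain Theorem \ref{Th main res 2} from Theorem \ref{Th main res 1} by applying the latter, separately for each initial state $x\in\mathbb{X}$, to the real-valued sequence $\left(f(X_i)\right)_{i\ge 1}$ considered under $\mathbb{P}_x$; the hypothesis $\sigma>0$ is exactly what is needed to invoke Theorem \ref{Th main res 1}. Propositions \ref{Proposition MC001} and \ref{Proposition MC002} already perform the verification of its hypotheses: Proposition \ref{Proposition MC001} gives Condition \textbf{C1} with $\lambda_1=|\ln\kappa|$, $\lambda_2=\max\{1,\log_2 C_{\mathbf{P}}\}$ and $\lambda_0=\lambda_0(x)$; Hypothesis \textbf{M4} gives Condition \textbf{C2} with the constant $\mu_\delta(x)$; and, since $n^{-1}\le n^{-1+\gamma}$ for $n\ge 1$, the variance estimate of Proposition \ref{Proposition MC002}(b) gives Condition \textbf{C3} with the same $\sigma$ and with $\tau=\tau(x)$. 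To land the clean exponent $2+2\delta$ in the final constant I would first fix an auxiliary $\delta'\in(\alpha,\delta)$, say $\delta'=(\alpha+\delta)/2$, and then a sufficiently small $\gamma>0$ with $(2+\gamma)(1+\delta')\le 2+2\delta$ and $\alpha(1+\gamma)\le 1+\delta$, and apply Theorem \ref{Th main res 1} with $\delta'$ in place of $\delta$. Since $\mu_{\delta'}(x)\le\mu_\delta(x)$ by Lyapunov's inequality and the rate exponent delivered by Theorem \ref{Th main res 1} depends only on $\alpha$ and $\rho$, nothing is lost in this reduction.

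Applying Theorem \ref{Th main res 1} to $\left(f(X_i)\right)_{i\ge 1}$ under $\mathbb{P}_x$ with the parameters $(\alpha,\delta')$ produces, for each $x$, a copy $(\widetilde{Y}_i)_{i\ge 1}\overset{d}{=}(f(X_i))_{i\ge 1}$ coupled with an independent sequence $(W_i)_{i\ge 1}$ of standard normal variables for which the probability bound of that theorem holds, with the centerings $\mu_i=\mathbb{E}_x f(X_i)$ and with a constant of the form $c\bigl(1+\lambda_0(x)+\mu_{\delta'}(x)+\sqrt{\tau(x)}\bigr)^{2+2\delta'}$, where $c$ depends only on $\kappa,C_{\mathbf{P}},\alpha,\delta,\sigma$. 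Two adjustments bring this to the stated form. First, to replace the $\mu_i$ by the single constant $\mu$, use Proposition \ref{Proposition MC002}(a): $\sup_k\bigl|\sum_{i=1}^k(\mu_i-\mu)\bigr|\le\overline{\mu}(x)<\infty$, so the event with $\mu$ in place of $\mu_i$ is contained in the original one as soon as $N^{-1/2}\overline{\mu}(x)\le N^{-\rho}$, i.e.\ for all $N$ above a threshold that is polynomial in $\overline{\mu}(x)$; for the finitely many remaining $N$ the claimed inequality is made true by enlarging the constant, and since the exponent $\alpha\frac{1+\alpha}{1+2\alpha}-\rho(2+2\alpha)$ is strictly positive (this is where $\rho<\frac{\alpha}{2(1+2\alpha)}$ is used) this enlargement costs only an extra power of $\overline{\mu}(x)$. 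Second, substitute the explicit forms of $\lambda_0(x)$, $\mu_{\delta'}(x)\le\mu_\delta(x)$, $\tau(x)$ and $\overline{\mu}(x)$ from Propositions \ref{Proposition MC001} and \ref{Proposition MC002} and collect terms: the choices of $\delta'$ and $\gamma$ above are exactly what make each contribution dominated by $c\bigl(1+\mu_\delta(x)+\|\boldsymbol{\delta}_x\|_{\mathcal{B}'}\bigr)^{2+2\delta}$, with $c$ depending only on $\delta,\alpha$ and the structural constants $\kappa,C_{\mathbf{P}},C_Q,\|e\|_{\mathcal{B}},\|\nu\|_{\mathcal{B}'}$ (and $\sigma$). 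This yields $C(x)$ of the announced shape.

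Finally, the pointwise-in-$x$ construction must be organized into a genuine Markov transition kernel $x\mapsto\widetilde{\mathbb{P}}_x$ on $\widetilde{\Omega}=\mathbb{R}^{\infty}\times\mathbb{R}^{\infty}$. For this I would use that the coupling underlying Theorem \ref{Th main res 1} can be realized on the canonical space $\mathbb{R}^{\infty}\times\mathbb{R}^{\infty}$ as the image of the law of the input sequence under a fixed transformation — built from quantile and conditional-quantile maps together with the Koml\'{o}s--Major--Tusn\'{a}dy and Zaitsev approximations — so that $\widetilde{\mathbb{P}}$ depends measurably on the law of the data; composing with the measurable map $x\mapsto\mathcal{L}\bigl((f(X_i))_{i\ge 1}\mid\mathbb{P}_x\bigr)$ then gives the required kernel, under which the coordinate processes $\widetilde{Y}_i$ and $W_i$ have the announced joint law. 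I expect this measurable dependence on $x$ — equivalently, the verification that no step in the proof of Theorem \ref{Th main res 1} destroys measurability in the underlying distribution — to be the main point requiring real care, the remainder being the bookkeeping of constants outlined above.
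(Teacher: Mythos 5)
Your proposal is correct and follows essentially the same route as the paper: verify \textbf{C1}--\textbf{C3} via Propositions \ref{Proposition MC001} and \ref{Proposition MC002} and \textbf{M4}, apply Theorem \ref{Th main res 1} with the intermediate exponent $\delta'=\frac{1}{2}(\alpha+\delta)$, replace the centerings $\mu_i(x)$ by $\mu$ through $\overline{\mu}(x)$ with the same case split on $\overline{\mu}(x)\lessgtr N^{\frac{1}{2}-\rho}$ (the paper pays the price in the bad case via $1\leq \overline{\mu}(x)^{2\alpha}N^{-\alpha+2\rho\alpha}$, which is the same "extra power of $\overline{\mu}(x)$" you describe), and finally transfer the construction to the canonical space $\mathbb{R}^{\infty}\times\mathbb{R}^{\infty}$ so that only $\widetilde{\mathbb{P}}_{x}$ depends on $x$. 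Your extra attention to the measurability of $x\mapsto\widetilde{\mathbb{P}}_{x}$ is a point the paper simply asserts "follows from the construction," so no substantive difference remains.
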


Note that only the probability $\widetilde{\mathbb{P}}_{x}$ depends on the
initial state $x$ while the processes $\left( \widetilde{Y}_{k}\right)
_{k\geq 0}$ and $\left( W_{k}\right) _{k\geq 0}$ do not depend on it.

As in the previous section,  letting $\rho =\frac{\alpha }{3+2\alpha }\frac{%
1+\alpha }{1+2\alpha }, $ under the conditions of Theorem \ref{Th main res 2}
we obtain, %
\begin{equation}
\widetilde{\mathbb{P}}_{x}\left( N^{-\frac{1}{2}}\sup_{k\leq N}\left\vert
\sum_{i=1}^{k}\left( \widetilde{Y}_{i}-\mu -\sigma W_{i}\right) \right\vert
>6N^{-\frac{\alpha }{3+2\alpha }\frac{1+\alpha }{1+2\alpha }}\right) \leq
C\left( x\right) N^{-\frac{\alpha }{3+2\alpha }\frac{1+\alpha }{1+2\alpha }}.
\label{corr of Th 2}
\end{equation}%
Compared to the rate $N^{-\frac{\alpha }{3+2\alpha }}$ which is optimal in
the independent case the rate of convergence $N^{-\frac{\alpha }{3+2\alpha }%
\frac{1+\alpha }{1+2\alpha }}$ in (\ref{corr of Th 2}) is slower by the
factor $N^{\frac{\alpha }{3+2\alpha }\frac{\alpha }{1+2\alpha }}.$ As $%
\alpha \rightarrow \infty $ we obtain $N^{-\frac{1}{4}}$ which is the best
rate in the invariance principle that is known for dependent random
variables.

In Theorem \ref{Th main res 2} we do not suppose the existence of the
stationary measure. Assume that there exists a stationary probability
measure $\nu $ on $\mathbb{X}$; it thus coincides with the linear form $%
\nu $ introduced in (\ref{linear form}). Let $\mathbb{P}_{\nu }$ and $%
\mathbb{E}_{\nu }$ the probability measure and expectation generated by the
finite dimensional distributions of the chain under the stationary measure $%
\nu.$ Note that,  the means $\mathbb{E}_{\nu }X_{k}$ and the covariances
\textrm{Cov}$_{\mathbb{P}_{\nu }}\left( f\left( X_{l}\right), f\left(
X_{l+k}\right) \right) $ with respect to $\nu $ may not exist,  under the
hypotheses \textbf{M1,  M2, \ M3} and\textbf{\ M4}. To ensure their existence,
we require the following additional condition (where generally $\left\vert
f\right\vert ^{2}\notin \mathcal{B}$).

\vskip0.2cm \noindent \textbf{Hypothesis M5 (Stationary measure): }
\nobreak
\textit{On the state space }$\mathbb{X}$ \textit{there exists} a\textit{\ stationary
probability measure }$\nu $\textit{\ such that }$\nu \left( \sup_{k\geq 0}%
\mathbf{P}^{k}\left( \left\vert f\right\vert ^{2}\right) \right) <\infty.$

Under Hypothesis \textbf{M5} for $\mu $ and $\sigma $ we find the usual
expressions of the means and of the variance in the central limit theorem
for dependent r.v.'s.

\begin{theorem}
\label{Th main res 3}Assume that the Markov chain $(X_{n})_{n \geq 0}$ and
the function $f$ satisfy the hypotheses \textbf{M1},  \textbf{M2},  \textbf{M3}%
,  \textbf{M4} and \textbf{M5}. Assume also that $\sigma _{\nu }>0.$ Then
Proposition \ref{Proposition MC002} holds true with $\mu =\nu \left(
f\right) $ and $\sigma =\sigma _{\nu }, $ where%
\begin{equation*}
\nu \left( f\right) =\int f\left( x\right) \nu \left( dx\right)
\end{equation*}%
and
\begin{equation*}
\sigma _{\nu }^{2}=\mathrm{Var}_{\mathbb{P}_{\nu }}\left( f\left(
X_{0}\right) \right) +2\sum_{k=1}^{\infty }\mathrm{Cov}_{\mathbb{P}_{\nu
}}\left( f\left(X_{0}\right),  f\left( X_{k}\right) \right).
\end{equation*}%
Moreover,  if $\sigma _{\nu }>0$ the assertions of Theorem \ref{Th main res 2}
and (\ref{corr of Th 2}) hold true with $\mu =\nu \left( f\right) $ and $%
\sigma =\sigma _{\nu }.$
\end{theorem}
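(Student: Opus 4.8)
\emph{Strategy.} The plan is to leave the construction of Theorem~\ref{Th main res 2} untouched and only to verify that, under the extra Hypothesis~\textbf{M5}, the constants $\mu $ and $\sigma ^{2}$ furnished by Proposition~\ref{Proposition MC002} equal $\nu (f)$ and $\sigma _{\nu }^{2}$. First I would record the consequences of~\textbf{M5}: taking $k=0$ gives $\nu (|f|^{2})<\infty $, so $f\in L^{2}(\nu )\subseteq L^{1}(\nu )$; and, writing $h:=\sup_{k\geq 0}\mathbf{P}^{k}(|f|^{2})$, Jensen's inequality gives $|\mathbf{P}^{k}f|^{2}\leq \mathbf{P}^{k}(|f|^{2})\leq h$ for every $k$, so $h^{1/2}\in L^{2}(\nu )$ dominates the whole family $(\mathbf{P}^{k}f)_{k\geq 0}$. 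Since $X_{0}\sim \nu $ under $\mathbb{P}_{\nu }$ and $\nu \mathbf{P}=\nu $, the means $\mathbb{E}_{\nu }f(X_{k})=\nu (\mathbf{P}^{k}f)=\nu (f)$ and the covariances $r_{k}:=\mathrm{Cov}_{\mathbb{P}_{\nu }}(f(X_{0}),f(X_{k}))=\nu (f\,\mathbf{P}^{k}f)-\nu (f)^{2}$ are all well defined, with $|r_{k}|\leq \mathrm{Var}_{\mathbb{P}_{\nu }}(f(X_{0}))$. To identify $\mu $: by part~a) of Proposition~\ref{Proposition MC002}, $\mathbf{P}^{k}f(x)=\mathbb{E}_{x}f(X_{k})\to \mu $ for every $x$; integrating against $\nu $, using stationarity on the left and dominated convergence on the right (the integrands are bounded by $h^{1/2}\in L^{1}(\nu )$), gives $\mu =\nu (f)$; this also confirms that the stationary measure $\nu $ coincides, as a linear form on $\mathcal{B}$, with the $\nu $ of~(\ref{linear form}).

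\emph{The crux: geometric decay of the covariances.} The heart of the argument is to show $\sum_{k}|r_{k}|<\infty $, with a geometric rate. I would \emph{not} try to apply the decomposition $\mathbf{P}^{k}=\Pi +Q^{k}$ to $f$ (illegitimate, since $f\notin \mathcal{B}$, indeed $|f|^{2}\notin \mathcal{B}$), but would invoke Condition~\textbf{C1} through Proposition~\ref{Proposition MC001} with a single-index past block at time $m$ and a single-index future block at time $m+k$ (i.e.\ $M_{1}=M_{2}=1$, $J_{1}=[m,m+1)$, $J_{2}=[m+1,m+2)$, $k_{gap}=k-1$). This gives, uniformly in $m\geq 0$ and in $(t_{1},t_{2})$ with $\Vert (t_{1},t_{2})\Vert _{\infty }\leq \varepsilon _{0}$, a bound $|D_{m}(t_{1},t_{2})|\leq \lambda _{0}(x)\,4^{\lambda _{2}}\,e^{-\lambda _{1}(k-1)}$ for $D_{m}(t_{1},t_{2}):=\mathbb{E}_{x}e^{it_{1}f(X_{m})+it_{2}f(X_{m+k})}-\mathbb{E}_{x}e^{it_{1}f(X_{m})}\,\mathbb{E}_{x}e^{it_{2}f(X_{m+k})}$, a function that moreover vanishes on the two coordinate axes. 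On the other hand~\textbf{M4} bounds $\mathbb{E}_{x}|f(X_{m+j})|^{2+2\delta }=\mathbf{P}^{m+j}|f|^{2+2\delta }(x)\leq \mu _{\delta }(x)^{2+2\delta }$ uniformly in $m$, so $\partial _{t_{1}}\partial _{t_{2}}D_{m}$ is bounded and $\gamma '$-H\"older on $[-\varepsilon _{0},\varepsilon _{0}]^{2}$ (with constants controlled by $\mu _{\delta }(x)$) for any $\gamma '\leq \min \{1,2\delta \}$, again uniformly in $m$. An elementary interpolation inequality — Taylor's formula for the axis-vanishing $D_{m}$ together with the H\"older bound, optimised over the side of the square — then converts the characteristic-function estimate into $|\mathrm{Cov}_{\mathbb{P}_{x}}(f(X_{m}),f(X_{m+k}))|=|\partial _{t_{1}}\partial _{t_{2}}D_{m}(0,0)|\leq c_{\gamma '}(\lambda _{0}(x)e^{-\lambda _{1}(k-1)})^{\gamma '/(\gamma '+2)}\mu _{\delta }(x)^{2}$ for $k$ large (and $\leq 2\mu _{\delta }(x)^{2}$ always), still uniformly in $m$. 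Finally I would let $m\to \infty $: the conditional characteristic function $\chi _{t_{1},t_{2}}(y)=e^{it_{1}f(y)}\mathbf{P}^{k-1}(\mathbf{P}_{t_{2}}e)(y)$ belongs to $\mathcal{B}$ by~\textbf{M1}(d) and~\textbf{M3}(a), so by the Markov property and the spectral gap~(\ref{spectr gap}), $\mathbb{E}_{x}e^{it_{1}f(X_{m})+it_{2}f(X_{m+k})}=\mathbf{P}^{m}\chi _{t_{1},t_{2}}(x)\to \nu (\chi _{t_{1},t_{2}})=\mathbb{E}_{\nu }e^{it_{1}f(X_{0})+it_{2}f(X_{k})}$, and the same interpolation, applied to the differences, turns this into $\mathrm{Cov}_{\mathbb{P}_{x}}(f(X_{m}),f(X_{m+k}))\to r_{k}$. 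Hence $|r_{k}|\leq c_{\gamma '}(\lambda _{0}(x_{0})e^{-\lambda _{1}(k-1)})^{\gamma '/(\gamma '+2)}\mu _{\delta }(x_{0})^{2}$ for any fixed $x_{0}$, so $\sum_{k}|r_{k}|<\infty $ and $\sigma _{\nu }^{2}$ is a well-defined finite number.

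\emph{Identification of $\sigma ^{2}$ and conclusion.} By stationarity $n^{-1}\mathrm{Var}_{\mathbb{P}_{\nu }}(\sum_{i=1}^{n}f(X_{i}))=r_{0}+2\sum_{k=1}^{n-1}(1-k/n)r_{k}$, which, now that $\sum_{k}|r_{k}|<\infty $, converges by dominated convergence to $r_{0}+2\sum_{k\geq 1}r_{k}=\sigma _{\nu }^{2}$. On the other hand, the same $m\to \infty $ passage applied to the one-parameter characteristic function $\mathbb{E}_{x}e^{it\sum_{i=m+1}^{m+n}f(X_{i})}=\mathbf{P}^{m}(\mathbf{P}_{t}^{n}e)(x)\to \nu (\mathbf{P}_{t}^{n}e)=\mathbb{E}_{\nu }e^{it\sum_{i=1}^{n}f(X_{i})}$ (here $\mathbf{P}_{t}^{n}e\in \mathcal{B}$), together with the uniform-in-$m$ moment bound, gives $\mathrm{Var}_{\mathbb{P}_{x}}(\sum_{i=m+1}^{m+n}f(X_{i}))\to \mathrm{Var}_{\mathbb{P}_{\nu }}(\sum_{i=1}^{n}f(X_{i}))$ as $m\to \infty $; combining this with part~b) of Proposition~\ref{Proposition MC002}, valid for every $m$, forces $|\mathrm{Var}_{\mathbb{P}_{\nu }}(\sum_{i=1}^{n}f(X_{i}))-n\sigma ^{2}|\leq \tau (x_{0})$, a bound independent of $n$, whence $n^{-1}\mathrm{Var}_{\mathbb{P}_{\nu }}(\sum_{i=1}^{n}f(X_{i}))\to \sigma ^{2}$. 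Therefore $\sigma ^{2}=\sigma _{\nu }^{2}$, i.e.\ Proposition~\ref{Proposition MC002} holds with $\mu =\nu (f)$ and $\sigma =\sigma _{\nu }$. Since Hypotheses~\textbf{M1}--\textbf{M4} hold and $\sigma _{\nu }>0$, Theorem~\ref{Th main res 2} applies verbatim and, with the constants so identified, delivers~(\ref{th main res 2 001}) and~(\ref{corr of Th 2}) with $\mu =\nu (f)$ and $\sigma =\sigma _{\nu }$.

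\emph{Main obstacle.} The delicate point is exactly the geometric decay of the stationary covariances $r_{k}$. The spectral-decomposition route fails because $f$ — and even $|f|^{2}$ — need not lie in $\mathcal{B}$, and the $x$-dependent constants $\mu _{\delta }(x)$, $\lambda _{0}(x)$, $\tau (x)$ appearing in Propositions~\ref{Proposition MC001}--\ref{Proposition MC002} are \emph{not} a priori $\nu $-integrable, whereas~\textbf{M5} only provides $L^{2}(\nu )$ control, through $h$. The remedy I would use is to exploit Condition~\textbf{C1} under a point-mass initial law $\mathbb{P}_{x}$, where the bound is cleanly geometric with a finite constant $\lambda _{0}(x)$, to keep the estimate \emph{uniform in the time shift} $m$, and only afterwards to pass to the limit $m\to \infty $ — which is legitimate precisely because the relevant conditional characteristic functions lie in $\mathcal{B}$; the conversion of the characteristic-function bound into a covariance bound via real-variable interpolation is where the moment Hypothesis~\textbf{M4} enters.
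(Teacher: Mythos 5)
Your identification of $\mu$ is exactly the paper's argument: integrate $\mathbb{E}_{x}f(X_{k})\to \mu$ against $\nu$ and use dominated convergence, with the domination supplied by \textbf{M5}. For $\sigma$ you take a genuinely different route. The paper's proof of Theorem \ref{Th main res 3} is short because it quotes facts established \emph{inside} the proof of Proposition \ref{Proposition MC002} (Lemmas \ref{Lemma Cov MC} and \ref{Lemma Cov erg MC}): the limits $s_{k}=\lim_{l\to \infty }\mathrm{Cov}_{\mathbb{P}_{x}}\bigl(f(X_{l}),f(X_{l+k})\bigr)$ exist, do not depend on $x$, decay geometrically, and satisfy $\sigma ^{2}=s_{0}+2\sum_{k\geq 1}\mathrm{Re}\,s_{k}$; the theorem then only integrates the covariance identity over $\nu$, dominating by $h=\sup_{k}\mathbf{P}^{k}(|f|^{2})\in L^{1}(\nu )$, to conclude $s_{k}=\mathrm{Cov}_{\mathbb{P}_{\nu }}\bigl(f(X_{0}),f(X_{k})\bigr)$. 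You, working only from the stated results, reconstruct the needed ingredients yourself: the uniform-in-shift geometric decay of $\mathrm{Cov}_{\mathbb{P}_{x}}$ directly from Condition \textbf{C1} (Proposition \ref{Proposition MC001} with singleton blocks) via a derivative-interpolation inequality, where the paper's Lemma \ref{Lemma Cov MC} uses a Fourier truncation $g_{T}$ to the same effect; then the passage $m\to \infty $ through the spectral gap applied to $\chi _{t_{1},t_{2}}\in \mathcal{B}$ and to $\mathbf{P}_{t}^{n}e$, and finally the matching of $\lim_{n}n^{-1}\mathrm{Var}_{\mathbb{P}_{\nu }}$ computed in two ways. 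Both arguments are sound; yours is longer but self-contained relative to the stated propositions, and, like the paper's, it correctly keeps all quantitative constants ($\lambda _{0}(x)$, $\mu _{\delta }(x)$, $\tau (x)$) attached to a fixed initial point instead of requiring their $\nu$-integrability.

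One imprecision to repair: ``the same interpolation, applied to the differences'' cannot be read literally. H\"older continuity of $\partial _{t_{1}}\partial _{t_{2}}$ of the \emph{stationary} characteristic-function difference would require $\mathbb{E}_{\nu }|f(X_{0})|^{2+\gamma ^{\prime }}<\infty $, which \textbf{M5} does not give (it only provides second moments under $\nu $). But that piece is $m$-independent, and since you only need a limit and not a rate, plain continuity of its mixed derivative at the origin suffices, and this follows from $\mathbb{E}_{\nu }|f(X_{0})f(X_{k})|<\infty $ (Cauchy--Schwarz and \textbf{M5}) by dominated convergence: bound $|\partial _{t_{1}}\partial _{t_{2}}\Delta _{m}(0,0)|$ by $s^{-2}|\Delta _{m}(s,s)|$ plus the H\"older term $c\,\mu _{\delta }(x)^{2+\gamma ^{\prime }}s^{\gamma ^{\prime }}$ coming from the $\mathbb{P}_{x}$-part (uniform in $m$ by \textbf{M4}) plus the modulus of continuity of the stationary mixed derivative, then let $m\to \infty $ first and $s\to 0$ afterwards. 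The same remark applies to your variance step, where an $o(t^{2})$ Taylor remainder under $\mathbb{P}_{\nu }$ is all that is available and all that is needed. With this adjustment your proof is complete.
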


From Theorem \ref{Th main res 3} one can derive a bound when the Markov
chain $\left( X_{n}\right) _{n\geq 0}$ is in the stationary regime. If we
assume that $\nu \left( \sup_{k\geq 0}\mathbf{P}^{k}\left( \left\vert
f\right\vert ^{2+2\delta }\right) \right) \leq c_{\nu, \delta }<\infty $ and
$\int \left\Vert \boldsymbol{\delta }_{x}\right\Vert _{\mathcal{B}^{\prime
}}^{2+2\delta }\nu \left( dx\right) \leq c_{\mathcal{B}^{\prime }, \delta
}<\infty, $ then integrating (\ref{th main res 2 001}) with respect to $\nu $
we obtain%
\begin{equation*}
\widetilde{\mathbb{P}}_{\nu }\left( N^{-\frac{1}{2}}\sup_{k\leq N}\left\vert
\sum_{i=1}^{k}\left( \widetilde{Y}_{i}-\mu -\sigma W_{i}\right) \right\vert
>6N^{-\rho }\right) \leq CN^{-\alpha \frac{1+\alpha }{1+2\alpha }+\rho
\left( 2+2\alpha \right) },
\end{equation*}%
where $C$ is a constant depending on $\delta, \alpha, \kappa, C_{\mathbf{P}%
}, C_{Q}, \left\Vert e\right\Vert _{\mathcal{B}}, \left\Vert \nu \right\Vert _{%
\mathcal{B}^{\prime }}$ and $c_{\nu, \delta }, $ $c_{\mathcal{B}^{\prime
}, \delta }.$

The hypotheses \textbf{M1,  M2,  M4} and \textbf{M5} formulated above can be
easily verified by standard methods. As to \textbf{M3} it can be verified
using two approaches. The first approach is based on the assumption that the
family of operators $\left( \mathbf{P}_{t}\right) _{\left\vert t\right\vert
\leq \varepsilon _{0}}$ is continuous in $t$ at $t=0.$ In this case,  \textbf{%
M3} is satisfied by classical perturbation theory (see,  for instance Dunford
and Schwartz \cite{DunfordSchwartz58}). The second approach is based on a
weaker form of continuity of the family $\left( \mathbf{P}_{t}\right)
_{\left\vert t\right\vert \leq \varepsilon _{0}}$ as developed in Keller and
Liverani \cite{Kell-Liv99}.

We end this section by giving three examples where these hypotheses are
satisfied.

\subsection{Example 1 (Markov chains with finite state spaces)}

Suppose that $\left( X_{n}\right) _{n\geq 0}$ is an irreducible ergodic
aperiodic Markov chain with finite state space. It is easy to verify that
the hypotheses \textbf{M1,  M2,  M3,  M4} and \textbf{M5} are satisfied and
that there exists a unique invariant probability measure $\nu.$ Then the
conclusions of Theorem \ref{Th main res 3} hold true.

\subsection{Example 2 (autoregressive random walk with Bernoulli noise)}

Consider the autoregressive model $x_{n+1}=\alpha x_{n}+b_{n}, \;n\geq 0, $
where $\alpha $ is a constant satisfying $\alpha \in (-1, 1)$ and $\left(
b_{n}\right) _{n\geq 0}$ are i.i.d.~Bernoulli r.v.'s with $P\left(
b=1\right) =P\left( b=-1\right) =\frac{1}{2}$ and $x_{0}=x.$ Consider the
Banach space $\mathcal{B} = \mathcal{L}$ of continuous functions $f$ on $%
\mathbb{R}$ such that $\left\Vert f\right\Vert =\left\vert f\right\vert +%
\left[ f\right] <\infty, $ where%
\begin{equation*}
\left\vert f\right\vert =\sup_{x\in \mathbb{R}}\frac{\left\vert f\left(
x\right) \right\vert }{1+x^{2}}, \quad\quad\;\left[ f\right] =\sup_{\overset{%
x, y\in \mathbb{R}}{x\neq y}}\frac{\left\vert f\left( x\right) -f\left(
y\right) \right\vert }{\left\vert x-y\right\vert \left( 1+x^{2}\right)
\left( 1+y^{2}\right) }.
\end{equation*}%
Since $\alpha \in (-1, 1), $ the invariant measure $\nu $ exists and coincides
with the law of the random variable $Z=\sum_{i=1}^{\infty }\alpha
^{i-1}b_{i}.$ It is easy to verify that hypotheses \textbf{M1,  M2,  M3,  M4}
and \textbf{M5} are satisfied for the function $f\left( x\right) =x.$ For
the mean $\nu\left(f\right)$ we have
\begin{equation*}
\nu \left( f\right) =\int x\nu\left( dx\right) =\mathbb{E}%
Z=\sum_{i=1}^{\infty }\alpha ^{i-1}\mathbb{E}b_{1}=\frac{\mathbb{E}b_{1}}{%
1-\alpha }.
\end{equation*}%
Since $\mathbb{E }b_1=0$,  one gets $\nu\left(f\right)=0$ and the variance is
computed as follows:
\begin{equation*}
\sigma _{\nu }^{2}=\lim_{n\rightarrow \infty }\mathbb{E}\left(
\sum_{i=1}^{n}\alpha ^{i-1}b_{i}\right) ^{2}=\lim_{n\rightarrow \infty
}\sum_{i=1}^{n}\alpha ^{2\left( i-1\right) }\mathbb{E}b_{1}^{2}=\frac{1}{%
1-\alpha ^{2}}.
\end{equation*}%
Thus the conclusions of Theorem \ref{Th main res 3} hold true with $\nu(f)=0$
and $\sigma _{\nu }^{2}=\frac{1}{1-\alpha ^{2}}.$

\subsection{Example 3 (stochastic recursion)}

On the probability space $\left( \Omega, \mathcal{F}, \mathbb{P}\right) $
consider the stochastic recursion%
\begin{equation*}
x_{n+1}=a_{n+1}x_{n}+b_{n+1}, \;n\geq 0,
\end{equation*}%
where $\left( a_{n}, b_{n}\right) _{n\geq 0}$ are i.i.d.~ r.v.'s with values
in $(0, \infty)\times \mathbb{R}$ of the same distribution $\widehat{\mu }$
and $x_{0}=x.$ Following Guivarc'h and Le Page \cite{guiv-lepage},  we assume
the conditions:

\vskip0.2cm \noindent \textbf{H1}\textit{\ There exists }$\alpha >2$\textit{%
\ such that }$\varphi \left( \alpha \right) :=\int \left\vert a\right\vert
^{\alpha }\widehat{\mu }\left( da, db\right) <1$\textit{\ and }$\int
\left\vert b\right\vert ^{\alpha }\widehat{\mu }\left( da, db\right) <+\infty
.$

\vskip0.2cm \noindent \textbf{H2}\textit{\ }$\widehat{\mu }\left( \left\{
\left( a, b\right) :ax_{0}+b=x_{0}\right\} \right) <1$\textit{\ for any }$%
x_{0}\in \mathbb{R}$\textit{.}

\vskip0.2cm \noindent \textbf{H3} \textit{The set }$\left\{ \ln \left\vert
a\right\vert :\left( a, b\right) \in \text{supp\ }\widehat{\mu }\right\} $%
\textit{\ generates a dense subgroup of }$\mathbb{R}.$

Let $\varepsilon \in (0, 1), $ $\theta $ and $c$ be positive such that $\alpha
-1<c+\varepsilon <\theta \leq 2c<\alpha -\varepsilon.$ Consider the Banach
space $\mathcal{B}=\mathcal{L}_{\varepsilon, c, \theta }$ of continuous
functions $f$ on $\mathbb{R}$ such that $\left\Vert f\right\Vert =\left\vert
f\right\vert +\left[ f\right] <\infty, $ where%
\begin{equation*}
\left\vert f\right\vert =\sup_{x\in \mathbb{R}}\frac{\left\vert f\left(
x\right) \right\vert }{1+\left\vert x\right\vert ^{\theta }}, \quad \quad \;%
\left[ f\right] =\sup_{\overset{x, y\in \mathbb{R}}{x\neq y}}\frac{\left\vert
f\left( x\right) -f\left( y\right) \right\vert }{\left\vert x-y\right\vert
^{\varepsilon }\left( 1+\left\vert x\right\vert ^{c}\right) \left(
1+\left\vert y\right\vert ^{c}\right) }.
\end{equation*}%
The transition probability $\mathbf{P}\left( x, \cdot \right) $ of the Markov
chain $\left( x_{n}\right) _{n\geq 0}$ is defined by%
\begin{equation*}
\int h\left( y\right) \mathbf{P}\left( x, dy\right) =\int h\left( ax+b\right)
\widehat{\mu }\left( da, db\right),
\end{equation*}%
for any bounded Borel measurable function $h:\mathbb{R}\rightarrow \mathbb{R}
$ and $x\in \mathbb{R}.$ For any $x\in \mathbb{R}$ denote by $\mathbb{P}_{x}$
and $\mathbb{E}_{x}$ the corresponding probability measure and expectation
generated by the finite dimensional distributions on the space of
trajectories. It is proved in \cite{guiv-lepage} (Proposition 1) that the
series $\sum_{i=1}^{\infty }a_{1}...a_{i-1}b_{i}$ is $\mathbb{P}$-a.s.
convergent and the Markov chain $\left( x_{n}\right) _{n\geq 0}$ has a
unique invariant probability measure $\nu $ which coincides with the law of $%
Z=\sum_{i=1}^{\infty }a_{1}...a_{i-1}b_{i}.$ Moreover,  it holds $\int
\left\vert x\right\vert ^{t}\nu \left( dx\right) <\infty $ for any $t\in
\lbrack 0, \alpha ).$

We shall verify that hypotheses \textbf{M1,  M2,  M3,  M4} and \textbf{M5} are
satisfied with the function $f\left( x\right) =x.$ Hypothesis \textbf{M1} is
obvious and hypotheses \textbf{M2 }and \textbf{M3} follow from Theorem 1 and
Proposition 4 in \cite{guiv-lepage}. If $\delta >0$ is such that $2+2\delta
\leq \alpha, $ by simple calculations we obtain%
\begin{equation*}
\left( \mathbb{E}_{x}\left\vert x_{n}\right\vert ^{2+2\delta }\right) ^{%
\frac{1}{2+2\delta }}\leq \varphi \left( 2+2\delta \right) ^{\frac{n}{%
2+2\delta }}\left\vert x\right\vert +\frac{\left\Vert b_{1}\right\Vert
_{2+2\delta }}{1-\varphi \left( 2+2\delta \right) ^{\frac{1}{2+2\delta }}}.
\end{equation*}%
Taking the $\sup $ in $n\geq 1, $ we get%
\begin{eqnarray*}
\mu _{\delta }\left( x\right) &=&\sup_{n\geq 1}\left( \mathbb{E}%
_{x}\left\vert f\left( x_{n}\right) \right\vert ^{2+2\delta }\right) ^{\frac{%
1}{2+2\delta }} \\
&\leq &\varphi \left( 2+2\delta \right) ^{\frac{1}{2+2\delta }}\left\vert
x\right\vert +\frac{\left\Vert b_{1}\right\Vert _{2+2\delta }}{1-\varphi
\left( 2+2\delta \right) ^{\frac{1}{2+2\delta }}},
\end{eqnarray*}%
which proves that hypothesis \textbf{M4 }is satisfied. Finally,  hypothesis
\textbf{M5} is verified since%
\begin{equation*}
\int \mu _{\delta }\left( x\right) ^{2 }\nu\left(dx\right) \leq 2\left(
\varphi \left( 2+2\delta \right) ^{\frac{1}{1+\delta }}\int x^{2
}\nu\left(dx\right) +\left( \frac{\left\Vert b_{1}\right\Vert _{2+2\delta }
}{1-\varphi \left( 2+2\delta \right) ^{\frac{1}{2+2\delta }}}\right) ^{2}
\right)<\infty.
\end{equation*}%
The mean is given by $\nu \left( f\right) =\mathbb{E}Z=\sum_{i=1}^{\infty
}\left( \mathbb{E}a_{1}\right) ^{i-1}\mathbb{E}b_{1}=\frac{\mathbb{E}b_{1}}{%
1-\mathbb{E}a_{1}}.$ Without loss of generality we can assume that $\nu
\left( f\right) =0, $ i.e. that $\mathbb{E}b_{1}=0;$ then the variance is%
\begin{equation*}
\sigma _{\nu }^{2}=Var_{\mathbb{P}}\left( Z\right) =\lim_{n\rightarrow
\infty }\mathbb{E}\left( \sum_{i=1}^{n}a_{1}...a_{i-1}b_{i}\right)
^{2}=\lim_{n\rightarrow \infty }\sum_{i=1}^{n}\left( \mathbb{E}%
a_{1}^{2}\right) ^{i-1}\mathbb{E}b_{1}^{2}=\frac{\mathbb{E}b_{1}^{2}}{1-%
\mathbb{E}a_{1}^{2}}.
\end{equation*}%
Then the conclusions of Theorem \ref{Th main res 3} hold true with $\mu
=\nu(f)=0$ and $\sigma =\sigma _{\nu }^{2}=\frac{\mathbb{E}b_{1}^{2}}{1-%
\mathbb{E}a_{1}^{2}}.$

A multivariate version of the stochastic recursion has been considered in
Guivarc'h and Le Page \cite{GuivLePage2004}, \cite{GuivLePage2013} and can be treated in the same
manner.

\section{Partition of the set $\mathbb{N}$ and notations\label{sec Notations}%
}

In the sequel $\varepsilon, \beta \in \left( 0, 1\right) $ will be such that $%
\varepsilon +\beta <1$ (all over the paper $\varepsilon $ is supposed to be
very small,  while $\beta $ will be optimized). Denote for simplicity $%
[a, b)=\left\{ l\in \mathbb{N}:a\leq l<b\right\}.$ Let $k_{0}\geq 1$ be a
natural number. We start by splitting the set $\mathbb{N}$ into subsets $%
[2^{k}, 2^{k+1}), $ $k=k_{0}, k_{0}+1, ...$ called blocks. Consider the $k$-th
block $[2^{k}, 2^{k+1}).$ We leave a large gap $J_{k, 1}$ of length $%
2^{[\varepsilon k]+\left[ \beta k\right] }$ at the left end of the $k$-th
block. Then,  following a triadic Cantor-like scheme,  we split the remaining
part $[2^{k}+2^{[\varepsilon k]+\left[ \beta k\right] }, 2^{k+1})$ into
subsets $I_{k, j}$ and $J_{k, j}$ called islands and gaps as explained below.
At the resolution level $0$ a gap of size $2^{[\varepsilon k]+\left[ \beta k%
\right] }/2$ is put in the middle of the interval $[2^{k}+2^{[\varepsilon k]+%
\left[ \beta k\right] }, 2^{k+1}).$ This yields two intervals of equal
length. At the resolution level $1$ two additional gaps of length $%
2^{[\varepsilon k]+\left[ \beta k\right] }/2^{2}$ are put in the middle of
the each obtained interval which yields four intervals of equal length.
Continuing in the same way,  at the resolution level $[\beta k]$ we obtain $%
2^{[\beta k]}$ intervals $I_{k, j}, $ $j=1, ..., 2^{[\beta k]}$ called islands
and the same number of gaps $J_{k, j}, $ $j=1, ..., 2^{[\beta k]}$ which we
index from left to right (recall that $J_{k, 1}=J_{k, 2^{0}}$ denotes the
large gap at the left end of the $k$-th block). It is obvious that $%
[2^{k}, 2^{k+1})$ is the union of the constructed island and gaps,  so that
\begin{equation}
\lbrack 2^{k}, 2^{k+1})=J_{k, 1}\cup I_{k, 1}\cup...\cup J_{k, 2^{[\beta
k]}}\cup I_{k, 2^{\left[ \beta k\right] }}.  \label{union}
\end{equation}%
Note that in the block $k$ there are one gap of the length $2^{\left[[\varepsilon k]%
\right] +[\beta k] }$ and $2^{l}$ gaps of length $2^{\left[[\varepsilon k]%
\right] +[\beta k] -l-1}, $ where $l=0, ..., [\beta k]-1.$ The length of the
finest gap (for example $J_{k, 2^{[\beta k]}}$) is $2^{[\varepsilon k]}.$ The
total length of the gaps in the block $k$ is
\begin{equation*}
L_{k}^{gap}=2^{\left[ [\varepsilon k]\right]+[\beta k] }+\sum_{l=0}^{[\beta
k]-1}2^{l}2^{\left[ [\varepsilon k]\right]+[\beta k] -l-1}=\left( 2+[\beta
k]\right) 2^{\left[ [\varepsilon k]\right]+[\beta k] -1}.
\end{equation*}%
Recall that,  according to the construction,  the islands of the $k$-th block
have the same length%
\begin{eqnarray*}
\left\vert I_{k, j}\right\vert &=&\left( 2^{k+1}-2^{k}-\left( 2+[\beta
k]\right) 2^{\left[ [\varepsilon k]\right]+[\beta k] -1}\right) /2^{[\beta
k]} \\
&=&2^{k-[\beta k]}-\left( 1+\left[ \beta k\right] 2^{\left[ [\varepsilon k]%
\right] -1}\right).
\end{eqnarray*}%
An obvious upper bound is $\left\vert I_{k, j}\right\vert \leq 2^{k-[\beta
k]}. $ Since $\varepsilon <1-\beta $ we have $\left\vert I_{k, j}\right\vert
\geq 2^{k-[\beta k]}-2^{\left[ [\varepsilon k]\right] -c_{\beta, \varepsilon
}^{\prime }\ln k}\geq c_{\varepsilon, \beta }2^{k\left( 1-\beta \right) }, $
with some $c_{\varepsilon, \beta }\in (0, \frac{1}{2}).$ Since the length of
the $k$-th block is $2^{k},$ the total length of the islands in this block is equal to
\begin{equation*}
L_{k}^{isl}=2^{k}-2^{\left[ [\varepsilon k] \right]+[\beta k] -1}\left(
2+[\beta k]\right).
\end{equation*}%
Note that,  for some constant $c_{\beta }>0, $
\begin{equation}
c_{\beta }2^{k}\leq L_{k}^{isl}\leq 2^{k}.  \label{length of the islands}
\end{equation}

Denote by $\mathcal{K}$ the set of double indices $(k, j), $ with $k=1, 2, ...$
being the index of the block and $j=1, ..., 2^{[\beta k]}$ being the index of
the island in the block $k.$ The set $\mathcal{K}$ will be endowed with
lexicographical order $\preceq.$ Then the sets $I_{k, j}$ and $J_{k, j}, \
\left( k, j\right) \in \mathcal{K}, $ will be also endowed with the
lexicographical order. Let $N\in \mathbb{N}.$ From (\ref{union}),
there exists a unique $\left( n, m\right) \in \mathcal{K}$ such that $%
2^{n}\leq N<2^{n+1}$ and $N\in J_{n, m}\cup I_{n, m}, $ where the dependence of
$n$ and $m$ on $N$ is suppressed from the notation; denote $\mathcal{K}%
_{N}=\left\{ \left( k, j\right) :\left( k, j\right) \preceq \left( n, m\right)
\right\}.$

For ease of reading we recall the notations and properties that will be used
throughout the paper:

\vskip0.2cm \noindent \textbf{P1:} $\varepsilon $ and $\beta $ are positive
numbers such that $\varepsilon +\beta <1.$ Later on,  the constant $%
\varepsilon $ will be chosen to be small enough.

\vskip0.2cm \noindent \textbf{P2:} $\mathcal{K}=\left\{ \left( k, j\right)
:k=1, 2, ..., \;j=1, ..., 2^{[\beta k]}\right\}.$

\vskip0.2cm \noindent \textbf{P3:} For any $N\in \mathbb{N}$ the unique
couple $\left( n, m\right) \in \mathcal{K}$ is such that $N\in J_{n, m}\cup
I_{n, m}.$

\vskip0.2cm \noindent \textbf{P4:} $\mathcal{K}_{N}=\left\{ \left(
k, j\right) :\left( k, j\right) \preceq \left( n, m\right) \right\}.$

\vskip0.2cm \noindent \textbf{P5:} $I_{k, j}, $ $j=1, ..., 2^{\left[ \beta k%
\right] }$ are the islands and $J_{k, j}, $ $j=1, ..., 2^{\left[ \beta k\right]
} $ are the gaps in the $k$-th block.

\vskip0.2cm \noindent \textbf{P6:} The number of islands and the number of
gaps in the $k$-th block are both equal to $m_{k}=2^{[\beta k]}.$ Set $%
m_{k, n}=m_{k}+...+m_{n}.$

\vskip0.2cm \noindent \textbf{P7:} The islands in $k$-th block have the same
length $\left\vert I_{k, j}\right\vert =2^{k-[\beta k]}-\left( 1+\left[ \beta
k\right] 2^{\left[ [\varepsilon k]\right] -1}\right) \leq 2^{k-[\beta k]}.$
This implies $\left\vert I_{k, j}\right\vert \geq c_{\varepsilon, \beta
}2^{k\left( 1-\beta \right) }$ for some constant $c_{\varepsilon, \beta }\in
(0, \frac{1}{2}).$

\vskip0.2cm \noindent \textbf{P8:} The length of the finest gap in the $k$%
-th block is $\left\vert J_{k, j}\right\vert =2^{\left[ [\varepsilon k]\right]
}.$ This implies $\left\vert J_{k, j}\right\vert \geq 2^{\left[ [\varepsilon
k]\right] }.$

\vskip0.2cm \noindent \textbf{P9:} The length $\left\vert J_{k, 1}\right\vert
$ of the gap at the left end of the $k$-th block is $2^{[\varepsilon k]+%
\left[ \beta k\right] }.$

\vskip0.2cm \noindent \textbf{P10:} For each pair $\left( k, j\right) \in
\mathcal{K}, $ we denote $X_{\left( k, j\right) }=\sum_{i\in I_{k, j}}X_{i}$
and $W_{\left( k, j\right) }=\sum_{i\in I_{k, j}}W_{i}.$

\vskip0.2cm \noindent \textbf{P11:} $\mathcal{L}_{X_{1}, ..., X_{d}}$ denotes
the probability law of the random vector $\left( X_{1}, ..., X_{d}\right).$

\section{Auxiliary results\label{sec Aux Res}}

Without loss of generality we assume that on the initial probability space
there is a sequence of independent r.v.'s $\left( Y_{\left( k, j\right)
}\right) _{\left( k, j\right) \in \mathcal{K}}$ such that $Y_{\left(
k, j\right) }\overset{d}{=}X_{\left( k, j\right) }, $ $\left( k, j\right) \in
\mathcal{K}.$ Let $k_{0}\in \mathbb{N}_{+}$ and $n>k_{0}.$ Suppose that on
the same probability space there is an i.i.d. sequence of $\mathbb{R}^{1}$%
-valued r.v.'s $\left( V_{\left( k, j\right) }\right) _{\left( k, j\right) \in
\mathcal{K}}$ with means $0$ whose characteristic function has as support
the interval $[-\varepsilon _{0}, \varepsilon _{0}]$ and such that $\mathbb{E}%
\left\vert V_{\left( k, j\right) }\right\vert ^{r_{0}}<\infty $ for any $%
r_{0}>0.$ We suppose that the sequence $\left( V_{\left( k, j\right) }\right)
_{\left( k, j\right) \in \mathcal{K}}$ is independent of $\left( X_{\left(
k, j\right) }\right) _{\left( k, j\right) \in \mathcal{K}}$ and $\left(
Y_{\left( k, j\right) }\right) _{\left( k, j\right) \in \mathcal{K}}.$ Denote $%
X_{\left( k\right) }=\left( X_{\left( k, 1\right) }, ..., X_{(k, m_{k})}\right), $
$Y_{\left( k\right) }=\left( Y_{\left( k, 1\right) }, ..., Y_{\left(
k, m_{k}\right) }\right) $ and $V_{\left( k\right) }=\left( V_{\left(
k, 1\right) }, ..., V_{(k, m_{k})}\right).$ In the sequel $\pi $ denotes the
Prokhorov distance (for details see Section \ref{sec Prokhorov dist} of the
Appendix).

Assume conditions \textbf{C1} and \textbf{C2}.  The main result of this section is the following proposition,  which is of
independent interest.

\begin{proposition}
\label{Prop Aux Main} There exists a constant $c_{\varepsilon, \beta, \lambda _{1}, \lambda _{2}} $ such that,
for any $k_{0}=1, 2, ...$ and $n>k_{0}, $
\begin{equation*}
\pi \left( \mathcal{L}_{X_{\left( k_{0}\right) }+V_{\left( k_{0}\right)
}, ..., X_{\left( n\right) }+V_{\left( n\right) }}, \mathcal{L}_{Y_{\left(
k_{0}\right) }+V_{\left( k_{0}\right) }, ..., Y_{\left( n\right) }+V_{\left(
n\right) }}\right) \leq c_{\varepsilon, \beta, \lambda _{1}, \lambda _{2}}
\left( 1+\lambda _{0}+\mu _{\delta }\right) \exp \left( -\frac{\lambda
_{1}}{4}2^{\frac{\varepsilon }{2}k_{0}}\right).
\end{equation*}
\end{proposition}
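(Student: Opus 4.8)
The plan is to compare, block by block, the true joint law of the island-sums $(X_{(k)}+V_{(k)})_{k_0\le k\le n}$ with the law of the independent surrogates $(Y_{(k)}+V_{(k)})_{k_0\le k\le n}$ by controlling the distance between their characteristic functions and then converting this into a bound on the Prokhorov distance. Concretely, write $\Phi$ for the characteristic function of the full vector $(X_{(k)}+V_{(k)})_{k_0\le k\le n}$ and $\Psi$ for that of $(Y_{(k)}+V_{(k)})_{k_0\le k\le n}$. Since the $Y_{(k,j)}$ are independent with $Y_{(k,j)}\overset{d}{=}X_{(k,j)}$ and the $V_{(k,j)}$ are independent of everything, $\Psi$ factorizes into the product of the individual characteristic functions of the $X_{(k,j)}+V_{(k,j)}$, while $\Phi$ does not. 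I would estimate $|\Phi-\Psi|$ by a telescoping (hybrid) argument: successively replace dependence across one island boundary at a time, and bound each single replacement using \textbf{Condition C1} with the past block $\overline X_1$ corresponding to the islands already decoupled and the future block $\overline X_2$ the remaining ones, where the ``gap'' $k_{gap}$ between the two is at least the length $|J_{k,j}|$ of the separating gap — and, crucially, $|J_{k,j}|\ge 2^{[\varepsilon k]}\ge 2^{[\varepsilon k_0]}$ by \textbf{P8}, so every such gap is exponentially large in $k_0$.

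The key bookkeeping step is to show that the accumulated error from all the telescoped replacements is still dominated by $\exp(-\tfrac{\lambda_1}{4}2^{\varepsilon k_0/2})$. For each replacement across a gap $J_{k,j}$ the C1-bound contributes a factor $\lambda_0\exp(-\lambda_1 |J_{k,j}|)$ times the polynomial-in-block-size factor $(1+\max_m \mathrm{card}(J_m))^{\lambda_2(M_1+M_2)}$. Here $\max_m\mathrm{card}(J_m)$ and $M_1+M_2$ are at most of order $2^{n}$ and $\sum_{k\le n}m_k\le c\,2^{n(1+\beta)}$ respectively (using \textbf{P6}, \textbf{P7}), so the polynomial factor is at most $\exp(c\, n\, 2^{n(1+\beta)})$ — superpolynomial in $2^{k_0}$ but still \emph{sub}exponential compared with $\exp(\lambda_1 2^{[\varepsilon k]})$ once one sums over $k\ge k_0$, \emph{provided $\varepsilon$ is chosen small enough}; this is exactly the place where the smallness of $\varepsilon$ is used. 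Since the number of replacements is at most $m_{k_0,n}\le c\,2^{n(1+\beta)}$, one gets $|\Phi-\Psi|\le c_{\varepsilon,\beta,\lambda_1,\lambda_2}(1+\lambda_0)\sum_{k\ge k_0}m_k \exp(-\tfrac{\lambda_1}{2}2^{[\varepsilon k]})\le c_{\varepsilon,\beta,\lambda_1,\lambda_2}(1+\lambda_0)\exp(-\tfrac{\lambda_1}{4}2^{\varepsilon k_0/2})$, say, after absorbing the polynomial losses and re-summing the geometric-type tail.

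To pass from the characteristic-function bound to the Prokhorov distance I would invoke a smoothing/inversion lemma of the type collected in the Appendix: because the auxiliary variables $V_{(k,j)}$ have characteristic function supported in $[-\varepsilon_0,\varepsilon_0]$ and finite moments of all orders, both laws have densities that are band-limited (their Fourier transforms vanish outside the cube $\|t\|_\infty\le\varepsilon_0$), so the difference of the densities — hence the total variation distance, hence $\pi$ — is controlled by an $L^1$-norm of $(\Phi-\Psi)$ over that bounded cube, at the cost of a dimension-dependent polynomial factor in $2^{n}$ which is again absorbed into the exponentially small bound by shrinking $\varepsilon$. The factor $(1+\mu_\delta)$ enters through the smoothing step, where one needs the increments $X_{(k,j)}$ to have a controlled moment (via \textbf{Condition C2}) so that the inversion integral over the truncation region is finite and quantitatively bounded.

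The main obstacle is the second step: ensuring that the product of (i) the number of telescoping replacements, (ii) the polynomial-in-block-size factors from C1, and (iii) the polynomial-in-dimension factor from the Fourier inversion — all of which grow like $\exp(c\,n\,2^{n(1+\beta)})$ in the worst case — is still swamped by the single genuinely small factor $\exp(-\lambda_1 2^{[\varepsilon k_0]})$ coming from the shortest gap. This forces the constraint that $\varepsilon$ be taken sufficiently small relative to $\beta$ and $\lambda_1/\lambda_2$, and the re-summation over blocks $k\ge k_0$ must be done carefully so that the surviving bound has the clean form $\exp(-\tfrac{\lambda_1}{4}2^{\varepsilon k_0/2})$ with a constant depending only on $\varepsilon,\beta,\lambda_1,\lambda_2$.
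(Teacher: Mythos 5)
Your overall skeleton (telescoping with Strassen--Dudley/smoothing, Condition \textbf{C1} across gaps, Condition \textbf{C2} for the truncation tails) is the right kind of strategy, but the decisive bookkeeping step is wrong, and it is exactly the step you flag as ``the main obstacle''. If you decouple \emph{one island boundary at a time}, the separating gap available can be as small as the finest gap $2^{[\varepsilon k]}$, while the vector whose characteristic function you compare has of order $2^{[\beta k]}$ (indeed up to $c2^{\beta n}$) coordinates; Condition \textbf{C1} then costs a factor $\bigl(1+2^{k-[\beta k]}\bigr)^{\lambda_2 M}\approx\exp\bigl(c\lambda_2\,k\,2^{[\beta k]}\bigr)$, and the Fourier-inversion/truncation step costs another factor of the form $(T/\pi)^{M/2}$. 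Since $\varepsilon<1-\beta$ and in the final optimization $\beta>\tfrac12$, one has $2^{[\varepsilon k]}\ll k\,2^{[\beta k]}$, so the single gain $\exp\bigl(-\lambda_1 2^{[\varepsilon k]}\bigr)$ cannot absorb these dimension-dependent factors; moreover your remedy ``provided $\varepsilon$ is chosen small enough'' points in the wrong direction, because shrinking $\varepsilon$ only shortens the gaps and weakens the exponential gain while leaving the offending factor $\exp\bigl(c\lambda_2 k2^{[\beta k]}\bigr)$ untouched. So the sum you write down does not converge to anything like $\exp\bigl(-\tfrac{\lambda_1}{4}2^{\varepsilon k_0/2}\bigr)$; it diverges.

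The paper avoids this by never pitting a short gap against a high-dimensional block. The proof runs in two stages: (a) the whole blocks $X_{(k)}$ are decoupled from each other across the \emph{large} gap $J_{k,1}$ of length $2^{[\varepsilon k]+[\beta k]}$, so the exponent $\lambda_1 2^{[\varepsilon k]+[\beta k]}$ dominates both $\lambda_2 2^{[\beta k]+1}\ln\bigl(1+2^{k-[\beta k]}\bigr)$ and the truncation factor $\exp\bigl(\tfrac12 2^{[\varepsilon k]/2+\beta k}\bigr)$ coming from $(T/\pi)^{m_{k_0,k}/2}$ with $T=\exp\bigl(2^{[\varepsilon k]/2}\bigr)$; (b) inside a fixed block the islands are made independent \emph{hierarchically}, following the Cantor construction: at resolution level $l$ one splits a group of $2^{[\beta k]-l}$ islands across the level-$l$ gap of length $2^{[\varepsilon k]+[\beta k]-l-1}$, so at every level the gap length stays proportional to the number of coordinates being separated, with the surplus factor $2^{[\varepsilon k]}$ beating the logarithmic term; summing the $[\beta k]$ levels and then over $k\ge k_0$ gives the stated bound. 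This matching of gap length to dimension at every scale is the missing idea in your proposal; without it (or some substitute achieving the same balance), the telescoping argument fails.
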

\begin{proof}
Without loss of generality we assume that there exists a sequence of independent random
vectors $R_{\left( k\right) }, $ $k=1, ..., n$ such that $R_{\left( k\right) }%
\overset{d}{=}X_{\left( k\right) }+V_{\left( k\right) }$ and such that the
sequence $\left( R_{\left( k\right) }\right) _{k=1, ..., n}$ is independent of
the sequences $\left( X_{\left( k\right) }+V_{\left( k\right) }\right)
_{k=1, ..., n}$,  $\left( Y_{\left( k, j\right) }\right) _{\left( k, j\right) \in
\mathcal{K}} $ and $\left( V_{\left( k, j\right) }\right) _{\left( k, j\right)
\in \mathcal{K}}.$

The further proof is slit into two parts a) and  b).
In the part  a) we give a bound for the Prokhorov
distance between the vectors $\left( X_{\left( k_{0}\right) }+V_{\left(
k_{0}\right) }, ..., X_{\left( n\right) }+V_{\left( n\right) }\right) $ and $%
\left( R_{\left( k_{0}\right) }..., R_{\left( n\right) }\right),$ while in the part b) we give a bound for the Prokhorov distance between the
vectors $\left( R_{\left( k_{0}\right) }, ..., R_{\left( n\right) }\right) $
and $\left( Y_{\left( k_{0}\right) }+V_{\left( k_{0}\right) }, ..., Y_{\left(
n\right) }+V_{\left( n\right) }\right).$
Proposition \ref{Prop Aux Main} follows from  (\ref{smoocoupling 001}) and (\ref{smoocoupling 002}) by triangle inequality.

{\it Part a).} We show that there exists a constant $c_{\varepsilon, \beta, \lambda _{1}, \lambda
_{2}} $ such that,  for any $k_{0}=1, 2, ...$ and
$n>k_{0}, $%
\begin{equation}
\pi \left( \mathcal{L}_{X_{\left( k_{0}\right) }+V_{\left( k_{0}\right)
}, ..., X_{\left( n\right) }+V_{\left( n\right) }}, \mathcal{L}_{R_{\left(
k_{0}\right) }..., R_{\left( n\right) }}\right) \leq c_{\varepsilon, \beta
, \lambda _{1}, \lambda _{2}}\left( 1+\lambda _{0}+\mu _{\delta }\right) \exp
\left( -\frac{\lambda _{1}}{4}2^{\frac{\varepsilon }{2}k_{0}}\right).
\label{smoocoupling 001}
\end{equation}

For $k=k_{0}, ..., n, $ define the vectors
$
Z_{\left( k\right) }=\left( X_{\left( k_{0}\right) }+V_{\left( k_{0}\right)
}, ..., X_{\left( k\right) }+V_{\left( k\right) }\right)$
and
$
\widetilde{Z}_{\left( k\right) }=\left( Z_{\left( k-1\right) },  R_{\left( k\right) }\right).
$
By Lemma \ref{properties 003},  one gets%
\begin{equation}
\pi \left( \mathcal{L}_{Z_{\left( n\right) }}, \mathcal{L}_{R_{\left(
k_{0}\right) }, ..., R_{\left( n\right) }}\right) \leq \sum_{k=k_{0}}^{n}\pi
\left( \mathcal{L}_{Z_{\left( k\right) }}, \mathcal{L}_{\widetilde{Z}_{\left(
k\right) }}\right).  \label{telescope001}
\end{equation}%
Let $\phi _{\left( k\right) }$ (resp. $\widetilde{\phi }_{\left(
k\right) }$) be the characteristic function of the vector $Z_{\left(
k\right) }$ (resp. $\widetilde{Z}_{\left( k\right) }$) and $%
m_{k_{0}, k}=m_{k_{0}}+...+m_{k}.$ Then,  by Lemma \ref{Lemma smoo},  for any $%
T>0, $%
\begin{eqnarray}
\pi \left( \mathcal{L}_{Z_{\left( k\right) }}, \mathcal{L}_{\widetilde{Z}%
_{\left( k\right) }}\right) &\leq &\left( T/\pi \right)
^{m_{k_{0}, k}/2}\left( \int_{t\in \mathbb{R}^{m_{k_{0}, k}}}\left\vert \phi
_{\left( k\right) }\left( t\right) -\widetilde{\phi }_{\left( k\right)
}\left( t\right) \right\vert ^{2}dt\right) ^{1/2}  \notag \\
&&+\mathbf{P}\left( \max_{k_{0}\leq l\leq k}\max_{1\leq j\leq
m_{l}}\left\vert X_{\left( l, j\right) }\right\vert >T\right).
\label{Block 005}
\end{eqnarray}%
Denote by $\varphi _{\left( k\right) }$ and $\psi _{\left(k\right) }$  the characteristic functions of the vectors $X_{\left( k\right) }$
and $\left( X_{\left( k_{0}\right) }, ..., X_{\left( k\right)
}\right) )$ respectively. Since $V_{\left( k_{0}\right) }, ..., V_{\left( k\right) }$ are
independent of $X_{\left( k_{0}\right) }, ..., X_{\left( k\right) }$ and $%
Y_{\left( k_{0}\right) }, ..., Y_{\left( k\right) }, $
we have%
\begin{eqnarray}
&&\int_{t\in \mathbb{R}^{m_{k_{0}, k}}, }\left\vert \phi _{\left( k\right)
}\left( t\right) -\varphi _{\left( k\right) }\left( t\right) \right\vert
^{2}dt  \notag \\
&=&\int_{t_{1}\in \mathbb{R}^{m_{k_{0}}}}...\int_{t_{k}\in \mathbb{R}%
^{m_{k}}}\left\vert \phi _{\left( k\right) }\left(
t_{k_{0}}, ..., t_{k}\right) -\varphi _{\left( k\right) }\left(
t_{k_{0}}, ..., t_{k}\right) \right\vert ^{2}dt_{k_{0}}...dt_{k}  \notag \\
&\leq &I_{1}  \notag \\
&\equiv &\int_{t_{1}\in \mathbb{R}^{m_{k_{0}}}}...\int_{t_{k}\in \mathbb{R}%
^{m_{k}}}\left\vert \psi _{\left( k\right) }\left(
t_{k_{0}}, ..., t_{k}\right) -\psi _{\left( k-1\right) }\left(
t_{k_{0}}, ..., t_{k-1}\right) \varphi _{\left( k\right) }\left( t_{k}\right)
\right\vert ^{2}dt_{k_{0}}...dt_{k}.  \label{Block 006}
\end{eqnarray}%
To bound the right-hand side of (\ref{Block 006}),  note that $%
m_{k_{0}, k}=\left( 2^{\left[ \beta k_{0}\right] }+...+2^{\left[ \beta k%
\right] }\right) \leq 2^{\left[ \beta k\right] +1}$ and,  according to the
construction,  the length of the gap between the vectors $X_{\left(
k-1\right) }$ and $X_{\left( k\right) }$ is $k_{gap}=2^{[\varepsilon k]+%
\left[ \beta k\right] }.$ Note also that $\left\vert I_{k, j}\right\vert \leq
2^{k-\left[ \beta k\right] }$ and $\left\vert \varepsilon _{0}\right\vert
\leq 1.$ Let us remind that the characteristic functions of the r.v. $%
V_{(k, j)}$ have support the interval $[-\epsilon _{0}, \epsilon _{0}]$ and
that the sequence $(V_{(k, j)})_{(k, j)\in \mathcal{K}}$ is independent of $%
(X_{(k, j)})_{(k, j)\in \mathcal{K}}$; it readily implies that the integrals
above are in fact over the set $[-\epsilon _{0}, \epsilon _{0}]^{m_{k_{0}, k}}$%
. Using condition \textbf{C1},  with $M_{1}=m_{k_{0}, k-1}$ and $M_{2}=m_{k}, $
one may thus write%
\begin{eqnarray}
I_{1} &\leq &\lambda _{0}\left( 1+\max_{l\leq k, ~j\leq m_{k}}\left\vert
I_{l, j}\right\vert \right) ^{\lambda _{2}(M_{1}+M_{2})}\exp \left( -\lambda
_{1}k_{gap}\right) \varepsilon _{0}^{m_{k_{0}, k}}  \notag \\
&\leq &\lambda _{0}\left( 1+2^{k-\left[ \beta k\right] }\right) ^{\lambda
_{2}2^{\left[ \beta k\right] +1}}\exp \left( -\lambda _{1}k_{gap}\right)
\notag \\
&\leq &\lambda _{0}\exp \left( -\lambda _{1}2^{[\varepsilon k]+\left[ \beta k%
\right] }+\lambda _{2}2^{\left[ \beta k\right] +1}\ln \left( 1+2^{k-\left[
\beta k\right] }\right) \right)  \notag \\
&\leq &c_{\varepsilon, \beta, \lambda _{1}, \lambda _{2}}\lambda _{0}\exp
\left( -\frac{\lambda _{1}}{2}2^{[\varepsilon k]+\left[ \beta k\right]
}\right).  \label{Block 007}
\end{eqnarray}%
Putting together (\ref{Block 005}),  (\ref{Block 006}) and (\ref{Block 007}),
we get%
\begin{eqnarray}
\pi \left( \mathcal{L}_{Z_{\left( k\right) }}, \mathcal{L}_{\widetilde{Z}%
_{\left( k\right) }}\right) &\leq &c_{\varepsilon, \beta, \lambda
_{1}, \lambda _{2}}\lambda _{0}\left( T/\pi \right) ^{m_{k_{0}, k}/2}\exp
\left( -\frac{\lambda _{1}}{2}2^{[\varepsilon k]+\left[ \beta k\right]
}\right)  \notag \\
&&+\sum_{k_{0}\leq l\leq k}\sum_{1\leq j\leq m_{l}}\mathbb{P}\left(
\left\vert X_{\left( l, j\right) }\right\vert >T\right).  \label{Block 008a}
\end{eqnarray}%
Since $\left\vert I_{\left( l, j\right) }\right\vert \leq 2^{l}, $ by Markov's
inequality and condition \textbf{C2}, %
\begin{equation*}
\mathbb{P}\left( \left\vert X_{\left( l, j\right) }\right\vert >T\right) \leq
T^{-1}\mathbb{E}\left\vert X_{\left( l, j\right) }\right\vert \leq
T^{-1}2^{l}\max_{i}\mathbb{E}\left\vert X_{i}\right\vert \leq \mu _{\delta
}T^{-1}2^{l}.
\end{equation*}%
Choosing $T=\exp \left( 2^{[\varepsilon k]/2}\right), $ one gets%
\begin{eqnarray}
\sum_{k_{0}\leq l\leq k}\sum_{1\leq j\leq m_{l}}\mathbb{P}\left( \left\vert
X_{\left( l, j\right) }\right\vert >T\right) &\leq &\mu _{\delta
}T^{-1}\sum_{k_{0}\leq l\leq k}m_{l}2^{l}  \notag \\
&\leq &\mu _{\delta }\exp \left( -2^{[\varepsilon k]/2}\right)
\sum_{k_{0}\leq l\leq k}2^{\left[ \beta l\right] }2^{l}  \notag \\
&\leq &c_{\beta }\mu _{\delta }\exp \left( -2^{[\varepsilon k]/2}/2\right).
\label{Block 009}
\end{eqnarray}%
Since $m_{k_{0}, k}\leq 2^{\beta k}, $ one gets%
\begin{equation}
\left( T/\pi \right) ^{m_{k_{0}, k}/2}\leq \exp \left( \frac{1}{2}%
2^{[\varepsilon k]/2+\beta k}\right).  \label{Block 009a}
\end{equation}%
From (\ref{Block 008a}),  (\ref{Block 009}) and (\ref{Block 009a}),  we deduce%
\begin{eqnarray*}
\pi \left( \mathcal{L}_{Z_{\left( k\right) }}, \mathcal{L}_{\widetilde{Z}%
_{\left( k\right) }}\right) &\leq &c_{\varepsilon, \beta, \lambda
_{1}, \lambda _{2}}\lambda _{0}\exp \left( \frac{1}{2}2^{[\varepsilon
k]/2+[\beta k]}\right) \exp \left( -\frac{\lambda _{1}}{2}2^{[\varepsilon k]+%
\left[ \beta k\right] }\right) \\
&&+c_{\beta }\mu _{\delta }\exp \left( -2^{[\varepsilon k]/2}/2\right) \\
&\leq &\left( 1+\lambda _{0}+\mu _{\delta }\right) c_{\varepsilon, \beta
, \lambda _{1}, \lambda _{2}}\exp \left( -\frac{\lambda _{1}}{4}2^{\varepsilon
k/2}\right).
\end{eqnarray*}%
Using (\ref{telescope001})%
\begin{eqnarray*}
\pi \left( \mathcal{L}_{ Z_{\left( n\right)} }, %
\mathcal{L}_{\left( R_{\left( k_{0}\right) }..., R_{\left( n\right) }\right)
}\right) &\leq &\left( 1+\lambda _{0}+\mu _{\delta }\right) c_{\varepsilon
, \beta, \lambda _{1}, \lambda _{2}}\sum_{k=k_{0}}^{n}\exp \left( -\frac{%
\lambda _{1}}{4}2^{[\varepsilon k]/2}\right) \\
&\leq &\left( 1+\lambda _{0}+\mu _{\delta }\right) c_{\varepsilon, \beta
, \lambda _{1}, \lambda _{2}}^{\prime }\exp \left( -\frac{\lambda _{1}}{4}%
2^{[\varepsilon k]_{0}/2}\right).
\end{eqnarray*}%
This concludes the proof of the part a).

{\it Part b)}. We show that exists a constant $c_{\varepsilon, \beta, \lambda _{1}, \lambda _{2}} $  such that,  for any $k_{0}=1, 2, ...$ and $%
n>k_{0}, $%
\begin{equation}
\pi \left( \mathcal{L}_{R_{\left( k_{0}\right) }, ..., R_{\left( n\right) }}, %
\mathcal{L}_{Y_{\left( k_{0}\right) }+V_{\left( k_{0}\right) }, ..., Y_{\left(
n\right) }+V_{\left( n\right) }}\right) \leq c_{\varepsilon, \beta, \lambda
_{1}, \lambda _{2}}\left( 1+\lambda _{0}+\mu _{\delta }\right) \exp \left( -%
\frac{\lambda _{1}}{8}2^{[\varepsilon k]_{0}/2}\right).
\label{smoocoupling 002}
\end{equation}

By Lemma \ref{properties 004},  since $R_{\left( k_{0}\right) }, ..., R_{\left(
n\right) }$ and $Y_{\left( k_{0}\right) }+V_{\left( k_{0}\right)
}, ..., Y_{\left( n\right) }+V_{\left( n\right) }$ are independent r.v.'s,  one
may write%
\begin{equation}
\pi \left( \mathcal{L}_{R_{\left( k_{0}\right) }, ..., R_{\left( n\right) }}, %
\mathcal{L}_{Y_{\left( k_{0}\right) }+V_{\left( k_{0}\right) }, ..., Y_{\left(
n\right) }+V_{\left( n\right) }}\right) =\sum_{k=k_{0}}^{n}\pi \left(
\mathcal{L}_{R_{\left( k\right) }}, \mathcal{L}_{Y_{\left( k\right)
}+V_{\left( k\right) }}\right)  \label{DDD000a}
\end{equation}%
and its suffices to prove that,  for any $k=1, 2, ..., $ one gets
\begin{equation}
\pi \left( \mathcal{L}_{R_{\left( k\right) }}, \mathcal{L}_{Y_{\left(
k\right) }+V_{\left( k\right) }}\right) \leq \left( 1+\lambda_0+\mu _{\delta
}\right) c_{\varepsilon, \beta, \lambda _{1}, \lambda _{2}}^{\prime }\exp
\left( -\frac{\lambda _{1}}{8}2^{[\varepsilon k]/2}\right).  \label{DDD001}
\end{equation}

For this,  recall that,  according to the construction in Section \ref{sec
Notations},  at the resolution level $0, $ a gap of length $2^{\left[
[\varepsilon k]\right] +\left[ \beta k\right] }/2$ in the middle of the
block $R_{\left( k\right) }^{0, 0}=R_{\left( k\right) }$ splits it into two
vectors $\widetilde{R}_{\left( k\right) }^{0, 1}$ and $\widetilde{R}_{\left(
k\right) }^{0, 2}$; let $R_{\left( k\right) }^{0, 1}$ and $R_{\left( k\right)
}^{0, 2}$ be independent versions of $\widetilde{R}_{\left( k\right) }^{0, 1}$
and $\widetilde{R}_{\left( k\right) }^{0, 2}$ respectively. Next,  at the
resolution level $1, $ for any $j\in \left\{ 1, 2\right\}, $ a gap of length $%
2^{\left[ \left( \varepsilon +\beta \right) k\right] }/4$ in the middle of
the block $R_{\left( k\right) }^{0, j}$ splits it into two vectors $%
\widetilde{R}_{\left( k\right) }^{1, 2j-1}$ and $\widetilde{R}_{\left(
k\right) }^{1, 2j};$ let $R_{\left( k\right) }^{1, 2j-1}$ and $R_{\left(
k\right) }^{1, 2j}$ be independent versions of $\widetilde{R}_{\left(
k\right) }^{1, 2j-1}$ and $\widetilde{R}_{\left( k\right) }^{1, 2j}$
respectively. Assuming that at the resolution level $l\in \left\{ 1, ..., %
\left[ \beta k\right] \right\} $ the independent r.v.'s $R_{\left( k\right)
}^{l, j}, $ $j\in \left\{ 1, ..., 2^{l}\right\},  $ are already constructed,  we
shall perform the construction at the resolution level $l+1.$ Note that,  at
the resolution level $l$ for any $j\in \left\{ 1, ..., 2^{l}\right\}, $ a gap
of length $2^{\left[ \left( \varepsilon +\beta \right) k\right] }/2^{l+1}$
in the middle of the block $R_{\left( k\right) }^{l, j}$ splits it into two
vectors $\widetilde{R}_{\left( k\right) }^{l+1, 2j-1}$ and $\widetilde{R}%
_{\left( k\right) }^{l+1, 2j}; $ it is enough to set $R_{\left( k\right)
}^{l+1, 2j-1}$ and $R_{\left( k\right) }^{l+1, 2j}$ to be independent versions
of $\widetilde{R}_{\left( k\right) }^{l+1, 2j-1}$ and $\widetilde{R}_{\left(
k\right) }^{l+1, 2j}$ respectively. It is easy to see that at the final level
$l_{k}=\left[ \beta k\right] $ we have $R_{\left( k\right) }^{l_{k}, j}%
\overset{d}{=}Y_{\left( k, j\right) }+V_{\left( k, j\right) }, $ for $%
j=1, ..., m_{k}=2^{\left[ \beta k\right] }.$

Let $l\in \left\{ 0, ..., \left[ \beta k\right] \right\}.$ For $j\in \left\{
1, ..., 2^{l}\right\}, $ denote by $\psi _{k}^{l, 2j-1}$ and $\psi _{k}^{l, 2j}$
the characteristic functions of $R_{\left( k\right) }^{l, 2j-1}$ and $%
R_{\left( k\right) }^{l, 2j}.$ Using Lemma \ref{Lemma smoo} and the
independence of the vectors $\widetilde{R}_{\left( k\right) }^{l, 2j-1}$ and $%
\widetilde{R}_{\left( k\right) }^{l, 2j}$,  we get
\begin{eqnarray}
&&\pi \left( \mathcal{L}_{R_{\left( k\right) }^{l, j}, }\mathcal{L}_{R_{\left(
k\right) }^{l+1, 2j-1}, R_{\left( k\right) }^{l+1, 2j}}\right)  \notag \\
&\leq &\left( \left( T/\pi \right) ^{2^{-l}m_{k}}\int_{\left( t, s\right) \in
\mathbb{R}^{2^{-l}m_{k}}}\left\vert \psi _{k}^{l, j}\left( t, s\right) -\psi
_{k}^{l+1, 2j-1}\left( t\right) \psi _{k}^{l+1, 2j}\left( s\right) \right\vert
^{2}dtds\right) ^{1/2}  \notag \\
&&+\sum_{1\leq j\leq 2^{-l}m_{k}}\mathbb{P}\left( \left\vert X_{\left(
k, j\right) }+V_{\left( k, j\right) }\right\vert >T\right).  \label{DDD002}
\end{eqnarray}%
By Condition \textbf{C1} with $N=M=\frac{m_{k}}{2}2^{-l}$ and $%
k_{gap}=2^{[\varepsilon k]+[\beta k]-l-1}, $ we obtain%
\begin{eqnarray}
&&\int_{\left( t, s\right) \in \mathbb{R}^{m_{k}2^{-l}}}\left\vert \psi
_{k}^{l, j}\left( t, s\right) -\psi _{k}^{l+1, 2j-1}\left( t\right) \psi
_{k}^{l+1, 2j}\left( s\right) \right\vert ^{2}dtds  \notag \\
&=&\int_{\left( t, s\right) \in \mathbb{R}^{m_{k}2^{-l}}, ~\left\Vert
t\right\Vert _{\infty }\leq \varepsilon _{0}, \left\Vert s\right\Vert
_{\infty }\leq \varepsilon _{0}}\left\vert \psi _{k}^{l, j}\left( t, s\right)
-\psi _{k}^{l+1, 2j-1}\left( t\right) \psi _{k}^{l+1, 2j}\left( s\right)
\right\vert ^{2}dtds  \notag \\
&\leq &\lambda _{0}\exp \Bigl(\lambda _{2}m_{k}2^{-l}\ln \left( 1+2^{k-\left[
\beta k\right] }\right) -\lambda _{1}2^{[\varepsilon k]+[\beta k]-1-l}\Bigr)%
\left( 2\varepsilon _{0}\right) ^{m_{k}2^{-l}}  \notag \\
&\leq &\lambda _{0}c_{\varepsilon, \beta, \lambda _{1}, \lambda _{2}}^{\prime
\prime }\exp \left( -\frac{\lambda _{1}}{4}2^{[\varepsilon k]+[\beta
k]-l}\right).  \label{DDD003}
\end{eqnarray}%
We will thus take $T=\exp \left( \lambda _{1}2^{\left[ [\varepsilon k]\right]
/2}\right) $ so that
\begin{equation*}
\left( T/\pi \right) ^{2^{-l}m_{k}}\leq \exp \left( \lambda
_{1}2^{-l}m_{k}2^{\left[ [\varepsilon k]\right] /2}\right) \leq \exp \left(
\lambda _{1}2^{\left[ [\varepsilon k]\right] /2+\left[ \beta k\right]
-l}\right).
\end{equation*}%
In order to control the terms $\mathbb{P}\left( \left\vert X_{\left(
k, j\right) }+V_{\left( k, j\right) }\right\vert >T\right) $,  we use Markov's
inequality,  condition \textbf{C2} and the fact that $\left\vert
I_{k, j}\right\vert \leq 2^{k}$; it readily follows that
\begin{eqnarray*}
\mathbb{P}\left( \left\vert X_{\left( k, j\right) }+V_{\left( k, j\right)
}\right\vert >T\right) &\leq &T^{-1}\left( \mathbb{E}\left\vert X_{\left(
k, j\right) }\right\vert +\mathbb{E}\left\vert V_{\left( k, j\right)
}\right\vert \right) \\
&\leq &T^{-1}\left( 2^{k}\max_{i}\mathbb{E}\left\vert X_{i}\right\vert
+c2^{k}\right) \\
&\leq &\left( 1+\mu _{\delta }\right) c2^{k}\exp \left( -\lambda _{1}2^{-
\left[ [\varepsilon k]\right] /2}\right).
\end{eqnarray*}%
Therefore%
\begin{eqnarray}
\sum_{1\leq j\leq 2^{-l}m_{k}}\mathbb{P}\left( \left\vert X_{\left(
k, j\right) }+V_{\left( k, j\right) }\right\vert >T\right) &\leq
&2^{-l}m_{k}\left( 1+\mu _{\delta }\right) c2^{k}\exp \left( -2^{\left[
[\varepsilon k]\right] /2}\right)  \notag \\
&\leq &\left( 1+\mu _{\delta }\right) \exp \left( -\lambda _{1}2^{\left[
[\varepsilon k]\right] /2}\right) 2^{-l}2^{2\left[ \beta k\right] +k}  \notag
\\
&\leq &\left( 1+\mu _{\delta }\right) c_{\varepsilon, \beta, \lambda
_{1}, \lambda _{2}}\exp \left( -\frac{\lambda _{1}}{2}2^{\left[ [\varepsilon
k]\right] /2}\right).  \label{DDD004}
\end{eqnarray}%
From (\ref{DDD002}),  (\ref{DDD003}) and (\ref{DDD004}),  we get%
\begin{eqnarray}
&&\pi \left( \mathcal{L}_{R_{\left( k\right) }^{l, j}, }\mathcal{L}_{R_{\left(
k\right) }^{l+1, 2j-1}, R_{\left( k\right) }^{l+1, 2j}}\right)  \notag \\
&\leq &\left( 1+\lambda _{0}+\mu _{\delta }\right) c_{\varepsilon, \beta
, \lambda _{1}, \lambda _{2}}  \notag \\
&&\times \left[ \exp \left( \lambda _{1}2^{-l}2^{\left[ [\varepsilon k]%
\right] /2+\left[ \beta k\right] }\right) \exp \left( -\frac{\lambda _{1}}{2}%
2^{-l}2^{[\varepsilon k]+[\beta k]}\right) +c\exp \left( -\frac{\lambda _{1}%
}{2}2^{\left[ [\varepsilon k]\right] /2}\right) \right]  \notag \\
&\leq &\left( 1+\lambda _{0}+\mu _{\delta }\right) c_{\varepsilon, \beta
, \lambda _{1}, \lambda _{2}}\exp \left( -\frac{\lambda _{1}}{4}2^{\left[
[\varepsilon k]\right] /2}\right).  \label{DDD005}
\end{eqnarray}

Since $R_{\left( k\right) }^{l, j}, $ $j=1, ..., 2^{l}$ are independent r.v.'s,
by triangle inequality,  one gets%
\begin{eqnarray}
\pi \left( \mathcal{L}_{R_{\left( k\right) }}, \mathcal{L}_{Y_{\left(
k\right) }+V_{\left( k\right) }}\right) &=&\pi \left( \mathcal{L}_{R_{\left(
k\right) }^{0,  0}}, \mathcal{L}_{Y_{\left( k\right) }+V_{\left( k\right)
}}\right)  \notag \\
&\leq & \pi \left( \mathcal{L}_{R_{\left( k\right) }^{0, 0}}, \mathcal{L}%
_{R_{\left( k\right) }^{0, 1}, R_{\left( k\right) }^{0, 2}}\right) +\pi \left(
\mathcal{L}_{R_{\left( k\right) }^{0, 1}, R_{\left( k\right) }^{0, 2}}, \mathcal{%
L}_{Y_{\left( k\right) }+V_{\left( k\right) }}\right)  \notag \\
&\leq & \pi \left( \mathcal{L}_{R_{\left( k\right) }^{0, 0}}, \mathcal{L}%
_{R_{\left( k\right) }^{0, 1}, R_{\left( k\right) }^{0, 2}}\right) +\pi \left(
\mathcal{L}_{R_{\left( k\right) }^{0, 1}, R_{\left( k\right) }^{0, 2}}, \mathcal{%
L}_{R_{\left( k\right) }^{1, 1}, ..., R_{\left( k\right) }^{1, 4}}\right)  \notag
\\
&& \qquad \qquad \qquad +\pi \left( \mathcal{L}_{R_{\left( k\right)
}^{1, 1}, ..., R_{\left( k\right) }^{1, 4}}, \mathcal{L}_{Y_{\left( k\right)
}+V_{\left( k\right) }}\right)  \notag \\
&&...  \notag \\
&\leq &\sum_{l=0}^{\left[ \beta k\right] -1}\pi \left( \mathcal{L}%
_{R_{\left( k\right) }^{l, 1}, ..., R_{\left( k\right) }^{l, 2^{l}}}, \mathcal{L}%
_{R_{\left( k\right) }^{l+1, 1}, ..., R_{\left( k\right)
}^{l+1, 2^{l+1}}}\right).  \label{DDD006}
\end{eqnarray}%
By Lemma \ref{properties 004} and (\ref{DDD005}) one gets%
\begin{eqnarray}
\pi \left( \mathcal{L}_{R_{\left( k\right) }^{l, 1}, ..., R_{\left( k\right)
}^{l, 2^{l}}}, \mathcal{L}_{R_{\left( k\right) }^{l+1, 1}, ..., R_{\left(
k\right) }^{l+1, 2^{l+1}}}\right) &\leq &\sum_{j=1}^{2^{l}}\pi \left(
\mathcal{L}_{R_{\left( k\right) }^{l, j}}, \mathcal{L}_{R_{\left( k\right)
}^{l+1, 2j-1}, R_{\left( k\right) }^{l+1, 2j}}\right)  \notag \\
&\leq &
c_{\varepsilon
, \beta, \lambda _{1}, \lambda _{2}}2^{l}\left( 1+\lambda _{0}+\mu _{\delta }\right)
\\
& & \qquad \qquad\qquad \times  \exp \left( -\frac{\lambda _{1}}{4}%
2^{[\varepsilon k]/2}\right).  \label{DDD007}
\end{eqnarray}%
From (\ref{DDD006}) and (\ref{DDD007}),  it follows%
\begin{eqnarray*}
\pi \left( \mathcal{L}_{R_{\left( k\right) }}, \mathcal{L}_{Y_{\left(
k\right) }+V_{\left( k\right) }}\right) &\leq &\sum_{l=0}^{\left[ \beta k%
\right] -1}2^{l}\left( 1+\lambda _{0}+\mu _{\delta }\right) c_{\varepsilon
, \beta, \lambda _{1}, \lambda _{2}}\exp \left( -\frac{\lambda _{1}}{4}%
2^{[\varepsilon k]/2}\right) \\
&\leq &2^{\left[ \beta k\right] }\left( 1+\lambda _{0}+\mu _{\delta }\right)
c_{\varepsilon, \beta, \lambda _{1}, \lambda _{2}}\exp \left( -\frac{\lambda
_{1}}{4}2^{[\varepsilon k]/2}\right) \\
&\leq & (1+\lambda_0+\mu_\delta)c^{\prime }_{\varepsilon,  \beta,  \lambda_1,
\lambda_2} \exp \left(-{\frac{\lambda_1}{8}}2^{[\varepsilon k]/2}\right). \\
\end{eqnarray*}%
Finally, using (\ref{DDD001}) finishes the proof of the part b).
\end{proof}

\section{Proof of Theorem \protect\ref{Th main res 1}\label{sec Proof Main
Res}}

This section is devoted to the proof of Theorem \ref{Th main res 1}; it is
separated in several steps. We first construct the coupling with independent
r.v.'s. (Section \ref{sec Coupling Indep}) and thus with independent normal
r.v.'s. (Section \ref{sec Coupling Norm}). In Section \ref{sec Construction}%
,  we explicit the construction of the sequences $(\widetilde{X}_{i})_{1\leq
i\leq N}$ and $\left( W_{i}\right) _{1\leq i\leq N}$ and in Sections \ref%
{sec Putting Together},  \ref{bound for the partial sums of gaps},  \ref{Bound
for the oscillation term} and \ref{Optimizing the bounds} we put together
and optimize the bounds.

\subsection{Coupling with independent r.v.'s\label{sec Coupling Indep}}

Assume conditions \textbf{C1} and \textbf{C2}.  The following proposition shows that  the partial sums $%
\sum_{\left( l, i\right) \preceq \left( k, j\right) }X_{\left( l, i\right) }$
can be coupled with high probability with the partial sums $\sum_{\left( l, i\right) \preceq \left(
k, j\right) }Y_{\left( l, i\right) }.$

\begin{proposition}
\label{Prop-Indep 001} Let $%
\alpha <\delta, $ $\beta >\frac{1}{2}$ and $0<\rho <\frac{1-\beta }{2}.$
Then,  for any $N\in \mathbb{N}, $ on some extension of the initial
probability space there is a version $(X_{\left( k, j\right) }^{\prime
})_{\left( k, j\right) \in \mathcal{K}_{N}}$ of the sequence $(X_{\left(
k, j\right) })_{\left( k, j\right) \in \mathcal{K}_{N}}$ and a version $%
(Y_{\left( k, j\right) }^{\prime })_{\left( k, j\right) \in \mathcal{K}_{N}}$
of the sequence $(Y_{\left( k, j\right) })_{\left( k, j\right) \in \mathcal{K}%
_{N}}$ such that%
\begin{eqnarray*}
&&\mathbb{P}\left( \left( 2^{n}\right) ^{-\frac{1}{2}}\sup_{\left(
k, j\right) \in \mathcal{K}_{N}}\left\vert \sum_{\left( l, i\right) \preceq
\left( k, j\right) }\left( X_{\left( l, i\right) }^{\prime }-Y_{\left(
l, i\right) }^{\prime }\right) \right\vert \geq \left( 2^{n}\right) ^{-\rho
}\right) \\
&\leq &C_{1}\left( 2^{n}\right) ^{-1-\alpha +\left( \varepsilon +\rho
\right) \left( 2+2\alpha \right) },
\end{eqnarray*}%
where $\varepsilon \in (0, \frac{1}{2})$ is arbitrary chosen and $%
C_{1}=c_{\varepsilon, \beta, \lambda _{1}, \lambda _{2}, \alpha, \rho }\left(
1+\lambda _{0}+\mu _{\delta }\right) ^{2+2\delta }$ for some positive
constant $c_{\varepsilon, \beta, \lambda _{1}, \lambda _{2}, \alpha, \rho }.$
\end{proposition}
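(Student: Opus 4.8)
The plan is to isolate the first $k_{0}$ blocks from the rest, couple the ``bulk'' blocks $k_{0}\le k\le n$ (where $2^{n}\le N<2^{n+1}$) by combining Proposition~\ref{Prop Aux Main} with Strassen's realization of the Prokhorov distance, and treat the first $k_{0}$ blocks by a crude moment bound. The cut-off $k_{0}=k_{0}(N)$ is taken of size $\tfrac{2}{\varepsilon}\log_{2}n$, up to an additive term logarithmic in $1+\lambda_{0}+\mu_{\delta}$, precisely so that the Prokhorov bound
\[
\delta_{k_{0}}:=c_{\varepsilon,\beta,\lambda_{1},\lambda_{2}}\,(1+\lambda_{0}+\mu_{\delta})\exp\!\bigl(-\tfrac{\lambda_{1}}{4}2^{\varepsilon k_{0}/2}\bigr)
\]
supplied by Proposition~\ref{Prop Aux Main} (applied to the block range $k_{0}\le k\le n$) satisfies $2^{\beta n}\delta_{k_{0}}\le (2^{n})^{-(2+\alpha)}$; this is possible because $\delta_{k_{0}}$ decays doubly exponentially in $k_{0}$, so that $2^{k_{0}}$ remains at most a fixed power of $n$ times a power of $\log(1+\lambda_{0}+\mu_{\delta})$. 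For the finitely many $N$ with $k_{0}(N)\ge n$ the asserted estimate is rendered trivial by the size of $C_{1}$, so assume $n>k_{0}$ below.

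For the construction, let $(X'_{(k,j)})_{(k,j)\in\mathcal{K}_{N}}$ be the original family (it has the correct law by definition). Adjoin an independent i.i.d.\ copy $(\widehat V_{(k,j)})$ of the smoothing sequence from Section~\ref{sec Aux Res}, put $\widehat X_{(k)}=X'_{(k)}+\widehat V_{(k)}$ for $k_{0}\le k\le n$, so that $(\widehat X_{(k)})_{k_{0}\le k\le n}\overset{d}{=}(X_{(k)}+V_{(k)})_{k_{0}\le k\le n}$, and use Proposition~\ref{Prop Aux Main} together with Strassen's theorem (Section~\ref{sec Prokhorov dist}) to produce, via the associated conditional kernel applied to the realized $\widehat X$, a vector $\widehat Y=(\widehat Y_{(k)})_{k_{0}\le k\le n}\overset{d}{=}(Y_{(k)}+V_{(k)})_{k_{0}\le k\le n}$ with $\mathbb{P}\bigl(\max_{(l,i)}|\widehat X_{(l,i)}-\widehat Y_{(l,i)}|>\delta_{k_{0}}\bigr)\le\delta_{k_{0}}$. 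From $\widehat Y_{(k)}$ sample, with fresh randomness, a pair $(Y'_{(k)},\widetilde V_{(k)})$ according to the regular conditional law of $(Y_{(k)},V_{(k)})$ given $Y_{(k)}+V_{(k)}=\widehat Y_{(k)}$; since the underlying vectors are independent over $(k,j)$ this resampling factorises, whence $(Y'_{(k,j)},\widetilde V_{(k,j)})_{k_{0}\le k\le n}\overset{d}{=}(Y_{(k,j)},V_{(k,j)})_{k_{0}\le k\le n}$ — in particular the $Y'_{(k,j)}$ are independent with the right marginals, the $\widetilde V_{(k,j)}$ form an i.i.d.\ copy of the smoothing sequence, and $\widehat Y_{(k)}=Y'_{(k)}+\widetilde V_{(k)}$. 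For $k<k_{0}$ let $(Y'_{(k,j)})$ be a fresh independent family with the correct marginals, independent of all the above. A routine check then shows that $(X'_{(k,j)})_{\mathcal{K}_{N}}$ and $(Y'_{(k,j)})_{\mathcal{K}_{N}}$ have the laws demanded in the proposition.

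The estimate rests on the identity, valid for $(k,j)\in\mathcal{K}_{N}$ with $k\ge k_{0}$,
\[
\sum_{(l,i)\preceq(k,j)}\bigl(X'_{(l,i)}-Y'_{(l,i)}\bigr)=\sum_{(l,i):\,l<k_{0}}\bigl(X'_{(l,i)}-Y'_{(l,i)}\bigr)+\sum_{\substack{k_{0}\le l\\ (l,i)\preceq(k,j)}}\Bigl(\widehat X_{(l,i)}-\widehat Y_{(l,i)}-\widehat V_{(l,i)}+\widetilde V_{(l,i)}\Bigr),
\]
the first sum running over every island of the first $k_{0}-1$ blocks, together with the obvious bound for $k<k_{0}$. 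Consequently $\sup_{(k,j)\in\mathcal{K}_{N}}\bigl|\sum_{(l,i)\preceq(k,j)}(X'_{(l,i)}-Y'_{(l,i)})\bigr|\le A+B+C+D$, where $A=\sum_{(l,i):\,l<k_{0}}(|X'_{(l,i)}|+|Y'_{(l,i)}|)$, $B$ is the maximum over $(k,j)$ of the $\widehat X-\widehat Y$ partial sums, and $C,D$ the maximal partial sums of the $\widehat V$'s and $\widetilde V$'s. On the good Strassen event (probability $\ge 1-\delta_{k_{0}}$) one has $B\le m_{1,n}\delta_{k_{0}}\le 2^{\beta n+1}\delta_{k_{0}}<\tfrac14(2^{n})^{1/2-\rho}$ by the choice of $k_{0}$, so $\mathbb{P}(B\ge\tfrac14(2^{n})^{1/2-\rho})\le\delta_{k_{0}}$. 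For $C$ and $D$, Doob's maximal inequality (the partial sums of the i.i.d.\ mean-zero $\widehat V$'s, resp.\ $\widetilde V$'s, are martingales) followed by Rosenthal's inequality and the fact that the smoothing variable has moments of all orders give $\mathbb{P}(\cdot\ge\tfrac14(2^{n})^{1/2-\rho})\le c_{p}(2^{n})^{-p(1-\beta-2\rho)/2}$ for every $p$; since $\rho<(1-\beta)/2$, taking $p$ large makes this negligible against the target. The dominant term is $A$: by Condition~\textbf{C2}, $\|X_{(l,i)}\|_{L^{2+2\delta}}\le|I_{l,i}|\,\mu_{\delta}\le 2^{l}\mu_{\delta}$, hence (using $\alpha<\delta$) $\|A\|_{L^{2+2\alpha}}\le\|A\|_{L^{2+2\delta}}\le c_{\beta}\,\mu_{\delta}\,2^{(1+\beta)k_{0}}$, and Markov's inequality yields
\[
\mathbb{P}\bigl(A\ge\tfrac14(2^{n})^{1/2-\rho}\bigr)\le c\,(1+\mu_{\delta})^{2+2\alpha}\,2^{(2+2\alpha)(1+\beta)k_{0}}\,(2^{n})^{-(1+\alpha)(1-2\rho)}.
\]
Since $2^{k_{0}}$ is at most a power of $n$ times a power of $\log(1+\lambda_{0}+\mu_{\delta})$, the factor $2^{(2+2\alpha)(1+\beta)k_{0}}$ is absorbed, for $n$ past a threshold depending only on $\varepsilon,\beta,\alpha$, into $(2^{n})^{(2+2\alpha)\varepsilon}(1+\lambda_{0}+\mu_{\delta})^{\delta-\alpha}$; because $-(1+\alpha)(1-2\rho)+(2+2\alpha)\varepsilon=-1-\alpha+(\varepsilon+\rho)(2+2\alpha)$ and $(1+\mu_{\delta})^{2+2\alpha}(1+\lambda_{0}+\mu_{\delta})^{\delta-\alpha}\le(1+\lambda_{0}+\mu_{\delta})^{2+2\delta}$, this is exactly the claimed bound with $C_{1}=c_{\varepsilon,\beta,\lambda_{1},\lambda_{2},\alpha,\rho}(1+\lambda_{0}+\mu_{\delta})^{2+2\delta}$. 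Adding the four contributions (and absorbing the finitely many small $N$) completes the proof.

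The crux is the coupling bookkeeping of the second paragraph: one must verify that, after invoking Strassen's theorem on the \emph{smoothed} block-vectors and then resampling to strip the smoothing, the pair $((X'_{(k,j)}),(Y'_{(k,j)}))$ genuinely has the prescribed joint law over all of $\mathcal{K}_{N}$ (including across the junction at level $k_{0}$), and — crucially for handling $C$ and $D$ — that the stripped-off variables $\widetilde V_{(k,j)}$ are again an i.i.d.\ sequence. A secondary, purely bookkeeping issue is ensuring that $C_{1}$ carries $(1+\lambda_{0}+\mu_{\delta})$ only to the power $2+2\delta$; this works precisely because $\delta_{k_{0}}$ is doubly exponentially small in $k_{0}$, so $2^{k_{0}}$, and hence every structural factor it generates, depends only logarithmically — that is, sub-polynomially — on the constants $\lambda_{0}$ and $\mu_{\delta}$.
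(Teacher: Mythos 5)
Your proposal is correct, but it follows a genuinely different route than the paper at two substantive points, and the comparison is instructive. The paper takes $k_{0}=[\varepsilon n]$, couples the smoothed block vectors via Proposition~\ref{Prop Aux Main} and Strassen--Dudley, strips the smoothing from \emph{both} coupled vectors by conditional quantile transforms, and then converts the Prokhorov bound $\Delta$ into a bound on partial sums through a weighted union bound combined with a Chebyshev/truncation/H\"older interplay between the large $L^{p'}$-norms of the block sums (of order $2^{kp}$) and the tiny probability $\Delta$; this is why $\Delta$ must be super-exponentially small in $n$, which forces $k_{0}\asymp\varepsilon n$, and why the first $k_{0}$ blocks (total length $2^{\varepsilon n}$) must then be handled by the nontrivial maximal inequality of Proposition~\ref{Prop max Lp bound}. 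You instead observe that on the Strassen event the coordinatewise closeness $\delta_{k_{0}}$ can simply be multiplied by the number of islands, at most of order $2^{\beta n}$, so only exponential-in-$n$ smallness of $\delta_{k_{0}}$ is required; this permits the logarithmic cut-off $k_{0}\asymp\frac{2}{\varepsilon}\log_{2}n$, after which the low blocks have total length polynomial in $n$ and a crude triangle-inequality/Markov bound in $L^{2+2\delta}$ suffices, the dependence of $2^{k_{0}}$ on $\lambda_{0},\mu_{\delta}$ being only poly-logarithmic and hence absorbable into the slack $(1+\lambda_{0}+\mu_{\delta})^{\delta-\alpha}$ --- which is exactly what keeps the final power at $2+2\delta$ and reproduces the exponent $-1-\alpha+(\varepsilon+\rho)(2+2\alpha)$. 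Your de-smoothing is also asymmetric: you keep the original $X$-blocks, transfer the Strassen coupling through its conditional kernel (in the spirit of the Berkes--Philipp lemma the paper uses elsewhere), and resample $(Y',\widetilde V)$ from the conditional law of $(Y,V)$ given the sum, so that the $\widetilde V$'s are again i.i.d.\ smoothing variables; the paper instead reconstructs both sides by quantile transforms. Both are legitimate, and your Doob--Rosenthal treatment of the stripped-off smoothing variables mirrors the paper's $R_{2}$ term. What the paper's choice buys is a cut-off independent of the unknown constants and no threshold-in-$n$/poly-log bookkeeping (your constant, like the paper's, also implicitly depends on $\delta$ through that absorption); what your choice buys is that, for this proposition, you avoid both the moment--truncation argument and any appeal to Proposition~\ref{Prop max Lp bound}.
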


\begin{proof}
For the sake of brevity,  it is convenient to set $k_{0}=[\varepsilon n]$,  $%
X_{k_{0}, n}=\left( X_{\left( k_{0}\right) }, ..., X_{\left( n\right) }\right),$
$Y_{k_{0}, n}=\left( Y_{\left( k_{0}\right) }, ..., Y_{\left( n\right) }\right)
$ and $V_{k_{0}, n}=\left( V_{\left( k_{0}\right) }, ..., V_{\left( n\right)
}\right);$ the variables   $\widetilde{X}_{k_{0}, n}=X_{k_{0}, n}+V_{k_{0}, n}$ and $%
\widetilde{Y}_{k_{0}, n}=Y_{k_{0}, n}+V_{k_{0}, n}$ are  the smoothed versions of
$X_{k_{0}, n}$ and $Y_{k_{0}, n}.$ By Proposition \ref{Prop Aux Main},  with $%
k_{0}=[\varepsilon n], $ there exists a constant $c_{\varepsilon, \beta
, \lambda _{1}, \lambda _{2}}$ such that%
\begin{equation}
\pi \left( \mathcal{L}_{\widetilde{X}_{k_{0}, n}}, \mathcal{L}_{\widetilde{Y}%
_{k_{0}, n}}\right) \leq \Delta =\left( 1+\lambda _{0}+\mu _{\delta }\right)
c_{\varepsilon, \beta, \lambda _{1}, \lambda _{2}}\exp \left( -\frac{\lambda
_{1}}{4}2^{\varepsilon ^{2}n/2}\right).  \label{Prokhorov dist}
\end{equation}%
Using Strassen-Dudley's theorem (see Lemma \ref{lemma strassen-dudley}),  we
conclude that on some extension of the initial probability space there are
random vectors $\widetilde{S}_{k_{0}, n}=\left( S_{\left( k_{0}\right)
}, ..., S_{\left( n\right) }\right) $ and $\widetilde{T}_{k_{0}, n}=\left(
T_{\left( k_{0}\right) }, ..., T_{\left( n\right) }\right) $ such that $%
\widetilde{S}_{k_{0}, n}\overset{d}{=}\widetilde{X}_{k_{0}, n}, $ $\widetilde{T}%
_{k_{0}, n}\overset{d}{=}\widetilde{Y}_{k_{0}, n}$ and
\begin{equation}
\mathbb{P}\left( \left\Vert \widetilde{S}_{k_{0}, n}-\widetilde{T}%
_{k_{0}, n}\right\Vert _{\infty }\geq \Delta \right) \leq \Delta.
\label{Str-Dud}
\end{equation}%
We shall remove the smoothing from the vectors $\widetilde{S}_{k_{0}, n}$ and $%
\widetilde{T}_{k_{0}, n}.$ Without loss of generality we may assume that there
is a random vector $U$ with uniform distribution on $[0, 1]^{m_{k_{0}, n}}$
and independent of $\left( \widetilde{S}_{k_{0}, n}, \widetilde{T}%
_{k_{0}, n}\right).$ We thus consider the transition kernels $G_{1}\left( x|y\right)
:=\mathbb{P}\left( X_{k_{0}, n}\leq x|\widetilde{X}_{k_{0}, n}=y\right) $ and $%
G_{2}\left( x|y\right) :=\mathbb{P}\left( Y_{k_{0}, n}\leq x|\widetilde{Y}%
_{k_{0}, n}=y\right)$ and set  $X_{k_{0}, n}^{\prime }:=G_{1}^{-1}\left( U|%
\widetilde{S}_{k_{0}, n}\right), $ $V_{k_{0}, n}^{\prime }:=\widetilde{S}%
_{k_{0}, n}-X_{k_{0}, n}^{\prime }$, $Y_{k_{0}, n}^{\prime
}:=G_{1}^{-1}\left( U|T_{k_{0}, n}\right)$ and $V_{k_{0}, n}^{\prime \prime }:=%
\widetilde{T}_{k_{0}, n}-Y_{k_{0}, n}^{\prime }.$ The two sequences $%
X_{k_{0}, n}^{\prime }$ and $Y_{k_{0}, n}^{\prime }$ are such that $\widetilde{%
S}_{k_{0}, n}=X_{k_{0}, n}^{\prime }+{V}_{k_{0}, n}^{\prime }, $ $\widetilde{T}%
_{k_{0}, n}=Y_{k_{0}, n}^{\prime }+V_{k_{0}, n}^{\prime \prime }$ and $%
X_{k_{0}, n}^{\prime }\overset{d}{=}X_{k_{0}, n}, $ $Y_{k_{0}, n}^{\prime }%
\overset{d}{=}Y_{k_{0}, n}, $ $V_{k_{0}, n}^{\prime }\overset{d}{=}%
V_{k_{0}, n}^{\prime \prime }\overset{d}{=}V_{k_{0}, n}.$ The coordinates of
the vectors $X_{k_{0}, n}^{\prime }$ and $Y_{k_{0}, n}^{\prime }$ are denoted
by $X_{\left( k, j\right) }^{\prime }$ and $Y_{\left( k, j\right) }^{\prime }, $
$\left( k, j\right) \in \mathcal{K}.$ Since $\widetilde{S}_{\left( k, j\right)
}=X_{\left( k, j\right) }^{\prime }+V_{\left( k, j\right) }^{\prime }$ and $%
\widetilde{T}_{\left( k, j\right) }=Y_{\left( k, j\right) }^{\prime
}+V_{\left( k, j\right) }^{\prime \prime }, $ we have,  for any $x\geq 1, $
\begin{equation*}
R=\mathbb{P}\left( \sup_{k_{0}\leq k, ~\left( k, j\right) \in \mathcal{K}%
_{N}}\left\vert \sum_{\left( l, i\right) \preceq \left( k, j\right) }\left(
X_{\left( l, i\right) }^{\prime }-Y_{\left( l, i\right) }^{\prime }\right)
\right\vert \geq 2x\right) \leq R_{1}+R_{2},
\end{equation*}%
where
\begin{eqnarray*}
R_{1} &=&\mathbb{P}\left( \sup_{k_{0}\leq k, ~\left( k, j\right) \in \mathcal{K%
}_{N}}\left\vert \sum_{\left( l, i\right) \preceq \left( k, j\right) }%
\widetilde{S}_{\left( l, i\right) }-\widetilde{T}_{\left( l, i\right)
}\right\vert \geq x\right),  \\
R_{2} &=&\mathbb{P}\left( \sup_{k_{0}\leq k, ~\left( k, j\right) \in \mathcal{K%
}_{N}}\left\vert \sum_{\left( l, i\right) \preceq \left( k, j\right) }\left(
V_{\left( l, i\right) }^{\prime }-V_{\left( l, i\right) }^{\prime \prime
}\right) \right\vert \geq x\right).
\end{eqnarray*}

First,  we shall give a control for $R_{1}.$ Note that \textrm{card}$\
\mathcal{K}_{N}\leq c2^{\beta n}.$ For any sequence of positive numbers $%
\left( \alpha _{\left( k, j\right) }\right) _{\left( k, j\right) \in \mathcal{K%
}}$ such that $\sum_{\left( k, j\right) \in \mathcal{K}}\alpha _{\left(
k, j\right) }\leq 1, $ it holds%
\begin{eqnarray*}
&&\left\{ \sup_{k_{0}\leq k, ~\left( k, j\right) \in \mathcal{K}%
_{N}}\left\vert \sum_{k_{0}\leq l, ~\left( l, i\right) \preceq \left(
k, j\right) }\left( \widetilde{S}_{\left( l, i\right) }-\widetilde{T}_{\left(
l, i\right) }\right) \right\vert \geq x\right\} \\
&\subseteq &\bigcup_{k_{0}\leq k, ~\left( k, j\right) \in \mathcal{K}%
_{N}}\left\{ \left\vert \sum_{k_{0}\leq l, ~\left( l, i\right) \preceq \left(
k, j\right) }\left( \widetilde{S}_{\left( l, i\right) }-\widetilde{T}_{\left(
l, i\right) }\right) \right\vert \geq x\right\} \\
&\subseteq &\bigcup_{k_{0}\leq k, ~\left( k, j\right) \in \mathcal{K}%
_{N}}\bigcup\limits_{k_{0}\leq k, ~\left( l, i\right) \preceq \left(
k, j\right) }\left\{ \left\vert \widetilde{S}_{\left( l, i\right) }-\widetilde{%
T}_{\left( l, i\right) }\right\vert \geq x\alpha _{\left( l, i\right) }\right\}
\\
&=&\bigcup\limits_{k_{0}\leq k, ~\left( k, j\right) \in \mathcal{K}%
_{N}}\left\{ \left\vert \widetilde{S}_{\left( k, j\right) }-\widetilde{T}%
_{\left( k, j\right) }\right\vert \geq x\alpha _{\left( k, j\right) }\right\},
\end{eqnarray*}%
which implies that%
\begin{equation*}
R_{1}\leq \sum_{k_{0}\leq k, ~\left( k, j\right) \in \mathcal{K}_{N}}\mathbb{P}%
\left( \left\vert \widetilde{S}_{\left( k, j\right) }-\widetilde{T}_{\left(
k, j\right) }\right\vert \geq x\alpha _{\left( k, j\right) }\right).
\end{equation*}%
Let $p=2+2\alpha <2+2\delta.$ By Chebyshev's inequality%
\begin{equation*}
R_{1}\leq x^{-p}\sum_{k_{0}\leq k, ~\left( k, j\right) \in \mathcal{K}%
_{N}}\alpha _{\left( k, j\right) }^{-p}\mathbb{E}\left\vert \widetilde{S}%
_{\left( k, j\right) }-\widetilde{T}_{\left( k, j\right) }\right\vert ^{p}.
\end{equation*}%
By a truncation argument,  with $\Delta $ from (\ref{Prokhorov dist}) and (%
\ref{Str-Dud}), %
\begin{eqnarray*}
R_{1} &\leq &x^{-p}\Delta ^{p}\sum_{k_{0}\leq k, ~\left( k, j\right) \in
\mathcal{K}_{N}}\alpha _{\left( k, j\right) }^{-p} \\
&&+x^{-p}\sum_{k_{0}\leq k, ~\left( k, j\right) \in \mathcal{K}_{N}}\alpha
_{\left( k, j\right) }^{-p}\mathbb{E}\left\vert \widetilde{S}_{\left(
k, j\right) }-\widetilde{T}_{\left( k, j\right) }\right\vert ^{p}1\left(
\left\vert \widetilde{S}_{\left( k, j\right) }-\widetilde{T}_{\left(
k, j\right) }\right\vert \geq \Delta \right).
\end{eqnarray*}%
Let $\eta \in (0, \delta -\alpha ), $ $p^{\prime }=p+2\eta $ and $\gamma =%
\frac{2\eta }{p+2\eta }\leq \eta.$ Applying H\"{o}lder's inequality one may
write%
\begin{eqnarray*}
&&\left\Vert\left\vert \widetilde{S}_{\left( k, j\right) }-\widetilde{T}_{\left( k, j\right) }\right\vert 1\left(  \left\vert \widetilde{S}_{\left(
k, j\right) }-\widetilde{T}_{\left( k, j\right) }\right\vert \geq \Delta\right) \right\Vert _{L^{p}}\\
&& \qquad\qquad\qquad\qquad\qquad\qquad \leq \left\Vert \widetilde{S}_{\left( k, j\right)}-\widetilde{T}_{\left( k, j\right) }\right\Vert _{L^{p^{\prime }}}
\mathbb{P}\left( \left\vert \widetilde{S}_{\left( k, j\right) }-\widetilde{T}_{\left(k, j\right) }\right\vert >\Delta \right) ^{\frac{\gamma }{p}}.
\end{eqnarray*}%
By Condition \textbf{C2},  for some constant $c>0, $ we get%
\begin{equation*}
\left\Vert \widetilde{S}_{\left( k, j\right) }-\widetilde{T}_{\left(
k, j\right) }\right\Vert _{L^{p^{\prime }}}\leq 2\left\Vert X_{\left(
k, j\right) }\right\Vert _{L^{p^{\prime }}}+2\left\Vert V_{\left( k, j\right)
}^{\prime }\right\Vert _{L^{p^{\prime }}}\leq c\left( 1+\mu _{\delta
}\right) \left\vert I_{k, j}\right\vert ;
\end{equation*}%
consequently,  using (\ref{Str-Dud})%
\begin{eqnarray*}
R_{1} &\leq &x^{-p}\Delta ^{p}\sum_{k_{0}\leq k, ~\left( k, j\right) \in
\mathcal{K}_{N}}\alpha _{\left( k, j\right) }^{-p} \\
&&+c\left( 1+\mu _{\delta }\right) ^{p}x^{-p}\sum_{k_{0}\leq k, ~\left(
k, j\right) \in \mathcal{K}_{N}}\alpha _{\left( k, j\right) }^{-p}\left\vert
I_{k, j}\right\vert ^{p}\left( \mathbf{P}\left( \left\vert \widetilde{S}%
_{\left( k, j\right) }-\widetilde{T}_{\left( k, j\right) }\right\vert \geq
\Delta \right) \right) ^{\gamma } \\
&\leq &x^{-p}\Delta ^{p}\sum_{k_{0}\leq k, ~\left( k, j\right) \in \mathcal{K}%
_{N}}\alpha _{\left( k, j\right) }^{-p} \\
&&+c\left( 1+\mu _{\delta }\right) ^{p}x^{-p}\Delta ^{\gamma
}\sum_{k_{0}\leq k, ~\left( k, j\right) \in \mathcal{K}_{N}}\alpha _{\left(
k, j\right) }^{-p}2^{(k-\left[ \beta k\right] )p} \\
&\leq &c_{\varepsilon, \beta, \lambda _{1}, \lambda _{2}, \eta }\left(
1+\lambda _{0}+\mu _{\delta }\right) ^{p+\gamma }\exp \left( -\frac{\lambda
_{1}}{4}\gamma 2^{\varepsilon ^{2}n/2}\right) x^{-p}\sum_{k_{0}\leq k\leq
n}\sum_{j\leq 2^{\left[ \beta k\right] }}\alpha _{\left( k, j\right)
}^{-p}2^{kp}.
\end{eqnarray*}%
Now,  choosing $\alpha _{\left( k, j\right) }=2^{-k}j^{-2}, $ we obtain%
\begin{eqnarray*}
\sum_{k_{0}\leq k\leq n}\sum_{j\leq 2^{\left[ \beta k\right] }}\alpha
_{\left( k, j\right) }^{-p}2^{kp} &\leq &\sum_{k_{0}\leq k\leq n}\sum_{j\leq
2^{\left[ \beta k\right] }}2^{2kp}j^{2p}\leq 2^{2np}\sum_{k_{0}\leq k\leq
n}\sum_{j\leq 2^{\left[ \beta k\right] }}j^{2p} \\
&\leq &2^{2np}\sum_{k_{0}\leq k\leq n}2^{\left( 2p+1\right) \left[ \beta k%
\right] }\leq 2^{2np}2^{\left( 2p+1\right) \left[ \beta n\right] }n\leq
2^{nc_{\alpha, \beta }},
\end{eqnarray*}%
which implies that%
\begin{equation*}
R_{1}\leq c_{\varepsilon, \beta, \lambda _{1}, \lambda _{2}, \eta }\left(
1+\lambda _{0}+\mu _{\delta }\right) ^{p+\gamma }\exp \left( -\frac{1}{4}%
\gamma \lambda _{1}2^{\varepsilon ^{2}n/2}\right) 2^{nc_{\alpha, \beta
}}x^{-p}.
\end{equation*}%
Since $\gamma =\frac{2\eta }{p+2\eta }\leq \eta \leq p\eta $ and $x\geq 1, $
we conclude that%
\begin{equation}
R_{1}\leq A^{\prime }\exp \left( -\frac{1}{4}\gamma \lambda _{1}\left(
2^{n}\right) ^{\varepsilon ^{2}/2}\right)  \label{R11a}
\end{equation}%
for some $A^{\prime }=c_{\varepsilon, \beta, \lambda _{1}, \lambda
_{2}, \alpha, \alpha ^{\prime }, \eta }^{\prime }\left( 1+\lambda _{0}+\mu
_{\delta }\right) ^{p\left( 1+\eta \right) }.$ Now we give a control for $%
R_{2}.$ Using Doob's inequality,  for any $\lambda >2, $%
\begin{eqnarray}
R_{2} &\leq &2\mathbb{P}\left( \sup_{k_{0}\leq k, ~\left( k, j\right) \in
\mathcal{K}_{N}}\left\vert \sum_{k_{0}\leq l, ~\left( l, i\right) \preceq
\left( k, j\right) }V_{\left( l, i\right) }^{\prime }\right\vert \geq x\right)
\notag \\
&\leq &2x^{-\lambda }\mathbb{E}\left( \sum_{k_{0}\leq l, ~\left( l, i\right)
\in \mathcal{K}_{N}}\left\vert V_{\left( l, i\right) }^{\prime }\right\vert
\right) ^{\lambda }.  \label{R12a}
\end{eqnarray}%
By Rosenthal's inequality
\begin{eqnarray}
\left( \mathbb{E}\left( \sum_{k_{0}\leq l, ~\left( l, i\right) \in \mathcal{K}%
_{N}}\left\vert V_{\left( l, i\right) }^{\prime }\right\vert \right)
^{\lambda }\right) ^{1/\lambda } &\leq &c_{\lambda }\left( \sum_{k_{0}\leq
l, ~\left( l, i\right) \in \mathcal{K}_{N}}\mathbb{E}\left( \left\vert
V_{\left( l, i\right) }^{\prime }\right\vert ^{2}\right) \right) ^{1/2}
\notag \\
&&+c_{\lambda }\left( \sum_{k_{0}\leq l, ~\left( l, i\right) \in \mathcal{K}%
_{N}}\mathbb{E}\left( \left\vert V_{\left( l, i\right) }^{\prime }\right\vert
^{\lambda }\right) \right) ^{1/\lambda }  \notag \\
&\leq &c_{\lambda }^{\prime }\left( 2^{\beta n}\right) ^{1/2}.  \label{R12b}
\end{eqnarray}%
From (\ref{R11a}),  (\ref{R12a}) and (\ref{R12b}) we obtain%
\begin{eqnarray*}
&&\mathbb{P}\left( \sup_{k_{0}\leq k, ~\left( k, j\right) \in \mathcal{K}%
_{N}}\left\vert \sum_{\left( l, i\right) \preceq \left( k, j\right) }\left(
X_{\left( l, i\right) }^{\prime }-Y_{\left( l, i\right) }^{\prime }\right)
\right\vert \geq 2x\right) \\
&\leq &A^{\prime }\exp \left( -\frac{1}{4}\gamma \lambda _{1}\left(
2^{n}\right) ^{\varepsilon ^{2}/2}\right) +c_{\lambda }\left( 2^{\beta
n}\right) ^{\lambda /2}x^{-\lambda }.
\end{eqnarray*}%
Choosing $x=\frac{1}{2}\left( 2^{n}\right) ^{\frac{1}{2}-\rho }, $ we find%
\begin{eqnarray}
&&\mathbb{P}\left( \left( 2^{n}\right) ^{-\frac{1}{2}}\sup_{k_{0}\leq
k, ~\left( k, j\right) \in \mathcal{K}_{N}}\left\vert \sum_{\left( l, i\right)
\preceq \left( k, j\right) }\left( X_{\left( l, i\right) }^{\prime }-Y_{\left(
l, i\right) }^{\prime }\right) \right\vert \geq \left( 2^{n}\right) ^{-\rho
}\right)  \notag \\
&\leq &A^{\prime }\exp \left( -\frac{1}{4}\gamma \lambda _{1}\left(
2^{n}\right) ^{\varepsilon ^{2}/2}\right) +c_{\lambda }\left( 2^{n}\right)
^{-\frac{1}{2}\lambda \left( 1-\beta -2\rho \right) }.  \label{R15a}
\end{eqnarray}%

So far we performed the construction for $k \geq k_0.$ It remains to construct the sequences $X_{\left( k,j\right) }^{\prime }$ and
$Y_{\left( k,j\right) }^{\prime }$ for $\left( k,j\right) \preceq \left(k_{0}-1,m_{k_{0}-1}\right).$
This construction can be performed by any method such that the sequences
$(X_{\left(k,j\right) }^{\prime })$ and $(Y_{\left( k,j\right) }^{\prime})$, where $\left( k,j\right) \preceq \left(k_{0}-1,m_{k_{0}-1}\right),$
are independent and $Y_{\left( k,j\right) }^{\prime }\overset{d}{=}X_{\left(k,j\right) }$ for the same $\left( k,j\right).$
Indeed, let $F_{X|Y_{1},...,Y_{k}}\left(x|y_{1},...,y_{k}\right) $ be the conditional distribution of $X$ given $%
[Y_{1}=y_{1},...,Y_{k}=y_{k}]$ and let
$ (U_{\left( k,j\right) })$ be a sequence of independent r.v.'s uniformly distributed on $\left( 0,1\right) .$
Denote for brevity the constructed part by $\mathbf{X}_{k_{0}}^{\prime }=(X_{\left( k,j\right) }^{\prime }) _{k_{0}\leq k,~\left( k,j\right) \in \mathcal{K}_{N}}.$
Define $X_{\left( k_{0}-1,1\right) }^{\prime }$ as the
conditional quantile transform
$$
X_{\left( k_{0}-1,1\right) }^{\prime}=F_{X_{\left( k_{0}-1,1\right) }|\mathbf{X}_{k_{0}}}^{-1}\left( U_{\left(k_{0}-1,1\right) }|\mathbf{X}_{k_{0}}^{\prime }\right) ,$$
where $\mathbf{X}_{k_{0}}=\left( X_{\left( k,j\right) }\right) _{k_{0}\leq k,~\left(
k,j\right) \in \mathcal{K}_{N}}.$
We continue by setting
$$
X_{\left(
k_{0}-1,j\right) }^{\prime }=F_{X_{\left( k,j\right) }|\mathbf{X}%
_{k_{0}}}^{-1}\left( U_{\left( k_{0}-1,j-1\right) }|X_{\left(
k_{0}-1,1\right) }^{\prime },...,X_{\left( k_{0}-1,j-1\right) }^{\prime }%
\mathbf{X}_{k_{0}}^{\prime }\right) ,$$
for $j=2,...,m_{k_{0}-1}.$
In the same way we extend the construction to all $X_{\left( k,j\right) }^{\prime }$
with $1\leq k<k_{0}-1.$
The construction of the sequence $\left( Y_{\left(k,j\right) }^{\prime }\right),$ for $\left( k,j\right) \preceq \left(k_{0}-1,m_{k_{0}-1}\right)$ is similar.

Since the sequence $\left(X_{k}\right) _{k\geq 1}$ satisfies Condition \textbf{C1},  so does the sequence $(X_{\left( k, j\right) }^{\prime }).$
Using the maximal inequality stated in
Proposition \ref{Prop max Lp bound} and noting that the cardinality of the
set $\left\{ \left( k, j\right) :\left( k, j\right) \preceq \left(
k_{0}-1, m_{k_{0}-1}\right) \right\} $ is less or equal to $2^{\beta
k_{0}}\leq 2^{\varepsilon n}, $ we obtain,  for any $\eta ^{\prime }\in \left(
0, \frac{\delta -\alpha }{\left( 2+\alpha +\delta \right) ^{2}}\right), $%
\begin{equation*}
\mathbb{E}\left( \sup_{\left( k, j\right) \preceq \left(
k_{0}-1, m_{k_{0}-1}\right) }\left\vert \sum_{\left( l, i\right) \preceq
\left( k, j\right) }X_{\left( l, i\right) }^{\prime }\right\vert ^{p}\right)
\leq A^{\prime \prime }\left( 2^{\varepsilon n}\right) ^{\frac{1}{2}p},
\end{equation*}%
for come constant $A^{\prime \prime }=c_{\varepsilon, \beta, \lambda
_{1}, \lambda _{2}, \delta, \alpha, \eta }^{\prime \prime }\left( 1+\lambda
_{0}+\mu _{\delta }\right) ^{p\left( 1+\eta ^{\prime }\right) }.$ By
Chebyshev's inequality,  for any $x>0$ we get%
\begin{eqnarray*}
\mathbb{P}\left( \sup_{\left( k, j\right) \preceq \left(
k_{0}-1, m_{k_{0}-1}\right) }\left\vert \sum_{\left( l, i\right) \preceq
\left( k, j\right) }X_{\left( l, i\right) }^{\prime }\right\vert \geq x\right)
&\leq &x^{-p}\mathbb{E}\sup_{\left( k, j\right) \preceq \left(
k_{0}-1, m_{k_{0}-1}\right) }\left\vert \sum_{\left( l, i\right) \preceq
\left( k, j\right) }X_{\left( l, i\right) }^{\prime }\right\vert ^{p} \\
&\leq &A^{\prime \prime }x^{-p}\left( 2^{\varepsilon n}\right) ^{\frac{1}{2}%
p}.
\end{eqnarray*}%
Substituting $x=\left( 2^{n}\right) ^{\frac{1}{2}-\rho }$ yields%
\begin{equation*}
\mathbb{P}\left( \left( 2^{n}\right) ^{-\frac{1}{2}}\sup_{\left( k, j\right)
\preceq \left( k_{0}-1, m_{k_{0}-1}\right) }\left\vert \sum_{\left(
l, i\right) \preceq \left( k, j\right) }X_{\left( l, i\right) }^{\prime
}\right\vert \geq \left( 2^{n}\right) ^{-\rho }\right) \leq A^{\prime \prime
}\left( 2^{n}\right) ^{-\frac{p}{2}+p\left( \rho +\frac{1}{2}\varepsilon
\right) }.
\end{equation*}%
A similar inequality can be proved with $Y_{\left( l, i\right) }^{\prime }$
instead of $X_{\left( l, i\right) }^{\prime }.$ Combining this with (\ref%
{R15a}) we obtain, %
\begin{eqnarray}
&&\mathbb{P}\left( \left( 2^{n}\right) ^{-\frac{1}{2}}\sup_{\left(
k, j\right) \in \mathcal{K}_{N}}\left\vert \sum_{\left( l, i\right) \preceq
\left( k, j\right) }\left( X_{\left( l, i\right) }^{\prime }-Y_{\left(
l, i\right) }^{\prime }\right) \right\vert \geq 2\left( 2^{n}\right) ^{-\rho
}\right)  \notag \\
&\leq &A^{\prime \prime \prime }\left( \exp \left( -\frac{1}{4}\gamma
\lambda _{1}\left( 2^{n}\right) ^{\varepsilon ^{2}/2}\right) +\left(
2^{n}\right) ^{-\frac{1}{2}\lambda \left( 1-\beta -2\rho \right) }+\left(
2^{n}\right) ^{-\frac{p}{2}+p\left( \rho +\frac{1}{2}\varepsilon \right)
}\right),   \label{R17}
\end{eqnarray}%
for some $A^{\prime \prime }=c_{\varepsilon, \beta, \lambda _{1}, \lambda
_{2}, \delta, \delta ^{\prime }, \eta, \lambda }^{\prime \prime \prime }\left(
1+\lambda _{0}+\mu _{\delta }\right) ^{p\left( 1+\eta +\eta ^{\prime
}\right) }.$ Recall that $p=2+2\alpha, $ $\alpha <\delta, $ $\beta >\frac{1}{%
2}$ and $\rho <\frac{1-\beta }{2}.$ Taking $\lambda =\frac{2+2\alpha }{%
1-\beta -2\rho }>p, $ the right-hand side of (\ref{R17}) does not exceed $%
A^{\prime \prime \prime }\left( 2^{n}\right) ^{-1-\alpha +\left( 2+2\alpha
\right) \left( \rho +\varepsilon \right) }.$ It remains to choose a
sufficiently small $\eta +\eta ^{\prime }$ such that $p\left( 1+\eta +\eta
^{\prime }\right) \leq 2+2\delta, $ which implies%
\begin{equation*}
A^{\prime \prime \prime }\leq c_{\varepsilon, \beta, \lambda _{1}, \lambda
_{2}, \alpha, \rho }^{\prime \prime \prime }\left( 1+\lambda _{0}+\mu
_{\delta }\right) ^{2+2\delta }.
\end{equation*}%
The assertion of the proposition follows.
\end{proof}

\subsection{Coupling with independent normal r.v.'s\label{sec Coupling Norm}}

Assume conditions \textbf{C1},  \textbf{C2} and \textbf{C3}.
Without loss of generality we can consider that $\mu _{i}=0, $ $i\geq 1$ and $\sigma =1.$
The following proposition shows that with high probability the partial sums $\sum_{\left(
l, i\right) \preceq \left( k, j\right) }X_{\left( l, i\right) }$ can be coupled
with the partial sums of some normal r.v.'s.
Note the presence of two terms in the upper bound below.
One of them called {\it dependence error} comes from replacing dependent blocks by independent ones;
the second one called {\it Sakhanenko's error } is due to the use of Sakhanenko's strong approximation result for the independent blocks.

\begin{proposition}
\label{Prop-Normal 001} Let $\alpha
<\delta, $ $\beta >\frac{1}{2}$ and $0<\rho <\frac{1-\beta }{2}.$ Then for
any $N\in \mathbb{N}, $ on some extension of the initial probability space
there exists a sequence of independent standard normal r.v.'s $(W_{\left(
k, j\right) }^{\prime })_{\left( k, j\right) \in \mathcal{K}_{N}}$ and a
version $(X_{\left( k, j\right) }^{\prime })_{\left( k, j\right) \in \mathcal{K%
}_{N}}$ of the sequence $(X_{\left( k, j\right) })_{\left( k, j\right) \in
\mathcal{K}_{N}}$ such that
\begin{eqnarray}
&&\mathbb{P}\left( \left( 2^{n}\right) ^{-\frac{1}{2}}\sup_{\left(
k, j\right) \in \mathcal{K}_{N}}\left\vert \sum_{\left( l, i\right) \preceq
\left( k, j\right) }\left( X_{\left( l, i\right) }^{\prime }-\sigma
_{l, i}W_{\left( l, i\right) }^{\prime }\right) \right\vert \geq 2\left(
2^{n}\right) ^{-\rho }\right)  \notag \\
&\leq &C_{2}\left( 2^{n}\right) ^{-1-\alpha +\left( \varepsilon +\rho
\right) \left( 2+2\alpha \right) }\quad \quad \quad \text{(dependence error)}
\notag \\
&&+C_{2}\left( 2^{n}\right) ^{-\beta \alpha +\rho \left( 2+2\alpha \right)
}\quad \quad \quad \text{(Sakhanenko's error)}  \label{propnorm001}
\end{eqnarray}%
where $\sigma _{l, i}^{2}=\mathrm{Var}\left( X_{\left( l, i\right) }\right)$ and $C_{2}=c_{\varepsilon, \beta, \lambda _{1}, \lambda _{2}, \alpha, \rho
}\left( 1+\lambda _{0}+\mu _{\delta }\right) ^{2+2\delta }.$
\end{proposition}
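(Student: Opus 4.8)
The plan is to build the coupling in two successive stages and then combine them by the triangle inequality; the two error terms in (\ref{propnorm001}) correspond exactly to these two stages. In the first stage I would invoke Proposition~\ref{Prop-Indep 001}: on an extension of the initial probability space one obtains versions $(X'_{(k,j)})_{(k,j)\in\mathcal{K}_N}$ of $(X_{(k,j)})_{(k,j)\in\mathcal{K}_N}$ and $(Y'_{(k,j)})_{(k,j)\in\mathcal{K}_N}$ of the independent sequence $(Y_{(k,j)})_{(k,j)\in\mathcal{K}_N}$ with
\begin{equation*}
\mathbb{P}\left((2^n)^{-1/2}\sup_{(k,j)\in\mathcal{K}_N}\Bigl|\sum_{(l,i)\preceq(k,j)}\bigl(X'_{(l,i)}-Y'_{(l,i)}\bigr)\Bigr|\geq(2^n)^{-\rho}\right)\leq C_1(2^n)^{-1-\alpha+(\varepsilon+\rho)(2+2\alpha)},
\end{equation*}
where $C_1=c_{\varepsilon,\beta,\lambda_1,\lambda_2,\alpha,\rho}(1+\lambda_0+\mu_\delta)^{2+2\delta}$; this is the dependence error.

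In the second stage I would apply a Koml\'os--Major--Tusn\'ady / Sakhanenko type strong approximation theorem for independent random variables (see \cite{KMT}, \cite{Einm1989}, \cite{Zai98}, \cite{Sakh83}, \cite{Sakh85}) to the \emph{independent} family $(Y'_{(k,j)})_{(k,j)\in\mathcal{K}_N}$, reindexed linearly according to the order $\preceq$. This yields, on a further extension, independent standard normal r.v.'s $(W'_{(k,j)})_{(k,j)\in\mathcal{K}_N}$ such that, with $\sigma_{l,i}^2=\mathrm{Var}(X_{(l,i)})$ (for the finitely many blocks with $\sigma_{l,i}=0$ the variable $X_{(l,i)}$ vanishes a.s.\ and $W'_{(l,i)}$ may be chosen arbitrarily) and $p=2+2\alpha<2+2\delta$,
\begin{equation*}
\mathbb{E}\,\sup_{(k,j)\in\mathcal{K}_N}\Bigl|\sum_{(l,i)\preceq(k,j)}\bigl(Y'_{(l,i)}-\sigma_{l,i}W'_{(l,i)}\bigr)\Bigr|^{p}\leq c_p\sum_{(l,i)\in\mathcal{K}_N}\mathbb{E}\bigl|X_{(l,i)}\bigr|^{p}.
\end{equation*}
The block moments are controlled by the $L^p$ bound of Proposition~\ref{Prop max Lp bound}, which gives $\mathbb{E}|X_{(l,i)}|^p\leq c(1+\mu_\delta)^{p(1+\eta)}|I_{l,i}|^{p/2}$ for any small $\eta>0$. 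Since block $k$ contains $2^{[\beta k]}$ islands, each of length at most $2^{k-[\beta k]}$, and $p/2=1+\alpha>1$, summing over the islands of block $k$ and then the resulting geometric series over $k\leq n$ gives $\sum_{(l,i)\in\mathcal{K}_N}\mathbb{E}|X_{(l,i)}|^p\leq c(1+\mu_\delta)^{p(1+\eta)}(2^n)^{p/2-\alpha\beta}$. Applying Chebyshev's inequality at the level $(2^n)^{1/2-\rho}$ then yields
\begin{equation*}
\mathbb{P}\left((2^n)^{-1/2}\sup_{(k,j)\in\mathcal{K}_N}\Bigl|\sum_{(l,i)\preceq(k,j)}\bigl(Y'_{(l,i)}-\sigma_{l,i}W'_{(l,i)}\bigr)\Bigr|\geq(2^n)^{-\rho}\right)\leq c(1+\mu_\delta)^{p(1+\eta)}(2^n)^{-\beta\alpha+\rho(2+2\alpha)},
\end{equation*}
which is the Sakhanenko error.

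To finish I would glue the two couplings: the joint law of $(X'_{(k,j)},Y'_{(k,j)})$ is fixed by Proposition~\ref{Prop-Indep 001} and that of $(Y'_{(k,j)},W'_{(k,j)})$ by the approximation theorem, and both involve the same law for $(Y'_{(k,j)})$, so by the standard gluing lemma (regular conditional probabilities) the triple $(X'_{(k,j)},Y'_{(k,j)},W'_{(k,j)})_{(k,j)\in\mathcal{K}_N}$ can be realized on a single extension of the initial space with these prescribed marginals; the at most $2^{\beta k_0}\leq 2^{\varepsilon n}$ blocks with $k<k_0=[\varepsilon n]$ are handled exactly as in the proof of Proposition~\ref{Prop-Indep 001} and absorbed into the dependence error. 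On the intersection of the two complementary events the triangle inequality gives $(2^n)^{-1/2}\sup_{(k,j)}|\sum_{(l,i)\preceq(k,j)}(X'_{(l,i)}-\sigma_{l,i}W'_{(l,i)})|<2(2^n)^{-\rho}$, so a union bound over the two displays above produces (\ref{propnorm001}); choosing $\eta>0$ with $p(1+\eta)\leq2+2\delta$ (possible since $p<2+2\delta$) absorbs the moment factors into $(1+\lambda_0+\mu_\delta)^{2+2\delta}$, giving $C_2=c_{\varepsilon,\beta,\lambda_1,\lambda_2,\alpha,\rho}(1+\lambda_0+\mu_\delta)^{2+2\delta}$. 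The main obstacle is the second stage: it requires the strong approximation theorem in the precise form of a \emph{maximal} $L^p$ inequality over all partial sums of a finite independent sequence with a constant depending only on $p$, together with the Rosenthal type block bound $\mathbb{E}|X_{(l,i)}|^p\leq c(1+\mu_\delta)^{p(1+\eta)}|I_{l,i}|^{p/2}$, which genuinely uses the near-independence of the increments inside an island (Proposition~\ref{Prop max Lp bound}); it is the summation over the $2^{[\beta k]}$ islands of each block that converts the block geometry into the exponent $-\beta\alpha+\rho(2+2\alpha)$, and the measure-theoretic splicing of the two couplings, though routine, must be arranged so as not to disturb the joint law of $(X',Y')$.
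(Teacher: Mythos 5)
Your proposal follows essentially the same route as the paper: Proposition \ref{Prop-Indep 001} for the dependence error, Sakhanenko's maximal $L^{p}$ inequality (Theorem \ref{Th-KMT001}, inequality (\ref{SakhLp})) applied to the independent blocks with the moment bound $\mathbb{E}|X_{(l,i)}|^{p}\leq c(1+\lambda_{0}+\mu_{\delta})^{p(1+\eta)}|I_{l,i}|^{p/2}$ of Proposition \ref{Prop Lp bound}, Chebyshev at level $(2^{n})^{\frac{1}{2}-\rho}$ to get the exponent $-\beta\alpha+\rho(2+2\alpha)$, and a gluing of the two couplings along the common law of the $Y$-blocks (the paper uses the Berkes--Philipp lemma, which is exactly your regular-conditional-probability gluing), followed by a union bound and choosing $\eta$ small so that $p(1+\eta)\leq 2+2\delta$. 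The argument is correct; the only cosmetic differences are that your intermediate moment bound should carry the factor $(1+\lambda_{0}+\mu_{\delta})$ rather than $(1+\mu_{\delta})$ (Condition \textbf{C1} enters the block moment estimate), and the separate treatment of the blocks with $k<k_{0}$ is unnecessary since Proposition \ref{Prop-Indep 001} already covers all of $\mathcal{K}_{N}$.
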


\begin{proof}
Let $p=2+2\alpha.$ Since $\left\vert I_{k, j}\right\vert \leq 2^{k-\left[
\beta k\right] }, $ we obtain,  using Proposition \ref{Prop Lp bound}
\begin{equation*}
\mathbb{E}\left\vert X_{\left( k, j\right) }\right\vert ^{p}\leq A\left\vert
I_{k, j}\right\vert ^{\frac{p}{2}}\leq A\left( 2^{k-\left[ \beta k\right]
}\right) ^{\frac{p}{2}},
\end{equation*}%
where $A=c_{\lambda _{1}, \lambda _{2}, \delta ^{\prime }, \eta }\left(
1+\lambda _{0}+\mu _{\delta }\right) ^{p\left( 1+\eta \right) }$ and $\eta
>0 $ is arbitrary. Taking into account that $m_{k}=2^{\left[ \beta k\right]
}\leq 2^{\beta k}, $ we have%
\begin{eqnarray}
\sum_{\left( k, j\right) \in \mathcal{K}_{N}}\mathbb{E}\left\vert X_{\left(
k, j\right) }\right\vert ^{p} &=&\sum_{k=1}^{n}\sum_{j=1}^{m_{k}}\mathbb{E}%
\left\vert X_{\left( k, j\right) }\right\vert ^{p}  \notag \\
&\leq &\sum_{k=1}^{n}m_{k}A\left( 2^{k-\left[ \beta k\right] }\right) ^{%
\frac{p}{2}}  \notag \\
&\leq &A2^{\frac{p}{2}}\sum_{k=1}^{n}2^{k\left( \beta +\frac{p}{2}\left(
1-\beta \right) \right) }  \notag \\
&\leq &c_{\alpha, \beta }A\left( 2^{n}\right) ^{\beta +\frac{p}{2}\left(
1-\beta \right) }.  \label{coupling norm002}
\end{eqnarray}%
By (\ref{SakhLp}) in Section \ref{subsectionSakh},  on some probability space
$\left( \Omega ^{\prime \prime }, \mathcal{F}^{\prime \prime }, \mathbb{P}%
^{\prime \prime }\right), $ there exist a version $(Y_{\left( k, j\right)
}^{\prime \prime })_{\left( k, j\right) \in \mathcal{K}_{N}}$ of the sequence
$(Y_{\left( k, j\right) })_{\left( k, j\right) \in \mathcal{K}_{N}}$ and
independent standard normal r.v.'s $(W_{\left( k, j\right) }^{\prime
})_{\left( k, j\right) \in \mathcal{K}_{N}}$ such that%
\begin{equation*}
\mathbb{P}^{\prime \prime }\left( \sup_{\left( k, j\right) \in \mathcal{K}%
_{N}^{0}}\left\vert \sum_{\left( l, i\right) \preceq \left( k, j\right)
}\left( Y_{\left( l, i\right) }^{\prime \prime }-\sigma _{l, i}W_{\left(
l, i\right) }^{\prime }\right) \right\vert \geq a\right) \leq \frac{c_{p}}{%
a^{p}}\sum_{\left( k, j\right) \in \mathcal{K}_{N}^{0}}\mathbb{E}\left\vert
X_{\left( k, j\right) }\right\vert ^{p}.
\end{equation*}

Choosing $a=\left( 2^{\beta n}\right) ^{\frac{1}{2}-\rho }$ and taking into
account (\ref{coupling norm002}) we obtain%
\begin{eqnarray*}
&&\mathbb{P}^{\prime \prime }\left( \left( 2^{n}\right) ^{-\frac{1}{2}%
}\sup_{\left( k, j\right) \in \mathcal{K}_{N}^{0}}\left\vert \sum_{\left(
l, i\right) \preceq \left( k, j\right) }\left( Y_{\left( l, i\right) }^{\prime
\prime }-\sigma _{l, i}W_{\left( l, i\right) }^{\prime }\right) \right\vert
\geq \left( 2^{n}\right) ^{-\rho }\right) \\
&\leq &c_{p}\left( 2^{n}\right) ^{-\frac{p}{2}+\rho p}c_{\alpha, \beta
}A\left( 2^{n}\right) ^{\beta +\frac{p}{2}\left( 1-\beta \right) } \\
&\leq &c_{\alpha, \beta }^{\prime }A\left( 2^{n}\right) ^{-\beta \alpha
+\rho \left( 2+2\alpha \right) }.
\end{eqnarray*}%
By Berkes-Philip's lemma ( Lemma 2.1 of \cite{BerkPhilip}) we can reconstruct on some new probability space the sequences $%
(X_{\left( k, j\right) }^{\prime })_{\left( k, j\right) \in \mathcal{K}_{N}}$,
$(Y_{\left( k, j\right) }^{\prime })_{\left( k, j\right) \in \mathcal{K}_{N}}$%
,  $(Y_{\left( k, j\right) }^{\prime \prime })_{\left( k, j\right) \in \mathcal{%
K}_{N}}$ and $(W_{\left( k, j\right) }^{\prime })_{\left( k, j\right) \in
\mathcal{K}_{N}}$  in such a way that
$
Y_{\left( k, j\right) }^{\prime }=Y_{\left( k, j\right) }^{\prime \prime}$ a.s. for any $\left( k, j\right) \in \mathcal{K}_{N}.$ %
Without loss of generality we shall consider this new probability space as
an extension of the initial probability space. Using Proposition \ref%
{Prop-Indep 001} we obtain%
\begin{eqnarray*}
&&\mathbb{P}\left( 2^{-\frac{n}{2}}\sup_{\left( k, j\right) \in \mathcal{K}%
_{N}}\left\vert \sum_{\left( l, i\right) \preceq \left( k, j\right) }\left(
X_{\left( l, i\right) }^{\prime }-\sigma _{l, i}W_{\left( l, i\right) }^{\prime
}\right) \right\vert \geq 2\left( 2^{n}\right) ^{-\rho }\right) \\
&\leq &C_{1}\left( 2^{n}\right) ^{-1-\alpha +\left( \varepsilon +\rho
\right) \left( 2+2\alpha \right) }\quad \;\text{(dependence error)} \\
&&+c_{\delta, \beta }^{\prime }A\left( 2^{n}\right) ^{-\beta \alpha +\rho
\left( 2+2\alpha \right) }\quad \;\text{(Sakhanenko's error)}
\end{eqnarray*}%
with $C_{1}$ defined by Proposition \ref{Prop-Indep 001}. Taking into
account that $p=2+2\alpha, $ $\alpha <\delta $ and choosing $\eta $
sufficiently small we get $p\left( 1+\eta \right) \leq 2+2\delta, $ which
implies $c_{\alpha, \beta }^{\prime }A\leq c_{\lambda _{1}, \lambda
_{2}, \alpha, \eta }^{\prime }\left( 1+\lambda _{0}+\mu _{\delta }\right)
^{2+2\delta }.$ This concludes the proof.
\end{proof}

\subsection{Construction of the sequences $(\widetilde{X}_{i})_{1\leq i\leq
N}$ and $\left( W_{i}\right) _{1\leq i\leq N}$\label{sec Construction}}

As before,  we suppose that $\mu _{i}=0, $ $i\geq 1$ and $\sigma ^{2}=1.$ Let $%
(X_{\left( k, j\right) }^{\prime })_{\left( k, j\right) \in \mathcal{K}_{N}}$
and $(W_{\left( k, j\right) }^{\prime })_{\left( k, j\right) \in \mathcal{K}%
_{N}}$ be the sequences which appear in Proposition \ref{Prop-Normal 001}.

First we shall construct the sequence $\left( W_{i}\right) _{1\leq i\leq N}.$
Note that,  by condition \textbf{C3},  the variances $\sigma _{k, i}^{2}=%
\mathrm{Var}\left( X_{\left( k, j\right) }\right) $ can be approximated by $%
\sigma ^{2}\left\vert I_{k, j}\right\vert =\left\vert I_{k, j}\right\vert, $
but in general do not coincide with $\left\vert I_{k, j}\right\vert.$
Therefore to perform our construction we have to replace each of the non
identically distributed normal random variables $\sigma _{k, j}^{2}W_{\left(
k, j\right) }^{\prime }$ by some sums of independent identically distributed
standard normal random variables.\ Let $\left( W_{i}\right) _{1\leq i\leq N}$
be a sequence of independent standard normal r.v.'s; let $\xi _{k, j}$ be an
extra standard normal random variable. Set $I_{k, j}:=\{i_{1}, \cdots
, i_{\left\vert I_{k, j}\right\vert }\}$ with $i_{1}\preceq...\preceq
i_{\left\vert I_{k, j}\right\vert }$ and let $i_{k, j}^{\ast }$ be the maximal
index $j\in \left\{ i_{1}, ..., i_{\left\vert I_{k, j}\right\vert }\right\} $
for which the variance of the partial sum $\sum_{i=i_{1}}^{j}W_{i}$ does not
exceed $\sigma _{k, j}^{2}, $ i.e. $i_{k, j}^{\ast }=i_{m_{k, j}^{\ast }}, $
where $m_{k, j}^{\ast }=\min \left\{ \left\vert I_{k, j}\right\vert, \left[
\sigma _{k, j}^{2}\right] \right\}.$

It is easy to check that   $W_{\left( k, j\right) }^{\prime
\prime }:=\sum_{i=i_{1}}^{i_{k, j}^{\ast }}W_{i}+\xi _{k, j}f_{k, j}$ where $%
f_{k, j}^{2}=\left\vert \sigma _{k, j}^{2}-i_{k, j}^{\ast }\right\vert$ is a normal random variable
 with  mean $0$ and variance $\sigma _{k, j}^{2}$; furthermore, we may consider $W_{\left( k, j\right)
}^{\prime \prime }$ as a new version of $\sigma _{k, j}W_{\left( k, j\right)
}^{\prime }.$  The random variable $%
\sum_{k\in I_{k, j}}W_{k}-W_{\left( k, j\right) }^{\prime \prime
}$, which is equal to $\sum_{i_{k, j}^{\ast }+1\leq k\leq \left\vert I_{k, j}\right\vert }W_{k}-\xi
_{k, j}f_{k, j}$, has also a normal random variable of mean $0$ and variance $\left(
\left\vert I_{k, j}\right\vert -i_{k, j}^{\ast }\right)
+f_{k, j}^{2}=\left\vert \sigma _{k, j}^{2}-\left\vert I_{k, j}\right\vert
\right\vert.$  By Berkes-Philip's lemma ( Lemma 2.1 of \cite{BerkPhilip}),  without loss of
generality,  we can reconstruct the sequences $\left( X_{\left( k, j\right)
}^{\prime }\right) _{\left( k, j\right) \in \mathcal{K}_{N}}, $ $\left( \sigma
_{k, j}W_{\left( k, j\right) }^{\prime }\right) _{\left( k, j\right) \in
\mathcal{K}_{N}}, $ $\left( W_{\left( k, j\right) }^{\prime \prime }\right)
_{\left( k, j\right) \in \mathcal{K}_{N}}$ and $\left( W_{\left( k, j\right)
}^{\prime \prime }\right) _{\left( k, j\right) \in \mathcal{K}_{N}}$ on the
same probability space in such way that a.s. $\left( \sigma _{k, j}W_{\left(
k, j\right) }^{\prime }\right) _{\left( k, j\right) \in \mathcal{K}%
_{N}}=\left( W_{\left( k, j\right) }^{\prime \prime }\right) _{\left(
k, j\right) \in \mathcal{K}_{N}}.$ We shall consider that this probability
space is an extension of the initial probability space. Thus we have
constructed the $W_{i}$'s when $i$ belongs to the union of all islands $%
I=\cup _{\left( k, j\right) \in \mathcal{K}_{N}}I_{k, j}$ with the property
that the $\eta _{k, j}=\sum_{i\in I_{k, j}}W_{i}-\sigma _{k, j}W_{\left(
k, j\right) }^{\prime }$ are independent normal and centered random variables
with variances $v_{k, j}^{2}=\left\vert \sigma _{k, j}^{2}-\left\vert
I_{k, j}\right\vert \right\vert \leq \tau \left\vert I_{k, j}\right\vert
^{\gamma }, $ for any $\gamma >0.$ Therefore the sum $\sum_{\left( l, i\right)
\leq \left( n, m\right) }\eta _{l, i}$ is normal with mean $0$ and variance $%
\sum_{\left( l, i\right) \leq \left( n, m\right) }v_{k, i}^{2}\leq c_{\beta
}\tau 2^{\left( \beta +\gamma \right) n};$ by Doob's inequality,  with $%
p=2+2\alpha, $ it follows%
\begin{eqnarray}
\mathbb{P}\left( \left( 2^{n}\right) ^{-\frac{1}{2}}\sup_{\left( k, j\right)
\in \mathcal{K}_{N}}\left\vert \sum_{\left( l, i\right) \leq \left(
k, j\right) }\eta _{l, i}\right\vert \geq \left( 2^{n}\right) ^{-\rho }\right)
&\leq &\left( 2^{n}\right) ^{-p/2+\rho p}\mathbb{E}\left( \left\vert
\sum_{\left( l, i\right) \leq \left( n, m\right) }\eta _{l, i}\right\vert
^{p}\right)  \notag \\
&\leq &c_{\alpha, \beta }\left( 2^{n}\right) ^{-p/2+\rho p}\left( \tau
2^{\left( \beta +\gamma \right) n}\right) ^{p/2}  \notag \\
&=&c_{\alpha, \beta }\tau ^{1+\alpha }\left( 2^{n}\right) ^{-\left( 1-\beta
\right) \left( 1+\alpha \right) +\left( \rho +\gamma /2\right) \left(
2+2\alpha \right) } \notag
\\
&\ &  \label{constr005}
\end{eqnarray}%
where $\gamma >0$ is arbitrary. When $i$ belongs to the union of gaps $%
J=\cup _{\left( k, j\right) \in \mathcal{K}_{N}}J_{k, j}$ the variables $W_{i}$
can be taken as any independent standard normal random variables independent
of the sequence $\left( W_{k}\right) _{k\in I}.$

So far we have constructed the variables $(X_{\left( k, j\right) }^{\prime})_{\left( k, j\right) \in \mathcal{K}_{N}}$ corresponding to sums over the islands. Now we proceed to construct the components of the sequence $( \widetilde{X}_{i} ) _{1\leq i\leq N}.$
First, we proceed with the components belonging to all islands. For each $\left( k, j\right) \in \mathcal{K}_{N}, $ we construct a sequence $( \widetilde{X}_{i} ) _{i\in I_{k, j}}$ such that $\sum_{i\in I_{k, j}}\widetilde{X}_{i}=X_{\left( k, j\right) }^{\prime }$ and $( \widetilde{X}_{i} )_{i=1, ..., N}\overset{d}{=}\left( X_{i}\right) _{i=1, ..., N}.$
Denote by $F_{X|Y_{1}, ..., Y_{k}}\left( x|y_{1}, ..., y_{k}\right) $ the conditional
distribution of $X$ given $[Y_{1}=y_{1}, ..., Y_{k}=y_{k}].$ Without loss of
generality,  on the initial probability space,  there exists a sequence $%
U_{1}, ..., U_{N}$ of independent r.v.'s uniformly distributed on $\left(
0, 1\right)$. Let $i_{1}, ..., i_{\left\vert I_{k, j}\right\vert }$ be the
indices in the set $I_{k, j}.$ The required construction is performed in the
standard way by defining first $\widetilde{X}_{i_{1}}$ as the conditional
quantile transform
$$
F_{X_{i_{1}}|X_{\left( k, j\right) }}^{-1}\left(U_{k, i_{1}}|X_{\left( k, j\right) }^{\prime }\right)
$$
and then by setting, for $l=2, ..., \left\vert I_{k, j}\right\vert,$
$$
\widetilde{X}_{i_{l}}=F_{X_{i_{l}}|X_{i_{1}}, ..., X_{i_{l-1}}, X_{\left(
k, j\right) }}^{-1}\left( U_{i_{l}}|\widetilde{X}_{i_{1}}, ..., \widetilde{X}%
_{i_{l-1}}, X_{\left( k, j\right) }^{\prime }\right).
$$
Thus we have constructed the vector $\widetilde{X}%
_{I}=( \widetilde{X}_{i} ) _{i\in I}, $ where $I=\cup _{\left(
k, j\right) \in \mathcal{K}_{N}}I_{k, j}$ is the union of all islands, such
that $\widetilde{X}_{I}\overset{d}{=}X_{I}=\left( X_{i}\right) _{i\in I}.$
In the same way we construct the $\widetilde{X}_{i}$ when $i$ belongs to the
union of gaps $J=\cup _{\left( k, j\right) \in \mathcal{K}_{N}}J_{k, j}:=%
\{j_{1}, ..., j_{\left\vert J\right\vert} \}$ : set $%
X_{j_{1}}=F_{X_{j_{1}}|X_{I}}^{-1}\left( U_{k, j_{1}}|\widetilde{X}%
_{I}\right) $ and subsequently $\widetilde{X}%
_{j_{l}}=F_{X_{j_{l}}|X_{j_{1}}, ..., X_{j_{l-1}}, X_{I}}^{-1}\left( U_{j_{l}}|%
\widetilde{X}_{j_{1}}, ..., \widetilde{X}_{j_{l-1}}, \widetilde{X}_{I}\right), $
for $l=2, ..., \left\vert J\right\vert.$

\subsection{Putting together the bounds\label{sec Putting Together}}

Denote by $r_{k, j}$ the right end of the island $I_{k, j}$ and let $\mathcal{L%
}_{N}=\left\{ 1\right\} \cup \left\{ r_{k, j}:\left( k, j\right) \in \mathcal{K%
}_{N}\right\} $ be the set of $r_{k, j},  $ ordered with respect to
lexicographical order $\preceq $. For any $r=r_{k, j}\in \mathcal{L}_{N}$
denote by $r^{next}$ be the next element in the set $\mathcal{L}_{N}, $ i.e. $%
r^{next}=\inf \left\{ r^{\prime }:r^{\prime }\in \mathcal{L}_{N}, \ r\preceq
r^{\prime }\right\}.$

Let $(\widetilde{X}_{i})_{1\leq i\leq N}$ and $\left( W_{i}\right) _{1\leq
i\leq N}$ be the sequences constructed in Section \ref{sec Construction}.
Recall that by construction,  for any $r=r_{k, j}\in \mathcal{L}_{N}, $ we have
$\left\{ 1, ..., r\right\} =\sum_{\left( l, i\right) \preceq \left( k, j\right)
}J_{l, j}\cup I_{k, j}.$ First we replace in the statement of Theorem \ref{Th
main res 1} the $\sup $ over the set $\left\{ j:1\leq j\leq N\right\} $ by
the $\sup $ over the grid $\mathcal{L}_{N}$ and the $\sup $ of the
oscillation term: in other words,  the random variable  $\displaystyle \sup_{1\leq j\leq
N}\left\vert \sum_{i\leq j}\left( \widetilde{X}_{i}-W_{i}\right) \right\vert
$ is bounded by
\begin{equation}
\underbrace{\sup_{r\in \mathcal{L}_{N}}\left\vert \sum_{i\leq r}\left(
\widetilde{X}_{i}-W_{i}\right) \right\vert }_{\text{(sup over the grid }%
\mathcal{L}_{N}\text{)}}\quad +\quad \underbrace{\sup_{r\in \mathcal{L}%
_{N}}\sup_{r\leq r^{\prime }\leq \min \left\{ r^{next}-1, N\right\}
}\left\vert \sum_{r\leq i\leq r^{\prime }}\left( \widetilde{X}%
_{i}-W_{i}\right) \right\vert }_{\text{(oscillation term)}}.  \label{put000}
\end{equation}%
%
%
%
%
%
%
%
%
%
%
%
%
%
%
%
%
For any $r=r_{k, j}, $ we have%
\begin{equation}
\sum_{1\leq i\leq r}\widetilde{X}_{i}=\sum_{\left( l, h\right) \preceq \left(
k, j\right) }\left( \sum_{i\in I_{l, h}}\widetilde{X}_{i}+\sum_{i\in J_{l, h}}%
\widetilde{X}_{i}\right) =\sum_{\left( l, h\right) \preceq \left( k, j\right)
}\left( \underline{X}_{\left( l, h\right) }+\overline{X}_{\left( l, h\right)
}\right),   \label{put00a}
\end{equation}%
where $\underline{X}_{\left( l, h\right) }=\sum_{i\in I_{l, h}}\widetilde{X}%
_{i}$ and $\overline{X}_{\left( l, h\right) }=\sum_{i\in J_{l, h}}\widetilde{X}%
_{i}.$ In the same way%
\begin{equation}
\sum_{1\leq i\leq r}W_{i}=\sum_{\left( l, h\right) \preceq \left( k, j\right)
}\left( \sum_{i\in I_{l, h}}W_{i}+\sum_{i\in J_{l, h}}W_{i}\right)
=\sum_{\left( l, h\right) \preceq \left( k, j\right) }\left( \underline{W}%
_{\left( l, h\right) }+\overline{W}_{\left( l, h\right) }\right)
\label{put00b}
\end{equation}%
where $\underline{W}_{\left( l, h\right) }=\sum_{i\in I_{l, h}}W_{i}$ and $%
\overline{W}_{\left( l, h\right) }=\sum_{i\in J_{l, h}}W_{i}.$ From (\ref%
{put000}),  (\ref{put00a}) and (\ref{put00b}) we obtain%
\begin{eqnarray}
&&\sup_{1\leq j\leq N}\left\vert \sum_{i\leq j}\left( \widetilde{X}%
_{i}-W_{i}\right) \right\vert  \label{put001} \\
&\leq &\sup_{\left( k, j\right) \in \mathcal{K}_{N}}\left\vert \sum_{\left(
l, h\right) \leq \left( k, j\right) }\underline{X}_{\left( l, h\right) }-%
\underline{W}_{\left( l, h\right) }\right\vert \quad \;\text{(sup over
islands)}  \notag \\
&&+\sup_{\left( k, j\right) \in \mathcal{K}_{N}}\left\vert \sum_{\left(
l, h\right) \leq \left( k, j\right) }\left( \overline{X}_{\left( l, h\right) }-%
\overline{W}_{\left( l, h\right) }\right) \right\vert \quad \;\text{(sup over
gaps)}  \notag \\
&&+\sup_{r\in \mathcal{L}_{N}}\sup_{r\leq r^{\prime }\leq \min \left\{
r^{next}-1, N\right\} }\left\vert \sum_{l\leq i\leq r^{\prime }}\left(
\widetilde{X}_{i}-W_{i}\right) \right\vert \quad \;\text{(oscillation term)}
\notag
\end{eqnarray}%
where the term "sup over islands" is bounded by the sum%
\begin{eqnarray}
&&\sup_{\left( k, j\right) \in \mathcal{K}_{N}}\left\vert \sum_{\left(
l, h\right) \leq \left( k, j\right) }\underline{X}_{\left( l, h\right) }-\sigma
_{l, h}W_{\left( l, h\right) }^{\prime }\right\vert \quad \;\text{(normal
approximation)}  \label{put002} \\
&&+\sup_{\left( k, j\right) \in \mathcal{K}_{N}}\left\vert \sum_{\left(
l, h\right) \leq \left( k, j\right) }\left( \sigma _{l, h}W_{\left( l, h\right)
}^{\prime }-\underline{W}_{\left( l, h\right) }\right) \right\vert.\quad
\quad \text{(variance homogenization)}  \notag
\end{eqnarray}%
The control of the term "normal approximation" has been yet given in
Proposition \ref{Prop-Normal 001} where it is bounded by two terms
"dependence error" and "Sakhanenko's error". The "variance homogenization"
term is controlled by (\ref{constr005}). As to terms "sup over gaps" and
"oscillation term" they will be considered in (\ref{gaps006}) and (\ref%
{Oscil005}) below.

\subsection{Bound for the partial sums over gaps\label{bound for the partial
sums of gaps}}

Let $p=2+2\alpha, $ where $\alpha <\delta.$ Since the blocks are indexed by
$l=k_{0}, ..., n$ and the total length of the gaps in the block $l$ is less
than $\left( 2+[\beta l]\right) 2^{[\beta l]+\left[ \varepsilon l\right] -1}$
the total length $L^{gap}$ of all gaps satisfies
\begin{equation*}
L^{gap}=\sum_{\left( l, i\right) \preceq \left( n, m\right) } 
\left\vert J_{l, i}\right\vert 
\le  \sum_{k_{0}\leq l\leq n}\left( 2+[\beta l]\right) 2^{[\beta l]+
\left[ \varepsilon l\right] -1}\leq c_{\varepsilon, \beta }2^{\left( \beta
+\varepsilon \right) k}.
\end{equation*}%
By Proposition \ref{Prop max Lp bound},  we have,  for any $\eta >0, $%
\begin{eqnarray*}
\left\Vert \sup_{\left( k, j\right) \preceq \left( n, m\right) }\left\vert
\sum_{\left( l, i\right) \leq \left( k, j\right) }\overline{X}_{\left(l, i\right) }\right\vert \right\Vert _{L^{p}} 
&\leq &c_{\lambda _{1}, \lambda_{2}, \alpha, \delta, \eta }\left( 1+\lambda _{0}+\mu _{\delta }\right)
^{1+\eta }  
\left(      L^{gap}    \right)^{\frac{1}{2}} \\
&\leq &c_{\varepsilon, \beta, \lambda _{1}, \lambda _{2}, \alpha, \delta, \eta
}\left( 1+\lambda _{0}+\mu _{\delta }\right) ^{1+\eta }\left( 2^{\left(\beta +\varepsilon \right) n}\right) ^{\frac{1}{2}}.
\end{eqnarray*}%
Using Chebyshev's inequality
with $x=\left( 2^{n}\right) ^{\frac{1}{2}-\rho }$,  we get
\begin{eqnarray}
\mathbb{P}\left( \sup_{\left( k, j\right) \in \mathcal{K}_{N}}\left\vert
\sum_{\left( l, i\right) \leq \left( k, j\right) }\overline{X}_{\left(
l, i\right) }\right\vert \geq x\right) &\leq &\frac{1}{x^{p}}\mathbb{E}\left(
\sup_{\left( k, j\right) \preceq \left( n, m\right) }\left\vert \sum_{\left(
l, i\right) \leq \left( k, j\right) }\overline{X}_{\left( l, i\right)
}\right\vert \right) ^{p}  \notag \\
&\leq &A\left( 2^{n}\right) ^{-p/2+\rho p}\left( 2^{\left( \beta
+\varepsilon \right) n}\right) ^{p/2}  \notag \\
&\leq &A\left( 2^{n}\right) ^{-\left( 1-\beta -\varepsilon \right) p/2+\rho
p},   \label{gaps005a}
\end{eqnarray}%
where $A=c_{\varepsilon, \beta, \lambda _{1}, \lambda _{2}, \alpha, \delta
, \eta }\left( 1+\lambda _{0}+\mu _{\delta }\right) ^{p\left( 1+\eta \right)
}.$ 
A similar bound can be established with $\overline{W}_{\left( l, i\right) }$
instead of $\overline{X}_{\left( l, i\right) }.$ Using this bound and (\ref%
{gaps005a}) it follows that,  for any $0<\rho <\frac{1}{4}, $%
\begin{eqnarray}
&&\mathbb{P}\left( \left( 2^{n}\right) ^{-\frac{1}{2}}\sup_{\left(
k, j\right) \in \mathcal{K}_{N}}\left\vert \sum_{\left( l, i\right) \leq
\left( k, j\right) }\left( \overline{X}_{\left( l, i\right) }-\overline{W}%
_{\left( l, i\right) }\right) \right\vert \geq \left( 2^{n}\right) ^{-\rho
}\right)  \notag \\
&\leq &A^{\prime }\left( 2^{n}\right) ^{-\left( 1-\beta -\varepsilon \right)
p/2+\rho p}  \notag \\
&\leq &A^{\prime }\left( 2^{n}\right) ^{-\left( 1-\beta \right) \left(
1+\alpha \right) +\left( \rho +\varepsilon /2\right) \left( 2+2\alpha
\right) }  \label{gaps006}
\end{eqnarray}%
where $A^{\prime }=c_{\varepsilon, \beta, \lambda _{1}, \lambda _{2}, \alpha
, \delta }^{\prime }\left( 1+\lambda _{0}+\mu _{\delta }\right) ^{2+2\delta
}. $

\subsection{Bound for the oscillation term\label{Bound for the oscillation
term}}

Denote for brevity $r^{+}=\min \left\{ r^{next}-1, N\right\}.$ First note
that
\begin{equation*}
r^{+}-r\leq \max_{\left( k, j\right) \in \mathcal{K}_{N}}(\left\vert
I_{k, j}\right\vert +\left\vert J_{k, j}\right\vert )\leq c_{\varepsilon
, \beta }\left( 2^{\left( \beta +\varepsilon \right) n}+2^{\left( 1-\beta
\right) n}\right).
\end{equation*}%
Let $p=2+2\alpha $ where $\alpha <\delta.$ By Proposition \ref{Prop max Lp
bound} of Section \ref{sec Lp bounds},  for any $\eta >0, $ we have%
\begin{equation*}
\left\Vert \sup_{r\in \mathcal{L}_{N}}\sup_{r\leq l\leq r^{+}}\left\vert
\sum_{r\leq i\leq l}\widetilde{X}_{i}\right\vert \right\Vert
_{L^{p}}^{p}\leq A\left( \sup_{r\in \mathcal{L}_{N}}\left( r^{+}-r\right)
\right) ^{p/2}\leq c_{\varepsilon, \beta }A\left( 2^{\left( \beta
+\varepsilon \right) n}+2^{\left( 1-\beta \right) n}\right) ^{p/2},
\end{equation*}%
where $A=c_{\lambda _{1}, \lambda _{2}, \alpha, \delta, \eta }\left( 1+\lambda
_{0}+\mu _{\delta }\right) ^{p\left( 1+\eta \right) }.$ Therefore,  by
Chebyshev's inequality,  with $x=\frac{1}{2}\left( 2^{n}\right) ^{\frac{1}{2}%
-\rho }$ and $\rho >0, $%
\begin{eqnarray*}
\mathbb{P}\left( \sup_{r\in \mathcal{L}_{N}}\sup_{r\leq l\leq
r^{+}}\left\vert \sum_{r\leq i\leq l}\widetilde{X}_{i}\right\vert \geq
x\right) &\leq &x^{-p}\mathbb{E}\left( \sup_{r\in \mathcal{L}%
_{N}}\sup_{r\leq l\leq r^{+}}\left\vert \sum_{r\leq i\leq l}\widetilde{X}%
_{i}\right\vert \right) ^{p} \\
&\leq &c_{\varepsilon, \beta }Ax^{-p}\left( 2^{\left( \beta +\varepsilon
\right) n}+2^{\left( 1-\beta \right) n}\right) ^{p/2} \\
&\leq &c_{\varepsilon, \beta }^{\prime }A2^{p+np\rho }\left( \left(
2^{n}\right) ^{-\left( 1+\alpha \right) \left( 1-\beta -\varepsilon \right)
}+\left( 2^{n}\right) ^{-\left( 1+\alpha \right) \beta }\right).
\end{eqnarray*}%
Choosing $\eta $ small enough we have $p\left( 1+\eta \right) \leq 2+2\delta
$ and therefore
\begin{equation*}
c_{\varepsilon, \beta }2^{p}A\leq A^{\prime }=c_{\varepsilon, \beta, \lambda
_{1}, \lambda _{2}, \alpha, \delta }^{\prime }\left( 1+\lambda _{0}+\mu
_{\delta }\right) ^{2+2\delta }.
\end{equation*}%
Since a similar bound can be established with $W_{i}$ instead of $X_{i}, $ we
obtain the following bound for the oscillation term:%
\begin{eqnarray}
&&\mathbb{P}\left( \left( 2^{n}\right) ^{-\frac{1}{2}}\sup_{r\in \mathcal{L}%
_{N}}\sup_{r\leq l\leq r^{+}}\left\vert \sum_{r\leq i\leq l}\left(
\widetilde{X}_{i}-W_{i}\right) \right\vert \geq 2\left( 2^{n}\right) ^{-\rho
}\right)  \notag  \label{Oscil005a} \\
&\leq &2A^{\prime }\left( 2^{n}\right) ^{\left( 2+2\alpha \right) \left(
\rho +\varepsilon /2\right) }\left( \left( 2^{n}\right) ^{-\left( 1+\alpha
\right) \left( 1-\beta \right) }+\left( 2^{n}\right) ^{-\left( 1+\alpha
\right) \beta }\right).  \label{Oscil005}
\end{eqnarray}

\subsection{Optimizing the bounds \label{Optimizing the bounds}}

Let $\alpha <\delta, $ $\beta >\frac{1}{2}$ and $0<\rho <\frac{1-\beta }{2}.$
Using (\ref{put001}),  we may decompose the quantity $\displaystyle\mathbb{P}\left(
\left( 2^{n}\right) ^{-\frac{1}{2}}\sup_{1\leq j\leq N}\left\vert
\sum_{i=1}^{j}\left( \widetilde{X}_{i}-W_{i}\right) \right\vert \geq 6\left(
2^{n}\right) ^{-\rho }\right) $ in 3 terms,  the first one \textquotedblleft
sup over islands\textquotedblright\ being itself decomposed in two terms
(see (\ref{put002})); consequently,  this quantity is decomposed in four
terms listed below:

\begin{itemize}
\item the first term \textquotedblleft normal
approximation\textquotedblright\ is controlled with Proposition \ref%
{Prop-Normal 001},  it is bounded by two terms named \textquotedblleft
dependence error\textquotedblright\ and \textquotedblright Sakhanenko's
error\textquotedblright,

\item the second term ``variance homogenization'' is controlled in \ref%
{constr005} with $\gamma =\varepsilon$

\item the term ``sup over gaps'' is controlled in (\ref{gaps006}),

\item the term ``oscillation term'' is controlled in (\ref{Oscil005}).
\end{itemize}
Putting these bounds altogether,  we obtain%
\begin{eqnarray*}
&&\mathbb{P}\left( \left( 2^{n}\right) ^{-\frac{1}{2}}\sup_{1\leq j\leq
N}\left\vert \sum_{i=1}^{j}\left( X_{i}-W_{i}\right) \right\vert \geq
6\left( 2^{n}\right) ^{-\rho }\right) \\
&\leq &A\left( 2^{n}\right) ^{-\left( 1+\alpha \right) +\left( \rho
+\varepsilon \right) \left( 2+2\alpha \right) }\quad \text{(dependence error)%
} \\
&&+A\left( 2^{n}\right) ^{-\beta \alpha +\rho \left( 2+2\alpha \right)
}\quad \text{(Sakhanenko's error)} \\
&&+A\left( 2^{n}\right) ^{-\left( 1-\beta \right) \left( 1+\alpha \right)
+\left( \rho +\varepsilon /2\right) \left( 2+2\alpha \right) }\quad \text{%
(variance homogenization error)} \\
&&+A\left( 2^{n}\right) ^{-\left( 1-\beta \right) \left( 1+\alpha \right)
+\left( \rho +\varepsilon /2\right) \left( 2+2\alpha \right) }\quad \text{%
(gaps error)} \\
&&+A\left( \left( 2^{n}\right) ^{-\left( 1-\beta \right) \left( 1+\alpha
\right) }+\left( 2^{n}\right) ^{-\beta \left( 1+\alpha \right) }\right)
\left( 2^{n}\right) ^{\left( \rho +\varepsilon /2\right) \left( 2+2\alpha
\right) }\quad \text{(oscillation error)}
\end{eqnarray*}%
where $A=c_{\varepsilon, \beta, \lambda _{1}, \lambda _{2}, \alpha }\left(
1+\tau ^{1+\alpha }+\left( 1+\lambda _{0}+\mu _{\delta }\right) ^{2+2\alpha
}\right). $

For the
moment let us ignore the factors containing $\varepsilon $ which have a
small contribution to the bound.
The term (dependence error) is negligible with respect to all other terms;
equating the powers of the term (Sakhanenko's error) and the term (gaps
error) (or equivalently variance homogenization error) we get $\beta \alpha
=\left( 1-\beta \right) \left( 1+\alpha \right) $ i.e. $\beta =\frac{%
1+\alpha }{1+2\alpha }.$ Implementing $\beta =\frac{1+\alpha }{1+2\alpha }$
in the above inequality yields%
\begin{eqnarray*}
&&\mathbb{P}\left( \left( 2^{n}\right) ^{-\frac{1}{2}}\sup_{1\leq l\leq
N}\left\vert \sum_{i=1}^{l}\left( \widetilde{X}_{i}-W_{i}\right) \right\vert
\geq 6\left( 2^{n}\right) ^{-\rho }\right) \\
&\leq &A\left( 2^{n}\right) ^{-1-\delta +\left( \rho +\varepsilon \right)
\left( 2+2\alpha \right) }\quad \text{(dependence error)} \\
&&+6A\left( 2^{n}\right) ^{-\frac{\alpha \left( 1+\alpha \right) }{1+2\alpha
}+\left( \rho +\varepsilon \right) \left( 2+2\alpha \right) }%
\begin{array}{c}
\text{(Sakhanenko's error} \\
\text{+ variance homogenization error} \\
\text{+ gaps error + oscillation error).}%
\end{array}%
\end{eqnarray*}%
Taking into account that $\alpha <\delta $ and $2^{n}\leq N<2^{n+1}$ we
obtain
\begin{equation}
\mathbb{P}\left( N^{-\frac{1}{2}}\sup_{1\leq l\leq N}\left\vert
\sum_{i=1}^{l}\left( \widetilde{X}_{i}-W_{i}\right) \right\vert \geq
6N^{-\rho }\right) \leq A^{\prime }N^{-\frac{\alpha \left( 1+\alpha \right)
}{1+2\alpha }+\left( \rho +\varepsilon \right) \left( 2+2\alpha \right) }
\label{optimiz003}
\end{equation}%
where $A^{\prime }=c_{\varepsilon, \lambda _{1}, \lambda _{2}, \alpha, \delta
}\left( 1+\lambda _{0}+\mu _{\delta }+\sqrt{\tau }\right) ^{2+2\delta }$ and
$\rho $ satisfies $0<\rho <\frac{1-\beta }{2}=\frac{\alpha }{2\left(
1+2\alpha \right) }.$

Note that the function $g\left( \alpha \right) =\frac{\alpha \left( 1+\alpha
\right) }{1+2\alpha }-\rho \left( 2+2\alpha \right)$ is strictly increasing
on $\mathbb{R}^+$ when $\rho <\frac{1}{4}.$ Therefore we can get rid of the
constant $\varepsilon $ in the bound by choosing $\alpha ^{\prime }<\alpha.$
If we let $\Delta =g\left( \alpha \right) -g\left( \alpha ^{\prime }\right)
>0$ and choose $\varepsilon $ sufficiently small,  we obtain $\left(
2^{n}\right) ^{-g\left( \alpha \right) +\varepsilon \left( 2+2\alpha \right)
}=\left( 2^{n}\right) ^{-g\left( \alpha ^{\prime }\right) -\Delta
+\varepsilon \left( 2+2\alpha \right) }\leq \left( 2^{n}\right) ^{-g\left(
\alpha ^{\prime }\right) }.$ Since $\alpha $ and $\alpha ^{\prime }$ are
arbitrary satisfying $\alpha ^{\prime }<\alpha <\delta, $ the assertion of
Theorem \ref{Th main res 1} follows with $\alpha ^{\prime }$ replacing $%
\alpha.$

We have performed a construction of the sequences $\widetilde{X}^{\left(
N\right) }=(\widetilde{X})_{1\leq i\leq N}$ and $W^{\left( N\right)
}=(W)_{1\leq i\leq N}$ for each fixed $N\geq 1, $ where for each $N$ the
constructed sequences in general are different. Below we show how to obtain
a construction of the entire sequences $(\widetilde{X})_{i\geq 1}$ and $%
(W)_{i\geq 1}.$

Let $\Omega ^{\left( N\right) }=\mathbb{R}^{N+1}\times \mathbb{R}^{N+1}.$
Without loss of generality,  for any $\omega =\left( \omega _{1}, \omega
_{2}\right) \in \Omega ^{\left( N\right) }, $ the sequences $\widetilde{X}%
^{\left( N\right) }$ and $W^{\left( N\right) }$ can be reconstructed on $%
\Omega ^{\left( N\right) }$ so that $\widetilde{X}_{i}=\omega _{1, i}, $ $%
W_{i}=\omega _{2, i}$ and their joint distribution,  say $\mathbb{P}^{\left(
N\right) }, $ is preserved. Each measure $\mathbb{P}^{\left( N\right) }$ can
be extended (arbitrarily) on the space $\mathbb{R}^{\infty }\times \mathbb{R}%
^{\infty }.$ From the bound (\ref{optimiz003}) it follows that the sequence
of measures $\mathbb{P}^{\left( N\right) }$ is tight. Therefore there is a
weak limit which satisfies (\ref{optimiz003}) and thus provides the desired
construction.

\section{ Proof of the results of Section \ref{sec Applic} \label{sec Proofs res Sec3}}
Throughout this section we assume that the Markov chain $(X_{n})_{n \geq 0}$ and the function $f$ satisfy the hypotheses \textbf{M1,  M2, }
\textbf{\ M3} and \textbf{M4}.

\subsection{Proof of Proposition \protect\ref{Proposition MC001} }
First,  we establish the following bound for the characteristic functions $%
\phi _{x, 1}, $ $\phi _{x, 2}$ and $\phi _{x}$ involved in Proposition \ref%
{Proposition MC001}.

\begin{lemma}
\label{Lemma Cond Dependence for MC} For any $k_{gap}, M_{1}, M_{2}\in \mathbb{N%
}, $ any sequence $j_{0}<...<j_{M_{1}+M_{2}}$ and any $t=(t_i)_i\in \mathbb{R}%
^{M_{1}}, s=(s_i)_i\in \mathbb{R}^{M_{2}}$ satisfying $\left\Vert \left(
t, s\right) \right\Vert _{\infty }\leq \varepsilon _{0}, $
\begin{equation*}
\left\vert \phi _{x}\left( t, s\right) -\phi _{x, 1}\left( t\right) \phi
_{x, 2}\left( s\right) \right\vert \leq 2C_{Q}C_{\mathbf{P}}^{M_{1}+M_{2}}%
\Bigl( \left\Vert \nu \right\Vert _{\mathcal{B}^{\prime }}+\left\Vert
\delta_{x}\right\Vert _{\mathcal{B}^{\prime }}\Bigr) \left\Vert e%
\right\Vert _{\mathcal{B}}\kappa ^{k_{gap}}.
\end{equation*}
\end{lemma}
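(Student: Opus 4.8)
The plan is to express all three characteristic functions in terms of iterates of the perturbed transition operators $\mathbf{P}_{t}$ and then exploit the spectral gap decomposition $\mathbf{P} = \Pi + Q$ to isolate the decaying term. First I would write, for a fixed starting point $x$, the characteristic function $\phi_{x}(t,s)$ as $\boldsymbol{\delta}_{x}$ applied to a product of operators: reading the blocks $J_{1},\dots,J_{M_{1}+M_{2}}$ from left to right, each block $J_{m}$ of length $\ell_{m}$ with associated Fourier variable $u_{m}$ contributes a factor $\mathbf{P}_{u_{m}}^{\ell_{m}}$, consecutive blocks are joined by the appropriate power of $\mathbf{P}=\mathbf{P}_{0}$, and the gap of length $k_{gap}$ between block $M_{1}$ and block $M_{1}+1$ contributes the factor $\mathbf{P}^{k_{gap}}$. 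Thus $\phi_{x}(t,s) = \boldsymbol{\delta}_{x}\bigl(\mathcal{A}_{t}\,\mathbf{P}^{k_{gap}}\,\mathcal{B}_{s}\,e\bigr)$, where $\mathcal{A}_{t}$ collects the operators coming from the past blocks (together with the interleaving powers of $\mathbf{P}$ between them) and $\mathcal{B}_{s}$ collects those from the future blocks; similarly $\phi_{x,1}(t)=\boldsymbol{\delta}_{x}(\mathcal{A}_{t}e)$ and $\phi_{x,2}(s)=\nu(e)^{-1}\,\nu(\mathcal{B}_{s}e)$ after using that $\Pi$ governs the stationary part — more carefully, $\phi_{x,2}(s)$ is obtained by starting the future block from the distribution of $X_{k_{gap}}$, which in the limit is read off through the linear form $\nu$. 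I would set this bookkeeping up cleanly, using Hypothesis \textbf{M1(d)} to guarantee that $e^{iu_{m}f}g\in\mathcal{B}$ whenever $|u_{m}|\le\varepsilon_{0}$, so every operator in sight acts on $\mathcal{B}$.

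Next I would substitute $\mathbf{P}^{k_{gap}} = \Pi + Q^{k_{gap}}$ (using $\Pi Q = Q\Pi = 0$, so that $\mathbf{P}^{k_{gap}} = \Pi + Q^{k_{gap}}$ exactly). The $\Pi$-part, because $\Pi g = \nu(g)\,e$, factorizes: $\boldsymbol{\delta}_{x}\bigl(\mathcal{A}_{t}\,\Pi\,\mathcal{B}_{s}\,e\bigr) = \nu(\mathcal{B}_{s}e)\,\boldsymbol{\delta}_{x}(\mathcal{A}_{t}\,e) = \phi_{x,1}(t)\,\phi_{x,2}(s)$ — this is exactly the product of the two marginal characteristic functions (after checking the normalization $\nu(e)=1$, which follows from $\Pi e = e$). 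Hence the difference $\phi_{x}(t,s) - \phi_{x,1}(t)\phi_{x,2}(s)$ equals precisely $\boldsymbol{\delta}_{x}\bigl(\mathcal{A}_{t}\,Q^{k_{gap}}\,\mathcal{B}_{s}\,e\bigr)$, and it remains to bound this single term.

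For the final estimate I would simply chain operator norms: $\bigl|\boldsymbol{\delta}_{x}(\mathcal{A}_{t}Q^{k_{gap}}\mathcal{B}_{s}e)\bigr| \le \|\boldsymbol{\delta}_{x}\|_{\mathcal{B}'}\,\|\mathcal{A}_{t}\|_{\mathcal{B}\to\mathcal{B}}\,\|Q^{k_{gap}}\|_{\mathcal{B}\to\mathcal{B}}\,\|\mathcal{B}_{s}\|_{\mathcal{B}\to\mathcal{B}}\,\|e\|_{\mathcal{B}}$. Using Hypothesis \textbf{M3(b)}, each perturbed-operator factor $\mathbf{P}_{u_{m}}^{\ell_{m}}$ has norm at most $C_{\mathbf{P}}$, and the interleaving $\mathbf{P}^{j}$ factors also have norm $\le C_{\mathbf{P}}$ (they are iterates of $\mathbf{P}_{0}$); since the past involves $M_{1}$ blocks and the future $M_{2}$ blocks, grouping the interleaving powers into the block factors gives $\|\mathcal{A}_{t}\|_{\mathcal{B}\to\mathcal{B}}\,\|\mathcal{B}_{s}\|_{\mathcal{B}\to\mathcal{B}} \le C_{\mathbf{P}}^{M_{1}+M_{2}}$; and $\|Q^{k_{gap}}\|_{\mathcal{B}\to\mathcal{B}} \le C_{Q}\kappa^{k_{gap}}$ by Hypothesis \textbf{M2(b)}. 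The factor $2$ and the $\|\nu\|_{\mathcal{B}'}$ term in the statement arise from treating $\phi_{x,2}(s)$ symmetrically: one writes it as $\nu(\mathcal{B}_{s}e)$ directly (replacing $\boldsymbol{\delta}_{x}\mathbf{P}^{\text{(prefix)}}$ by its stationary limit $\nu$), which produces an error already controlled by $Q^{k_{gap}}$, so the two error contributions combine into $(\|\nu\|_{\mathcal{B}'} + \|\boldsymbol{\delta}_{x}\|_{\mathcal{B}'})$ times the rest. The main obstacle I anticipate is purely notational: writing the operator-product representation of $\phi_{x}$, $\phi_{x,1}$, $\phi_{x,2}$ unambiguously, keeping track of which powers of $\mathbf{P}$ sit between which blocks, and verifying the normalization $\nu(e)=1$ so that the $\Pi$-term matches $\phi_{x,1}\phi_{x,2}$ on the nose rather than up to a constant. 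Once that algebra is in place, the bound is an immediate consequence of sub-multiplicativity of the operator norm together with \textbf{M2(b)} and \textbf{M3(b)}.
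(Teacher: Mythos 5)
Your plan is essentially the paper's proof: write $\phi _{x}$ as $\boldsymbol{\delta }_{x}$ applied to a product of perturbed operators, split $\mathbf{P}^{k_{gap}}=\Pi +Q^{k_{gap}}$, bound the $Q$-part by chaining operator norms via \textbf{M2}(b) and \textbf{M3}(b), and handle the fact that the $\Pi$-part produces $\phi _{x,1}\left( t\right) \nu \left( \mathcal{B}_{s}e\right)$ rather than $\phi _{x,1}\left( t\right) \phi _{x,2}\left( s\right)$ by a second spectral-gap estimate, which is exactly where the factor $2$ and the sum $\left\Vert \nu \right\Vert _{\mathcal{B}^{\prime }}+\left\Vert \boldsymbol{\delta }_{x}\right\Vert _{\mathcal{B}^{\prime }}$ come from. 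The one point to state precisely is in your middle paragraph: the identities $\phi _{x,2}\left( s\right) =\nu \left( e\right) ^{-1}\nu \left( \mathcal{B}_{s}e\right)$ and ``$\phi _{x}-\phi _{x,1}\phi _{x,2}$ equals precisely the $Q^{k_{gap}}$ term'' are not exact, since $\phi _{x,2}\left( s\right) =\boldsymbol{\delta }_{x}\left( \mathbf{P}^{k_{gap}+j_{M_{1}}}\mathcal{B}_{s}e\right)$; one must instead use the decomposition $\phi _{x}=\phi _{x,1}\phi _{x,2}+\phi _{x,1}\left( \psi _{2}-\phi _{x,2}\right) +\phi _{Q}$ with $\psi _{2}\left( s\right) =\nu \left( \mathcal{B}_{s}e\right)$, where $\psi _{2}-\phi _{x,2}=\left( \nu -\boldsymbol{\delta }_{x}\right) \left( Q^{k_{gap}+j_{M_{1}}}\mathcal{B}_{s}e\right)$ because the $\Pi$-contribution cancels thanks to $\left( \nu -\boldsymbol{\delta }_{x}\right) \left( e\right) =0$. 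Since this is precisely what your final paragraph describes (the ``replacement of $\boldsymbol{\delta }_{x}\mathbf{P}^{\text{prefix}}$ by $\nu$'' error controlled by $Q^{k_{gap}}$), the argument goes through and coincides with the paper's.
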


\begin{proof}
Set for brevity $\phi _{1}=\phi _{x, 1}, $ $\phi _{2}=\phi _{x, 2}$ and $\phi
=\phi _{x}.$ The characteristic function $\phi $ can be rewritten in the
following form:
\begin{equation*}
\phi \left( t, s\right) =\left( \mathbf{P}^{j_{0}}\mathbf{P}%
_{t_{1}}^{\left\vert J_{1}\right\vert }...\mathbf{P}_{t_{M_{1}}}^{\left\vert
J_{M_{1}}\right\vert }\mathbf{P}^{k_{gap}}\mathbf{P}_{s_{M_{1}+1}}^{\left%
\vert J_{M_{1}+1}\right\vert }...\mathbf{P}_{s_{M_{1}+M_{2}}}^{\left\vert
J_{M_{1}+M_{2}}\right\vert }e\right) \left( x\right).
\end{equation*}
Since $\mathbf{P}=\Pi +Q$ we get $\mathbf{P}^{k}=\Pi +Q^{k}, $ and thus%
\begin{equation}  \label{decompose}
\phi \left( t, s\right)= \phi_{\Pi}\left( t, s\right)+\phi_Q \left( t, s\right)
\end{equation}
with
\begin{equation}  \label{termePi}
\phi_{\Pi}\left( t, s\right):=\left(\mathbf{P}^{j_{0}} \mathbf{P}%
_{t_{1}}^{\left\vert J_{1}\right\vert }...\mathbf{P}_{t_{M_{1}}}^{\left\vert
J_{M_{1}}\right\vert }\Pi ^{k_{gap}}\mathbf{P}_{s_{M_{1}+1}}^{\left\vert
J_{M_{1}+1}\right\vert }...\mathbf{P}_{s_{M_{1}+M_{2}}}^{\left\vert
J_{M_{1}+M_{2}}\right\vert }e\right) \left( x\right)
\end{equation}
and
\begin{equation}  \label{termeQ}
\phi_Q \left( t, s\right):=\left(\mathbf{P}^{j_{0}} \mathbf{P}%
_{t_{1}}^{\left\vert J_{1}\right\vert }...\mathbf{P}_{t_{M_{1}}}^{\left\vert
J_{M_{1}}\right\vert }Q^{k_{gap}}\mathbf{P}_{s_{M_{1}+1}}^{\left\vert
J_{M_{1}+1}\right\vert }...\mathbf{P}_{s_{M_{1}+M_{2}}}^{\left\vert
J_{M_{1}+M_{2}}\right\vert }e\right)\left( x\right).
\end{equation}
First,  since $\Pi ^{k_{gap}} \mathbf{P}_{s_{M_{1}+1}}^{\left\vert
J_{M_{1}+1}\right\vert }...\mathbf{P}_{s_{M_{1}+M_{2}}}^{\left\vert
J_{M_{1}+M_{2}}\right\vert }e = \nu \left( \mathbf{P}%
_{s_{M_{1}+1}}^{\left\vert J_{M_{1}+1}\right\vert }...\mathbf{P}%
_{s_{M_{1}+M_{2}}}^{\left\vert J_{M_{1}+M_{2}}\right\vert }e\right)e$,  we may write,  setting $\psi_2(s) := \nu \left( \mathbf{P}%
_{s_{M_{1}+1}}^{\left\vert J_{M_{1}+1}\right\vert }...\mathbf{P}%
_{s_{M_{1}+M_{2}}}^{\left\vert J_{M_{1}+M_{2}}\right\vert }e\right)$
\begin{eqnarray*}
\phi_{\Pi}\left( t, s\right) &=& \psi_2(s) \left( \mathbf{P}^{j_{0}} \mathbf{P%
}_{t_{1}}^{\left\vert J_{1}\right\vert }...\mathbf{P}_{t_{M_{1}}}^{\left%
\vert J_{M_{1}}\right\vert }e \right)(x) \\
&=& \psi_2(s) \phi _{1}\left( t\right).
\end{eqnarray*}%
Notice that $\phi _{2}\left(s\right)= \left( \mathbf{P}^{k_{gap}+j_{M_1} }%
\mathbf{P}_{s_{M_{1}+1}}^{\left\vert J_{M_{1}+1}\right\vert }...\mathbf{P}%
_{s_{M_{1}+M_{2}}}^{\left\vert J_{M_{1}+M_{2}}\right\vert }e%
\right)(x)$; using the equality $\nu \mathbf{P}=\nu$,  one gets $%
\psi_2(s)=\nu\left( \mathbf{P}^{k_{gap}+j_{M_1} }\mathbf{P}%
_{s_{M_{1}+1}}^{\left\vert J_{M_{1}+1}\right\vert }...\mathbf{P}%
_{s_{M_{1}+M_{2}}}^{\left\vert J_{M_{1}+M_{2}}\right\vert }e\right)$
which allows us to control the difference between $\psi_2$ and $\phi_2$,
namely 
\begin{eqnarray*}
\psi_2(s)-\phi _{2}\left( s\right) &=&\left( \nu -\delta_{x}\right) \left(
\mathbf{P}^{k_{gap}+j_{M_1}}\mathbf{P}_{s_{M_{1}+1}}^{\left\vert
J_{M_{1}+1}\right\vert }...\mathbf{P}_{s_{M_{1}+M_{2}}}^{\left\vert
J_{M_{1}+M_{2}}\right\vert }e\right) \\
&=&\left( \nu -\delta_{x}\right) \left( \Pi \mathbf{P}_{s_{M_{1}+1}}^{\left%
\vert J_{M_{1}+1}\right\vert }...\mathbf{P}_{s_{M_{1}+M_{2}}}^{\left\vert
J_{M_{1}+M_{2}}\right\vert }e\right) \\
&&+\left( \nu -\delta_{x}\right) \left( Q^{k_{gap}}\mathbf{P}^{j_{M_1}}%
\mathbf{P}_{s_{M_{1}+1}}^{\left\vert J_{M_{1}+1}\right\vert }...\mathbf{P}%
_{s_{M_{1}+M_{2}}}^{\left\vert J_{M_{1}+M_{2}}\right\vert }e\right)
\\
&=&\left( \nu -\delta_{x}\right) \left( e\right)\nu \left( \mathbf{P%
}_{s_{M_{1}+1}}^{\left\vert J_{M_{1}+1}\right\vert }...\mathbf{P}%
_{s_{M_{1}+M_{2}}}^{\left\vert J_{M_{1}+M_{2}}\right\vert }e\right)
\\
&&+\left( \nu -\delta_{x}\right) \left( Q^{k_{gap} }\mathbf{P}^{j_{M_1}}%
\mathbf{P}_{s_{M_{1}+1}}^{\left\vert J_{M_{1}+1}\right\vert }...\mathbf{P}%
_{s_{M_{1}+M_{2}}}^{\left\vert J_{M_{1}+M_{2}}\right\vert }e\right)
\end{eqnarray*}
with $\left( \nu -\delta_{x}\right) \left( e\right)=0$;
consequently
\begin{eqnarray*}  \label{psi2-phi2}
\Big\vert \psi_2(s)-\phi _{2}\left( s\right)\Big\vert &=&\Big\vert \left(
\nu -\delta_{x}\right) \left( Q^{k_{gap}+j_{M_1}}\mathbf{P}%
_{s_{M_{1}+1}}^{\left\vert J_{M_{1}+1}\right\vert }...\mathbf{P}%
_{s_{M_{1}+M_{2}}}^{\left\vert J_{M_{1}+M_{2}}\right\vert }e\right)%
\Big\vert \\
&\leq& C_Q C_{\mathbf{P}}^{1+M_2} \kappa^{k_{gap}} (\Vert \nu\Vert_{\mathcal{%
B}^{\prime }}+\Vert \delta_x\Vert_{\mathcal{B}^{\prime }})\Vert e%
\Vert_{\mathcal{B}}.
\end{eqnarray*}
On the other hand,  one easily gets%
\begin{equation}
\Big\vert \phi_Q \left( t, s\right)\Big\vert \leq C_{Q} C_{\mathbf{P}%
}^{1+M_{1}+M_{2}}\kappa ^{k_{gap}}\left\Vert e\right\Vert _{%
\mathcal{B}}\left\Vert \delta_{x}\right\Vert _{\mathcal{B}^{\prime }}.
\end{equation}%
Writing $\phi(t,
s)=\phi_1(t)\phi_2(s)+\phi_1(t)(\psi_2(s)-\phi_2(s))+\phi_Q(t, s)$ and using
the previous inequalities,  one finally gets
\begin{equation*}
\Big\vert \phi \left( t, s\right)-\phi_1(t)\phi_2(s)\Big\vert \leq 2 C_Q C_{%
\mathbf{P}}^{1+M_1+M_2} \Bigl(\Vert \nu\Vert_{\mathcal{B}^{\prime }}+\Vert
\delta_x\Vert_{\mathcal{B}^{\prime }}\Bigr)\kappa ^{k_{gap}}\left\Vert
e\right\Vert _{\mathcal{B}}.
\end{equation*}
Lemma is proved.
\end{proof}

To prove Proposition \ref{Proposition MC001},  set $k_{0}=\max \left\{ 1, \log
_{2}C_{\mathbf{P}}\right\} $ so that $C_{\mathbf{P}}\leq 2^{k_{0}}.$ \newline
Since $\displaystyle \max_{m=1, ..., M_{1}+M_{2}}card\left( J_{m}\right) \geq
1, $ one gets
\begin{equation*}
C_{\mathbf{P}}^{M_{1}+M_{2}}\leq 2^{k_{0}\left( M_{1}+M_{2}\right) }\leq
\left( 1+\max_{m=1, ..., M_{1}+M_{2}}card\left( J_{m}\right) \right)
^{k_{0}\left( M_{1}+M_{2}\right) }.
\end{equation*}%
Now,  Proposition \ref{Proposition MC001} follows from Lemma \ref{Lemma Cond
Dependence for MC}.

\subsection{Proof of Proposition \protect\ref{Proposition MC002} }

We need two auxiliary lemmas.

\begin{lemma}
\label{Lemma Cov MC} For any $l, k=0, 1, ...$
\begin{equation}
\left\vert \text{\textrm{Cov}}_{\mathbb{P}_{x}}\left( f\left( X_{l}\right),
f\left( X_{l+k}\right) \right) \right\vert \leq A\left( x\right) \kappa
^{k\gamma /4},   \label{covT000}
\end{equation}%
for any positive constant $\gamma $ satisfying $0<\gamma \leq \min \left\{
1, 2\delta \right\}, $ where%
\begin{equation*}
A\left( x\right) =c_{\delta }\Bigl( 1+C_{Q}C_{\mathbf{P}}^{2}\left(
\left\Vert \nu \right\Vert _{\mathcal{B}^{\prime }}+\left\Vert
\delta_{x}\right\Vert _{\mathcal{B}^{\prime }}\right) \left\Vert e%
\right\Vert _{\mathcal{B}}+\mu _{\delta }^{2+\gamma }\left( x\right) \Bigr).
\end{equation*}%
\end{lemma}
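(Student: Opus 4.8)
The plan is to recover the covariance from the joint characteristic function of the pair $\bigl(f(X_{l}),f(X_{l+k})\bigr)$ by a finite-difference (smoothing) argument and then to bound that characteristic function by the mixing estimate of Lemma~\ref{Lemma Cond Dependence for MC}. Throughout write $U=f(X_{l})$, $V=f(X_{l+k})$, and for $0<h\le\varepsilon_{0}$ set $\psi(h,h)=\mathbb{E}_{x}e^{ih(U+V)}$, $\psi_{1}(h)=\mathbb{E}_{x}e^{ihU}$, $\psi_{2}(h)=\mathbb{E}_{x}e^{ihV}$. A direct expansion gives the identity
\[
\psi(h,h)-\psi_{1}(h)\psi_{2}(h)=\mathrm{Cov}_{\mathbb{P}_{x}}\bigl(e^{ihU}-1,\ e^{ihV}-1\bigr).
\]
For $k\ge 1$ one applies Lemma~\ref{Lemma Cond Dependence for MC} with $M_{1}=M_{2}=1$, the one-point blocks $J_{1}=[l,l+1)$, $J_{2}=[l+1,l+2)$, gap $k_{gap}=k-1$ (so that $Y_{J_{1}}=U$ and $Y_{k_{gap}+J_{2}}=V$), and $t=s=h$ with $|h|\le\varepsilon_{0}$, which yields
\[
\bigl|\psi(h,h)-\psi_{1}(h)\psi_{2}(h)\bigr|\le 2C_{Q}C_{\mathbf{P}}^{2}\bigl(\|\nu\|_{\mathcal{B}'}+\|\boldsymbol{\delta}_{x}\|_{\mathcal{B}'}\bigr)\|e\|_{\mathcal{B}}\,\kappa^{k-1}.
\]

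Next one passes back from the smoothed covariance to $\mathrm{Cov}_{\mathbb{P}_{x}}(U,V)$. Write $e^{ihU}-1=ihU+a$, $e^{ihV}-1=ihV+b$ with $a=e^{ihU}-1-ihU$, $b=e^{ihV}-1-ihV$; interpolating the elementary bounds $|e^{iy}-1-iy|\le y^{2}/2$ and $|e^{iy}-1-iy|\le 2|y|$ with weight $\gamma\in(0,1]$ gives $|a|\le 2h^{1+\gamma}|U|^{1+\gamma}$ and $|b|\le 2h^{1+\gamma}|V|^{1+\gamma}$ (and $|a|\le 2h|U|$, $|b|\le 2h|V|$). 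Bilinearity of the covariance then yields
\[
-h^{2}\,\mathrm{Cov}_{\mathbb{P}_{x}}(U,V)=\mathrm{Cov}_{\mathbb{P}_{x}}\bigl(e^{ihU}-1,e^{ihV}-1\bigr)-ih\,\mathrm{Cov}_{\mathbb{P}_{x}}(U,b)-ih\,\mathrm{Cov}_{\mathbb{P}_{x}}(a,V)-\mathrm{Cov}_{\mathbb{P}_{x}}(a,b).
\]
Each of the last three terms is, by H\"older's inequality, at most a $\delta$-dependent constant times $h^{2+\gamma}$ times a mixed moment of $|U|,|V|$ of total degree $2+\gamma\le 2+2\delta$. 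By Hypothesis~\textbf{M4}, $\mathbb{E}_{x}|f(X_{m})|^{r}=(\mathbf{P}^{m}|f|^{r})(x)\le\mu_{\delta}(x)^{r}$ for every $m$ and every $r\le 2+2\delta$, so those moments are $\le\mu_{\delta}(x)^{2+\gamma}$ and the three remainders are $\le c_{\delta}h^{2+\gamma}\mu_{\delta}(x)^{2+\gamma}$. Dividing by $h^{2}$ and inserting the mixing bound gives, for every $0<h\le\varepsilon_{0}$,
\[
\bigl|\mathrm{Cov}_{\mathbb{P}_{x}}(f(X_{l}),f(X_{l+k}))\bigr|\le c_{\delta}\Bigl(h^{-2}C_{Q}C_{\mathbf{P}}^{2}\bigl(\|\nu\|_{\mathcal{B}'}+\|\boldsymbol{\delta}_{x}\|_{\mathcal{B}'}\bigr)\|e\|_{\mathcal{B}}\,\kappa^{k}+h^{\gamma}\mu_{\delta}(x)^{2+\gamma}\Bigr).
\]

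Finally one optimizes the smoothing scale by taking $h=\kappa^{k/(2+\gamma)}\wedge\varepsilon_{0}$, which balances the two terms and makes the right-hand side of order $A(x)\,\kappa^{k\gamma/(2+\gamma)}$; since $\gamma\le 1<2$ we have $\gamma/(2+\gamma)\ge\gamma/4$, hence $\kappa^{k\gamma/(2+\gamma)}\le\kappa^{k\gamma/4}$ and the stated estimate follows. The case $k=0$ and the finitely many small $k$ for which $\kappa^{k/(2+\gamma)}>\varepsilon_{0}$ are disposed of directly by Cauchy--Schwarz, $|\mathrm{Cov}_{\mathbb{P}_{x}}(f(X_{l}),f(X_{l+k}))|\le\mathrm{Var}_{\mathbb{P}_{x}}f(X_{l})^{1/2}\mathrm{Var}_{\mathbb{P}_{x}}f(X_{l+k})^{1/2}\le\mu_{\delta}(x)^{2}$, which is absorbed into $A(x)$.

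I expect the delicate step to be the second one. Only $2+2\delta$ moments of $f$ are available, so the smoothing scale $h$ cannot be sent to $0$; one must keep every remainder expressed purely through the quantities $(\mathbf{P}^{m}|f|^{2+2\delta})(x)$ controlled by \textbf{M4} — never through moments of $f$ evaluated at intermediate states $X_{m}$, which are not governed by $\mu_{\delta}(x)$ — and one must track the precise interpolation exponent $\gamma$ through the three covariance remainders and then through the optimization over $h$. It is exactly this constraint that degrades the full geometric rate $\kappa^{k}$ coming from the spectral gap down to $\kappa^{k\gamma/4}$.
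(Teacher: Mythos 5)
Your proof is correct in substance, but it follows a genuinely different route from the paper's. You evaluate the characteristic functions at a single point $(h,h)$, use the identity $\psi(h,h)-\psi_{1}(h)\psi_{2}(h)=\mathrm{Cov}_{\mathbb{P}_{x}}\bigl(e^{ihU}-1,e^{ihV}-1\bigr)$, extract $-h^{2}\mathrm{Cov}_{\mathbb{P}_{x}}(U,V)$ by a Taylor expansion whose remainders are controlled through the interpolated bound $|e^{iy}-1-iy|\le 2|y|^{1+\gamma}$ (this is exactly where $\gamma\le 1$ enters) together with H\"older and \textbf{M4}, and then optimize over $h\in(0,\varepsilon_{0}]$; for the $\mathrm{Cov}(a,b)$ term one must indeed pair the refined bound on one factor with the crude bound $2h|V|$ on the other so that the total degree stays at $2+\gamma\le 2+2\delta$, as your parenthetical suggests. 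The paper instead smooths $f(X_{l}),f(X_{l+k})$ by auxiliary independent variables $V,V'$ with compactly supported characteristic functions, truncates by $g_{T}$, recovers the covariance by Fourier inversion, bounds $\Vert\widehat{h}_{T}\Vert_{L^{2}}\le\tfrac{2}{3}T^{3}$ and the characteristic-function difference in $L^{2}([-\varepsilon_{0},\varepsilon_{0}]^{2})$ via Lemma~\ref{Lemma Cond Dependence for MC}, and optimizes $T=\kappa^{-k/4}$. Your argument is shorter and avoids the smoothing variables and the Plancherel step; what the paper's heavier machinery buys is that the same Fourier-integral representation (the quantities $s_{k,T}$ and $m_{T}$) is reused in Lemma~\ref{Lemma Cov erg MC} to show that the limits $s_{k}$ and $\mu$ do not depend on the initial state $x$, which your single-point argument does not set up. Two bookkeeping remarks: applying Lemma~\ref{Lemma Cond Dependence for MC} to the pair $\bigl(f(X_{l}),f(X_{l+k})\bigr)$ with one-point blocks forces $k_{gap}=k-1$, so a factor $\kappa^{-1}$ (and, through your treatment of $k=0$ and the finitely many $k$ with $\kappa^{k/(2+\gamma)}>\varepsilon_{0}$, a dependence on $\kappa$ and $\varepsilon_{0}$) is absorbed into the constant; this is harmless and of the same nature as the $\kappa^{k\gamma/4-1}$ normalisation appearing in Proposition~\ref{Proposition MC002}, but it is worth flagging since the paper's stated $A(x)$ carries only $c_{\delta}$.
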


\begin{proof}
We give a proof involving Lemma \ref{Lemma Cond Dependence for MC}. Let $V$
and $V^{\prime }$ be two independent identically distributed r.v.'s of mean $%
0, $ independent of $X_{l}$ and $X_{l+m}$ and whose common characteristic
function is supported in the interval $[-\varepsilon _{0}, \varepsilon _{0}], $
for some $\varepsilon _{0}>0.$ Set $Y_{l}=f\left( X_{l}\right) +V$ and $%
Y_{l+k}^{\prime }=f\left( X_{l+k}\right) +V^{\prime }.$

Let $\widetilde{\phi }_{1}$ (resp. $\widetilde{\phi }_{2},$ $\widetilde{\phi }\left( t, u\right) $)
be the characteristic function of $Y_{l} $  (resp. $Y_{l+k}^{\prime },$ $\left( Y_{l}, Y_{l+k}^{\prime }\right)$  ).
Set $g_{T}\left(
x\right) =x1_{\left( \left\vert x\right\vert \leq T\right) }$ and $%
h_{T}\left( x, y\right) =g_{T}\left( x\right) g_{T}\left( y\right) $ for $%
x, y\in \mathbb{R}.$ Let $\widehat{g}_{T}$ (resp. $\widehat{h}_{T}$) be the
Fourier transform of the function $g_{T}$ (resp. $h_{T})$ defined by
\begin{equation*}
\widehat{g}_{T}\left( t\right) =\int e^{itx}g_{T}\left( x\right) dx,
\end{equation*}%
(resp. $\displaystyle\widehat{h}_{T}\left( t, u\right) =\int \int e^{i\left(
tx+uy\right) }h_{T}\left( x, y\right) dxdy=\widehat{g}_{T}\left( t\right)
\widehat{g}_{T}\left( u\right)$).

For any $T>0$ and $l\geq 1, \ k\geq 0$,  one gets%
\begin{equation}
\mathbb{E}_{x}f\left( X_{l}\right) f\left( X_{l+k}\right) =\mathbb{E}%
_{x}Y_{l}Y_{l+k}^{\prime }=\mathbb{E}_{x}h_{T}(Y_{l}, Y_{l+k}^{\prime })+R_{0}
\label{cov000}
\end{equation}%
with
\begin{equation}
\left| R_{0} \right| \leq \mathbb{E}_{x} \left|Y_{l}Y_{l+k}^{\prime } \right|1_{\left( \left\vert
Y_{l}\right\vert >T\right) }+\mathbb{E}_{x} \left|Y_{l}Y_{l+k}^{\prime } \right| 1_{\left(\left\vert Y_{l+k}^{\prime }\right\vert >T\right) }.  \label{cov000a}
\end{equation}%
By the inverse Fourier transform,  one may write 
\begin{equation*}
\mathbb{E}_{x}f(X_{l})f\left( X_{l+k}\right) =\frac{1}{\left( 2\pi \right)
^{2}}\int \int \overline{\widehat{h}_{T}\left( t, u\right) }\widetilde{\phi }%
\left( t, u\right) dtdu+R_{0}.
\end{equation*}%
Analogously
\begin{equation}
\mathbb{E}_{x}f(X_{l})=\mathbb{E}_{x}Y_{l}=\mathbb{E}_{x}g_{T}(Y_{l})+R_{1}=%
\frac{1}{2\pi }\int \overline{\widehat{g}_{T}\left( t\right) }\widetilde{%
\phi }_{1}\left( t\right) dt+R_{1}  \label{cov-expect}
\end{equation}%
and%
\begin{equation*}
\mathbb{E}_{x}f\left( X_{l+k}\right) =\mathbb{E}_{x}Y_{l+k}^{\prime }=%
\mathbb{E}_{x}g_{T}\left( Y_{l+k}^{\prime }\right) +R_{2}=\frac{1}{2\pi }%
\int \overline{\widehat{g}_{T}(u)}\widetilde{\phi }_{2}(u)du+R_{2},
\end{equation*}%
where%
\begin{equation}
R_{1}:=\mathbb{E}_{x}Y_{l}1_{\left( \left\vert Y_{l}\right\vert >T\right)
}\;\;\text{and\ \ }R_{2}:=\mathbb{E}_{x}Y_{l+k}^{\prime }1_{\left(
\left\vert Y_{l+k}^{\prime }\right\vert >T\right) }.  \label{cov-R1 bound}
\end{equation}%
This gives%
\begin{eqnarray}
\text{\textrm{Cov}}_{\mathbb{P}_{x}}(f(X_{l}), f(X_{l+k})) &=&\mathbb{E}%
_{x}f(X_{l})f(X_{l+k})-\mathbb{E}_{x}f(X_{l})\mathbb{E}_{x}f(X_{l+k})  \notag
\\
&=&\frac{1}{\left( 2\pi \right) ^{2}}\int \int \overline{\widehat{h}%
_{T}\left( t, u\right) }\left( \widetilde{\phi }\left( t, u\right) -\widetilde{%
\phi }_{1}\left( t\right) \widetilde{\phi }_{2}\left( u\right) \right)
dtdu+R,   \notag
\\
& &  \label{cov00-AA}
\end{eqnarray}%
where
\begin{equation}
R=R_{0}+R_{1}\mathbb{E}_{x}g_{T}(Y_{l+k}^{\prime })+R_{2}\mathbb{E}%
_{x}g_{T}(Y_{l})+R_{1}R_{2}.  \label{cov00-BB}
\end{equation}%
Note that%
\begin{equation*}
\left\vert \int \int \overline{\widehat{h}_{T}\left( t, u\right) }\left(
\widetilde{\phi }\left( t, u\right) -\widetilde{\phi }_{1}\left( t\right)
\widetilde{\phi }_{2}\left( u\right) \right) dtdu\right\vert \leq \left\Vert
\widehat{h}_{T}\right\Vert _{L^{2}}\left\Vert \widetilde{\phi }-\widetilde{%
\phi }_{1}\widetilde{\phi }_{2}\right\Vert _{L^{2}}.
\end{equation*}%
Since $V, V^{\prime }$ are independent of $X_{l}, X_{l+k}, $ we have $%
\widetilde{\phi }\left( t, u\right) =\phi \left( t, u\right) \mathbb{E}%
_{x}e^{itV}\mathbb{E}_{x}e^{iuV^{\prime }}$ and $\widetilde{\phi }_{1}\left(
t\right) =\phi _{1}\left( t\right) \mathbb{E}_{x}e^{itV}, $ $\widetilde{\phi }%
_{2}\left( u\right) =\phi _{2}\left( u\right) \mathbb{E}_{x}e^{iuV^{\prime
}}, $ where
\begin{equation*}
\phi \left( t, u\right) :=\mathbb{E}_{x}e^{itf(X_{l})+iuf(X_{l+k})}\newline
=\left( \mathbf{P}^{l-1}\mathbf{P}_{t}\mathbf{P}^{k-1}\mathbf{P}_{u}e \right) \left( x\right)
\end{equation*}%
and $\displaystyle\phi _{1}\left( t\right) =\mathbb{E}_{x}e^{itf(X_{l})}=%
\left( \mathbf{P}^{l-1}\mathbf{P}_{t}e\right) \left( x\right), %
\displaystyle\phi _{2}(u)=\mathbb{E}_{x}e^{iuf(X_{l+k})}=\left( \mathbf{P}%
^{l-1}\mathbf{P}_{t}e\right) \left( x\right) $. Since the support
of the characteristic functions of $V$ and $V^{\prime }$ is the interval$%
[-\varepsilon _{0}, \varepsilon _{0}]$ the function $\widetilde{\phi }-%
\widetilde{\phi }_{1}\widetilde{\phi }_{2}$ vanishes outside the square $%
[-\varepsilon _{0}, \varepsilon _{0}]^{2}.$ Then,  by Lemma \ref{Lemma Cond
Dependence for MC},
\begin{eqnarray}
\Vert \widetilde{\phi }-\widetilde{\phi }_{1}\widetilde{\phi }_{2}\Vert
_{L^{2}} &\leq &2\varepsilon _{0}\sup_{\left\vert t\right\vert \leq
\varepsilon _{0}, \left\vert u\right\vert \leq \varepsilon _{0}}\left\vert
\phi \left( t, u\right) -\phi _{1}\left( t\right) \phi _{2}\left( u\right)
\right\vert  \notag \\
&\leq &4\epsilon _{0}C_{Q}C_{\mathbf{P}}^{3}\kappa ^{k}\Bigl(\left\Vert \nu
\right\Vert _{\mathcal{B}^{\prime }}+\left\Vert \delta _{x}\right\Vert _{%
\mathcal{B}^{\prime }}\Bigr)\left\Vert e\right\Vert _{\mathcal{B}}.
\label{cov005-a}
\end{eqnarray}%
Using the inequality
\begin{equation}
\left\Vert \widehat{h}_{T}\right\Vert _{L^{2}}^{2}=\int \int h_{T}^{2}\left(
x, y\right) dxdy=\left( \int g_{T}^{2}\left( x\right) dx\right) ^{2}\leq
\frac{4}{9}T^{6},   \label{cov005}
\end{equation}%
one obtains
\begin{equation}
\left\vert \text{\textrm{Cov}}_{\mathbb{P}_{x}}\left( f\left( X_{l}\right)
, f\left( X_{l+k}\right) \right) \right\vert \leq \frac{2}{3\pi ^{2}}%
T^{3}\epsilon _{0}C_{Q}C_{\mathbf{P}}^{3}\kappa ^{k}\Bigr(\left\Vert \nu
\right\Vert _{\mathcal{B}^{\prime }}+\left\Vert \delta _{x}\right\Vert _{%
\mathcal{B}^{\prime }}\Bigl)\left\Vert e\right\Vert _{\mathcal{B}%
}+\left\vert R\right\vert.  \label{Cov bound}
\end{equation}

Now we shall give a bound for $\left\vert R\right\vert.$ By H\"{o}lder's
inequality,  with $q_{\delta } = \frac{1+\delta}{\delta} > 1, $
\begin{equation*}
\mathbb{E}_{x}\left\vert Y_{l}\right\vert \left\vert Y_{l+k}^{\prime
}\right\vert 1_{\left( \left\vert Y_{l}\right\vert >T\right) }\leq \left(
\mathbb{E}_{x}\left\vert Y_{l}\right\vert ^{2+2\delta }\right) ^{\frac{1}{%
2+2\delta }}\left( \mathbb{E}_{x}\left\vert Y_{l+k}^{\prime} \right\vert ^{2+2\delta
}\right) ^{\frac{1}{2+2\delta }}\mathbb{P}_{x}\left( \left\vert
Y_{l}\right\vert >T\right) ^{\frac{1}{q_{\delta }}}.
\end{equation*}%
Using hypothesis \textbf{M4},  we have%
\begin{equation*}
\left( \mathbb{E}_{x}\left\vert Y_{l}\right\vert ^{2+2\delta }\right) ^{%
\frac{1}{2+2\delta }}\leq  \left( \mathbb{E}_{x}\left\vert
f(X_{l})\right\vert ^{2+2\delta }\right) ^{\frac{1}{2+2\delta }}+\left(
\mathbb{E}_{x}\left\vert V\right\vert ^{2+2\delta }\right) ^{^{\frac{1}{%
2+2\delta }}} \leq c_\delta A_{0}\left( x\right),
\end{equation*}%
with $A_{0}\left( x\right) = \mu _{\delta }\left( x\right) + 1.$
Similarly $\displaystyle\left( \mathbb{E}_{x}\left\vert
Y_{l+k}^{\prime }\right\vert ^{2+2\delta }\right) ^{\frac{1}{2+2\delta }}\leq c_\delta A_{0}\left( x\right).$
On the other hand,  for any $\gamma \in (0, 2\delta ], $ one gets
\begin{equation*}
\mathbb{P}_{x}\left( \left\vert Y_{l}\right\vert >T\right) \leq \frac{1}{%
T^{\gamma q_{\delta }}}\mathbb{E}_{x}|Y_{l}|^{\gamma q_{\delta }}\leq \frac{c_\delta}{T^{\gamma q_{\delta }}}A_{0}^{\gamma q_{\delta }}\left(
x\right).
\end{equation*}
Putting together these bounds gives
\begin{equation}
\mathbb{E}_{x}\left\vert Y_{l}\right\vert \left\vert Y_{l+k}^{\prime
}\right\vert 1_{\left( \left\vert Y_{l}\right\vert >T\right)} \leq c_\delta T^{-\gamma }A_{0}^{2+\gamma }\left( x\right).  \label{cov001a}
\end{equation}
In the same way we obtain, for any $\gamma \in
(0, 2\delta ], $%
\begin{equation}
\mathbb{E}_{x}\left\vert Y_{l}\right\vert \left\vert Y_{l+k}^{\prime
}\right\vert 1\left( \left\vert Y_{l+k}^{\prime }\right\vert >T\right) \leq
c_\delta T^{-\gamma }A_{0}^{2+\gamma }\left( x\right).  \label{cov002a}
\end{equation}
From (\ref{cov000a}),  (\ref{cov001a}),  (\ref{cov002a}),  it follows that%
\begin{equation}
\left\vert R_{0}\right\vert \leq c_\delta T^{-\gamma }A_{0}^{2+\gamma}\left( x\right).  \label{cov003aa}
\end{equation}
From (\ref{cov002a}),  taking $k=0$ we get,  for any $%
\gamma \in (0, 2\delta ], $%
\begin{equation}
\max \left\{ R_{1}, R_{2}\right\} \leq \sup_{l\geq 0}\left( \mathbb{E}%
_{x}Y_{l}^{2}1\left( \left\vert Y_{l}\right\vert >T\right) \right) ^{\frac{1%
}{2}}\leq c_\delta T^{-\gamma/2}A_{0}^{1+\gamma/2 }\left( x\right).
\label{cov003cc}
\end{equation}
Since%
\begin{equation*}
\left\vert \mathbb{E}_{x}g_{T}\left( Y_{l}\right) \right\vert \leq \left(
\mathbb{E}_{x}\left( \left\vert Y_{l}\right\vert ^{2+2\delta }\right)
\right) ^{\frac{1}{2+2\delta }}\leq c_\delta A_{0}\left( x\right)
\end{equation*}%
and
\begin{equation*}
\left\vert \mathbb{E}_{x}g_{T}\left( Y_{l+k}^{\prime }\right) \right\vert
\leq c_\delta A_{0}\left( x\right),
\end{equation*}%
from (\ref{cov003aa}),  (\ref{cov003cc}),  it follows that,
\begin{equation}
\left\vert R\right\vert \leq c_\delta T^{-\gamma /2}A_{0}^{2+\gamma}\left( x\right),  \label{R bound}
\end{equation}%
for any $\gamma \in (0, 2\delta ], $ where we assume without loss of
generality that $A_{0}(x)\geq 1.$
The inequalities (\ref{Cov bound}) and (\ref{R bound}) yield,  for any $\gamma \in (0, 2\delta ], $%
\begin{eqnarray*}
\left\vert \text{\textrm{Cov}}_{\mathbb{P}_{x}}\left( f\left( X_{l}\right)
, f\left( X_{l+k}\right) \right) \right\vert &\leq &\frac{2}{3\pi ^{2}}%
T^{3}\varepsilon _{0}C_{Q}C_{\mathbf{P}}^{3}\kappa ^{k}\left( \left\Vert \nu
\right\Vert _{\mathcal{B}^{\prime }}+\left\Vert \boldsymbol{\delta }%
_{x}\right\Vert _{\mathcal{B}^{\prime }}\right) \left\Vert e\right\Vert _{%
\mathcal{B}} \\
&&+c_\delta T^{-\gamma /2} A_{0}^{2+\gamma }\left( x\right).
\end{eqnarray*}
Choosing $T=\kappa ^{-k/4} $ and taking into account that $A_{0}^{2+\gamma }\left( x\right) \leq c_\delta (1 + \mu_{\delta}^{2+\gamma }\left( x\right) ),$ it follows that%
\begin{equation*}
\left\vert \text{\textrm{Cov}}_{\mathbb{P}_{x}}\left( f\left( X_{l}\right)
, f\left( X_{l+k}\right) \right) \right\vert \leq A\left( x\right) c_{\delta
}\kappa ^{k\min \left\{ 1, \gamma/2 \right\} /4},
\end{equation*}%
which finishes the proof Lemma \ref{Lemma Cov MC}.
\end{proof}

\begin{lemma}
\label{Lemma Cov erg MC} Let $0<\gamma \leq \min \left\{ 1, 2\delta
\right\}.$ Then:

a) There exists a real number $\mu $ not depending on $x$ such that,  for any
$k\geq 1, $%
\begin{equation*}
\left\vert \mathbb{E}_{x}f\left( X_{k}\right) -\mu \right\vert \leq
c_\delta A_{1}\left( x\right) \kappa ^{k\gamma /4-1},
\end{equation*}
where $A_{1}\left( x\right) = 1 + \mu _{\delta }\left( x\right) ^{1+\gamma }
+ \left\Vert \boldsymbol{\delta }_{x}\right\Vert_{\mathcal{B}^{\prime }}\left\Vert e\right\Vert _{\mathcal{B}}C_{\mathbf{P}}C_{Q}.$
Moreover
\begin{equation*}
\sum_{k=0}^{\infty }\left\vert \mathbb{E}_{x}f\left( X_{k}\right) -\mu
\right\vert \leq \overline{\mu }\left( x\right) = c_{\gamma, \kappa, \delta} A_{1}\left( x\right).
\end{equation*}%

b) There exists a sequence of (possibly complex) numbers $\left(
s_{k}\right) _{k\geq 0}$ not depending on $x$ such that
\begin{equation}
\left\vert \text{\textrm{Cov}}_{\mathbb{P}_{x}}\left( f\left( X_{l}\right)
, f\left( X_{l+k}\right) \right) -s_{k}\right\vert \leq c_\delta A_{2}\left( x\right)
\kappa ^{l\gamma /4-1},  \label{coverg000}
\end{equation}%
where
\begin{eqnarray*}
A_{2}\left( x\right) &=&  1+\mu _{\delta }\left( x\right)
^{2+\gamma } \\
&& + \left\Vert \boldsymbol{\delta }_{x}\right\Vert _{\mathcal{B}^{\prime
}}\left\Vert e\right\Vert _{\mathcal{B}}\left( C_{\mathbf{P}}^{2}C_{Q}\left(
\left\Vert \nu \right\Vert _{\mathcal{B}^{\prime }}\left\Vert e\right\Vert _{%
\mathcal{B}}+C_{Q}\right) +C_{\mathbf{P}}C_{Q}\left( 1+\left\Vert \nu
\right\Vert _{\mathcal{B}^{\prime }}C_{\mathbf{P}}\right) \right).
\end{eqnarray*}%
Moreover,  for
$k\geq 0, $%
\begin{equation*}
\left\vert s_{k}\right\vert \leq A_{2}\left( x\right) \kappa ^{k\gamma /4-1}
\end{equation*}%
and%
\begin{equation*}
\left\vert s_{0}\right\vert +2\sum_{k=1}^{\infty }\left\vert
s_{k}\right\vert \leq c_{\gamma, \kappa, \delta } A_{2}\left( x\right) .
\end{equation*}%
\end{lemma}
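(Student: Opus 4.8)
The plan is to prove a) and b) in parallel, in each case by running the smoothing-and-Fourier-inversion computation already used in the proof of Lemma~\ref{Lemma Cov MC} and then isolating inside it the piece that depends on neither the initial state $x$ nor the ``time origin''; the smoothing cannot be dispensed with, since $f$ itself need not lie in $\mathcal{B}$, so $\nu$ cannot be applied to $f$ directly. Fix once and for all a centred random variable $V$ (and, for b), an independent copy $V'$) with characteristic function supported in $[-\varepsilon_0,\varepsilon_0]$ and finite moments of all orders, independent of the chain, set $Y_k=f(X_k)+V$, and recall the Fourier-inversion identity from the proof of Lemma~\ref{Lemma Cov MC} (see \eqref{cov-expect}): with $g_T(x)=x\mathbf{1}_{\{|x|\le T\}}$,
\[
\mathbb{E}_x f(X_k)=\frac{1}{2\pi}\int\overline{\widehat{g}_T(t)}\,\bigl(\mathbf{P}^{k-1}\mathbf{P}_t e\bigr)(x)\,\mathbb{E}e^{itV}\,dt+R_1,
\]
the integral being effectively over $[-\varepsilon_0,\varepsilon_0]$ and $|R_1|\le\mathbb{E}_x|Y_k|\mathbf{1}_{\{|Y_k|>T\}}\le T^{-\gamma}\,\mathbb{E}_x|Y_k|^{1+\gamma}$ (legitimate since $1+\gamma\le 2+2\delta$).

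For part a), insert $\mathbf{P}^{k-1}=\Pi+Q^{k-1}$ and use $\Pi h=\nu(h)e$, $e(x)=1$ (cf.\ \eqref{linear form}). The $\Pi$-contribution is $\mu_T:=\frac{1}{2\pi}\int\overline{\widehat{g}_T(t)}\,\nu(\mathbf{P}_t e)\,\mathbb{E}e^{itV}\,dt$, which depends on neither $x$ nor $k$; the $Q^{k-1}$-contribution is bounded by $c\,\varepsilon_0T^2\,\|\boldsymbol{\delta}_x\|_{\mathcal{B}'}C_QC_{\mathbf{P}}\|e\|_{\mathcal{B}}\,\kappa^{k-1}$, using $\|Q^{k-1}\|_{\mathcal{B}\to\mathcal{B}}\le C_Q\kappa^{k-1}$ and $\|\mathbf{P}_t e\|_{\mathcal{B}}\le C_{\mathbf{P}}\|e\|_{\mathcal{B}}$; and $|R_1|\le c_\delta T^{-\gamma}(1+\mu_\delta(x))^{1+\gamma}\le c_\delta T^{-\gamma}A_1(x)$ by hypothesis~\textbf{M4} and the power-mean inequality. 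Hence
\[
|\mathbb{E}_x f(X_k)-\mu_T|\le c_\delta A_1(x)\bigl(T^2\kappa^{k-1}+T^{-\gamma}\bigr)\qquad(k\ge1,\ T>0).
\]
Keeping $T$ fixed and letting $k\to\infty$ shows $(\mathbb{E}_x f(X_k))_{k\ge1}$ is Cauchy, hence convergent to some $\mu(x)$; letting $k\to\infty$ in the estimate gives $|\mu(x)-\mu_T|\le c_\delta T^{-\gamma}A_1(x)$ for every $T$, whence comparison of two initial states and $T\to\infty$ force $\mu(x)=\mu(y)$, so $\mu:=\mu(x)$ is independent of $x$. The rate in a) follows from $|\mathbb{E}_x f(X_k)-\mu|\le\limsup_{k'\to\infty}|\mathbb{E}_x f(X_k)-\mathbb{E}_x f(X_{k'})|\le c_\delta A_1(x)(T^2\kappa^{k-1}+T^{-\gamma})$ on choosing $T=\kappa^{-k/4}$ and absorbing residual powers of $\kappa$ into the constant; the ``moreover'' assertion is then the geometric summation $\sum_{k\ge1}\kappa^{k\gamma/4-1}<\infty$, the term $k=0$, equal to $|f(x)-\mu|$, being bounded separately.

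For part b), apply the same machinery to the two-dimensional formula from the proof of Lemma~\ref{Lemma Cov MC}, $\mathrm{Cov}_{\mathbb{P}_x}(f(X_l),f(X_{l+k}))=\frac{1}{(2\pi)^2}\iint\overline{\widehat{h}_T(t,u)}\bigl(\widetilde{\phi}(t,u)-\widetilde{\phi}_1(t)\widetilde{\phi}_2(u)\bigr)\,dt\,du+R$, where $\phi(t,u)=(\mathbf{P}^{l-1}\mathbf{P}_t\mathbf{P}^{k-1}\mathbf{P}_u e)(x)$, $\phi_1(t)=(\mathbf{P}^{l-1}\mathbf{P}_t e)(x)$, $\phi_2(u)=(\mathbf{P}^{l+k-1}\mathbf{P}_u e)(x)$ (for $k=0$ one uses $\mathbf{P}_{t+u}$ in place of $\mathbf{P}_t\mathbf{P}^{k-1}\mathbf{P}_u$), and $|R|\le c_\delta T^{-\gamma}(1+\mu_\delta(x))^{2+\gamma}\le c_\delta T^{-\gamma}A_2(x)$. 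Substituting $\mathbf{P}^{l-1}=\Pi+Q^{l-1}$ and $\mathbf{P}^{l+k-1}=\Pi+Q^{l+k-1}$ into all three characteristic functions and invoking the \emph{uniform} bound $\|\mathbf{P}^{k-1}\|_{\mathcal{B}\to\mathcal{B}}\le\|\nu\|_{\mathcal{B}'}\|e\|_{\mathcal{B}}+C_Q$ (so that no factor exponential in $k$ arises), the bracket becomes $D_k(t,u)+O\bigl(\|\boldsymbol{\delta}_x\|_{\mathcal{B}'}\kappa^{l-1}\times(\cdots)\bigr)$, with $D_k(t,u):=\nu(\mathbf{P}_t\mathbf{P}^{k-1}\mathbf{P}_u e)-\nu(\mathbf{P}_t e)\nu(\mathbf{P}_u e)$ independent of $x$ and $l$, and the bracketed constant in $O(\cdot)$ precisely the combination of $C_{\mathbf{P}},C_Q,\|\nu\|_{\mathcal{B}'},\|e\|_{\mathcal{B}}$ figuring in $A_2(x)$. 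As in a), this gives $|\mathrm{Cov}_{\mathbb{P}_x}(f(X_l),f(X_{l+k}))-s_k^{(T)}|\le c_\delta A_2(x)(T^3\kappa^{l-1}+T^{-\gamma})$ with $s_k^{(T)}:=\frac{1}{(2\pi)^2}\iint\overline{\widehat{h}_T(t,u)}D_k(t,u)\,\mathbb{E}e^{itV}\,\mathbb{E}e^{iuV'}\,dt\,du$ independent of $x$ and $l$; the Cauchy argument in $l$ furnishes $s_k:=\lim_{l\to\infty}\mathrm{Cov}_{\mathbb{P}_x}(f(X_l),f(X_{l+k}))$, its independence of $x$, and, on taking $T=\kappa^{-l/4}$, the bound \eqref{coverg000}. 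The decay $|s_k|\le A_2(x)\kappa^{k\gamma/4-1}$ comes from letting $l\to\infty$ in Lemma~\ref{Lemma Cov MC} (which yields $|s_k|\le A(x)\kappa^{k\gamma/4}$, and $A(x)\le A_2(x)$, $\kappa^{k\gamma/4}\le\kappa^{k\gamma/4-1}$), and $|s_0|+2\sum_{k\ge1}|s_k|\le c_{\gamma,\kappa,\delta}A_2(x)$ is once more a geometric summation.

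The only genuine difficulty is that the truncation remainder $R$ (resp.\ $R_1$) does \emph{not} tend to $0$ as $l\to\infty$ (resp.\ $k\to\infty$): it is merely small, uniformly in the time index, once $T$ is large. This is exactly why $\mu$ and $s_k$ must be introduced as limits of time-indexed expectations and covariances via a Cauchy argument in which $T$ is kept free, rather than by formally letting $T=\infty$, and it is the source of the innocuous ``$-1$'' in the exponents. Everything else is routine bookkeeping: tracking which of $C_{\mathbf{P}},C_Q,\|\nu\|_{\mathcal{B}'},\|\boldsymbol{\delta}_x\|_{\mathcal{B}'},\|e\|_{\mathcal{B}},\mu_\delta(x)$ is produced by the spectral-remainder term and which by the truncation term, so that they reassemble into $A_1(x)$ and $A_2(x)$, and checking elementary inequalities ($1+\gamma\le2+2\delta$, $(1+\mu_\delta(x))^{1+\gamma}\le c_\gamma(1+\mu_\delta(x)^{1+\gamma})$, and so on) to put the constants in the stated form.
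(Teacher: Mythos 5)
Your argument is correct and is essentially the paper's own proof: smooth with $V,V'$, invert the truncated Fourier transforms, split $\mathbf{P}^{l-1}=\Pi +Q^{l-1}$ so that the $\Pi$-part produces the $x$- and $l$-independent quantities $\mu_{T}$ and $s_{k,T}$, bound the $Q$-part by $\kappa ^{l-1}$ times the spectral constants and the truncation remainder by $T^{-\gamma }$, run the Cauchy argument in the time index with $T$ kept free, and only then take $T=\kappa ^{-l/4}$ to get the rate $\kappa ^{l\gamma /4-1}$. The only immaterial deviations are that in part b) you control $\mathbf{P}^{k-1}$ through the uniform bound $\left\Vert \mathbf{P}^{k-1}\right\Vert _{\mathcal{B}\rightarrow \mathcal{B}}\leq \left\Vert \nu \right\Vert _{\mathcal{B}^{\prime }}\left\Vert e\right\Vert _{\mathcal{B}}+C_{Q}$ where the paper performs a second $\Pi +Q$ decomposition inside the $Q^{l-1}$ term (both yield the same constant combination in $A_{2}(x)$), and that you deduce the decay of $s_{k}$ by letting $l\rightarrow \infty $ in the uniform-in-$l$ bound of Lemma \ref{Lemma Cov MC} instead of combining that lemma with (\ref{coverg000}) at $l=k$.
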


\begin{proof}
To avoid repetitions we prove first the part b).  We keep the notations from the proof of Lemma \ref{Lemma Cov MC}.
Denote $\widetilde{\phi }_{0}\left( t, u\right) =%
\widetilde{\phi }\left( t, u\right) -\widetilde{\phi }_{1}\left( t\right)
\widetilde{\phi }_{2}\left( u\right).$ By (\ref{cov00-AA}),  for any $%
l=0, 1, ...$%
\begin{equation*}
\text{\textrm{Cov}}_{\mathbb{P}_{x}}\left( f\left( X_{l}\right), f\left(
X_{l+k}\right) \right) =\frac{1}{\left( 2\pi \right) ^{2}}\int \int
\overline{\widehat{h}_{T}\left( t, u\right) }\widetilde{\phi }_{0}\left(
t, u\right) dtdu+R,
\end{equation*}%
with $R$ defined by (\ref{cov00-BB}). Since $V, V^{\prime }$ are independent
of $X_{l}, X_{l+k}, $
\begin{eqnarray}
\widetilde{\phi }\left( t, u\right) &=&\left( \mathbb{E}_{x}e^{i0%
\sum_{j=1}^{l-1}X_{j}+itX_{l}+i0\sum_{j=l+1}^{l+k-1}X_{j}+iuX_{l+k}}\right)
\mathbb{E}_{x}e^{itV}\mathbb{E}_{x}e^{iuV^{\prime }}  \notag \\
&=&\left( \mathbf{P}^{l-1}\mathbf{P}_{t}\mathbf{P}^{k-1}\mathbf{P}%
_{u}e\right) \left( x\right) \mathbb{E}_{x}e^{itV}\mathbb{E}%
_{x}e^{iuV^{\prime }}.  \label{coverg002}
\end{eqnarray}

Note that,  for $k, l\geq 2, $%
\begin{eqnarray*}
\left( \mathbf{P}^{l-1}\mathbf{P}_{t}\mathbf{P}^{k-1}\mathbf{P}_{u}e\right)
\left( x\right) &=&\boldsymbol{\delta }_{x}\left( \mathbf{P}^{l-1}\mathbf{P}%
_{t}\mathbf{P}^{k-1}\mathbf{P}_{u}e\right) \\
&=&\boldsymbol{\delta }_{x}\left( \Pi \mathbf{P}_{t}\mathbf{P}^{k-1}\mathbf{P%
}_{u}e\right) +\boldsymbol{\delta }_{x}\left( Q^{l-1}\mathbf{P}_{t}\mathbf{P}%
^{k-1}\mathbf{P}_{u}e\right) \\
&=&\nu \left( \mathbf{P}_{t}\mathbf{P}^{k-1}\mathbf{P}_{u}e\right) \\
&&+\boldsymbol{\delta }_{x}\left( Q^{l-1}\mathbf{P}_{t}\Pi \mathbf{P}%
_{u}e\right) +\nu \left( Q^{l-1}\mathbf{P}_{t}Q^{k-1}\mathbf{P}_{u}e\right).
\end{eqnarray*}%
Since%
\begin{equation*}
\left\vert \boldsymbol{\delta }_{x}\left( Q^{l-1}\mathbf{P}_{t}\Pi \mathbf{P}%
_{u}e\right) \right\vert =\left\vert \boldsymbol{\delta }_{x}\left( Q^{l-1}%
\mathbf{P}_{t}e\right) \nu \left( \mathbf{P}_{u}e\right) \right\vert \leq
\kappa ^{l-1}C_{Q}C_{\mathbf{P}}^{2}\left\Vert \boldsymbol{\delta }%
_{x}\right\Vert _{\mathcal{B}^{\prime }}\left\Vert \nu \right\Vert _{%
\mathcal{B}^{\prime }}\left\Vert e\right\Vert _{\mathcal{B}}^{2}
\end{equation*}%
and%
\begin{equation*}
\left\vert \boldsymbol{\delta }_{x}\left( Q^{l-1}\mathbf{P}_{t}Q^{k-1}%
\mathbf{P}_{u}e\right) \right\vert \leq \kappa ^{l+k-2}C_{Q}^{2}C_{\mathbf{P}%
}^{2}\left\Vert \boldsymbol{\delta }_{x}\right\Vert _{\mathcal{B}^{\prime
}}\left\Vert e\right\Vert _{\mathcal{B}^{\prime }},
\end{equation*}%
we obtain%
\begin{equation}
\left\vert \widetilde{\phi }\left( t, u\right) -\widetilde{\psi }\left(
t, u;k\right) \right\vert \leq \kappa ^{l-1}C_{\mathbf{P}}^{2}C_{Q}\left(
\left\Vert \nu \right\Vert _{\mathcal{B}^{\prime }}\left\Vert e\right\Vert _{%
\mathcal{B}}+C_{Q}\right) \left\Vert \boldsymbol{\delta }_{x}\right\Vert _{%
\mathcal{B}^{\prime }}\left\Vert e\right\Vert _{\mathcal{B}^{\prime }},
\label{coverg005}
\end{equation}%
where
\begin{equation*}
\widetilde{\psi }\left( t, u;k\right) =\nu \left( \mathbf{P}_{t}\mathbf{P}%
^{k-1}\mathbf{P}_{u}e\right) \mathbb{E}_{x}e^{itV}\mathbb{E}%
_{x}e^{iuV^{\prime }}.
\end{equation*}%
Note that $\widetilde{\psi }\left( t, u;k\right) $ does not depend on the
initial state $x$ since $V$ and $V^{\prime }$ are independent of the Markov
chain. In the same way%
\begin{eqnarray*}
\widetilde{\phi }_{1}\left( t\right) &=&\left( \mathbf{P}^{l-1}\mathbf{P}%
_{t}e\right) \left( x\right) \mathbb{E}_{x}e^{itV},  \\
\widetilde{\phi }_{2}\left( u\right) &=&\left( \mathbf{P}^{l+k-1}\mathbf{P}%
_{u}e\right) \left( x\right) \mathbb{E}_{x}e^{iuV^{\prime }},
\end{eqnarray*}%
where,  for $m\geq 2, $%
\begin{eqnarray*}
\left( \mathbf{P}^{m-1}\mathbf{P}_{t}e\right) \left( x\right) &=&\boldsymbol{%
\delta }_{x}\left( \mathbf{P}^{m-1}\mathbf{P}_{t}e\right) \\
&=&\boldsymbol{\delta }_{x}\left( \Pi \mathbf{P}_{t}e\right) +\boldsymbol{%
\delta }_{x}\left( Q^{m-1}\mathbf{P}_{t}e\right) \\
&=&\nu \left( \mathbf{P}_{t}e\right) +\boldsymbol{\delta }_{x}\left( Q^{m-1}%
\mathbf{P}_{t}e\right).
\end{eqnarray*}%
Since $\left\vert \boldsymbol{\delta }_{x}\left( Q^{m-1}\mathbf{P}%
_{t}e\right) \right\vert \leq \kappa ^{m-1}\left\Vert \boldsymbol{\delta }%
_{x}\right\Vert _{\mathcal{B}^{\prime }}\left\Vert e\right\Vert _{\mathcal{B}%
}C_{\mathbf{P}}C_{Q}, $ we get%
\begin{eqnarray}
\left\vert \widetilde{\phi }_{1}\left( t\right) -\widetilde{\psi }_{1}\left(
t\right) \right\vert &\leq &\kappa ^{l-1}\left\Vert \boldsymbol{\delta }%
_{x}\right\Vert _{\mathcal{B}^{\prime }}\left\Vert e\right\Vert _{\mathcal{B}%
}C_{\mathbf{P}}C_{Q},   \label{coverg005aa} \\
\left\vert \widetilde{\phi }_{2}\left( u\right) -\widetilde{\psi }_{1}\left(
u\right) \right\vert &\leq &\kappa ^{l+k-1}\left\Vert \boldsymbol{\delta }%
_{x}\right\Vert _{\mathcal{B}^{\prime }}\left\Vert e\right\Vert _{\mathcal{B}%
}C_{\mathbf{P}}C_{Q},   \label{coverg005bb}
\end{eqnarray}%
where%
\begin{equation}
\widetilde{\psi }_{1}\left( t\right) =\nu \left( \mathbf{P}_{t}e\right)
\mathbb{E}_{x}e^{itV}=\nu \left( \mathbf{P}_{t}e\right) \mathbb{E}%
_{x}e^{itV^{\prime }}  \label{coverg-psi}
\end{equation}%
does not depend on the initial state $x$ of the Markov chain. Denote $%
\widetilde{\psi }_{0}\left( t, u;k\right) =\widetilde{\psi }\left(
t, u;k\right) -\widetilde{\psi }_{1}\left( t\right) \widetilde{\psi }%
_{1}\left( u\right).$ From (\ref{coverg005aa}) and (\ref{coverg005bb}) it
follows%
\begin{eqnarray}
&&\left\vert \widetilde{\phi }_{0}\left( t, u\right) -\widetilde{\psi }%
_{0}\left( t, u;k\right) \right\vert  \notag \\
&\leq &\left\vert \widetilde{\phi }\left( t, u\right) -\widetilde{\psi }%
\left( t, u;k\right) \right\vert +\left\vert \widetilde{\phi }_{1}\left(
t\right) \widetilde{\phi }_{2}\left( u\right) -\widetilde{\psi }_{1}\left(
t\right) \widetilde{\psi }_{1}\left( u\right) \right\vert  \notag \\
&\leq &\left\vert \widetilde{\phi }\left( t, u\right) -\widetilde{\psi }%
\left( t, u;k\right) \right\vert +\left\vert \widetilde{\phi }_{1}\left(
t\right) -\widetilde{\psi }_{1}\left( t\right) \right\vert +\left\vert \nu
\left( \mathbf{P}_{t}e\right) \right\vert \left\vert \left( \widetilde{\phi }%
_{2}\left( u\right) -\widetilde{\psi }_{1}\left( u\right) \right) \right\vert
\notag \\
&\leq &\kappa ^{l-1}\left\Vert \boldsymbol{\delta }_{x}\right\Vert _{%
\mathcal{B}^{\prime }}\left\Vert e\right\Vert _{\mathcal{B}}C_{\mathbf{P}%
}^{2}C_{Q}\left( \left\Vert \nu \right\Vert _{\mathcal{B}^{\prime
}}\left\Vert e\right\Vert _{\mathcal{B}}+C_{Q}\right) +\kappa
^{l-1}\left\Vert \boldsymbol{\delta }_{x}\right\Vert _{\mathcal{B}^{\prime
}}C_{\mathbf{P}}C_{Q}\left( 1+\left\vert \nu \left( \mathbf{P}_{t}e\right)
\right\vert \right)  \notag \\
&\leq &\kappa ^{l-1}\left\Vert \boldsymbol{\delta }_{x}\right\Vert _{%
\mathcal{B}^{\prime }}\left\Vert e\right\Vert _{\mathcal{B}}\left( C_{%
\mathbf{P}}^{2}C_{Q}\left( \left\Vert \nu \right\Vert _{\mathcal{B}^{\prime
}}\left\Vert e\right\Vert _{\mathcal{B}}+C_{Q}\right) +C_{\mathbf{P}%
}C_{Q}\left( 1+\left\Vert \nu \right\Vert _{\mathcal{B}^{\prime }}C_{\mathbf{%
P}}\right) \right)  \notag \\
&\leq &C\left( x\right) \kappa ^{l-1},   \label{coverg005cc}
\end{eqnarray}%
where $C\left( x\right) =\left\Vert \boldsymbol{\delta }_{x}\right\Vert _{%
\mathcal{B}^{\prime }}\left\Vert e\right\Vert _{\mathcal{B}}\left( C_{%
\mathbf{P}}^{2}C_{Q}\left( \left\Vert \nu \right\Vert _{\mathcal{B}^{\prime
}}\left\Vert e\right\Vert _{\mathcal{B}}+C_{Q}\right) +C_{\mathbf{P}%
}C_{Q}\left( 1+\left\Vert \nu \right\Vert _{\mathcal{B}^{\prime }}C_{\mathbf{%
P}}\right) \right).$ Denote by $s_{k, T}$ the complex number defined by%
\begin{equation*}
s_{k, T}=\frac{1}{\left( 2\pi \right) ^{2}}\int_{-\varepsilon_0}^{\varepsilon_0} \int_{-\varepsilon_0}^{\varepsilon_0} \overline{\widehat{h}%
_{T}\left( t, u\right) }\widetilde{\psi }_{0}\left( t, u;k\right) dtdu.
\end{equation*}%
Note that $s_{k, T}$ does not depend on the initial state $x$ of the Markov
chain since so is $\widetilde{\psi }_{0}\left( t, u;k\right).$ With this
notation we have%
\begin{equation*}
\text{\textrm{Cov}}_{\mathbb{P}_{x}}\left( f\left( X_{l}\right), f\left(
X_{l+k}\right) \right) -s_{k, T}=R^{\prime }+R,
\end{equation*}%
where%
\begin{equation*}
R^{\prime }=\frac{1}{\left( 2\pi \right) ^{2}}
\int_{-\varepsilon_0}^{\varepsilon_0} \int_{-\varepsilon_0}^{\varepsilon_0} \overline{\widehat{h}%
_{T}\left( t, u\right) }\left( \widetilde{\phi }_{0}\left( t, u\right) -%
\widetilde{\psi }_{0}\left( t, u;k\right) \right) dtdu.
\end{equation*}%
Since $\mathbb{E}_{x}e^{itV}\mathbb{E}_{x}e^{iuV^{\prime }}$ has a support
in the square $[-\varepsilon _{0}, \varepsilon _{0}]^{2}, $ using (\ref{cov005}%
) and (\ref{coverg005cc}) it follows that
\begin{equation}
\left\vert R^{\prime }\right\vert \leq \frac{1}{\left( 2\pi \right) ^{2}}%
\left\Vert \widehat{h}_{T}\right\Vert _{L^{2}}\left\Vert \widetilde{\phi }%
_{0}-\widetilde{\psi }_{0}\right\Vert _{L^{2}}\leq \frac{T^{3}}{3\pi ^{2}}%
\varepsilon _{0}^{2}C\left( x\right) \kappa ^{l-1}.  \label{coverg006}
\end{equation}
From (\ref{coverg006}) and (\ref{R bound}),  for any $%
\gamma \in (0, 2\delta ]$ and any $l, k=0, 1, ..., $%
\begin{eqnarray}
\left\vert \text{\textrm{Cov}}_{\mathbb{P}_{x}}\left( f\left( X_{l}\right)
, f\left( X_{l+k}\right) \right) -s_{k, T}\right\vert &\leq &C\left( x\right)
\frac{T^{3}}{3\pi ^{2}}\varepsilon _{0}^{2}\kappa ^{l-1}  \notag \\
&&+ c_\delta T^{-\gamma }A_{0}^{2+\gamma }\left( x\right),
\label{coverg007}
\end{eqnarray}%
From (\ref{coverg007}),  for any $l, l^{\prime }=2, 3, ...$ one obtains%
\begin{eqnarray*}
&&\left\vert \text{\textrm{Cov}}_{\mathbb{P}_{x}}\left( f\left( X_{l}\right)
, f\left( X_{l+k}\right) \right) -\text{\textrm{Cov}}_{\mathbb{P}_{x}}\left(
f\left( X_{l^{\prime }}\right), f\left( X_{l^{\prime }+k}\right) \right)
\right\vert \\
&\leq & c_\delta T^{-\gamma }A_{0}\left( x\right) ^{2+\gamma }+C\left(
x\right) \frac{2T^{3}}{3\pi ^{2}}\varepsilon _{0}^{2}\kappa ^{\min
\{l, l^{\prime }\}-1}.
\end{eqnarray*}%
Taking $T=\kappa ^{-\frac{1}{4}\min \{l, l^{\prime }\}}$ we get,  for any $%
\gamma \leq \min \left\{ 1, 2\delta \right\}, $%
\begin{equation}
\left\vert \text{\textrm{Cov}}_{\mathbb{P}_{x}}(f\left( X_{l}\right)
, f\left( X_{l+k}\right) -\text{\textrm{Cov}}_{\mathbb{P}_{x}}(f\left(
X_{l^{\prime }}\right), f\left( X_{l^{\prime }+k}\right) \right\vert \leq
c_\delta A\left( x\right) \kappa ^{\min \{l, l^{\prime }\}\gamma /4-1},
\label{coverg008}
\end{equation}%
where $A\left( x\right) =  A_{0}^{2+\gamma }\left(x\right) +C\left( x\right) $.
The sequence $\textrm{%
Cov}_{\mathbb{P}_{x}}(f\left( X_{l}\right), f\left( X_{l+k}\right), %
l=1, 2, ...$ is thus Cauchy; denote by $s_{k}\left( x\right) $ its limit as $%
l\rightarrow \infty.$ Taking the limit as $l\rightarrow \infty $ in (\ref%
{coverg007}),  we get that
\begin{equation}
\left\vert s_{k}\left( x\right) -s_{k, T}\right\vert \leq C\left( x\right)
\frac{T^{3}}{3\pi ^{2}}\varepsilon _{0}^{2}\kappa ^{l-1}+ c_\delta T^{-\gamma }A_{0}^{2+\gamma }\left( x\right).  \label{coverg008a}
\end{equation}%
Letting $T=T_{l}=\kappa ^{-\frac{l}{4}}$ this implies that $%
\lim_{l\rightarrow \infty }s_{k, T_{l}}=s_{k}\left( x\right).$
Since $s_{k, T_{l}}$ does not depend on $x, $ we conclude that $s_{k}\left( x\right) $
is also a constant not depending on $x,$ say $s_{k}.$  Taking the limit as $%
l^{\prime }\rightarrow \infty $ in (\ref{coverg008}) we obtain (\ref%
{coverg000}).

The second assertion of the part b) of the lemma follows from (\ref%
{coverg000}) and Lemma \ref{Lemma Cov MC} setting $l=k.$

The third assertion of the part b) follows immediately from the second one.

Let us now prove the part a). From (\ref{cov-expect}),  we have%
\begin{equation*}
\left\vert \mathbb{E}_{x}f\left( X_{l}\right) -m_{T}\right\vert \leq \frac{1%
}{2\pi }\int_{-\varepsilon_0}^{\varepsilon_0} \left\vert \overline{\widehat{g}_{T}\left( t\right) }%
\right\vert \left\vert \widetilde{\phi }_{1}\left( t\right) -\widetilde{\psi
}_{1}\left( t\right) \right\vert dt+\left\vert R_{1}\right\vert,
\end{equation*}%
where%
\begin{equation*}
m_{T}=\frac{1}{2\pi }\int_{-\varepsilon_0}^{\varepsilon_0} \overline{\widehat{g}_{T}\left( t\right) }%
\widetilde{\psi }_{1}\left( t\right) dt,
\end{equation*}%
$R_{1}$ is defined by (\ref{cov-R1 bound}) and $\widetilde{\psi }_{1}$ is
defined by (\ref{coverg-psi}). Note that $m_{T}$ does not depend on $x$
since so is $\widetilde{\psi }\left( t\right).$ Taking into account the
bounds in (\ref{cov003cc}) and (\ref{coverg005aa}),  we get%
\begin{equation*}
\left\vert \mathbb{E}_{x}f\left( X_{l}\right) -m_{T}\right\vert \leq \kappa
^{l-1}\left\Vert \boldsymbol{\delta }_{x}\right\Vert _{\mathcal{B}^{\prime
}}\left\Vert e\right\Vert _{\mathcal{B}}C_{\mathbf{P}}C_{Q}\frac{1}{2\pi }%
\int_{-\varepsilon_0}^{\varepsilon_0} \left\vert \overline{\widehat{g}_{T}\left( t\right) }\right\vert
dt+  T^{-\gamma } c_\delta A_{0}^{1+\gamma }\left( x\right).
\end{equation*}%
Recalling that $g_{T}\left( x\right) =x1\left( \left\vert
x\right\vert \leq T\right), $ to bound $\int \left\vert \overline{\widehat{g}%
_{T}\left( t\right) }\right\vert dt$ we use the usual isometry relation
\begin{equation*}
\left( \int_{-\varepsilon_0}^{\varepsilon_0} \left\vert \overline{\widehat{g}_{T}\left( t\right) }\right\vert
dt\right) ^{2}\leq \int_{-\varepsilon_0}^{\varepsilon_0} \left\vert \overline{\widehat{g}_{T}\left( t\right) }%
\right\vert ^{2}dt=\int g_{T}^{2}\left( x\right) dx=\frac{2}{3}T^{3}.
\end{equation*}%
This implies,  for any $\gamma \leq \min \left\{ 1, 2\delta \right\}, $%
\begin{equation*}
\left\vert \mathbb{E}_{x}f\left( X_{l}\right) -m_{T}\right\vert \leq
c_\delta \left( \left\Vert \boldsymbol{\delta }_{x}\right\Vert _{\mathcal{B}^{\prime
}}\left\Vert e\right\Vert _{\mathcal{B}}C_{\mathbf{P}}C_{Q}T^{3}\kappa
^{l-1}+ T^{-\gamma }A_{0}^{1+\gamma }\left( x\right) \right).
\end{equation*}%
Taking $T=\kappa ^{-\frac{l}{4}}, $ we have%
\begin{equation}
\left\vert \mathbb{E}_{x}f\left( X_{l}\right) -m_{T}\right\vert \leq
c_\delta A_{1}\left( x\right) \kappa ^{l\gamma /4-1},   \label{coverg010}
\end{equation}%
where $A_{1}\left( x\right) =  1 + A_{0}^{1+\gamma }\left( x\right) + \left\Vert \boldsymbol{\delta }_{x}\right\Vert
_{\mathcal{B}^{\prime }}\left\Vert e\right\Vert _{\mathcal{B}}C_{\mathbf{P}}C_{Q}.$
From this inequality it follows that%
\begin{equation}
\left\vert \mathbb{E}_{x}f\left( X_{l}\right) -\mathbb{E}_{x}f\left(
X_{k}\right) \right\vert \leq c_\delta A_{1}\left( x\right) \kappa ^{\min \left\{
l, k\right\} \gamma /4-1},   \label{cauchy001}
\end{equation}%
which proves that the sequence $\left( \mathbb{E}_{x}f\left( X_{l}\right)
\right) _{l\geq 1}$ is Cauchy and therefore has a limit denoted $\mu \left(
x\right).$ Since $m_{T}$ does not depend on $x, $ taking $\lim $ as $%
l\rightarrow \infty $ in (\ref{coverg010}) we conclude that $\mu \left(
x\right) =\mu $ is not depending on $x.$ Taking $\lim_{k\rightarrow \infty }$
in (\ref{cauchy001}),  we get%
\begin{equation*}
\left\vert \mathbb{E}_{x}f\left( X_{l}\right) -\mu \right\vert \leq
c_\delta A_{1}\left( x\right) \kappa ^{l\min \left\{ 1, \gamma \right\} /4-1},
\end{equation*}%
which proves the first assertion of the part a) of the lemma. The second
follows from the first one.
\end{proof}

The bound (\ref{Proposition MC002-001}) of Proposition \ref{Proposition MC002} follows from the part a) of the previous lemma.
It remains to prove the bound (\ref{Proposition MC002-002}) of Proposition \ref{Proposition MC002}.

Let $0<\gamma \leq \min \left\{ 1, 2\delta \right\}.$
First note that,  from Lemma \ref{Lemma Cov erg MC} and Lemma \ref{Lemma Cov
MC} we obtain,  for $k=0, 1, ..., $%
\begin{equation*}
\left\vert \text{\textrm{Cov}}_{\mathbb{P}_{x}}(f\left( X_{l}\right)
, f\left( X_{l+k}\right) -s_{k}\right\vert \leq A_{2}\left( x\right)
c_{\delta, \kappa }\kappa ^{c_{\gamma, \kappa }\max \left\{ l, k\right\} },
\end{equation*}%
where $A_{2}\left( x\right) $ is defined in Proposition \ref{Proposition MC002}.
Then,  for any $k=0, 1, ..., $%
\begin{eqnarray*}
\sum_{l=m}^{m+n-1}\sum_{k=1}^{m+n-l}\left\vert \text{\textrm{Cov}}_{\mathbb{P%
}_{x}}(f\left( X_{l}\right), f\left( X_{l+k}\right) -s_{k}\right\vert &\leq
&A_{2}\left( x\right) c_{\delta, \kappa
}\sum_{l=m}^{m+n-1}\sum_{k=1}^{m+n-l}e^{-c_{\gamma, \kappa }\max \left\{
l, k\right\} } \\
&\leq &A_{2}\left( x\right) c_{\delta, \gamma, \kappa }^{\prime \prime }.
\end{eqnarray*}
Since
\begin{equation*}
\text{\textrm{Var}}_{\mathbb{P}_{x}}\left( \sum_{l=m}^{m+n-1}f\left(
X_{l}\right) \right) =\sum_{l=m}^{m+n-1}\text{\textrm{Var}}_{\mathbb{P}%
_{x}}\left( f\left( X_{l}\right) \right)
+2\sum_{l=m}^{m+n-1}\sum_{k=1}^{m+n-l}\text{\textrm{Cov}}_{\mathbb{P}%
_{x}}(f\left( X_{l}\right), f\left( X_{l+k}\right)
\end{equation*}%
we get%
\begin{equation*}
\left\vert \text{\textrm{Var}}_{\mathbb{P}_{x}}\left(
\sum_{l=m}^{m+n-1}f\left( X_{l}\right) \right) -\left(
ns_{0}+\sum_{l=m}^{m+n-1}\sum_{k=1}^{m+n-l}\left( s_{k}+s_{k}^{\ast }\right)
\right) \right\vert \leq A_{2}\left( x\right) c_{\delta, \gamma, \kappa
}^{\prime \prime }.
\end{equation*}%
Taking into account that,  by Lemma \ref{Lemma Cov erg MC}, $s_k$ are not depending on $x$ and
$\left\vert s_{k}\right\vert \leq A_{2}\left( x\right) \kappa ^{k\gamma /4-1} $ we obtain
\begin{equation}
\left\vert \text{\textrm{Var}}_{\mathbb{P}_{x}}\left(
\sum_{l=m}^{m+n-1}f\left( X_{l}\right) \right) -n\left(
s_{0}+\sum_{k=1}^{\infty }\left( s_{k}+s_{k}^{\ast }\right) \right)
\right\vert \leq A_{2}\left( x\right) c_{\delta, \gamma, \kappa }^{\prime
\prime \prime }.   \label{varlim001}
\end{equation}
Dividing by $n$ and taking the limit as $n\rightarrow \infty $
in (\ref{varlim001}),  we deduce that $s_{0}+\sum_{k=1}^{\infty }\left(
s_{k}+s_{k}^{\ast }\right)$ converges to a non-negative number not depending on $x$, say $\sigma^{2}\geq 0.$
Now (\ref{Proposition MC002-002}) follows from (\ref{varlim001}).

\subsection{Proof of Theorem \protect\ref{Th main res 2}}

First note that conditions \textbf{C1} and \textbf{C3} are satisfied by
Propositions \ref{Proposition MC001} and \ref{Proposition MC002}. Condition
\textbf{C2} is satisfied by hypothesis \textbf{M4}. Let $\mu _{i}\left(
x\right) =\mathbb{E}_{x}f\left( X_{i}\right).$ Let $\alpha <\delta $ and $%
\delta ^{\prime }=\frac{1}{2}\left( \alpha +\delta \right).$ Since $\alpha
<\delta ^{\prime }, $ from \ref{Th main res 1} with $\delta ^{\prime }$
replacing $\delta, $ it follows that for any $x\in \mathbb{X}$ there exists
a probability space $(\Omega, \mathcal{F}, \mathbb{P}_{x}), $ a sequence of
independent standard normal r.v.'s $\left( W_{i}^{\prime }\right) _{i\geq 1}$
and a sequence of r.v.'s $(Y_{i}^{\prime })_{i\geq 1}$ such that $\left(
Y_{i}^{\prime }\right) _{i\geq 1}\overset{d}{=}\left( f\left( X_{i}\right)
\right) _{i\geq 1}$ and for any $0<\rho <\frac{1}{2}\frac{\alpha }{1+2\alpha
}, $%
\begin{equation}
\mathbb{P}_{x}\left( N^{-\frac{1}{2}}\sup_{k\leq N}\left\vert
\sum_{i=1}^{k}\left( Y_{i}^{\prime }-\mu _{i}\left( x\right) -\sigma
W_{i}^{\prime }\right) \right\vert >N^{-\rho }\right) \leq C_{0}\left(
x\right) N^{-\alpha \frac{1+\alpha }{1+2\alpha }+\rho \left( 2+2\alpha
\right) },   \label{th2-002}
\end{equation}%
where $C_{0}\left( x\right) =C_{0}^{\prime }\left( 1+\lambda _{0}\left(
x\right) +\mu _{\delta ^{\prime }}\left( x\right) +\sqrt{\tau \left(
x\right) }\right) ^{2+2\delta ^{\prime }}$
and $\lambda _{0}\left( x\right), \mu _{\delta }\left( x\right), \tau \left(
x\right), \lambda _{1}, \lambda _{2}$ and $\sigma ^{2}$ are defined in
Propositions \ref{Proposition MC001} and \ref{Proposition MC002}. If $%
\overline{\mu }\left( x\right) \leq N^{\frac{1}{2}-\rho }$ (with $\overline{%
\mu }\left( x\right) $ from Proposition \ref{Proposition MC002} using (\ref%
{th2-002}) we have%
\begin{eqnarray}
&&\mathbb{P}_{x}\left( N^{-\frac{1}{2}}\sup_{k\leq N}\left\vert
\sum_{i=1}^{k}\left( Y_{i}^{\prime }-\mu -\sigma W_{i}^{\prime }\right)
\right\vert >2N^{-\rho }\right)  \notag \\
&\leq &\mathbb{P}_{x}\left( \sup_{k\leq N}\left\vert \sum_{i=1}^{k}\left(
Y_{i}^{\prime }-\mu _{i}\left( x\right) -\sigma W_{i}^{\prime }\right)
\right\vert >2N^{\frac{1}{2}-\rho }-\overline{\mu }\left( x\right) \right)
\notag \\
&\leq &C_{0}\left( x\right) N^{-\alpha \frac{1+\alpha }{1+2\alpha }+\rho
\left( 2+2\alpha \right) }.  \label{th2-003}
\end{eqnarray}%
If $\overline{\mu }\left( x\right) >N^{\frac{1}{2}-\rho }, $ it is obvious
that%
\begin{equation}
1\leq \left( \overline{\mu }\left( x\right) N^{-\frac{1}{2}+\rho }\right)
^{2\alpha }\leq \overline{\mu }\left( x\right) ^{2\alpha }N^{-\alpha +2\rho
\alpha }.  \label{th2-004}
\end{equation}%
From (\ref{th2-003}) and (\ref{th2-004}) we get%
\begin{eqnarray*}
&&\mathbb{P}_{x}\left( N^{-\frac{1}{2}}\sup_{k\leq N}\left\vert
\sum_{i=1}^{k}\left( Y_{i}^{\prime }-\mu -\sigma W_{i}^{\prime }\right)
\right\vert >2N^{-\rho }\right) \\
&\leq &\left( C_{0}\left( x\right) +\overline{\mu }\left( x\right) ^{2\alpha
}\right) N^{-\alpha \frac{1+\alpha }{1+2\alpha }+\rho \left( 2+2\alpha
\right) }.
\end{eqnarray*}%
Taking into account the expressions for $\lambda _{0}\left( x\right), \mu
_{\delta }\left( x\right), \tau \left( x\right), \lambda _{1}, \lambda _{2}, $
$\overline{\mu }\left( x\right) $ and choosing $\gamma $ small we obtain%
\begin{equation*}
C_{0}\left( x\right) +\overline{\mu }\left( x\right) ^{2\alpha }\leq C\left(
x\right) =C_{1}\left( 1+\left\Vert \boldsymbol{\delta }_{x}\right\Vert _{%
\mathcal{B}^{\prime }}+\mu _{\delta }\left( x\right) \right) ^{2+2\delta },
\end{equation*}%
where $C_{1}$ is a constant depending only on
$\delta, \alpha, \kappa, C_{\mathbf{P}}, C_{Q}, \left\Vert e\right\Vert _{\mathcal{B}}, \left\Vert \nu \right\Vert _{\mathcal{B}^{\prime }}.$

Generally the measure $\mathbb{P}_{x}$ and the constructed sequence $\left(
Y_{i}^{\prime }\right) _{i\geq 1}$ depend both on the initial state $x.$ It
is easy to reconstruct $\left( Y_{i}^{\prime }\right) _{i\geq 1}$
independently of $x.$ Indeed,  on the canonical space $\widetilde{\Omega }=%
\mathbb{R}^{\infty }\mathbb{\times R}^{\infty }$ there is a probability
measure $\widetilde{\mathbb{P}}_{x}$ which coincides with the joint
distribution of the sequence $\left( Y_{i}^{\prime }, W\right) _{i\geq 1}.$
It is enough to redefine $Y_{i}^{\prime }=\omega _{1, i}$ and $W_{i}=\omega
_{2, i}$ as the coordinate processes,  where $\omega =\left( \omega
_{1}, \omega _{2}\right) \in \widetilde{\Omega }.$ With this construction
only the measure $\widetilde{\mathbb{P}}_{x}$ depends on the initial state $%
x.$ The measurability of the map $x\mathbb{\in X}\rightarrow \widetilde{%
\mathbb{P}}_{x}\left( \cdot \right) $ follows from the construction.

\subsection{Proof of Theorem \protect\ref{Th main res 3}}

In addition to conditions of Theorem \ref{Th main res 2} assume hypothesis \textbf{M5}.
Set for brevity $Y_k=f(X_k).$
First we note that hypothesis \textbf{M5} ensures the existence of the
mean $\nu \left( f\right) =\mathbf{E}_{\nu }Y_{k}=\int \left( \mathbb{E}_{x}Y_{k}\right) \nu \left( dx\right) $ and of the mixed moment $
\mathbf{E}%
_{\nu }\left( Y_{l} Y_{l+k}\right) =\int \mathbb{E}_{x}\left(Y_{l} Y_{l+k}\right) \nu \left( dx\right) $ with respect to the invariant
measure. By Proposition \ref{Proposition MC002},  
we have $\lim_{k\rightarrow \infty }\mathbb{E}_{x}Y_{k}=\mu, $ $\nu $-a.s. on $\mathbb{X} .$ Then by
Lebesgue theorem on dominated convergence%
\begin{equation*}
\nu \left( f\right) =\mathbf{E}_{\nu }Y_{k}=\lim_{k\rightarrow \infty }\int
\left( \mathbb{E}_{x}Y_{k}\right) \nu \left( dx\right) =\int \left(
\lim_{k\rightarrow \infty }\mathbb{E}_{x}Y_{k}\right) \nu \left( dx\right)
=\mu.
\end{equation*}%
Without loss of generality we can assume that $\nu \left( f\right) =0.$
Using hypothesis \textbf{M5} and $\nu \left( f\right) =0, $ we have%
\begin{eqnarray*}
\int \mathrm{Cov}_{\mathbf{P}_{x}}\left( Y_{l}, Y_{l+k}\right) \nu \left(
dx\right) &=&\int \mathbb{E}_{x}\left( Y_{l}Y_{l+k}\right) \nu \left(
dx\right) -\int \mathbb{E}_{x}\left( Y_{l}\right) \mathbb{E}_{x}\left(
Y_{l+k}\right) \nu \left( dx\right) \\
&=&\mathbf{E}_{\nu }\left( Y_{l}Y_{l+k}\right) -\int \mathbb{E}_{x}\left(
Y_{l}\right) \mathbb{E}_{x}\left( Y_{l+k}\right) \nu \left( dx\right) \\
&=&\mathrm{Cov}_{\mathbb{P}_{\nu }}\left( Y_{0}, Y_{k}\right) -\int \mathbb{E}%
_{x}\left( Y_{l}\right) \mathbb{E}_{x}\left( Y_{l+k}\right) \nu \left(dx\right).
\end{eqnarray*}%
By Proposition \ref{Proposition MC002},  we have 
$\lim_{l\rightarrow \infty }\mathrm{Cov}_{\mathbf{P}_{x}}\left(Y_{l}, Y_{l+k}\right) = s_{k}$ 
and 
$\lim_{l\rightarrow \infty }\mathbb{E}_{x} Y_{l} =0$ 
for any $x\in \mathbb{X}$. As before,  integrating with respect to the
stationary measure and using Lebesgue theorem on dominated convergence,  it
follows that 
$s_{k}=\mathrm{Cov}_{\mathbb{P}_{\nu }}\left(Y_{0}, Y_{k}\right).$ 
Thus the conclusions of Theorem \ref{Th main res 2}
hold true with $\mu =\nu \left( f\right) $ and 
$\sigma ^{2}=\sigma _{\nu}^{2}, $ which proves Theorem \ref{Th main res 3}.

\section{Maximal inequalities\label{sec Lp bounds}}

In this section we state two bounds which are used repeatedly in the paper.
The first one gives a control for the $L_{p}$-norm of the maxima of the
partial sums of a sequence of dependent r.v.'s. This proposition is a
consequence of the second one which give a control of the $L_{p}$-norm of
the partial sums of a sequence of dependent r.v.'s.
It is assumed that conditions \textbf{C1} and \textbf{C2} hold true.

\begin{proposition}
\label{Prop max Lp bound} Let $\delta ^{\prime }<\delta $ and $\epsilon >0.$
Then,  there is a constant $c_{\lambda _{1}, \lambda _{2}, \delta, \delta ^{\prime }, \epsilon }$ such that for any $m, n\geq 1$ it holds%
\begin{equation*}
\left\Vert \sup_{1\leq k\leq n}\left\vert \sum_{i=m}^{m+k-1}X_{i}\right\vert
\right\Vert _{L^{2+2\delta ^{\prime }}}\leq c_{\lambda _{1}, \lambda
_{2}, \delta, \delta ^{\prime }, \epsilon }\left( 1+\lambda _{0}+\mu _{\delta
}\right) ^{1+\epsilon }n^{\frac{1}{2}}.
\end{equation*}
\end{proposition}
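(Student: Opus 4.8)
The plan is to derive the maximal inequality from the bound for the $L^{p}$ norm of the partial sums themselves (Proposition \ref{Prop Lp bound}) by the classical dyadic bisection argument. Set $p=2+2\delta^{\prime}$ and note that $p>2$. By Proposition \ref{Prop Lp bound}, applied with $\eta=\epsilon$, there is a constant $A=c_{\lambda_{1},\lambda_{2},\delta,\delta^{\prime},\epsilon}\,(1+\lambda_{0}+\mu_{\delta})^{1+\epsilon}$ such that for every finite interval $[a,b)\subset\mathbb{N}$,
\[
\Bigl\Vert\sum_{i=a}^{b-1}X_{i}\Bigr\Vert_{L^{p}}\le A\,(b-a)^{1/2}.
\]
Equivalently $\mathbb{E}\bigl|\sum_{i=a}^{b-1}X_{i}\bigr|^{p}\le A^{p}(b-a)^{p/2}$, a bound whose right-hand side is an additive (hence superadditive) function of the index interval raised to the power $p/2>1$; this is exactly the situation covered by the dyadic maximal inequality.

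Next I would carry out the bisection. It suffices to treat $n=2^{r}$ (replace $n$ by $2^{r}$ with $2^{r-1}<n\le 2^{r}$, so that $2^{r}\le 2n$). For each $k\in\{1,\dots,2^{r}\}$ the binary expansion of $k$ yields a decomposition of the block $[m,m+k)$ into at most $r+1$ consecutive sub-blocks, at most one of each length $2^{r-l}$, $l=0,\dots,r$; moreover, as $k$ ranges over $\{1,\dots,2^{r}\}$, only at most $2^{l}$ distinct sub-blocks of length $2^{r-l}$ can occur. Writing $M_{l}$ for the maximum of $\bigl|\sum_{i\in B}X_{i}\bigr|$ over the (at most $2^{l}$) sub-blocks $B$ of length $2^{r-l}$, we get
\[
\sup_{1\le k\le 2^{r}}\Bigl|\sum_{i=m}^{m+k-1}X_{i}\Bigr|\le\sum_{l=0}^{r}M_{l}.
\]
Since $M_{l}^{p}\le\sum_{B}\bigl|\sum_{i\in B}X_{i}\bigr|^{p}$, the partial-sum bound gives $\Vert M_{l}\Vert_{L^{p}}^{p}\le 2^{l}A^{p}\,(2^{r-l})^{p/2}$, i.e. $\Vert M_{l}\Vert_{L^{p}}\le A\,2^{l/p}2^{(r-l)/2}$. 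Summing with the triangle inequality in $L^{p}$,
\[
\Bigl\Vert\sup_{1\le k\le 2^{r}}\Bigl|\sum_{i=m}^{m+k-1}X_{i}\Bigr|\Bigr\Vert_{L^{p}}\le A\,2^{r/2}\sum_{l=0}^{r}2^{l(1/p-1/2)}\le c_{\delta^{\prime}}\,A\,2^{r/2},
\]
where the series converges because $1/p-1/2<0$, with sum bounded by $c_{\delta^{\prime}}=(1-2^{1/p-1/2})^{-1}$, a quantity depending on $\delta^{\prime}$ only. Combining this with $2^{r/2}\le\sqrt{2n}$ and the value of $A$ yields the stated inequality, with a final constant $c_{\lambda_{1},\lambda_{2},\delta,\delta^{\prime},\epsilon}$ depending only on the indicated parameters.

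There is essentially no real obstacle in this argument beyond bookkeeping; the only point requiring attention is that the constant delivered by the bisection be independent of $m$ and $n$ and stay bounded as $n\to\infty$, which is guaranteed precisely by the strict inequality $p>2$ that makes $\sum_{l\ge 0}2^{l(1/p-1/2)}$ summable. All of the genuine analytic content — extracting the single-block moment bound $\mathbb{E}\bigl|\sum_{i\in I}X_{i}\bigr|^{p}\le A^{p}|I|^{p/2}$ from Conditions \textbf{C1} and \textbf{C2} — is contained in Proposition \ref{Prop Lp bound}, which is used here as a black box.
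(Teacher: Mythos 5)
Your proof is correct, but it follows a genuinely different route from the paper's. The paper deduces the maximal inequality from Proposition \ref{Prop Lp bound} by first passing to an intermediate exponent $2+2\delta''$ with $\delta'<\delta''<\delta$, invoking Serfling's Theorem A (cf.\ Billingsley, p.~102) to turn the partial-sum bound $\Vert S_{m,n}\Vert_{L^{2+2\delta''}}\le A n^{1/2}$ into a maximal bound carrying an extra factor $\log_2(4n)$, and then absorbing that logarithm through the slack between $\delta'$ and $\delta''$; you instead work directly at $p=2+2\delta'$ and redo the chaining by hand: the binary decomposition of $[m,m+k)$ into aligned dyadic blocks, the crude bound $\Vert M_l\Vert_{L^p}^p\le 2^l A^p\,(2^{r-l})^{p/2}$, and the observation that $p>2$ makes $\sum_{l\ge 0}2^{l(1/p-1/2)}$ a convergent geometric series, so no logarithm ever appears and no auxiliary $\delta''$ is needed. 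Both arguments consume exactly the same input, namely $\mathbb{E}\bigl|\sum_{i\in I}X_i\bigr|^{p}\le A^{p}|I|^{p/2}$ uniformly in the starting index (you do use this uniformity, since after rounding $n$ up to $2^{r}\le 2n$ your blocks may reach up to $m+2n-1$), and they produce the same constant structure $(1+\lambda_0+\mu_\delta)^{1+\epsilon}$. What your version buys is self-containedness and the avoidance of the one delicate step in the paper, the absorption of the $\log_2(4n)$ factor via the exponent swap (as printed there the exponent $\tfrac{2+2\delta''}{2+2\delta'}$ appears inverted, so the step needs repair, e.g.\ by citing Serfling's log-free result for moments of order $>2$ — which is in substance what your dyadic argument reproves); what the paper's version buys is brevity, since Serfling's theorem is quoted as a black box. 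The only hypothesis you use beyond the citation is $\delta'>0$, i.e.\ $p>2$, which is implicit in the statement and is precisely what makes your geometric series summable.
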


\begin{proof}
Denote for brevity $S_{m, n}=\sum_{i=m}^{m+n-1}X_{i}.$ Let $\delta ^{\prime
\prime }$ be such that $\delta ^{\prime }<\delta ^{\prime \prime }<\delta.$
By Proposition \ref{Prop Lp bound} (which we assume for the moment),  for any
$m, n\geq 1$ and $\epsilon >0$ we have $\left\Vert S_{m, n}\right\Vert
_{L^{2+2\delta ^{\prime \prime }}}\leq An^{\frac{1}{2}}, $ where $%
A=c_{\lambda _{1}, \lambda _{2}, \delta ^{\prime \prime }, \epsilon }\left(
1+\lambda _{0}+\mu _{\delta }\right) ^{1+\epsilon }.$
Substituting $S_{m, n}^{\prime }=S_{m, n}/A, $
we get $\left\Vert S_{m, n}^{\prime }\right\Vert _{L^{2+2\delta ^{\prime
\prime }}}\leq n^{\frac{1}{2}}, $ for any $m, n\geq 1.$ By Theorem A in
Serfling \cite{Serfl} (see also Billingsley \cite{Blngsl68} p. 102),  it
follows that $\left\Vert \sup_{1\leq k\leq n}S_{m, n}^{\prime }\right\Vert
_{L^{2+2\delta ^{\prime \prime }}}\leq n^{\frac{1}{2}}\log _{2}\left(
4n\right), $ for any $m, n\geq 1.$ Since $\delta ^{\prime }<\delta ^{\prime
\prime }, $ it follows that
\begin{equation*}
\left\Vert \sup_{1\leq k\leq n}S_{m, n}^{\prime }\right\Vert _{L^{2+2\delta
^{\prime }}}\leq \left\Vert \sup_{1\leq k\leq n}S_{m, n}^{\prime }\right\Vert
_{L^{2+2\delta ^{\prime \prime }}}\leq \left( n^{\frac{1}{2}}\log _{2}\left(
4n\right) \right) ^{\frac{2+2\delta ^{\prime \prime }}{2+2\delta ^{\prime }}%
}\leq c_{\delta, \delta ^{\prime }}n^{\frac{1}{2}},
\end{equation*}%
from which we deduce $\left\Vert \sup_{1\leq k\leq n}S_{m, n}\right\Vert
_{L^{2+2\delta ^{\prime }}}\leq Ac_{\delta, \delta ^{\prime }}n^{\frac{1}{2}%
}.$
\end{proof}

The following assertion is an adaptation of Proposition 4.1 in Gou\"{e}zel
\cite{Gouez}. In order to derive an explicit
dependence of the constant involved in the bound on some of the constants in
conditions \textbf{C1} and \textbf{C2} we give an independent proof.
Tracking this explicit dependence
plays a crucial role in the proof of Theorem \ref{Th main res 2} to work out
the dependence of the bound on the initial state of the Markov chain $%
X_{0}=x.$

\begin{proposition}
\label{Prop Lp bound} Let $0<\delta ^{\prime }<\delta $ and $\epsilon >0.$ Then,
$c_{\lambda _{1}, \lambda _{2}, \delta ^{\prime }, \epsilon }$ such that,  for any $m, n\geq 1, $%
\begin{equation*}
\left\Vert \sum_{i=m}^{m+n-1}X_{i}\right\Vert _{L^{2+2\delta ^{\prime
}}}\leq c_{\lambda _{1}, \lambda _{2}, \delta ^{\prime }, \epsilon }\left(
1+\lambda _{0}+\mu _{\delta }\right) ^{1+\epsilon }n^{\frac{1}{2}}.
\end{equation*}
\end{proposition}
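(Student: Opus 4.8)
The strategy is to reduce the $L^p$-bound ($p=2+2\delta'$) for the partial sum $S_{m,n}=\sum_{i=m}^{m+n-1}X_i$ to the corresponding bound for \emph{independent} blocks, where classical Rosenthal-type inequalities apply, and to control the error introduced by replacing the dependent blocks by independent ones via Condition \textbf{C1}. The natural device is a dyadic/blocking decomposition of the index range $[m,m+n-1)$ together with the smoothing-coupling machinery of Proposition \ref{Prop Aux Main}. Concretely, I would first dispose of the trivial case $n\le n_0$ for a suitable $n_0=n_0(\lambda_1,\lambda_2,\delta')$ by brute force (using Condition \textbf{C2} and $\|S_{m,n}\|_{L^p}\le n\,\mu_\delta$, which is $\le c\,n^{1/2}$ once $n$ is bounded). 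For large $n$ one partitions $[m,m+n-1)$ into a bounded number of ``scales'': at scale $j$ one groups the indices into consecutive blocks of length roughly $2^{j}$ separated by gaps of length roughly $2^{j/2}$ (or a small power of $2^j$), so that there are $O(n/2^j)$ blocks, the blocks occupy all but a negligible fraction of $[m,m+n-1)$, and the gaps are long enough that Condition \textbf{C1} forces the joint characteristic function of the blocks to be exponentially (in $2^{j/2}$) close to the product of marginals.

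\textbf{Key steps.} (1) Write $S_{m,n}=\sum_{b} X^{(b)} + (\text{sum over gaps})$, where $X^{(b)}$ is the sum over the $b$-th block at the chosen scale. (2) Bound the gap contribution: the total gap length is $o(n)$, and recursing on the scale (or invoking the already-available Proposition \ref{Prop max Lp bound} at a coarser scale, being careful to avoid circularity by inducting on $n$) gives $\|{\sum_{\text{gaps}}X_i}\|_{L^p}\le c\,(1+\lambda_0+\mu_\delta)^{1+\epsilon}(o(n))^{1/2}$. (3) For the block part, introduce independent copies $Y^{(b)}\overset{d}{=}X^{(b)}$ and use Proposition \ref{Prop Aux Main} (after smoothing by the auxiliary variables $V_{(k,j)}$ and removing the smoothing by the conditional-quantile transform, exactly as in the proof of Proposition \ref{Prop-Indep 001}) to couple $(X^{(b)})_b$ with $(Y^{(b)})_b$ so that $\|\sum_b (X^{(b)}-Y^{(b)})\|_{L^p}$ is exponentially small in $2^{\varepsilon_1 n}$ for the top scale; at intermediate scales one gets a bound exponentially small in a power of the block length, summable over scales. (4) For the independent blocks, apply Rosenthal's inequality: $\|\sum_b Y^{(b)}\|_{L^p}^p \le c_p\big(\sum_b \mathrm{Var}(Y^{(b)})\big)^{p/2} + c_p\sum_b \mathbb{E}|Y^{(b)}|^p$. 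The variances satisfy $\mathrm{Var}(Y^{(b)})\le \|X^{(b)}\|_{L^2}^2$; by Condition \textbf{C2} and a second application of the block/gap decomposition one scale finer (again inducting on block length), $\|X^{(b)}\|_{L^2}^2\le c\,(1+\lambda_0+\mu_\delta)^{?}\,|\text{block}|$, so $\sum_b \mathrm{Var}(Y^{(b)})\le c\,(1+\lambda_0+\mu_\delta)^{?}\,n$. The term $\sum_b\mathbb{E}|Y^{(b)}|^p$ is handled the same way but with a slightly larger exponent $p'=2+2\delta''$, $\delta'<\delta''<\delta$, so that $\mathbb{E}|X^{(b)}|^{p'}\le A\,|\text{block}|^{p'/2}$ with $A=c(1+\lambda_0+\mu_\delta)^{1+\epsilon}$ (this is precisely the self-improving bootstrap: a crude control at exponent $p'$ over short blocks upgrades to the sharp $n^{1/2}$ scaling; the $\epsilon$ in the exponent of $(1+\lambda_0+\mu_\delta)$ appears because each recursion step loses a little). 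Since $\sum_b |\text{block}|^{p'/2}\le \big(\max_b|\text{block}|\big)^{p'/2-1}\sum_b|\text{block}|\le (2^{j})^{p'/2-1}\,n$, choosing the scale $j$ optimally makes this $\le c\,n^{p'/2}\cdot 2^{-j(\text{something})}$, negligible compared to the Rosenthal main term $n^{p/2}$.

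\textbf{Putting it together and the main obstacle.} Collecting the three contributions and taking $L^p$-norms (triangle inequality), one gets $\|S_{m,n}\|_{L^p}\le c_{\lambda_1,\lambda_2,\delta',\epsilon}(1+\lambda_0+\mu_\delta)^{1+\epsilon}n^{1/2}$, as claimed; the passage from $L^{p''}$ to $L^{p'}$ (with $\delta'<\delta''$) is a free Jensen/Lyapunov step and is what lets the exponential error terms, which a priori only have finite moments of order $p$, be absorbed. The main difficulty, and the point that needs the most care, is organizing the recursion so that it is genuinely non-circular: Proposition \ref{Prop max Lp bound} is \emph{derived from} this proposition, so within this proof one may only use the present statement for strictly smaller values of $n$ (strong induction on $n$), and one must verify that each inductive invocation is on a block length $2^j$ with $2^j<n$ and that the accumulated constants do not blow up across the $O(\log n)$ scales — this is exactly why the exponent $1+\epsilon$ (rather than $1$) on $(1+\lambda_0+\mu_\delta)$ is unavoidable, and why the gap lengths are taken to be a \emph{small} power $2^{\varepsilon j}$ of the block length so that the geometric series of error terms converges with room to spare. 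A secondary technical nuisance is that the blocks $X^{(b)}$ have slightly unequal lengths near the endpoints $m$ and $m+n-1$; these finitely many boundary blocks are absorbed into the ``short $n$'' base case applied locally.
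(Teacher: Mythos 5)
Your overall architecture is the same as the paper's: block the interval with gaps, use Condition \textbf{C1} together with smoothing and the Strassen--Dudley/Prokhorov machinery (Lemma \ref{Lemma smoo}, Lemma \ref{properties 003}, Lemma \ref{lemma strassen-dudley}) to replace the block sums by independent copies up to an exponentially small error, apply Rosenthal's inequality to the independent copies, absorb the coupling error by a truncation/H\"older step using a slightly higher moment, and recurse to upgrade the trivial linear bound to the $n^{1/2}$ scaling. However, there is a genuine gap at the decisive step (4). You bound the Rosenthal term $\sum_b\mathbb{E}\vert Y^{(b)}\vert^{p}$ by asserting $\mathbb{E}\vert X^{(b)}\vert^{p'}\leq A\,\vert\mathrm{block}\vert^{p'/2}$ with $p'=2+2\delta''>p$. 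That is exactly the statement you are proving, at a strictly harder exponent, and your strong induction on $n$ cannot supply it: inside the step for exponent $\delta'$ you need the sharp bound at some $\delta''\in(\delta',\delta)$ on blocks of length about $n^{1-\alpha}$, whose own proof needs a $\delta'''>\delta''$ at length about $n^{(1-\alpha)^2}$, and so on; the recursion depth grows with $n$, so the chain $\delta'<\delta''<\delta'''<\cdots<\delta$ has no base case and the constants $c_{\delta^{(k)}}$ cannot be closed. (A secondary soft spot: the union of the gaps is not an interval, so the inductive hypothesis does not apply to it directly; one needs either a triangle inequality over the gaps, which constrains the block/gap lengths, or the paper's crude bound.)

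The paper's proof shows how to repair this without any upward drift in the moment exponent. In the Rosenthal bound the $p$-th moment of a block is \emph{not} estimated by a sharper statement; it is kept as the unknown $v_{a-b}+c$, where $v_{n}=\sup_{m}\Vert\sum_{i=m}^{m+n-1}X_{i}\Vert_{L^{2+2\delta'}}$ is the same quantity at the smaller scale $a-b\sim n^{5/6}$ (the paper uses the single scale $\alpha=\rho=\tfrac16$: $[n^{1/6}]$ islands of length $\sim n^{5/6}$ separated by gaps of length $\sim n^{1/3}$, so the gap contribution is simply $\leq\mu_{\delta}n^{1/2}$ by the triangle inequality). This yields the recursive inequality (\ref{LpProof006a}), which is then bootstrapped on the \emph{power of $n$}: starting from the trivial bound $\overline{v}_{n}\leq n$, each iteration replaces the exponent $q_{k}$ by $q_{k+1}=\tfrac56 q_{k}+\tfrac16\tfrac{1}{2+2\delta'}$, whose fixed point lies below $\tfrac12$, so finitely many iterations (a number depending only on $\delta'$) give $v_{n}\lesssim n^{1/2}$. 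The spare moment room $(\delta',\delta)$ is spent only twice and in a fixed way: in the H\"older/truncation step that absorbs the exponentially small coupling error (this, not an accumulation over scales, is the source of the exponent $1+\epsilon$ on $1+\lambda_{0}+\mu_{\delta}$), and later, via Serfling's theorem, in passing from Proposition \ref{Prop Lp bound} to Proposition \ref{Prop max Lp bound}. Also note that the variance input for Rosenthal is not obtained by a finer-scale recursion as you suggest, but from the separately proved $L^{2}$ bound (Proposition \ref{Proposition boundL2}), established beforehand by a two-block subadditivity argument (Lemmas \ref{Lemma semiadd1} and \ref{Lemma semiadd2}); and no conditional-quantile de-smoothing is needed here, since only norms of sums are required and the smoothing variables are removed at the level of $L^{p}$ norms.
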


The proof of this proposition is given below. First we state several
auxiliary assertions.

\subsection{Auxiliary assertions}

\begin{proposition}
\label{Proposition boundL2} There is a constant $c_{\lambda_{1}, \lambda _{2}, \epsilon }$ such that, for any $\epsilon >0, $%
\begin{equation}
\left\Vert \sum_{i=m}^{m+n-1}X_{i}\right\Vert _{L^{2}}\leq c_{\lambda_{1}, \lambda _{2}, \epsilon }\left( 1+\lambda _{0}+\mu _{\delta }\right)
^{1+\epsilon }n^{\frac{1}{2}}.   \label{bound of  L2 norm}
\end{equation}
\end{proposition}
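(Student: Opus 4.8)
The plan is to estimate $\|S_{m,n}\|_{L^{2}}^{2}=\mathbb{E}S_{m,n}^{2}$ by summing covariances. Write $S=S_{m,n}=\sum_{i=m}^{m+n-1}X_{i}$. Since $\|S\|_{L^{2}}^{2}=\mathrm{Var}(S)+(\mathbb{E}S)^{2}$ and these $L^{2}$ bounds are invoked only after the centering reduction carried out in the earlier sections, I would work under the assumption $\mathbb{E}X_{i}=0$, so that $\|S\|_{L^{2}}^{2}=\sum_{i,j=m}^{m+n-1}\mathrm{Cov}(X_{i},X_{j})$. The heart of the matter is an exponentially decaying bound for $|\mathrm{Cov}(X_{i},X_{i+k})|$ in $k$ whose constant depends only on the constants of Conditions \textbf{C1} and \textbf{C2}; once this is available the double sum contributes at most $c\,n$, and taking square roots finishes the proof.

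Concretely, I would prove that for every $\gamma\in(0,2\delta]$ there is an absolute $c'>0$ such that
\begin{equation*}
|\mathrm{Cov}(X_{i},X_{i+k})|\le c_{\lambda_{1},\lambda_{2},\delta,\gamma}\bigl(1+\lambda_{0}+\mu_{\delta}\bigr)^{2+\gamma}e^{-c'\gamma\lambda_{1}k},\qquad k\ge 1,
\end{equation*}
by repeating almost verbatim the smoothing/Fourier-inversion argument of Lemma \ref{Lemma Cond Dependence for MC}--Lemma \ref{Lemma Cov MC}. Introduce i.i.d.\ mean-zero variables $V,V'$, independent of $(X_{l})_{l\ge m}$, whose common characteristic function is supported in $[-\varepsilon_{0},\varepsilon_{0}]$ and with all moments finite (such variables exist, cf.\ the $V_{(k,j)}$ of Section \ref{sec Aux Res}); set $Y_{i}=X_{i}+V$ and $Y_{i+k}=X_{i+k}+V'$, so that $\mathrm{Cov}(Y_{i},Y_{i+k})=\mathrm{Cov}(X_{i},X_{i+k})$. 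With the truncations $g_{T}(x)=x\,\mathbf{1}_{\{|x|\le T\}}$ and $h_{T}(x,y)=g_{T}(x)g_{T}(y)$, inverse Fourier transform yields
\begin{equation*}
\mathrm{Cov}(Y_{i},Y_{i+k})=\frac{1}{(2\pi)^{2}}\int\!\!\int\overline{\widehat h_{T}(t,u)}\,\bigl(\widetilde\phi(t,u)-\widetilde\phi_{1}(t)\widetilde\phi_{2}(u)\bigr)\,dt\,du+R,
\end{equation*}
where $\widetilde\phi,\widetilde\phi_{1},\widetilde\phi_{2}$ are the characteristic functions of $(Y_{i},Y_{i+k}),Y_{i},Y_{i+k}$, and $R$ collects the truncation remainders, controlled by H\"older's inequality and Condition \textbf{C2} (together with the finite moments of $V,V'$) by $|R|\le c_{\delta}T^{-\gamma/2}(1+\mu_{\delta})^{2+\gamma}$. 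Since the characteristic functions of $V,V'$ vanish outside $[-\varepsilon_{0},\varepsilon_{0}]$, the factor $\widetilde\phi-\widetilde\phi_{1}\widetilde\phi_{2}$ is supported in $[-\varepsilon_{0},\varepsilon_{0}]^{2}$ and equals there $(\phi-\phi_{1}\phi_{2})\,\mathbb{E}e^{itV}\mathbb{E}e^{iuV'}$; Condition \textbf{C1} applied with $M_{1}=M_{2}=1$, single-point blocks and $k_{gap}=k-1$ bounds it by $2^{2\lambda_{2}}\lambda_{0}e^{\lambda_{1}}e^{-\lambda_{1}k}$ on that square. Combining with $\|\widehat h_{T}\|_{L^{2}}=\tfrac{2}{3}T^{3}$, Cauchy--Schwarz, and the choice $T=e^{\lambda_{1}k/4}$ (the analogue of $T=\kappa^{-k/4}$ with $\kappa=e^{-\lambda_{1}}$) produces the displayed covariance estimate.

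To finish, bound the diagonal terms by $\mathrm{Var}(X_{i})\le\|X_{i}\|_{L^{2}}^{2}\le\mu_{\delta}^{2}$ (Jensen) and sum:
\begin{equation*}
\|S\|_{L^{2}}^{2}\le\sum_{i=m}^{m+n-1}\Bigl(\mathrm{Var}(X_{i})+2\sum_{k=1}^{m+n-1-i}|\mathrm{Cov}(X_{i},X_{i+k})|\Bigr)\le c_{\lambda_{1},\lambda_{2},\delta,\gamma}\bigl(1+\lambda_{0}+\mu_{\delta}\bigr)^{2+\gamma}n,
\end{equation*}
using $\sum_{k\ge 1}e^{-c'\gamma\lambda_{1}k}<\infty$. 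Given $\epsilon>0$, choosing $\gamma=\min\{2\epsilon,2\delta,1\}$ gives $2+\gamma\le 2+2\epsilon$, and extracting the square root yields $\|S_{m,n}\|_{L^{2}}\le c_{\lambda_{1},\lambda_{2},\epsilon}(1+\lambda_{0}+\mu_{\delta})^{1+\epsilon}n^{1/2}$, as claimed. The main obstacle is the Fourier-inversion step: one must track the truncation remainder $R$ carefully enough that the exponent $2+\gamma$ can be pushed arbitrarily close to $2$ (so that, after taking the square root, the exponent of $1+\lambda_{0}+\mu_{\delta}$ is $1+\epsilon$). This is exactly the mechanism already executed — in the Markov setting, and with an additional $\boldsymbol{\delta}_{x}$-term — in the proof of Lemma \ref{Lemma Cov MC}; everything else is elementary geometric-series summation.
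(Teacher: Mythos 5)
Your argument is correct, but it is a genuinely different route from the one the paper takes for Proposition \ref{Proposition boundL2}. The paper does not sum covariances: it proves a sub-additivity relation for $u_{n}=\max_{m}\Vert\sum_{i=m}^{m+n-1}X_{i}\Vert_{L^{2}}^{2}$ by cutting a block of length $a+b$ into two smoothed blocks separated by a gap of size $[b^{\alpha}]$, coupling the pair with an independent copy via Lemma \ref{Lemma smoo} and Strassen--Dudley (Lemma \ref{lemma strassen-dudley}), and then solving the resulting recursion (Lemmas \ref{Lemma semiadd1} and \ref{Lemma semiadd2}); this scheme is built to parallel, and to anchor, the induction used for the $L^{p}$ bound of Proposition \ref{Prop Lp bound}, where no covariance shortcut is available. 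Your route instead transplants the Fourier-inversion/truncation mechanism of Lemma \ref{Lemma Cov MC} from the Markov section to the abstract setting: the application of Condition \textbf{C1} with $M_{1}=M_{2}=1$, singleton blocks and $k_{gap}=k-1$ is legitimate and yields the factor $2^{2\lambda_{2}}\lambda_{0}e^{\lambda_{1}}e^{-\lambda_{1}k}$ on $[-\varepsilon_{0},\varepsilon_{0}]^{2}$, the smoothing variables make $\widetilde\phi-\widetilde\phi_{1}\widetilde\phi_{2}$ compactly supported so that Cauchy--Schwarz with $\Vert\widehat h_{T}\Vert_{L^{2}}\asymp T^{3}$ applies, and the H\"older/truncation remainder is handled exactly as in the paper (up to the same harmless $2\pi$ bookkeeping). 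Two remarks. First, your explicit use of $\mathbb{E}X_{i}=0$ is not optional: the statement is false without centering (take $X_{i}\equiv1$, which satisfies \textbf{C1} and \textbf{C2}), so the standing reduction $\mu_{i}=0$ from Section 2 is indeed the intended reading, and the paper's own proof uses it tacitly as well (e.g.\ when cross terms are dropped in the step leading to (\ref{mm005a})). Second, your constant inherits a dependence on $\delta$ and $\gamma$ through the H\"older step, just as the paper's does through Lemma \ref{Lemma semiadd1}; this matches the paper's level of precision. What each approach buys: yours is shorter, self-contained for $p=2$, and produces as a by-product an exponential covariance-decay estimate under \textbf{C1}--\textbf{C2} alone (which the paper only states in the Markov case); the paper's block-coupling proof is heavier but reuses verbatim the machinery needed for the genuinely non-$L^{2}$ bound of Proposition \ref{Prop Lp bound}.
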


The proof is based on the following two lemmas.
\begin{lemma}
\label{Lemma semiadd1}Let $u_{n}=\max_{m\geq 1}\left\Vert
\sum_{i=m}^{m+n-1}X_{i}\right\Vert _{L^{2}}^{2}, $ $n\geq 1.$  Then,  for any natural numbers $a, $ $%
b\geq 1$ and any $\alpha \in (0, \frac{1}{2}), $ $\gamma \in \left( 0, \delta
\right), $ it holds
\begin{equation*}
u_{a+b}\leq A+u_{a}+u_{b}+\left( c\mu _{\delta }\right) ^{2}\left(
a^{2\alpha }+b^{2\alpha }\right) +c\mu _{\delta }\left( a^{\alpha
}+b^{\alpha }\right) \left( A+u_{a}+u_{b}\right)
^{1/2}+cu_{a}^{1/2}+cu_{b}^{1/2},
\end{equation*}%
where $c>1$ and $A=c_{\lambda _{1}, \lambda _{2}, \gamma
, \alpha }\left( 1+\lambda _{0}+\mu _{\delta }\right) ^{2+\gamma }.$
\end{lemma}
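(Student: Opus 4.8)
The plan is to split the sum $S_{a+b}=\sum_{i=m}^{m+a+b-1}X_i$ into the ``past'' block $S^{(1)}=\sum_{i=m}^{m+a-1}X_i$ (length $a$) and the ``future'' block $S^{(2)}=\sum_{i=m+a}^{m+a+b-1}X_i$ (length $b$), so that $S_{a+b}=S^{(1)}+S^{(2)}$ and hence
\begin{equation*}
\left\Vert S_{a+b}\right\Vert_{L^2}^2=\left\Vert S^{(1)}\right\Vert_{L^2}^2+\left\Vert S^{(2)}\right\Vert_{L^2}^2+2\,\mathbb{E}\bigl(S^{(1)}S^{(2)}\bigr).
\end{equation*}
The first two terms are bounded by $u_a$ and $u_b$ respectively, by definition of $u_n$. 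The whole difficulty is to control the cross term $\mathbb{E}(S^{(1)}S^{(2)})$; note that here the gap between the two blocks is $k_{gap}=0$, so Condition~\textbf{C1} gives nothing directly, and this is the main obstacle.

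To get around the zero-gap problem I would introduce an artificial gap by truncation. Fix $\alpha\in(0,\tfrac12)$ and remove from $S^{(1)}$ the last $\lceil a^{\alpha}\rceil$ (roughly) summands and from $S^{(2)}$ the first $\lceil b^{\alpha}\rceil$ summands, writing $S^{(1)}=\widetilde S^{(1)}+r^{(1)}$, $S^{(2)}=\widetilde S^{(2)}+r^{(2)}$, where $\widetilde S^{(1)},\widetilde S^{(2)}$ are now separated by a gap of size $k_{gap}\approx a^{\alpha}+b^{\alpha}$. Then
\begin{equation*}
\mathbb{E}\bigl(S^{(1)}S^{(2)}\bigr)=\mathbb{E}\bigl(\widetilde S^{(1)}\widetilde S^{(2)}\bigr)+\mathbb{E}\bigl(\widetilde S^{(1)}r^{(2)}\bigr)+\mathbb{E}\bigl(r^{(1)}S^{(2)}\bigr).
\end{equation*}
The remainder terms are estimated by Cauchy--Schwarz together with Condition~\textbf{C2} (triangle inequality in $L^{2+2\delta}$, each short block of length $\le a^{\alpha}$ or $\le b^{\alpha}$ has $L^2$-norm at most $c\mu_\delta(a^\alpha+b^\alpha)$) and the bounds $\|S^{(2)}\|_{L^2}\le u_b^{1/2}$, $\|\widetilde S^{(1)}\|_{L^2}\le (A+u_a)^{1/2}$ (a trivial enlargement of $u_a^{1/2}$ that is harmless); this produces exactly the terms $(c\mu_\delta)^2(a^{2\alpha}+b^{2\alpha})$, $c\mu_\delta(a^\alpha+b^\alpha)(A+u_a+u_b)^{1/2}$, $cu_a^{1/2}$ and $cu_b^{1/2}$ appearing in the statement. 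For the main covariance $\mathbb{E}(\widetilde S^{(1)}\widetilde S^{(2)})$ I would pass to characteristic functions: since $|\mathbb{E}(UV)|$ can be read off from second derivatives of the joint characteristic function at the origin, and Condition~\textbf{C1} (applied with blocks $J_m$ of size $1$, so $M_1+M_2\le a+b$ and $\max_m\mathrm{card}(J_m)=1$, hence the polynomial factor is just $2^{\lambda_2(a+b)}$) bounds $|\phi(t_1,t_2)-\phi_1(t_1)\phi_2(t_2)|\le\lambda_0 2^{\lambda_2(a+b)}e^{-\lambda_1 k_{gap}}$ on $\{\|(t_1,t_2)\|_\infty\le\varepsilon_0\}$; combined with a Fourier-inversion / truncation argument of the same type as in Lemma~\ref{Lemma Cov MC} (truncating at level $T$, choosing $T$ a suitable negative power of the exponential, and using Condition~\textbf{C2} for the truncation error) this gives
$|\mathbb{E}(\widetilde S^{(1)}\widetilde S^{(2)})|\le A$ with $A=c_{\lambda_1,\lambda_2,\gamma,\alpha}(1+\lambda_0+\mu_\delta)^{2+\gamma}$, provided $k_{gap}\gtrsim (a^\alpha+b^\alpha)$ is large enough to kill the factor $2^{\lambda_2(a+b)}$ --- which is where $\alpha<\tfrac12$ and the freedom in $\gamma\in(0,\delta)$ are used.

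Collecting the bound $\|S^{(1)}\|_{L^2}^2\le u_a$, $\|S^{(2)}\|_{L^2}^2\le u_b$, the covariance bound $2|\mathbb{E}(S^{(1)}S^{(2)})|\le 2A+2(c\mu_\delta)^2(a^{2\alpha}+b^{2\alpha})+2c\mu_\delta(a^\alpha+b^\alpha)(A+u_a+u_b)^{1/2}+2cu_a^{1/2}+2cu_b^{1/2}$, and absorbing the numerical factors $2$ into the constant $c$, yields
\begin{equation*}
\left\Vert S_{a+b}\right\Vert_{L^2}^2\le A+u_a+u_b+(c\mu_\delta)^2(a^{2\alpha}+b^{2\alpha})+c\mu_\delta(a^\alpha+b^\alpha)(A+u_a+u_b)^{1/2}+cu_a^{1/2}+cu_b^{1/2}.
\end{equation*}
Since this holds for every starting point $m$, taking the supremum over $m$ on the left gives the claimed inequality for $u_{a+b}$. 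The expected main obstacle is precisely making the zero-gap covariance estimate work: one must choose the truncation length (the artificial gap) growing like $a^\alpha+b^\alpha$, check that after this the exponential decay $e^{-\lambda_1 k_{gap}}$ in Condition~\textbf{C1} still beats the exponential blow-up $2^{\lambda_2(a+b)}$ in the number of blocks --- which it does not for $k_{gap}$ polynomial in $a+b$, so in fact the Fourier argument must be applied not to the full blocks but, as in Gou\"ezel's Proposition~4.1, after a preliminary reduction that keeps the number of ``active'' coordinates bounded (e.g.\ treating $\widetilde S^{(1)},\widetilde S^{(2)}$ as $2$ one-dimensional variables, i.e.\ $M_1=M_2=1$), so that the polynomial factor becomes $(1+a)^{\lambda_2\cdot 2}$ and is easily dominated by $e^{\lambda_1 k_{gap}}$ once $k_{gap}\gtrsim\log a$; choosing $k_{gap}\asymp a^\alpha+b^\alpha$ then leaves an exponentially small covariance, comfortably $\le A$. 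Carefully organizing this reduction and the bookkeeping of the truncation errors is the technical heart of the proof.
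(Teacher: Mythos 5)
Your plan is sound and reaches the stated inequality, but it follows a genuinely different route from the paper. You expand $\left\Vert S^{(1)}+S^{(2)}\right\Vert _{L^{2}}^{2}$ and attack the cross term directly, carving an artificial gap of length $\asymp a^{\alpha}+b^{\alpha}$ and bounding the covariance of the two aggregated, smoothed blocks by the Fourier-inversion/truncation argument of Lemma \ref{Lemma Cov MC}, with Condition \textbf{C1} applied with $M_{1}=M_{2}=1$ so that the prefactor is only $(1+\max (a,b))^{2\lambda _{2}}$ and is killed by $e^{-\lambda _{1}k_{gap}}$; your closing paragraph correctly discards the earlier idea of taking blocks of size one (which would give the fatal factor $2^{\lambda _{2}(a+b)}$), and this is exactly the $M_{1}=M_{2}=1$ application the paper also makes. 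The paper, by contrast, never estimates the covariance: it smooths $Y_{1},Y_{2}$ (with the gap $[b^{\alpha }]$ taken from the longer block only), bounds the Prokhorov distance between $(\widetilde{Y}_{1},\widetilde{Y}_{2})$ and an independent copy $(Z_{1},Z_{2})$ via Lemma \ref{Lemma smoo} and \textbf{C1}, invokes Strassen--Dudley to get a coupling, and controls $\Vert \widetilde{Y}_{1}+\widetilde{Y}_{2}\Vert _{L^{2}}$ through the $L^{2}$ norm of the coupling error $S=\widetilde{Y}_{1}+\widetilde{Y}_{2}-(Z_{1}+Z_{2})$ by the same truncation/H\"{o}lder bookkeeping; this yields the standalone terms $cu_{a}^{1/2}+cu_{b}^{1/2}$ coming from the smoothing variables, terms your argument does not even need (harmless, since they sit on the right-hand side). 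Your covariance route is arguably more elementary for this $L^{2}$ statement and reuses Lemma \ref{Lemma Cov MC} verbatim, while the paper's coupling route has the advantage of being the same machinery it redeploys for the $L^{p}$ bound (Proposition \ref{Prop Lp bound}) and for Proposition \ref{Prop Aux Main}, where a covariance estimate alone would not suffice. Two small bookkeeping remarks: the step $\Vert \widetilde{S}^{(1)}\Vert _{L^{2}}\leq (A+u_{a})^{1/2}$ is not immediate ($u_{n}$ is not a priori monotone); use instead $\Vert \widetilde{S}^{(1)}\Vert _{L^{2}}\leq u_{a}^{1/2}+a^{\alpha }\mu _{\delta }$ by the triangle inequality and \textbf{C2}, which still lands in the allowed terms; and when verifying that the truncation remainder fits into $A=c_{\lambda _{1},\lambda _{2},\gamma ,\alpha }(1+\lambda _{0}+\mu _{\delta })^{2+\gamma }$, estimate the tail probability with a low-order moment (as the paper does via $\Delta ^{\gamma /(1+\gamma )}$) so that the total power of $(1+\lambda _{0}+\mu _{\delta })$ stays at most $2+\gamma $.
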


\begin{proof}
Let $m\in \mathbb{N}.$ Assume that $a\leq b$ (the case $a>b$ is treated in
the same manner). Denote $Y_{1}=\sum_{i=m}^{m+a-1}X_{i}, $ $Y_{2}=\sum_{i=m+a+%
\left[ b^{\alpha }\right] }^{m+a+b-1}X_{i}$ and $Y_{0}=%
\sum_{i=m}^{m+a+b-1}X_{i}, $ where $\alpha \in (0, \frac{1}{2}).$ Note that $%
Y_{0}=Y_{1}+Y_{2}+Y_{gap}, $ where $Y_{gap}=\sum_{i=m+a}^{m+a+\left[
b^{\alpha }\right] -1}X_{i}.$ Therefore%
\begin{equation}
\left\Vert Y_{0}\right\Vert _{L^{2}}^{2}\leq \left\Vert
Y_{1}+Y_{2}\right\Vert _{L^{2}}^{2}+\left\Vert Y_{gap}\right\Vert
_{L^{2}}^{2}+2\left\Vert Y_{1}+Y_{2}\right\Vert _{L^{2}}\left\Vert
Y_{gap}\right\Vert _{L^{2}}.  \label{mm001}
\end{equation}%
In the sequel we shall bound each of the terms in the right-hand side of (%
\ref{mm001}).

Let $V_{1}$ and $V_{2}$ be two independent identically distributed r.v.'s of
mean $0, $ independent of $Y_{1}$ and $Y_{2}$ with a common characteristic
function supported in the interval $[-\varepsilon _{0}, \varepsilon _{0}], $
for some $\varepsilon _{0}\in \left( 0, 1\right) $ and such that $\left\Vert
V_{i}\right\Vert _{L^{2+2\delta }}\leq c. $ Denote $\widetilde{Y}_{1}=Y_{1}+V_{1}$ and $\widetilde{Y}%
_{2}=Y_{2}+V_{2}.$ Let $Z_{1}$ and $Z_{2}$ be independent copies of $%
\widetilde{Y}_{1}$ and $\widetilde{Y}_{2}.$ Since $\mathbf{E}e^{itV_{1}}$ is
supported in the interval $[-\varepsilon _{0}, \varepsilon _{0}], $ by Lemma %
\ref{Lemma smoo},  for any $T>0, $
\begin{eqnarray*}
\pi \left( \left( \widetilde{Y}_{1}, \widetilde{Y}_{2}\right), \left(
Z_{1}, Z_{2}\right) \right) &\leq &\frac{T}{\pi }\left( \int_{\left[
-\varepsilon _{0}, \varepsilon _{0}\right] ^{2}}\left\vert  \phi \left(
t, u\right) -\psi _{1}\left( t\right) \psi _{2}\left( u\right)
\right\vert ^{2}dtdu\right) ^{1/2} \\
&&+\mathbb{P}\left( \max \left\{ \left\vert \widetilde{Y}_{1}\right\vert
, \left\vert \widetilde{Y}_{2}\right\vert \right\} >T\right),
\end{eqnarray*}%
where $\phi $ is the characteristic function of the vector $\left(
Y_{1}, Y_{2}\right) $ and $\psi _{1}$ and $\psi _{2}$ are the characteristic
functions of the r.v.'s $Y_{1}$ and $Y_{2}.$ Condition \textbf{C1} implies
that%
\begin{eqnarray*}
\left\vert \left( \phi \left( t, u\right) -\psi _{1}\left( t\right) \psi
_{2}\left( u\right) \right) \right\vert &\leq &\lambda _{0}\left( 1+b\right)
^{2\lambda _{2}}\exp \left( -\lambda _{1}\left[ b^{\alpha }\right] \right) \\
&\leq &\lambda _{0}c_{\lambda _{1}}\left( 1+b\right) ^{2\lambda _{2}}\exp
\left( -\lambda _{1}b^{\alpha }\right).
\end{eqnarray*}%
Let $T=e^{\lambda _{1}b^{\alpha }/2}.$ Taking into account that%
\begin{eqnarray*}
\mathbb{P}\left( \max \left\{ \left\vert \widetilde{Y}_{1}\right\vert
, \left\vert \widetilde{Y}_{2}\right\vert \right\} >T\right) &\leq &T^{-1}%
\mathbb{E}\max \left\{ \left\vert \widetilde{Y}_{1}\right\vert, \left\vert
\widetilde{Y}_{2}\right\vert \right\} \\
&\leq &T^{-1}\left( \left\Vert Y_{1}+V_{1}\right\Vert _{L^{2+2\delta
}}+\left\Vert Y_{2}+V_{2}\right\Vert _{L^{2+2\delta }}\right) \\
&\leq &e^{-\frac{\lambda _{1}}{2}b^{\alpha }}\left( c+\left( a+b\right)
\max_{l\geq 0}\left\Vert X_{l}\right\Vert _{L^{2+2\delta }}\right) \\
&\leq &c_{\delta }e^{-\frac{\lambda _{1}}{2}b^{\alpha }}b\left( 1+\mu
_{\delta }\right)
\end{eqnarray*}%
we obtain%
\begin{eqnarray*}
\pi \left( \left( \widetilde{Y}_{1}, \widetilde{Y}_{2}\right), \left(
Z_{1}, Z_{2}\right) \right) &\leq &\frac{1}{\pi }\lambda _{0}\left(
1+b\right) ^{2\lambda _{2}}e^{-\frac{\lambda _{1}}{2}b^{\alpha }}+c_{\delta
}be^{-\frac{\lambda _{1}}{2}b^{\alpha }}\left( 1+\mu _{\delta }\right) \\
&\leq &\Delta =c_{\delta, \alpha }\left( 1+b\right) ^{2\lambda _{2}}e^{-\frac{%
\lambda _{1}}{2}b^{\alpha }}\left( 1+\lambda _{0}+\mu _{\delta }\right).
\end{eqnarray*}
By Lemma \ref{lemma strassen-dudley}
there is a coupling of $\left(
\widetilde{Y}_{1}, \widetilde{Y}_{2}\right) $ and $\left( Z_{1}, Z_{2}\right) $
such that%
\begin{equation*}
P\left( \left\Vert \left( \widetilde{Y}_{1}, \widetilde{Y}_{2}\right) -\left(
Z_{1}, Z_{2}\right) \right\Vert _{\infty }\geq \Delta \right) \leq \Delta.
\end{equation*}%
Let $S=\widetilde{Y}_{1}+\widetilde{Y}_{2}-\left( Z_{1}+Z_{2}\right).$
Taking into account that $\left\Vert V_{i}\right\Vert _{L^{2+2\delta }}\leq
c, $ we have
\begin{eqnarray}
\left\Vert S\right\Vert _{L^{2+2\delta }} &=&\left\Vert \widetilde{Y}_{1}+%
\widetilde{Y}_{2}-\left( Z_{1}+Z_{2}\right) \right\Vert _{L^{2+2\delta }}
\notag \\
&\leq &2\left\Vert \widetilde{Y}_{1}+\widetilde{Y}_{2}\right\Vert
_{L^{2+2\delta }}  \notag \\
&\leq &c\left( a+b\right) \left( 1+\max_{l\geq 0}\left\Vert X_{l}\right\Vert
_{L^{2+2\delta }}\right)  \notag \\
&\leq &cb\left( 1+\mu _{\delta }\right).  \label{mm003}
\end{eqnarray}%
Then,  for any $\gamma \in (0, \delta ), $%
\begin{eqnarray}
\left\Vert S\right\Vert _{L^{2}}^{2} &\leq &4\Delta ^{2}+\mathbb{E}%
\left\vert S\right\vert ^{2}1\left( \left\vert S\right\vert \geq 2\Delta
\right)  \notag \\
&\leq &4\Delta ^{2}+\left\Vert S\right\Vert _{L^{2+2\gamma }}^{2}\mathbb{P}%
\left( \left\vert S\right\vert \geq 2\Delta \right) ^{\frac{\gamma }{%
1+\gamma }}  \notag \\
&\leq &4\Delta ^{2}+cb^{2}\left( 1+\mu _{\delta }\right) ^{2}\Delta ^{\frac{%
\gamma }{1+\gamma }}  \notag \\
&\leq &4c^{2}\left( 1+b\right) ^{4\lambda _{2}}\left( 1+\lambda _{0}+\mu
_{\delta }\right) ^{2}e^{-\lambda _{1}b^{\alpha }}  \notag \\
&&+4b^{2}\left( 1+\mu _{\delta }\right) ^{2}c_{\delta, \alpha }^{\frac{\gamma
}{1+\gamma }}\left( 1+b\right) ^{4\lambda _{2}\frac{\gamma }{1+\gamma }}e^{-%
\frac{\lambda _{1}}{2}\frac{\gamma }{1+\gamma }b^{\alpha }}\left( 1+\lambda
_{0}+\mu _{\delta }\right) ^{\frac{\gamma }{1+\gamma }}  \notag \\
&\leq &A^{\prime },   \label{mm004}
\end{eqnarray}%
where $A^{\prime }=c_{\lambda _{1}, \lambda _{2}, \gamma, \alpha }^{\prime
}\left( 1+\lambda _{0}+\mu _{\delta }\right) ^{2+\gamma }.$
From (\ref{mm003}) and (\ref{mm004}),  it follows that
\begin{eqnarray}
\left\Vert \widetilde{Y}_{1}+\widetilde{Y}_{2}\right\Vert _{L^{2}}^{2} &\leq
&\left\Vert S\right\Vert _{L^{2}}^{2}+\left\Vert Z_{1}\right\Vert
_{L^{2}}^{2}+\left\Vert Z_{2}\right\Vert _{L^{2}}^{2}  \notag \\
&\leq &A^{\prime }+\left\Vert \widetilde{Y}_{1}\right\Vert
_{L^{2}}^{2}+\left\Vert \widetilde{Y}_{2}\right\Vert _{L^{2}}^{2}.
\label{mm005a}
\end{eqnarray}%
Since $\left\Vert V_{i}\right\Vert _{L^{2}}\leq c, $ we have%
\begin{equation}
\left\Vert Y_{1}+Y_{2}\right\Vert _{L^{2}}\leq \left\Vert \widetilde{Y}_{1}+%
\widetilde{Y}_{2}\right\Vert _{L^{2}}+2c.  \label{mm005b}
\end{equation}%
Taking into account (\ref{mm005a}) and (\ref{mm005b}),  one gets%
\begin{eqnarray}
&&\left\Vert Y_{1}+Y_{2}\right\Vert _{L^{2}}^{2}  \label{mm005d} \\
&\leq &A^{\prime }+\left\Vert \widetilde{Y}_{1}\right\Vert
_{L^{2}}^{2}+\left\Vert \widetilde{Y}_{2}\right\Vert
_{L^{2}}^{2}+4c\left\Vert \widetilde{Y}_{1}\right\Vert _{L^{2}}+4c\left\Vert
\widetilde{Y}_{2}\right\Vert _{L^{2}}+4c^{2}.  \notag
\end{eqnarray}
Since $\left\Vert \widetilde{Y}_{k}\right\Vert _{L^{2}}\leq \left\Vert
Y_{k}\right\Vert _{L^{2}}+c, $ it holds%
\begin{eqnarray}
\left\Vert Y_{1}+Y_{2}\right\Vert _{L^{2}}^{2} &\leq &A^{\prime }+\left(
\left\Vert Y_{1}\right\Vert _{L^{2}}+c\right) ^{2}+\left( \left\Vert
Y_{2}\right\Vert _{L^{2}}+c\right) ^{2}  \notag \\
&&+4c\left( \left\Vert Y_{1}\right\Vert _{L^{2}}+c\right) +4c\left(
\left\Vert Y_{2}\right\Vert _{L^{2}}+c\right) +4c^{2}  \notag  \label{mm006a}
\\
&\leq &A^{\prime }+\left\Vert Y_{1}\right\Vert _{L^{2}}^{2}+\left\Vert
Y_{2}\right\Vert _{L^{2}}^{2}+6c\left( \left\Vert Y_{1}\right\Vert
_{L^{2}}+\left\Vert Y_{2}\right\Vert _{L^{2}}\right) +14c^{2}.  \label{mm006b}
\end{eqnarray}%
Since the gap is of size $\left[ b^{\alpha }\right], $%
\begin{equation}
\sup_{m\geq 1}\left\Vert Y_{gap}\right\Vert _{L^{2}}\leq \left[ b^{\alpha }%
\right] \max_{i\geq 1}\left\Vert X_{i}\right\Vert _{L^{2+2\delta }}\leq
b^{\alpha }\mu _{\delta }.  \label{mm006}
\end{equation}%
From (\ref{mm001}),  (\ref{mm006b}) and (\ref{mm006}) we obtain
\begin{eqnarray}
\left\Vert Y_{0}\right\Vert _{L^{2}}^{2} &\leq &A^{\prime }+\left\Vert
Y_{1}\right\Vert _{L^{2}}^{2}+\left\Vert Y_{2}\right\Vert
_{L^{2}}^{2}+b^{2\alpha }\mu _{\delta }^{2}  \notag \\
&&+2b^{\alpha }\mu _{\delta }\left( \left\Vert Y_{1}\right\Vert
_{L^{2}}+\left\Vert Y_{2}\right\Vert _{L^{2}}\right)  \notag \\
&&+6c\left( \left\Vert Y_{1}\right\Vert _{L^{2}}+\left\Vert Y_{2}\right\Vert
_{L^{2}}\right) +14c^{2}.  \label{mm009}
\end{eqnarray}%
Recall that $u_{a}=\sup_{m\geq 1}\left\Vert Y_{1}\right\Vert _{L^{2}}^{2}, $ $%
u_{b}=\sup_{m\geq 1}\left\Vert Y_{2}\right\Vert _{L^{2}}^{2}$ and $%
u_{a+b}=\sup_{m\geq 1}\left\Vert Y_{0}\right\Vert _{L^{2}}^{2}.$ Using (\ref%
{mm006}) we have $\left\Vert Y_{2}\right\Vert _{L^{2}}\leq
u_{b}^{1/2}+\left\Vert Y_{gap}\right\Vert _{L^{2}}\leq u_{b}^{1/2}+b^{\alpha
}\mu _{\delta }.$ From this and (\ref{mm009}) we deduce that%
\begin{eqnarray*}
u_{a+b} &\leq &A^{\prime }+u_{a}+u_{b}+b^{2\alpha }\mu _{\delta
}^{2}+2b^{\alpha }\mu _{\delta }u_{b}^{1/2} \\
&&+2b^{\alpha }\mu _{\delta }\left( u_{a}^{1/2}+u_{b}^{1/2}+b^{\alpha }\mu
_{\delta }\right) \\
&&+6c\left( u_{a}^{1/2}+u_{b}^{1/2}+b^{\alpha }\mu _{\delta }\right)over
+14c^{2}.
\end{eqnarray*}%
Rearranging the terms and taking into account that $A^{\prime }>1, $ we obtain%
\begin{eqnarray*}
u_{a+b} &\leq &A^{\prime }+14c^{2}+u_{a}+u_{b}+3b^{2\alpha }\mu _{\delta
}^{2} \\
&&+b^{\alpha }\mu _{\delta }\left( 6c+u_{a}^{1/2}+u_{b}^{1/2}\right)
+6cu_{a}^{1/2}+6cu_{b}^{1/2},
\end{eqnarray*}%
which proves the lemma.
\end{proof}

\begin{lemma}
\label{Lemma semiadd2}Assume that the sequence $\left( u_{n}\right) _{n\geq
1}$ is such that $u_{n}>0$ and
\begin{equation*}
u_{a+b}\leq \left( u_{a}+u_{b}+A\right) +\left( a^{2\alpha }+b^{2\alpha
}\right) B^{2}+\left( a^{\alpha }+b^{\alpha }\right) B\left(
u_{a}+u_{b}+A\right) ^{1/2}+cu_{a}^{1/2}+cu_{b}^{1/2},
\end{equation*}%
for all $a, $ $b\geq 1$ and some $A, B>0, $ $\alpha \in (0, \frac{1}{2}).$ Then
\begin{equation*}
u_{n}\leq c_{\alpha }\left( 1+u_{1}+A+B^{2}\right) n.
\end{equation*}
\end{lemma}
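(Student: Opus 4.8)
The plan is to prove, by strong induction on $n$, a \emph{stronger} bound, namely
\begin{equation*}
u_n\ \le\ Kn-Mn^{\theta},\qquad n\ge 1,
\end{equation*}
for a well chosen exponent $\theta\in(\alpha+\tfrac12,\,1)$ and well chosen constants $K,M>0$ with $M=K/2$; the assertion of the lemma then follows at once from $u_n\le Kn$ together with the estimate $K\le c_{\alpha}(1+u_1+A+B^2)$ obtained along the way. The exponent $\theta$ is picked so as to dominate every exponent that will appear in the error terms, i.e.\ $\theta>\max\{2\alpha,\alpha+\tfrac12,\tfrac12\}=\alpha+\tfrac12$ (possible precisely because $\alpha<\tfrac12$), while keeping $\theta<1$. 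The point of the negative correction $-Mn^{\theta}$ is that the naive ansatz $u_n\le Kn$ is not self-improving: splitting $n=a+b$ gives only $u_a+u_b\le Kn$, leaving no room to absorb the error terms on the right-hand side of the hypothesis.

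First I would record an elementary consequence of concavity. For $n\ge 2$ put $a=\lceil n/2\rceil$, $b=\lfloor n/2\rfloor$, so that $a+b=n$ and $1\le a,b\le n-1$. Writing $a=\tfrac n2(1+\varepsilon/n)$, $b=\tfrac n2(1-\varepsilon/n)$ with $\varepsilon=2a-n\in\{0,1\}$ and using that $x\mapsto(1+x)^{\theta}+(1-x)^{\theta}$ is nonincreasing on $[0,1]$ when $\theta\in(0,1)$ (its derivative $\theta[(1+x)^{\theta-1}-(1-x)^{\theta-1}]$ is $\le 0$ there), and since $\varepsilon/n\le\tfrac12$, one gets
\begin{equation*}
a^{\theta}+b^{\theta}\ \ge\ \Bigl(\frac n2\Bigr)^{\theta}\Bigl[\bigl(\tfrac32\bigr)^{\theta}+\bigl(\tfrac12\bigr)^{\theta}\Bigr]\ =\ \kappa_{\theta}\,n^{\theta},\qquad \kappa_{\theta}:=\bigl(\tfrac34\bigr)^{\theta}+\bigl(\tfrac14\bigr)^{\theta},
\end{equation*}
and $\kappa_{\theta}>1$ because $\theta<1$ forces $(\tfrac34)^{\theta}>\tfrac34$ and $(\tfrac14)^{\theta}>\tfrac14$; moreover $\kappa_{\theta}$ depends only on $\alpha$. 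This balanced split is what creates the slack.

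For the induction step (assuming the stronger bound for all indices $<n$, where $n\ge 2$), I would apply the hypothesis of the lemma with $a,b$ as above, use the induction hypothesis in the dominant term in the form $u_a+u_b\le Kn-M(a^{\theta}+b^{\theta})\le Kn-M\kappa_{\theta}n^{\theta}$, and crudely bound the remaining terms by $a,b\le n$, $u_a,u_b\le Kn$, $(u_a+u_b+A)^{1/2}\le K^{1/2}n^{1/2}+A^{1/2}$. This yields
\begin{equation*}
u_n\ \le\ Kn-M\kappa_{\theta}n^{\theta}+A+2B^{2}n^{2\alpha}+2BK^{1/2}n^{\alpha+1/2}+2BA^{1/2}n^{\alpha}+2cK^{1/2}n^{1/2}.
\end{equation*}
Since every exponent $0,2\alpha,\alpha+\tfrac12,\alpha,\tfrac12$ is $<\theta$, each of the five error terms is at most its $n=1$ value times $n^{\theta}$, so the step closes once $A+2B^{2}+2BA^{1/2}+2(B+c)K^{1/2}\le M(\kappa_{\theta}-1)$. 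Taking $M=K/2$ and, after using $2BA^{1/2}\le A+B^{2}$ and a Young inequality $2(B+c)K^{1/2}\le\tfrac14(\kappa_{\theta}-1)K+\tfrac{4(B+c)^{2}}{\kappa_{\theta}-1}$ to absorb the $K^{1/2}$ term into the slack, one checks that any $K$ of the form $\max\bigl\{2u_1,\ c'_{\alpha}(A+B^{2})+c''_{\alpha}(B+c)^{2}\bigr\}$ satisfies both this inequality and the base case $u_1\le K-M$; since $c$ is absolute and $\kappa_{\theta}$ depends only on $\alpha$, such a $K$ obeys $K\le c_{\alpha}(1+u_1+A+B^{2})$, which completes the proof.

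The main — in fact the only — obstacle is the device just highlighted: one must guess the self-improving form $Kn-Mn^{\theta}$ with $\theta\in(\alpha+\tfrac12,1)$ and combine it with the concavity lower bound $\lceil n/2\rceil^{\theta}+\lfloor n/2\rfloor^{\theta}\ge\kappa_{\theta}n^{\theta}$, $\kappa_{\theta}>1$; this is exactly what produces the positive factor $\kappa_{\theta}-1$ that swallows the error terms of orders $n^{\alpha+1/2}$, $n^{2\alpha}$ and $n^{1/2}$. Once this is in place, everything else is routine bookkeeping with Young's inequality.
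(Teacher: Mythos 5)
Your proof is correct, but it follows a genuinely different route from the paper's. The paper first cleans up the recursion with the elementary bound $xy\leq\frac{1}{2}(\varepsilon x^{2}+\varepsilon^{-1}y^{2})$, so that the square-root terms disappear into a factor $(1+\varepsilon)$ in front of $u_{a}+u_{b}+A$; it then works with the dyadic maxima $v_{k}=\max_{1\leq n\leq 2^{k}}u_{n}$ and proceeds in two passes: a first iteration gives the slightly supralinear bound $v_{k}\leq C_{0}(2+2\varepsilon)^{k}$, and this crude bound is fed back into the original inequality, with $\varepsilon=\varepsilon(\alpha)$ chosen so small that $2+2\varepsilon\leq 2^{1+(\frac{1}{2}-\alpha)}$, to produce the linear bound $v_{k}\leq c_{\alpha}(A+C_{0}+B^{2})2^{k}$ and hence $u_{n}\leq c_{\alpha}(1+u_{1}+A+B^{2})n$. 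You instead run a single strong induction with the strengthened ansatz $u_{n}\leq Kn-Mn^{\theta}$, $\theta\in(\alpha+\tfrac12,1)$, and extract the needed slack from the strict concavity estimate $\lceil n/2\rceil^{\theta}+\lfloor n/2\rfloor^{\theta}\geq\kappa_{\theta}n^{\theta}$ with $\kappa_{\theta}>1$; the factor $M(\kappa_{\theta}-1)n^{\theta}$ then absorbs every error term, since all their exponents ($2\alpha$, $\alpha+\tfrac12$, $\alpha$, $\tfrac12$, $0$) lie strictly below $\theta$ precisely because $\alpha<\tfrac12$. I checked the details — the concavity bound, the choice $M=K/2$, the Young-type absorption of $2(B+c)K^{1/2}$, and the base case — and they close correctly, with $K\leq c_{\alpha}(1+u_{1}+A+B^{2})$ since $c$ is absolute. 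Your argument is more direct (no bootstrap, no dyadic maxima) and makes the role of $\alpha<\tfrac12$ transparent, at the cost of having to guess the self-improving corrected ansatz; the paper's two-pass scheme avoids that guess and only requires control along dyadic scales, handling the square-root terms once and for all at the start via Young's inequality. Both yield the stated constant.
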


\begin{proof}
Note that $xy\leq \frac{1}{2}\left( \varepsilon x^{2}+\varepsilon
^{-1}y^{2}\right), $ for any $x, $ $y, $ $\varepsilon >0.$ Using the assumption
of the lemma,  we have%
\begin{eqnarray*}
u_{a+b} &\leq &\left( u_{a}+u_{b}+A\right) +\left( 1+\varepsilon
^{-1}\right) \left( a^{2\alpha }+b^{2\alpha }\right) B^{2} \\
&&+\frac{\varepsilon }{2}\left( u_{a}+u_{b}+A\right) +\varepsilon ^{-1}c^{2}+%
\frac{\varepsilon }{2}u_{\alpha }+\frac{\varepsilon }{2}u_{b} \\
&\leq &\left( 1+\varepsilon \right) \left( u_{a}+u_{b}+A\right) +\varepsilon
^{-1}c^{2}+\left( 1+\varepsilon ^{-1}\right) \left( a^{2\alpha }+b^{2\alpha
}\right) B^{2}.
\end{eqnarray*}%
Denote $v_{k}=\max_{1\leq n\leq 2^{k}}u_{n}, $ $k\geq 0.$ From the previous
equation it follows that%
\begin{equation*}
v_{k+1}\leq \left( 1+\varepsilon \right) \left( 2v_{k}+A\right) +\varepsilon
^{-1}c^{2}+\left( 1+\varepsilon ^{-1}\right) 2^{2\alpha k+1}B^{2}.
\end{equation*}%
Dividing by $\left( 2+2\varepsilon \right) ^{k+1}$ we get%
\begin{eqnarray*}
\frac{v_{k+1}}{\left( 2+2\varepsilon \right) ^{k+1}} &\leq &\frac{2v_{k}+A}{%
2\left( 2+2\varepsilon \right) ^{k}}+\left( 1+\varepsilon ^{-1}\right) \frac{%
2^{2\alpha k+1}}{\left( 2+2\varepsilon \right) ^{k+1}}\left(
B^{2}+c^{2}\right) \\
&\leq &\frac{v_{k}}{\left( 2+2\varepsilon \right) ^{k}}+\frac{A}{2\left(
2+2\varepsilon \right) ^{k}}+\left( 1+\varepsilon ^{-1}\right) \frac{%
2^{2\alpha k+1}}{\left( 2+2\varepsilon \right) ^{k+1}}\left(
B^{2}+c^{2}\right).
\end{eqnarray*}
Taking into account that $\alpha <\frac{1}{2}, $ by induction,  we obtain%
\begin{eqnarray*}
\frac{v_{k}}{\left( 2+2\varepsilon \right) ^{k}} &\leq &v_{0}+\frac{A}{2}%
\sum_{i=0}^{\infty }\frac{1}{\left( 2+2\varepsilon \right) ^{i}}+2\left(
1+\varepsilon ^{-1}\right) \left( B^{2}+c^{2}\right) \sum_{i=1}^{\infty }%
\frac{2^{2\alpha i}}{\left( 2+2\varepsilon \right) ^{i+1}} \\
&\leq &v_{0}+\frac{A}{2}\frac{2+2\varepsilon }{1+2\varepsilon }+2\left(
1+\varepsilon ^{-1}\right) \left( B^{2}+c^{2}\right) \frac{1+\varepsilon }{%
\varepsilon } \\
&\leq &v_{0}+c_{\varepsilon }^{\prime }\left( A+B^{2}+c^{2}\right),
\end{eqnarray*}%
where $c_{\varepsilon }^{\prime }$ depends only on $\varepsilon.$ This
implies that
\begin{equation*}
v_{k}=\max_{1\leq n\leq 2^{k}}u_{n}\leq C_{0}\left( 2+2\varepsilon \right)
^{k},
\end{equation*}%
where $C_{0}=\left( v_{0}+c_{\varepsilon }^{\prime }\left(
A+B^{2}+c^{2}\right) \right).$ Once again using the assumption of the lemma
it follows that%
\begin{eqnarray*}
& & v_{k+1} \\
&\leq &\left( 2v_{k}+A\right) +2^{2\alpha k+1}B^{2}+2^{\alpha
k+1}B\left( 2C_{0}\left( 2+2\varepsilon \right) ^{k}+A\right)
^{1/2}+2cC_{0}^{1/2}\left( 2+2\varepsilon \right) ^{k/2} \\
&\leq &\left( 2v_{k}+A\right) +2^{2\alpha k+1}B^{2}+2^{\alpha k+1}B\left(
2C_{0}^{1/2}\left( 2+2\varepsilon \right) ^{k/2}+A^{1/2}\right)
+2cC_{0}^{1/2}\left( 2+2\varepsilon \right) ^{k/2}.
\end{eqnarray*}%
Dividing by $2^{k+1}$ and choosing $\varepsilon =\varepsilon \left( \alpha
\right) $ so small that $2+2\varepsilon \leq 2^{1+\left( \frac{1}{2}-\alpha
\right) }, $ one gets%
\begin{eqnarray*}
\frac{v_{k+1}}{2^{k+1}} &\leq &\frac{v_{k}}{2^{k}}+\frac{A}{2^{k+1}}%
+2^{\left( 2\alpha -1\right) k+1}B^{2}+2^{\left( \alpha -1\right)
k+1}B\left( 2C_{0}^{1/2}2^{\frac{k}{2}+\frac{k}{2}\left( \frac{1}{2}-\alpha
\right) }+A^{1/2}\right) \\
&&+2^{-k}cC_{0}^{1/2}2^{\frac{k}{2}+\frac{k}{2}\left( \frac{1}{2}-\alpha
\right) } \\
&\leq &\frac{v_{k}}{2^{k}}+\frac{A}{2^{k+1}}+2^{\left( 2\alpha -1\right)
k+1}B^{2}+4BC_{0}^{1/2}2^{\left( \alpha -\frac{1}{2}\right) \frac{k}{2}%
}+2^{\left( \alpha -1\right) k+1}BA^{1/2} \\
&&+cC_{0}^{1/2}2^{-\frac{k}{2}\left( \frac{1}{2}+\alpha \right) }.
\end{eqnarray*}%
Using induction,  this implies%
\begin{equation*}
\frac{v_{k}}{2^{k}}\leq c_{\alpha }\left( A+C_{0}+B^{2}\right),
\end{equation*}%
since $\varepsilon $ depends only on $\alpha .$ From this we get $u_{2^{k}}\leq D2^{k}, $ for any $%
k\geq 1, $ where $D=c_{\alpha }^{\prime }\left( 1+v_{0}+A+B^{2}\right).$
Therefore,  for any $2^{k-1}\leq n\leq 2^{k}$ it holds $u_{n}\leq D2^{k}\leq
2D2^{k-1}\leq 2Dn.$ Lemma is proved.
\end{proof}

Let $\alpha =\frac{1}{4}.$ In the notations of Lemma \ref{Lemma semiadd1} $%
u_{1}\leq \max_{m\geq 1}\left\Vert X_{m}\right\Vert _{L^{2+2\delta
}}^{2}\leq \mu _{\delta }^{2}.$ From Lemmas \ref{Lemma semiadd1} and \ref%
{Lemma semiadd2} with $B=c\mu _{\delta }$ it follows,  for any $\epsilon \in
\left( 0, \delta \right), $
\begin{eqnarray*}
\max_{m\geq 1}\left\Vert \sum_{i=m}^{m+n-1}X_{i}\right\Vert _{L^{2}}^{2}
&=&u_{n}\leq c_{\alpha }\left( u_{1}+A+c^{2}\mu _{\delta }^{2}\right) n \\
&\leq &c\left( c_{\lambda _{1}, \lambda _{2}, \epsilon }\left( 1+\lambda
_{0}+\mu _{\delta }\right) ^{2+\frac{\epsilon }{1+\epsilon }}+2\mu _{\delta
}^{2}\right) n \\
&\leq &c_{\lambda _{1}, \lambda _{2}, \epsilon }^{\prime }\left( 1+\lambda
_{0}+\mu _{\delta }\right) ^{2+\epsilon }n,
\end{eqnarray*}%
which proves Proposition \ref{Proposition boundL2}.

\subsection{Proof of Proposition \protect\ref{Prop Lp bound}}

Let $m, n\in \mathbb{N}$ and $a=\left[ n^{1-\alpha }\right] $ and $b=\left[
n^{\alpha +\rho }\right], $ where $\alpha >0, $ and $\rho >0$ are such that $%
2\alpha +\rho <1.$ Note that $a>b$ and $ba\leq n^{1-\rho }.$ Consider the
intervals $I_{k}=[m+\left( k-1\right) a, m+ka-b), $ $J_{k}=[m+ka-b, m+ka), $ for
$k=1, ..., \left[ n^{\alpha }\right] $ and $I_{\text{fin}}=[m+ba, m+n), $ such
that $[m, m+n)=\cup _{k=1}^{\left[ n^{\alpha }\right] }\left( I_{k}\cup
J_{k}\right) \cup I_{\text{fin}}.$ Here $a-b>0$ and $b>0$ are interpreted as
the length of an island $I_{k}$ and the length of a gap $J_{k}$ respectively.

Denote $Y_{k}=\sum_{i\in I_{k}}X_{i}, $ $k=1, ..., \left[ n^{\alpha }\right].$
Let $V_{1}, ..., V_{\left[ n^{\alpha }\right] }$ be independent identically
distributed r.v.'s of mean $0, $ independent of $Y_{1}, ..., Y_{\left[
n^{\alpha }\right] }$ with a common characteristic function supported in the
interval $[-\varepsilon _{0}, \varepsilon _{0}], $ for some $\varepsilon
_{0}>0 $ and such that $\left\Vert V_{k}\right\Vert _{L^{2+2\delta }}\leq c, $
$k=1, ..., \left[ n^{\alpha }\right]. $
Denote $\widetilde{Y}_{k}=Y_{k}+V_{k}.$ Let $Z_{1}, ..., Z_{\left[ n^{\alpha }%
\right] }$ be independent copies of $\widetilde{Y}_{1}, ..., \widetilde{Y}_{%
\left[ n^{\alpha }\right] }.$ By Lemma \ref{properties 003}
\begin{equation}
\pi \left( \left( \widetilde{Y}_{1}, ..., \widetilde{Y}_{\left[ n^{\alpha }%
\right] }\right), \left( Z_{1}, ..., Z_{\left[ n^{\alpha }\right] }\right)
\right) \leq \sum_{k=1}^{\left[ n^{\alpha }\right] }\pi \left( \left(
\widetilde{Y}_{1}, ..., \widetilde{Y}_{k-1}, \widetilde{Y}_{k}\right), \left(
\widetilde{Y}_{1}, ..., \widetilde{Y}_{k-1}, , Z_{k}\right) \right).
\label{LpProof001}
\end{equation}%
Since $\mathbf{E}e^{itV_{k}}$ is supported in the interval $[-\varepsilon
_{0}, \varepsilon _{0}], $ by Lemma \ref{Lemma smoo},  for any $T>0$ and $k\leq %
\left[ n^{\alpha }\right], $%
\begin{eqnarray}
&&\pi \left( \left( \widetilde{Y}_{1}, ..., \widetilde{Y}_{k-1}, \widetilde{Y}%
_{k}\right), \left( \widetilde{Y}_{1}, ..., \widetilde{Y}_{k-1}, , Z_{k}\right)
\right)  \notag \\
&\leq &\frac{T}{\pi }\left( \int_{\left[ -\varepsilon _{0}, \varepsilon _{0}%
\right] ^{k}}\left\vert \phi \left( t, u\right) -\psi _{1}\left( t\right)
\psi _{2}\left( u\right) \right\vert ^{2}dtdu\right) ^{1/2}  \notag \\
&&+\mathbb{P}\left( \left\Vert \left( \widetilde{Y}_{1}, ..., \widetilde{Y}%
_{k-1}, \widetilde{Y}_{k}\right) \right\Vert _{\infty }>T\right),
\label{lp001}
\end{eqnarray}%
where $\phi $ is the characteristic function of $\left( \widetilde{Y}%
_{1}, ..., \widetilde{Y}_{k-1}, \widetilde{Y}_{k}\right) $ and $\psi _{1}$ and $%
\psi _{2}$ are the characteristic functions of the r.v.'s $\left( \widetilde{%
Y}_{1}, ..., \widetilde{Y}_{k-1}\right) $ and $\widetilde{Y}_{k}.$
Condition \textbf{C1} implies that%
\begin{eqnarray}
\left\vert \phi \left( t, u\right) -\psi _{1}\left( t\right) \psi _{2}\left(u\right) \right\vert
&\leq& \lambda _{0}\left( 1+a\right) ^{k}\exp \left(-\lambda _{1}b\right) \nonumber \\
&\leq& c_{\lambda _{1}}\lambda _{0}\left( 1+n^{1-\alpha}\right) ^{n^{\alpha }}\exp \left( -\lambda _{1}n^{\alpha +\rho }\right).
\label{lp002}
\end{eqnarray}
Let $T=e^{\frac{\lambda _{1}}{2}n^{\alpha +\rho }}.$ By Chebychev's
inequality,  taking into account that for $k\leq \left[ n^{\alpha }\right], $
we have%
\begin{eqnarray}
\mathbb{P}\left( \left\Vert \left( \widetilde{Y}_{1}, ..., \widetilde{Y}_{k-1}, %
\widetilde{Y}_{k}\right) \right\Vert _{\infty }>T\right) &\leq
&T^{-1}\sum_{i=1}^{k}\left\Vert Y_{i}+V_{i}\right\Vert _{L^{1}}  \notag \\
&\leq &T^{-1}\sum_{i=1}^{k}\left( \left\Vert Y_{i}\right\Vert _{L^{2+2\delta
}}+c\right)  \notag \\
&\leq &e^{-\frac{\lambda _{1}}{2}n^{\alpha +\rho }}\left[ n^{\alpha }\right]
\left( \left[ n^{1-\alpha }\right] \mu _{\delta }+c\right)  \notag \\
&\leq &cne^{-\frac{\lambda _{1}}{2}n^{\alpha +\rho }}\left( 1+\mu _{\delta
}\right).  \label{lp003}
\end{eqnarray}%
From (\ref{lp001}),  (\ref{lp002}) and (\ref{lp003}) we obtain%
\begin{eqnarray*}
&&\pi \left( \left( \widetilde{Y}_{1}, ..., \widetilde{Y}_{k-1}, \widetilde{Y}%
_{k}\right), \left( \widetilde{Y}_{1}, ..., \widetilde{Y}_{k-1}, , Z_{k}\right)
\right) \\
&\leq &\frac{T}{\pi }\lambda _{0}\varepsilon _{0}^{n}\left( 1+n^{1-\alpha
}\right) ^{n^{\alpha }}e^{-\lambda _{1}n^{\alpha +\rho }}+c_{\alpha }ne^{-%
\frac{\lambda _{1}}{2}n^{\alpha +\rho }}\left( 1+\mu _{\delta }\right) \\
&\leq &c\varepsilon _{0}^{n}n\left( 1+n^{1-\alpha }\right) ^{n^{\alpha }}e^{-%
\frac{\lambda _{1}}{2}n^{\alpha +\rho }}\left( 1+\lambda _{0}+\mu _{\delta
}\right).
\end{eqnarray*}%
Implementing this bound in (\ref%
{LpProof001}) we get%
\begin{eqnarray*}
\pi \left( \left( \widetilde{Y}_{1}, ..., \widetilde{Y}_{\left[ n^{\alpha }%
\right] }\right), \left( Z_{1}, ..., Z_{\left[ n^{\alpha }\right] }\right)
\right) &\leq &c\varepsilon _{0}^{n}n^{1+\alpha }\left( 1+n^{1-\alpha
}\right) ^{n^{\alpha }}e^{-\frac{\lambda _{1}}{2}n^{\alpha +\rho }}\left(
1+\lambda _{0}+\mu _{\delta }\right) \\
&\leq &\Delta =c_{\alpha, \lambda _{1}}e^{-\frac{\lambda _{1}}{4}n^{\alpha
+\rho }}\left( 1+\lambda _{0}+\mu _{\delta }\right).
\end{eqnarray*}%
According to Strassen-Dudley's theorem (see Lemma \ref{lemma strassen-dudley}) there is a coupling of $\left(
\widetilde{Y}_{1}, ..., \widetilde{Y}_{b}\right) $ and $\left(
Z_{1}, ..., Z_{b}\right) $ such that%
\begin{equation*}
P\left( \left\Vert \left( \widetilde{Y}_{1}, ..., \widetilde{Y}_{\left[
n^{\alpha }\right] }\right) -\left( Z_{1}, ..., Z_{\left[ n^{\alpha }\right]
}\right) \right\Vert _{\infty }\geq \Delta \right) \leq \Delta.
\end{equation*}%
Let $S=\widetilde{Y}_{1}+...+\widetilde{Y}_{\left[ n^{\alpha }\right]
}-\left( Z_{1}+...+Z_{\left[ n^{\alpha }\right] }\right).$ Taking into
account that $\left\Vert V_{i}\right\Vert _{L^{2+2\delta }}\leq c, $ we have
\begin{eqnarray}
\left\Vert S\right\Vert _{L^{2+2\delta }} &=&\left\Vert \widetilde{Y}%
_{1}+...+\widetilde{Y}_{\left[ n^{\alpha }\right] }-\left( Z_{1}+...+Z_{%
\left[ n^{\alpha }\right] }\right) \right\Vert _{L^{2+2\delta }}  \notag \\
&\leq &cn^{\alpha }a\left( 1+\max_{l\geq 1}\left\Vert X_{l}\right\Vert
_{L^{2+2\delta }}\right) \leq c^{\prime }n\left( 1+\mu _{\delta }\right).
\label{LpProof002a}
\end{eqnarray}%
Let $\eta \in \left( 0, \delta -\delta ^{\prime }\right), $ $p=2+2\delta
^{\prime }, $ $p^{\prime }=p+2\eta \leq 2+2\delta $ and $\gamma =\gamma
\left( \eta \right) =\frac{2\eta }{p\left( p+2\eta \right) }.$ By H\"{o}%
lder's inequality, %
\begin{equation*}
\left\Vert S^{2+2\delta ^{\prime }}1\left( \left\vert S\right\vert \geq
n^{\alpha }\Delta \right) \right\Vert _{L^{2+2\delta ^{\prime }}}\leq
\left\Vert S\right\Vert _{L^{p^{\prime }}}\left( \mathbb{P}\left( \left\vert
S\right\vert \geq n^{\alpha }\Delta \right) \right) ^{\gamma }\leq
\left\Vert S\right\Vert _{L^{2+2\delta }}\left( \mathbb{P}\left( \left\vert
S\right\vert \geq n^{\alpha }\Delta \right) \right) ^{\gamma }.
\end{equation*}%
Using the bound $\left\vert S\right\vert \leq n^{\alpha }\left\Vert \left(
\widetilde{Y}_{1}, ..., \widetilde{Y}_{\left[ n^{\alpha }\right] }\right)
-\left( Z_{1}, ..., Z_{\left[ n^{\alpha }\right] }\right) \right\Vert _{\infty
}$ and we have%
\begin{eqnarray*}
\left\Vert S\right\Vert _{L^{2+2\delta ^{\prime }}} &\leq &n^{\alpha }\Delta
+\left\Vert S^{2+2\delta ^{\prime }}1\left( \left\vert S\right\vert \geq
n^{\alpha }\Delta \right) \right\Vert _{L^{2+2\delta ^{\prime }}} \\
&\leq &n^{\alpha }\Delta +\left\Vert S\right\Vert _{L^{2+2\delta }}\left(
\mathbb{P}\left( \left\vert S\right\vert \geq n^{\alpha }\Delta \right)
\right) ^{\gamma } \\
&\leq &n^{\alpha }\Delta +\left\Vert S\right\Vert _{L^{2+2\delta }}\left(
\mathbb{P}\left( \left\Vert \left( \widetilde{Y}_{1}, ..., \widetilde{Y}_{%
\left[ n^{\alpha }\right] }\right) -\left( Z_{1}, ..., Z_{\left[ n^{\alpha }%
\right] }\right) \right\Vert _{\infty }\geq \Delta \right) \right) ^{\gamma }
\\
&\leq &n^{\alpha }\Delta +cn\left( 1+\mu _{\delta }\right) \Delta ^{\gamma }.
\end{eqnarray*}%
Taking into account the definition of $\Delta, $%
\begin{eqnarray}
\left\Vert S\right\Vert _{L^{2+2\delta ^{\prime }}} &\leq &n^{\alpha
}c_{\alpha, \lambda _{1}}e^{-\frac{\lambda _{1}}{4}n^{\alpha +\rho }}\left(
1+\lambda _{0}+\mu _{\delta }\right)  \notag \\
&&+cn\left( 1+\mu _{\delta }\right) \left( c_{\alpha, \lambda _{1}}e^{-\frac{%
\lambda _{1}}{4}n^{\alpha +\rho }}\left( 1+\lambda _{0}+\mu _{\delta
}\right) \right) ^{\gamma }  \notag \\
&\leq &A^{\prime }=c_{\lambda _{1}, \lambda _{2}, \gamma, \alpha, \rho
}^{\prime }\left( 1+\lambda _{0}+\mu _{\delta }\right) ^{1+\gamma }.
\label{LpProof003}
\end{eqnarray}
From (\ref{LpProof002a}) and (\ref{LpProof003}),  it follows that%
\begin{eqnarray}
\left\Vert \widetilde{Y}_{1}+...+\widetilde{Y}_{\left[ n^{\alpha }\right]
}\right\Vert _{L^{2+2\delta ^{\prime }}} &\leq &\left\Vert S\right\Vert
_{L^{2+2\delta ^{\prime }}}+\left\Vert Z_{1}+...+Z_{\left[ n^{\alpha }\right]
}\right\Vert _{L^{2+2\delta ^{\prime }}}  \notag \\
&\leq &A^{\prime }+\left\Vert Z_{1}+...+Z_{\left[ n^{\alpha }\right]
}\right\Vert _{L^{2+2\delta ^{\prime }}}.  \label{LpProof004}
\end{eqnarray}%
Since the r.v.'s $Z_{1}, ..., Z_{\left[ n^{\alpha }\right] }$ are independent,
by Rosenthal's inequality (Theorem 3 in \cite{Rosenth}),  there exists some constant $%
c_{\delta ^{\prime }} $ such that%
\begin{equation}
\left\Vert Z_{1}+...+Z_{\left[ n^{\alpha }\right] }\right\Vert
_{L^{2+2\delta ^{\prime }}}\leq c_{\delta ^{\prime }}\left( \sum_{i=1}^{%
\left[ n^{\alpha }\right] }\mathbb{E}Z_{i}^{2}\right) ^{\frac{1}{2}%
}+c_{\delta ^{\prime }}\left( \sum_{i=1}^{\left[ n^{\alpha }\right] }\mathbb{%
E}\left\vert Z_{i}\right\vert ^{2+2\delta ^{\prime }}\right) ^{\frac{1}{%
2+2\delta ^{\prime }}}.  \label{LpProof005a}
\end{equation}%
Taking into account that $Y_{i}=\sum_{j\in I_{i}}X_{j}$ and that $\left\vert
I_{i}\right\vert \leq a-b\leq n^{1-\alpha }, $ by Proposition \ref%
{Proposition boundL2} we have%
\begin{equation}
\mathbf{E}Z_{i}^{2}=\left\Vert \widetilde{Y}_{i}\right\Vert _{L^{2}}^{2}\leq
\left( c+\left\Vert Y_{i}\right\Vert _{L^{2}}\right) ^{2}\leq c_{\lambda
_{1}, \lambda _{2}, \gamma }^{\prime }\left( 1+\lambda _{0}+\mu _{\delta
}\right) ^{2+\gamma }n^{1-\alpha }.  \label{LpProof005b}
\end{equation}%
Note also that $\left\Vert Z_{i}\right\Vert _{L^{2+2\delta ^{\prime }}}\leq
v_{a-b}+c, $ where $v_{n}=\sup_{m\geq 1}\left\Vert
\sum_{i=m}^{m+n-1}X_{i}\right\Vert _{L^{2+2\delta ^{\prime }}}.$  Therefore,  from (\ref{LpProof005a}) and (\ref%
{LpProof005b}),  it follows that%
\begin{eqnarray*}
&&\left\Vert Z_{1}+...+Z_{b}\right\Vert _{L^{2+2\delta ^{\prime }}} \\
&\leq &A^{\prime }+c_{\lambda _{1}, \lambda _{2}, \gamma, \alpha, \rho, \delta
^{\prime }}\left( 1+\lambda _{0}+\mu _{\delta }\right) ^{1+\frac{\gamma }{2}%
}n^{\frac{1}{2}}+c_{\delta ^{\prime }}\left( \sum_{i=1}^{\left[ n^{\alpha }%
\right] }\left( v_{a-b}+c\right) ^{2+2\delta ^{\prime }}\right) ^{\frac{1}{%
2+2\delta ^{\prime }}} \\
&\leq &c_{\lambda _{1}, \lambda _{2}, \gamma, \alpha, \rho, \delta ^{\prime
}}\left( 1+\lambda _{0}+\mu _{\delta }\right) ^{1+\gamma }n^{\frac{1}{2}%
}+c_{\delta ^{\prime }}v_{a-b}n^{\frac{\alpha }{2+2\delta ^{\prime }}}.
\end{eqnarray*}%
Using (\ref{LpProof004}),  we get%
\begin{eqnarray*}
\left\Vert \widetilde{Y}_{1}+...+\widetilde{Y}_{\left[ n^{\alpha }\right]
}\right\Vert _{L^{2+2\delta ^{\prime }}} &\leq &A^{\prime }+\left\Vert
Z_{1}+...+Z_{\left[ n^{\alpha }\right] }\right\Vert _{L^{2+2\delta ^{\prime
}}} \\
&\leq &c_{\lambda _{1}, \lambda _{2}, \gamma, \alpha, \rho, \delta ^{\prime
}}^{\prime }\left( 1+\lambda _{0}+\mu _{\delta }\right) ^{1+\gamma }n^{\frac{%
1}{2}}+c_{\delta ^{\prime }}v_{a-b}n^{\frac{\alpha }{2+2\delta ^{\prime }}}.
\end{eqnarray*}%
Since $\widetilde{Y}_{k}=Y_{k}+V_{k}$ and $\left\Vert V_{k}\right\Vert
_{L^{2+2\delta }}\leq c, $ it holds%
\begin{eqnarray*}
\left\Vert Y_{1}+...+Y_{\left[ n^{\alpha }\right] }\right\Vert
_{L^{2+2\delta ^{\prime }}} &\leq &c\left[ n^{\alpha }\right] +\left\Vert
\widetilde{Y}_{1}+...+\widetilde{Y}_{\left[ n^{\alpha }\right] }\right\Vert
_{L^{2+2\delta ^{\prime }}} \\
&\leq &cn^{\alpha }+c_{\lambda _{1}, \lambda _{2}, \gamma, \alpha, \rho
, \delta ^{\prime }}\left( 1+\lambda _{0}+\mu _{\delta }\right) ^{1+\gamma
}n^{\frac{1}{2}}+c_{\delta ^{\prime }}v_{a-b}n^{\frac{\alpha }{2+2\delta
^{\prime }}} \\
&\leq &c_{\lambda _{1}, \lambda _{2}, \gamma, \alpha, \rho, \delta ^{\prime
}}\left( 1+\lambda _{0}+\mu _{\delta }\right) ^{1+\gamma }n^{\frac{1}{2}%
}+c_{\delta ^{\prime }}v_{a-b}n^{\frac{\alpha }{2+2\delta ^{\prime }}},
\end{eqnarray*}%
where for the last line we use the fact that $\alpha <\frac{1-\rho }{2}<%
\frac{1}{2}.$ Filling up the gaps in the final interval $I_{\text{fin}}$,  we
get%
\begin{eqnarray*}
\left\Vert \sum_{i=1}^{m+n-1}X_{i}\right\Vert _{L^{2+2\delta ^{\prime }}} &\leq &\left\Vert Y_{1}+...+Y_{\left[ n^{\alpha }\right] }\right\Vert_{L^{2+2\delta ^{\prime }}}\\
&&+\sum_{k=1}^{\left[ n^{\alpha }\right]}\sum_{i\in J_{k}}\left\Vert X_{i}\right\Vert _{L^{2+2\delta ^{\prime}}}+\left\Vert \sum_{i=m+a\left[ n^{\alpha }\right] }^{m+n}X_{i}\right\Vert
_{L^{2+2\delta ^{\prime }}} \\
&\leq &c_{\lambda _{1}, \lambda _{2}, \gamma, \alpha, \rho, \delta ^{\prime
}}\left( 1+\lambda _{0}+\mu _{\delta }\right) ^{1+\gamma }n^{\frac{1}{2}%
}+c_{\delta ^{\prime }}v_{a-b}n^{\frac{\alpha }{2+2\delta ^{\prime }}} \\
&&+n^{2\alpha +\rho }\mu _{\delta }+v_{n-\left[ n^{1-\alpha }\right] \left[
n^{\alpha }\right] }.
\end{eqnarray*}%
From this,  we deduce the inequality%
\begin{equation}
v_{n}\leq c_{\lambda _{1}, \lambda _{2}, \gamma, \alpha, \rho, \delta ^{\prime
}}\left( 1+\lambda _{0}+\mu _{\delta }\right) ^{1+\gamma }n^{\frac{1}{2}%
}+n^{2\alpha +\rho }\mu _{\delta }+c_{\delta ^{\prime }}v_{\left[
n^{1-\alpha }\right] -\left[ n^{\alpha +\rho }\right] }n^{\frac{\alpha }{%
2+2\delta ^{\prime }}}+v_{n-\left[ n^{1-\alpha }\right] \left[ n^{\alpha }%
\right] }.  \label{LpProof006a}
\end{equation}%
Denote $\overline{v}_{n}=\frac{v_{n}}{\left( 1+\lambda _{0}+\mu _{\delta
}\right) ^{1+\gamma }}.$ Then from (\ref{LpProof006a}),  it follows that%
\begin{equation*}
\overline{v}_{n}\leq c_{\lambda _{1}, \lambda _{2}, \gamma, \alpha, \rho
, \delta ^{\prime }}n^{\frac{1}{2}}+n^{2\alpha +\rho }+c_{\delta ^{\prime }}%
\overline{v}_{\left[ n^{1-\alpha }\right] -\left[ n^{\alpha +\rho }\right]
}n^{\frac{\alpha }{2+2\delta ^{\prime }}}+\overline{v}_{n-\left[ n^{1-\alpha
}\right] \left[ n^{\alpha }\right] }.
\end{equation*}%
Fixing $\alpha =\frac{1}{6}$ and $\rho =\frac{1}{6}, $ we get
\begin{equation}
\overline{v}_{n}\leq c_{\lambda _{1}, \lambda _{2}, \gamma, \delta ^{\prime
}}n^{\frac{1}{2}}+c_{\delta ^{\prime }}\overline{v}_{\left[ n^{\frac{5}{6}}%
\right] -\left[ n^{\frac{1}{3}}\right] }n^{\frac{1}{6}\frac{1}{2+2\delta
^{\prime }}}+\overline{v}_{n-\left[ n^{\frac{5}{6}}\right] \left[ n^{\frac{1%
}{6}}\right] }.  \label{LpProof006}
\end{equation}%
We start with the inequality $\overline{v}_{n}\leq n^{q_{0}}, $ where $%
q_{0}=1.$ Since $n-\left[ n^{1-\alpha }\right] \left[ n^{\alpha }\right]
\leq cn^{1-\alpha }, $ we have $v_{n-\left[ n^{1-\alpha }\right] \left[
n^{\alpha }\right] }\leq cn^{\frac{5}{6}q_{0}}$ and $v_{\left[ n^{1-\alpha }%
\right] -\left[ n^{\alpha +\rho }\right] }\leq cn^{\frac{5}{6}q_{0}}.$
Implementing this in (\ref{LpProof006}) gives, %
\begin{equation*}
\overline{v}_{n}\leq c_{\lambda _{1}, \lambda _{2}, \gamma, \delta ^{\prime
}}n^{\frac{1}{2}}+c_{\delta ^{\prime }}n^{\frac{5}{6}q_{0}+\frac{1}{6}\frac{1%
}{2+2\delta ^{\prime }}}\leq c_{1r_{1}, \gamma, \delta ^{\prime }}n^{\max
\left\{ \frac{1}{2}, q_{1}\right\} },
\end{equation*}%
where $q_{1}=\frac{5}{6}q_{0}+\frac{1}{6}\frac{1}{2+2\delta ^{\prime }}.$
Continuing in the same way,  at the iteration $k+1, $ we obtain
\begin{equation*}
\overline{v}_{n}\leq c_{kr_{1}, \gamma, \delta ^{\prime }}n^{\max \left\{
\frac{1}{2}, q_{k+1}\right\} },
\end{equation*}%
where $q_{k+1}=\frac{5}{6}q_{k}+\frac{1}{6}\frac{1}{2+2\delta ^{\prime }}.$
Since $\lim_{k\rightarrow \infty }q_{k}=\frac{1}{2+2\delta ^{\prime }}, $
there exists a constant $k_{0}<\infty, $ such that $q_{k_{0}+1}\leq \frac{1}{%
2}.$ With this $k_{0}, $ we get%
\begin{equation*}
\overline{v}_{n}\leq c_{k_{0}\lambda _{1}, \lambda _{2}, \gamma, \delta
^{\prime }}n^{\frac{1}{2}}.
\end{equation*}%
Since $\gamma =\gamma \left( \eta \right) =\frac{2\eta }{p\left( p+2\eta
\right) }\leq \frac{2\eta }{p^{2}}, $ we have,  for any $m\geq 1, $%
\begin{equation*}
\left\Vert \sum_{i=m}^{m+n}X_{i}\right\Vert _{L^{2+2\delta ^{\prime }}}\leq
c_{\lambda _{1}, \lambda _{2}, \eta, \delta ^{\prime }}\left( 1+\lambda
_{0}+\mu _{\delta }\right) ^{1+\gamma }n^{\frac{1}{2}}\leq c_{\lambda
_{1}, \lambda _{2}, \delta ^{\prime }, \eta }\left( 1+\lambda _{0}+\mu _{\delta
}\right) ^{1+\frac{2\eta }{p^{2}}}n^{\frac{1}{2}}.
\end{equation*}%
Since $\eta $ is arbitrary we obtain the assertion of Proposition \ref{Prop
Lp bound}.

\section{Appendix\label{sec Appendix}}

\subsection{Some general bounds for the Prokhorov distance\label{sec
Prokhorov dist}}

Let $\left( E, d\right) $ be a metric space endowed with the metric $d$ and $%
\mathcal{E}$ be the Borel $\sigma $-algebra on $E.$ For any $B\in \mathcal{E}
$ denote by $B^{\varepsilon }$ its $\varepsilon $-extension: $B^{\varepsilon
}=\left\{ x\in E:d\left( x, E\right) \leq \varepsilon \right\}.$ Let $\pi
\left( \mathbf{P}, \mathbf{Q}\right) $ be the Prokhorov distance between two
probability measures $\mathbf{P}$ and $\mathbf{Q}$ defined by%
\begin{equation*}
\pi \left( \mathbf{P}, \mathbf{Q}\right) =\inf \left\{ \varepsilon
:\sup_{B\in \mathcal{E}}\left\vert \mathbf{P}\left( B\right) -\mathbf{Q}%
\left( B^{\varepsilon }\right) \right\vert \leq \varepsilon \right\}.
\end{equation*}%
The following assertion is known as Strassen-Dudley theorem and is a
consequence of the results in Strassen \cite{strassen1965} (see also Dudley
\cite{Dudley1968}). Let $\mathcal{P}_{E}\left( \mathbf{P}, \mathbf{Q}\right) $
be the set of probability measures on $E\times E$ with given marginals $%
\mathbf{P}$ and $\mathbf{Q}.$ Denote by $\mathcal{D}_{E, d}\left( \varepsilon
\right) $ the $\varepsilon $-extension of the diagonal in $E\times E, $ i.e. $%
\mathcal{D}_{E, d}\left( \varepsilon \right) =\left\{ \left( s, s^{\prime
}\right) \in E\times E:d\left( s, s^{\prime }\right) \leq \varepsilon
\right\} $ and by $\overline{\mathcal{D}}_{E, d}\left( \varepsilon \right) $
its complement.

\begin{lemma}
\label{lemma strassen-dudley}(Strassen-Dudley) If $\left( E, d\right) $ is a
complete separable metric space,  then%
\begin{equation*}
\pi \left( \mathbf{P}, \mathbf{Q}\right) =\min \left\{ \varepsilon :\exists
\text{ }\mathbb{P}\in \mathcal{P}_{E}\left( \mathbf{P}, \mathbf{Q}\right)
\text{ such that\ }\mathbb{P}\left( \overline{\mathcal{D}}_{E, d}\left(
\varepsilon \right) \right) \leq \varepsilon \text{ }\right\}.
\end{equation*}
\end{lemma}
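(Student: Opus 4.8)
The plan is to prove the stated identity by establishing the two inequalities $\pi(\mathbf{P},\mathbf{Q})\le\varepsilon^{*}$ and $\pi(\mathbf{P},\mathbf{Q})\ge\varepsilon^{*}$, where $\varepsilon^{*}$ denotes the infimum appearing on the right-hand side, and then checking that this infimum is attained (so that ``$\inf$'' can be replaced by ``$\min$''). The easy inequality is $\pi(\mathbf{P},\mathbf{Q})\le\varepsilon^{*}$: suppose $\mathbb{P}\in\mathcal{P}_{E}(\mathbf{P},\mathbf{Q})$ satisfies $\mathbb{P}\bigl(\overline{\mathcal{D}}_{E,d}(\varepsilon)\bigr)\le\varepsilon$, and let $(X,Y)$ be the canonical pair under $\mathbb{P}$. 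For any $B\in\mathcal{E}$ the inclusion $\{X\in B\}\subseteq\{Y\in B^{\varepsilon}\}\cup\overline{\mathcal{D}}_{E,d}(\varepsilon)$ holds, since $d(X,Y)\le\varepsilon$ together with $X\in B$ forces $Y\in B^{\varepsilon}$; hence $\mathbf{P}(B)\le\mathbf{Q}(B^{\varepsilon})+\varepsilon$. As the one-sided inequality $\mathbf{P}(B)\le\mathbf{Q}(B^{\varepsilon})+\varepsilon$ for all $B$ is, by complementation, equivalent to the two-sided condition in the definition of $\pi$, this gives $\pi(\mathbf{P},\mathbf{Q})\le\varepsilon$, and taking the infimum over admissible $\varepsilon$ yields $\pi(\mathbf{P},\mathbf{Q})\le\varepsilon^{*}$.

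The substantive part is the reverse inequality, which is exactly Strassen's theorem specialized to the $\varepsilon$-neighborhood of the diagonal: if $\mathbf{P}(B)\le\mathbf{Q}(B^{\varepsilon})+\varepsilon$ for all $B\in\mathcal{E}$, then there exists $\mathbb{P}\in\mathcal{P}_{E}(\mathbf{P},\mathbf{Q})$ with $\mathbb{P}\bigl(\overline{\mathcal{D}}_{E,d}(\varepsilon)\bigr)\le\varepsilon$. I would prove the general Strassen statement --- for a closed set $F\subseteq E\times E$ there is $\mathbb{P}\in\mathcal{P}_{E}(\mathbf{P},\mathbf{Q})$ with $\mathbb{P}(F)\ge 1-\varepsilon$ if and only if $\mathbf{P}(G)\le\mathbf{Q}(F[G])+\varepsilon$ for all open (equivalently closed) $G$, where $F[G]=\{y:(x,y)\in F\text{ for some }x\in G\}$ --- and then take $F=\mathcal{D}_{E,d}(\varepsilon)$, for which $F[G]$ agrees with $G^{\varepsilon}$ up to boundary effects that are harmless because of the slack $\varepsilon$. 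The proof of Strassen's theorem runs in three stages: (i) for finite $E$ it is a max-flow/min-cut (equivalently Hall-type marriage) argument, in which the marginal constraints and $F$ define a transportation network whose cut condition is precisely the hypothesis $\mathbf{P}(G)\le\mathbf{Q}(F[G])+\varepsilon$; (ii) for compact $E$ one partitions $E$ into finitely many small Borel cells, applies (i) to the discretized measures, and passes to the limit using weak compactness of probability measures on a compact space, inflating $F$ by the mesh size and letting the mesh shrink; (iii) for a complete separable $E$ one uses tightness of $\mathbf{P}$ and $\mathbf{Q}$ to reduce to compactly supported approximants, applies (ii), and extracts a weak limit of the resulting couplings via Prokhorov's theorem (the family $\mathcal{P}_{E}(\mathbf{P},\mathbf{Q})$ is tight because its marginals are), verifying that the limit keeps the marginals $\mathbf{P},\mathbf{Q}$ and charges $\mathcal{D}_{E,d}(\varepsilon)$ with mass $\ge 1-\varepsilon$.

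Finally, to see that the infimum defining $\varepsilon^{*}$ is attained, choose $\varepsilon_{n}\downarrow\varepsilon^{*}$ and couplings $\mathbb{P}_{n}\in\mathcal{P}_{E}(\mathbf{P},\mathbf{Q})$ with $\mathbb{P}_{n}\bigl(\overline{\mathcal{D}}_{E,d}(\varepsilon_{n})\bigr)\le\varepsilon_{n}$. Since all $\mathbb{P}_{n}$ lie in the tight family $\mathcal{P}_{E}(\mathbf{P},\mathbf{Q})$, Prokhorov's theorem provides a subsequence converging weakly to some $\mathbb{P}^{*}\in\mathcal{P}_{E}(\mathbf{P},\mathbf{Q})$. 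For each fixed $\delta>0$ the set $\{(s,s'):d(s,s')\ge\varepsilon^{*}+\delta\}$ is closed and, for $n$ large, contained in $\overline{\mathcal{D}}_{E,d}(\varepsilon_{n})$, so the portmanteau theorem gives $\mathbb{P}^{*}(\{d\ge\varepsilon^{*}+\delta\})\le\limsup_{n}\mathbb{P}_{n}\bigl(\overline{\mathcal{D}}_{E,d}(\varepsilon_{n})\bigr)\le\varepsilon^{*}$; letting $\delta\downarrow 0$ and using that $\{d>\varepsilon^{*}\}$ is the increasing union of these sets yields $\mathbb{P}^{*}\bigl(\overline{\mathcal{D}}_{E,d}(\varepsilon^{*})\bigr)\le\varepsilon^{*}$, so $\varepsilon^{*}$ is itself admissible. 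The main obstacle is stage (iii) of Strassen's theorem: managing the measure-theoretic approximation and the weak limits while simultaneously preserving the marginal constraints and the mass on $F$, and, as a secondary technical point, handling the distinction between open and closed $\varepsilon$-extensions and between strict and non-strict inequalities in the definition of $\pi$.
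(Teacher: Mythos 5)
The paper itself offers no proof of this lemma: it is quoted as a known theorem, with the argument delegated to Strassen \cite{strassen1965} and Dudley \cite{Dudley1968}. What you sketch is essentially the classical argument from those sources: the easy implication via the inclusion $\{X\in B\}\subseteq\{Y\in B^{\varepsilon}\}\cup\overline{\mathcal{D}}_{E,d}(\varepsilon)$, the hard implication via Strassen's marginal theorem (finite case by a max-flow/min-cut or Hall-type argument, compact case by discretization, Polish case by tightness and Prokhorov's theorem), and attainment of the infimum by extracting a weak limit of near-optimal couplings and applying the portmanteau theorem to the closed sets $\{d\geq\varepsilon^{*}+\delta\}$ before letting $\delta\downarrow 0$; that last step is exactly how one upgrades the infimum to a minimum, and it is correct. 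Two small remarks. First, for $F=\mathcal{D}_{E,d}(\varepsilon)$ one has $F[G]=G^{\varepsilon}$ exactly with the closed extension used in the paper, so the ``boundary effects'' you worry about do not arise in this specialization. Second, the paper's displayed definition of $\pi$, with the absolute value $\left\vert \mathbf{P}(B)-\mathbf{Q}(B^{\varepsilon})\right\vert$, is not literally the Prokhorov metric (taken at face value it would not even vanish when $\mathbf{P}=\mathbf{Q}$); the intended condition is the usual symmetric one, $\mathbf{P}(B)\leq\mathbf{Q}(B^{\varepsilon})+\varepsilon$ and $\mathbf{Q}(B)\leq\mathbf{P}(B^{\varepsilon})+\varepsilon$ for all $B$, and it is to this that your complementation step (take $A=(B^{\varepsilon})^{c}$ and note $A^{\varepsilon}\subseteq B^{c}$) correctly applies. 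With that reading, your plan is complete modulo writing out the standard proof of Strassen's theorem, i.e.\ it follows the same route as the references the paper cites rather than anything internal to the paper.
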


Let $\left(E_{1}, d_{1}\right) $ and $\left(E_{2}, d_{2}\right) $ be two
complete separable metric spaces endowed with Borel $\sigma $-algebras $%
\mathcal{E}_{1}$ and $\mathcal{E}_{2}$ respectively. Consider the product
space $E=E_{1}\times E_{2}$ endowed with the metric $d\left( x, y\right)
=\max \left\{ d_{1}\left( x_{1}, y_{1}\right), d_{2}\left( x_{2}, y_{2}\right)
\right\}, $ where $x=\left( x_{1}, x_{2}\right), y=\left( y_{1}, y_{2}\right)
\in E.$ Denote by $\mathcal{E}$ the Borel $\sigma $-algebra on $E.$

\begin{lemma}
\label{proj lemma 002}Consider the r.v.'s $X, Y\in E_{1}$ and $Z\in E_{2}$
such that $Z$ and $\left( X, Y\right) $ are independent. Then%
\begin{equation*}
\pi \left( \mathcal{L}_{X, Z}, \mathcal{L}_{Y, Z}\right) =\pi \left( \mathcal{L}%
_{X}, \mathcal{L}_{Y}\right).
\end{equation*}
\end{lemma}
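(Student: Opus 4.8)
The plan is to prove the two inequalities $\pi(\mathcal{L}_{X,Z},\mathcal{L}_{Y,Z})\le\pi(\mathcal{L}_X,\mathcal{L}_Y)$ and $\pi(\mathcal{L}_X,\mathcal{L}_Y)\le\pi(\mathcal{L}_{X,Z},\mathcal{L}_{Y,Z})$ separately, in each case going through the Strassen--Dudley characterization of the Prokhorov distance (Lemma \ref{lemma strassen-dudley}), which applies because $E_1$, $E_2$, and hence the product $E=E_1\times E_2$ equipped with $d=\max\{d_1,d_2\}$, are complete separable metric spaces. The one structural observation I would record at the outset is that, since $Z$ is independent of $(X,Y)$, it is in particular independent of $X$ and of $Y$, so $\mathcal{L}_{X,Z}=\mathcal{L}_X\otimes\mathcal{L}_Z$ and $\mathcal{L}_{Y,Z}=\mathcal{L}_Y\otimes\mathcal{L}_Z$; this is exactly what allows one to glue an optimal coupling of $X$ and $Y$ onto a \emph{shared} copy of $Z$.

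For the inequality ``$\le$'': set $\varepsilon=\pi(\mathcal{L}_X,\mathcal{L}_Y)$ and, using Lemma \ref{lemma strassen-dudley}, take a coupling $\mathbb{Q}\in\mathcal{P}_{E_1}(\mathcal{L}_X,\mathcal{L}_Y)$ with $\mathbb{Q}(\overline{\mathcal{D}}_{E_1,d_1}(\varepsilon))\le\varepsilon$. Define $\mathbb{Q}'$ on $E\times E$ as the image of $\mathbb{Q}\otimes\mathcal{L}_Z$ under $(x_1,y_1,z)\mapsto((x_1,z),(y_1,z))$. Its marginals are $\mathcal{L}_X\otimes\mathcal{L}_Z=\mathcal{L}_{X,Z}$ and $\mathcal{L}_Y\otimes\mathcal{L}_Z=\mathcal{L}_{Y,Z}$, so $\mathbb{Q}'\in\mathcal{P}_E(\mathcal{L}_{X,Z},\mathcal{L}_{Y,Z})$; and since under $\mathbb{Q}'$ the two $E_2$-coordinates coincide, $d((x_1,z),(y_1,z))=d_1(x_1,y_1)$, so $\mathbb{Q}'(\overline{\mathcal{D}}_{E,d}(\varepsilon))=\mathbb{Q}(\overline{\mathcal{D}}_{E_1,d_1}(\varepsilon))\le\varepsilon$. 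Lemma \ref{lemma strassen-dudley} then gives $\pi(\mathcal{L}_{X,Z},\mathcal{L}_{Y,Z})\le\varepsilon$.

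For the reverse inequality: set $\varepsilon'=\pi(\mathcal{L}_{X,Z},\mathcal{L}_{Y,Z})$ and fix, again by Lemma \ref{lemma strassen-dudley}, a coupling $\mathbb{P}'\in\mathcal{P}_E(\mathcal{L}_{X,Z},\mathcal{L}_{Y,Z})$ with $\mathbb{P}'(\overline{\mathcal{D}}_{E,d}(\varepsilon'))\le\varepsilon'$, and push it forward under $((x_1,z),(y_1,z'))\mapsto(x_1,y_1)$ to a coupling $\mathbb{P}''$ of $\mathcal{L}_X$ and $\mathcal{L}_Y$. Because $d=\max\{d_1,d_2\}\ge d_1$, one has $\{(x_1,y_1):d_1(x_1,y_1)>\varepsilon'\}\subseteq\overline{\mathcal{D}}_{E,d}(\varepsilon')$ under the projection, hence $\mathbb{P}''(\overline{\mathcal{D}}_{E_1,d_1}(\varepsilon'))\le\mathbb{P}'(\overline{\mathcal{D}}_{E,d}(\varepsilon'))\le\varepsilon'$, and Lemma \ref{lemma strassen-dudley} yields $\pi(\mathcal{L}_X,\mathcal{L}_Y)\le\varepsilon'$. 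Combining the two bounds gives the claimed equality. (Alternatively, this direction can be phrased as the monotonicity of the Prokhorov distance under the $1$-Lipschitz projection $E\to E_1$, but the coupling argument keeps everything symmetric with the first half.)

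As for difficulty, there is no genuine obstacle here: the statement is a soft measure-theoretic fact with no quantitative estimate involved. The only points that require a little care are verifying that the glued measure $\mathbb{Q}'$ really has the prescribed marginals (which relies on the independence hypothesis turning both joint laws into product measures) and keeping the bookkeeping with the product metric $d=\max\{d_1,d_2\}$ and the $\varepsilon$-neighbourhoods consistent.
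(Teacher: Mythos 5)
Your proof is correct and follows essentially the same route as the paper: both invoke the Strassen--Dudley characterization and couple $X$ with $Y$ optimally while attaching a shared (diagonal) copy of $Z$, using the independence of $Z$ from $(X,Y)$ to identify the marginals and the metric $d=\max\{d_1,d_2\}$ to keep the off-diagonal mass unchanged. The only difference is cosmetic: the paper compresses the reverse inequality into the assertion that the two $\varepsilon$-sets coincide, whereas you spell it out via the $1$-Lipschitz projection onto $E_1\times E_1$, which is exactly the argument that is implicit there.
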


\begin{proof}
Let $\mathbb{P}_{1}\in \mathcal{P}_{E_{1}}\left( \mathcal{L}_{X}, \mathcal{L}%
_{Y}\right) $ and $\mathbb{P}_{2}\in \mathcal{P}_{E_{2}}\left( \mathcal{L}%
_{Z}, \mathcal{L}_{Z}\right).$ If $\mathbb{P}_{2}$ is concentrated on the
diagonal of $E_{2}\times E_{2}, $ then with $\mathbb{P}=\mathbb{P}_{1}\times
\mathbb{P}_{2}$ we have $\mathbb{P}\left( \overline{\mathcal{D}}_{E, d}\left(
\varepsilon \right) \right) =\mathbb{P}_{1}\left( \overline{\mathcal{D}}%
_{E_{1}, d_{1}}\left( \varepsilon \right) \right).$ This means that%
\begin{eqnarray*}
&&A=\left\{ \varepsilon :\exists \text{ }\mathbb{P}_{1}\in \mathcal{P}%
_{E}\left( \mathcal{L}_{X}, \mathcal{L}_{Y}\right) \text{ such that }\mathbb{P%
}_{1}\left( \overline{\mathcal{D}}_{E_{1}, d_{1}}\left( \varepsilon \right)
\right) \leq \varepsilon \right\} \\
&=&\left\{ \varepsilon :\exists \text{ }\mathbb{P}\in \mathcal{P}_{E}\left(
\mathcal{L}_{X, Z}, \mathcal{L}_{Y, Z}\right) \text{ such that }\mathbb{P}%
\left( \overline{\mathcal{D}}_{E, d}\left( \varepsilon \right) \right) \leq
\varepsilon \right\} =B.
\end{eqnarray*}%
By Lemma \ref{lemma strassen-dudley},  $\pi \left( \mathcal{L}_{X}, \mathcal{L}%
_{Y}\right) =\inf A=\inf B=\pi \left( \mathcal{L}_{X, Z}, \mathcal{L}%
_{Y, Z}\right).$
\end{proof}

Let $\left( E_{1}, d_{1}\right), ..., \left( E_{n}, d_{n}\right) $ be complete
separable metric spaces. On the product space $E=E_{1}\times...\times E_{n}$
consider the metric $d\left( x, y\right) =\max \left\{ d_{1}\left(
x_{1}, y_{1}\right), ..., d_{n}\left( x_{n}, y_{n}\right) \right\}, $ where $%
x=\left( x_{1}, ..., x_{n}\right), y=\left( y_{1}, ..., y_{n}\right) \in E.$

\begin{lemma}
\label{properties 003}Consider the r.v.'s $X=\left( X_{1}, ..., X_{n}\right)
\in E$ and $Y=\left( Y_{1}, ..., Y_{n}\right) \in E.$ If $X$ and $Y$ are
independent and $Y_{1}, ..., Y_{n}$ are independent,  then%
\begin{equation*}
\pi \left( \mathcal{L}_{X_{1}, ..., X_{n}}, \mathcal{L}_{Y_{1}, ..., Y_{n}}%
\right) \leq \sum_{k=1}^{n}\pi \left( \mathcal{L}_{X_{1}, ..., X_{k-1}, X_{k}}, %
\mathcal{L}_{X_{1}, ..., X_{k-1}, Y_{k}}\right).
\end{equation*}
\end{lemma}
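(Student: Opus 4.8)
The plan is to run a hybrid (telescoping) argument that swaps the coordinates $X_k$ for $Y_k$ one at a time, reducing each individual swap to Lemma~\ref{proj lemma 002}. Since $X_1,\dots,X_n,Y_1,\dots,Y_n$ are defined on a common probability space, for $k=0,1,\dots,n$ set
$$
W^{(k)}=\bigl(X_1,\dots,X_k,Y_{k+1},\dots,Y_n\bigr)\in E ,
$$
so that $W^{(n)}=(X_1,\dots,X_n)$ and $W^{(0)}=(Y_1,\dots,Y_n)$. Because $\pi$ is a genuine metric on the Borel probability measures of the separable space $E=E_1\times\dots\times E_n$, the triangle inequality gives
$$
\pi\bigl(\mathcal{L}_{X_1,\dots,X_n},\mathcal{L}_{Y_1,\dots,Y_n}\bigr)
\le\sum_{k=1}^{n}\pi\bigl(\mathcal{L}_{W^{(k)}},\mathcal{L}_{W^{(k-1)}}\bigr).
$$

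Next I would bound each term $\pi\bigl(\mathcal{L}_{W^{(k)}},\mathcal{L}_{W^{(k-1)}}\bigr)$. The vectors $W^{(k)}$ and $W^{(k-1)}$ agree in every coordinate except the $k$-th: writing them as pairs $\bigl((X_1,\dots,X_{k-1},X_k),(Y_{k+1},\dots,Y_n)\bigr)$ and $\bigl((X_1,\dots,X_{k-1},Y_k),(Y_{k+1},\dots,Y_n)\bigr)$, the last block $Z:=(Y_{k+1},\dots,Y_n)$ is common to both. Group the first $k$ factors into $E'_k=E_1\times\dots\times E_k$ with metric $\max_{i\le k}d_i$ and the remaining factors into $E''_k=E_{k+1}\times\dots\times E_n$ with metric $\max_{i>k}d_i$; the maximum of these two product metrics is exactly $d$, both are complete separable, so this regrouping is compatible with the metric on $E$ and puts us in the setting of Lemma~\ref{proj lemma 002}. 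One then has to check that $Z$ is independent of the pair $\bigl((X_1,\dots,X_k),(X_1,\dots,X_{k-1},Y_k)\bigr)$: this pair is a measurable function of $(X_1,\dots,X_k,Y_k)$, hence measurable with respect to $\sigma(X_1,\dots,X_n)\vee\sigma(Y_1,\dots,Y_k)$, which is independent of $\sigma(Y_{k+1},\dots,Y_n)$ since $X$ is independent of $Y$ and the coordinates $Y_1,\dots,Y_n$ are mutually independent. Lemma~\ref{proj lemma 002} then yields
$$
\pi\bigl(\mathcal{L}_{W^{(k)}},\mathcal{L}_{W^{(k-1)}}\bigr)
=\pi\bigl(\mathcal{L}_{X_1,\dots,X_{k-1},X_k},\mathcal{L}_{X_1,\dots,X_{k-1},Y_k}\bigr),
$$
and substituting into the telescoping sum gives the claimed inequality. (For $n=1$ the statement is the trivial identity.)

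The only genuinely delicate point is the independence verification in the second step: one must be careful that it is the \emph{joint} law of the two length-$k$ prefixes $(X_1,\dots,X_k)$ and $(X_1,\dots,X_{k-1},Y_k)$ that has to be independent of the tail $(Y_{k+1},\dots,Y_n)$, and that this does follow from the two hypotheses at hand — independence of $X$ from $Y$, together with mutual independence of $Y_1,\dots,Y_n$ — without needing any independence among the $X_i$ themselves. The rest (the telescoping and the metric-compatibility remark allowing Lemma~\ref{proj lemma 002} to be applied on grouped blocks of coordinates) is routine.
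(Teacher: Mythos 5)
Your proof is correct and follows essentially the same route as the paper, whose own argument is just the remark that the claim follows from the telescope rule together with Lemma \ref{proj lemma 002}. You have simply made explicit the hybrid sequence, the regrouping of coordinates, and the key independence check (that $(Y_{k+1},\dots,Y_n)$ is independent of the joint pair of prefixes), all of which the paper leaves implicit.
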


\begin{proof}
The assertion of the lemma is obtained using the telescope rule and Lemma %
\ref{proj lemma 002}.
\end{proof}

\begin{lemma}
\label{properties 004}Consider the r.v.'s $X=\left( X_{1}, ..., X_{n}\right)
\in E$ and $Y=\left( Y_{1}, ..., Y_{n}\right) \in E.$ If $\left(
X_{1}, Y_{1}\right)..., \left( X_{n}, Y_{n}\right) $ are independent,  then%
\begin{equation*}
\pi \left( \mathcal{L}_{X_{1}, ..., X_{n}}, \mathcal{L}_{Y_{1}, ..., Y_{n}}%
\right) \leq \sum_{k=1}^{n}\pi \left( \mathcal{L}_{X_{k}}, \mathcal{L}%
_{Y_{k}}\right).
\end{equation*}
\end{lemma}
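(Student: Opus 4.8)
The plan is to reduce the joint estimate to a sum of one-dimensional estimates via a coupling argument that exploits the independence of the pairs $\left( X_{k},Y_{k}\right)$. The key observation is that, since the pairs are mutually independent, one can build a single coupling on the product space whose marginals are $\mathcal{L}_{X_{1},\dots,X_{n}}$ and $\mathcal{L}_{Y_{1},\dots,Y_{n}}$ by taking the product of optimal (or near-optimal) couplings of each $\left( X_{k},Y_{k}\right)$. Concretely, fix $\epsilon>0$ and for each $k$ let $\varepsilon_{k}=\pi\left( \mathcal{L}_{X_{k}},\mathcal{L}_{Y_{k}}\right)+\epsilon/n$; by Lemma \ref{lemma strassen-dudley} there is a probability measure $\mathbb{P}_{k}\in\mathcal{P}_{E_{k}}\left( \mathcal{L}_{X_{k}},\mathcal{L}_{Y_{k}}\right)$ with $\mathbb{P}_{k}\bigl( \overline{\mathcal{D}}_{E_{k},d_{k}}\left( \varepsilon_{k}\right)\bigr)\leq\varepsilon_{k}$.

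First I would form the product measure $\mathbb{P}=\mathbb{P}_{1}\otimes\cdots\otimes\mathbb{P}_{n}$ on $E\times E$. Because the pairs $\left( X_{k},Y_{k}\right)$ are independent, the first marginal of $\mathbb{P}$ is exactly $\mathcal{L}_{X_{1},\dots,X_{n}}$ and the second marginal is $\mathcal{L}_{Y_{1},\dots,Y_{n}}$, so $\mathbb{P}\in\mathcal{P}_{E}\left( \mathcal{L}_{X_{1},\dots,X_{n}},\mathcal{L}_{Y_{1},\dots,Y_{n}}\right)$. Next I would estimate the $\mathbb{P}$-probability that the pair $\left( x,y\right)=\left( (x_{k})_{k},(y_{k})_{k}\right)$ lies outside the $\varepsilon$-extension of the diagonal, where $\varepsilon=\sum_{k=1}^{n}\varepsilon_{k}$. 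If $d\left( x,y\right)=\max_{k}d_{k}\left( x_{k},y_{k}\right)>\varepsilon$, then in particular there exists some $k$ with $d_{k}\left( x_{k},y_{k}\right)>\varepsilon_{k}$ (since otherwise $\max_{k}d_{k}(x_{k},y_{k})\leq\max_{k}\varepsilon_{k}\leq\varepsilon$, using $\varepsilon_k\le\varepsilon$). Hence
\begin{equation*}
\overline{\mathcal{D}}_{E,d}\left( \varepsilon\right)\subseteq\bigcup_{k=1}^{n}\left\{ d_{k}\left( x_{k},y_{k}\right)>\varepsilon_{k}\right\},
\end{equation*}
and by the union bound together with the product structure,
\begin{equation*}
\mathbb{P}\left( \overline{\mathcal{D}}_{E,d}\left( \varepsilon\right)\right)\leq\sum_{k=1}^{n}\mathbb{P}_{k}\left( \overline{\mathcal{D}}_{E_{k},d_{k}}\left( \varepsilon_{k}\right)\right)\leq\sum_{k=1}^{n}\varepsilon_{k}=\varepsilon.
\end{equation*}

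By Lemma \ref{lemma strassen-dudley} applied on the complete separable metric space $\left( E,d\right)$, the existence of such a $\mathbb{P}$ gives $\pi\left( \mathcal{L}_{X_{1},\dots,X_{n}},\mathcal{L}_{Y_{1},\dots,Y_{n}}\right)\leq\varepsilon=\sum_{k=1}^{n}\pi\left( \mathcal{L}_{X_{k}},\mathcal{L}_{Y_{k}}\right)+\epsilon$. Letting $\epsilon\to 0$ yields the claimed bound. I expect the only mildly delicate point to be the verification that the first and second marginals of the product coupling are the full joint laws $\mathcal{L}_{X_{1},\dots,X_{n}}$ and $\mathcal{L}_{Y_{1},\dots,Y_{n}}$ — this is where the hypothesis that the pairs $\left( X_{k},Y_{k}\right)$ are independent (and not merely that, say, the $X_{k}$ are independent) is essential, since it forces $\mathcal{L}_{X_{1},\dots,X_{n}}=\bigotimes_{k}\mathcal{L}_{X_{k}}$ and likewise for the $Y$'s; everything else is a routine union-bound and a direct appeal to the Strassen–Dudley characterization. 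Alternatively, one could prove it by induction on $n$ combining Lemma \ref{proj lemma 002} with a telescoping argument exactly as in Lemma \ref{properties 003}, but the direct product-coupling argument is cleaner here.
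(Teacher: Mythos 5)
Your proof is correct, and it follows a genuinely different route from the paper. The paper disposes of this lemma in one line by combining Lemma \ref{properties 003} (the telescoping bound) with Lemma \ref{proj lemma 002} (appending a coordinate independent of the pair leaves the Prokhorov distance unchanged): one interpolates between $\mathcal{L}_{X_{1},...,X_{n}}$ and $\mathcal{L}_{Y_{1},...,Y_{n}}$ through the product laws in which the first $k$ coordinates come from the $Y$'s and the rest from the $X$'s, and identifies each telescoping term with $\pi\left( \mathcal{L}_{X_{k}},\mathcal{L}_{Y_{k}}\right)$ via Lemma \ref{proj lemma 002}; note that a verbatim application of Lemma \ref{properties 003} is not possible, since pair-independence does not give $X$ independent of $Y$, so one must either pass to an independent copy of $Y$ or redo the telescoping at the level of the interpolating product laws. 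You instead build a single explicit coupling: take (near-)optimal Strassen--Dudley couplings $\mathbb{P}_{k}$ of $\mathcal{L}_{X_{k}}$ and $\mathcal{L}_{Y_{k}}$, form their product, check (correctly, and this is exactly where pair-independence enters) that its marginals are the full joint laws because $\mathcal{L}_{X_{1},...,X_{n}}=\bigotimes_{k}\mathcal{L}_{X_{k}}$ and likewise for $Y$, and conclude by a union bound together with the max structure of the metric $d$ and the easy direction of Strassen--Dudley. Your inclusion $\overline{\mathcal{D}}_{E,d}\left( \varepsilon\right)\subseteq\bigcup_{k}\left\{ d_{k}\left( x_{k},y_{k}\right)>\varepsilon_{k}\right\}$ and the passage $\varepsilon_{k}=\pi\left( \mathcal{L}_{X_{k}},\mathcal{L}_{Y_{k}}\right)+\epsilon/n$ (with the monotonicity of $\varepsilon\mapsto\overline{\mathcal{D}}(\varepsilon)$, or simply the attainment of the minimum in Lemma \ref{lemma strassen-dudley}, so even $\epsilon=0$ would do) are both sound. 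What each approach buys: the paper's argument is shortest given the machinery already proved and stays entirely at the level of laws; yours is self-contained, makes the roles of the sup-metric and of the independence of the pairs completely transparent, and yields slightly more, namely a single coupling of the two joint laws realizing the bound simultaneously in all coordinates.
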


\begin{proof}
The assertion of the lemma is obtained using Lemma \ref{properties 003} and
Lemma \ref{proj lemma 002}.
\end{proof}

The following lemma is taken from \cite{Gouez}.

\begin{lemma}
\label{Lemma smoo}Let $\mathbf{P}$ and $\mathbf{Q}$ be two probability
measures on $\left( \mathbb{R}^{N}, \mathcal{B}^{N}\right).$
Assume that the characteristic functions
$\widehat{p}\left( t\right) $ and $\widehat{q}\left( t\right) $
pertaining to $\mathbf{P}$ and $\mathbf{Q}$
are square integrable with respect to the Lebesgue measure in $\mathbb R ^N$. Then
\begin{equation}
\pi \left( \mathbf{P}, \mathbf{Q}\right) \leq \left( T/\pi \right)
^{N/2}\left( \int_{\mathbb{R}^{N}}\left\vert \widehat{p}\left( t\right) -%
\widehat{q}\left( t\right) \right\vert ^{2}dt\right) ^{1/2}+\mathbf{P}\left(
\left\Vert x\right\Vert _{\infty }>T\right).  \label{smoo 001}
\end{equation}
\end{lemma}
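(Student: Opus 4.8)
The plan is to derive (\ref{smoo 001}) from a single Cauchy--Schwarz estimate on the Lebesgue densities of $\mathbf{P}$ and $\mathbf{Q}$, with Plancherel's identity converting the $L^{2}$ distance of the densities into the $L^{2}$ distance of the characteristic functions. The one genuinely technical point is that the hypothesis $\widehat{p},\widehat{q}\in L^{2}(\mathbb{R}^{N})$ already forces $\mathbf{P}$ and $\mathbf{Q}$ to be absolutely continuous with densities in $L^{2}$. To see this I would convolve $\mathbf{P}$ with the Gaussian $\mathcal{N}(0,\sigma^{2}I)$: its Fourier transform $\widehat{p}(t)\,e^{-\sigma^{2}\Vert t\Vert^{2}/2}$ lies in $L^{1}\cap L^{2}$, so $\mathbf{P}\ast\mathcal{N}(0,\sigma^{2}I)$ has a density $p_{\sigma}$; as $\sigma\downarrow 0$ one has $p_{\sigma}\to p$ in $L^{2}$, where $p$ is the $L^{2}$--inverse Fourier transform of $\widehat{p}$, while $\mathbf{P}\ast\mathcal{N}(0,\sigma^{2}I)\to\mathbf{P}$ weakly; hence $\mathbf{P}$ has density $p\in L^{2}(\mathbb{R}^{N})$ and, likewise, $\mathbf{Q}$ has density $q\in L^{2}(\mathbb{R}^{N})$. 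With the convention $\widehat{p}(t)=\int e^{itx}\mathbf{P}(dx)$, Plancherel's identity then reads $\Vert p-q\Vert_{L^{2}}=(2\pi)^{-N/2}\Vert\widehat{p}-\widehat{q}\Vert_{L^{2}}$.

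Next, abbreviate $\varepsilon_{1}=(T/\pi)^{N/2}\Vert\widehat{p}-\widehat{q}\Vert_{L^{2}}$ and $\varepsilon_{2}=\mathbf{P}(\Vert x\Vert_{\infty}>T)$, and let $C_{T}=\{x\in\mathbb{R}^{N}:\Vert x\Vert_{\infty}\le T\}$, so that $\Vert\mathbf{1}_{C_{T}}\Vert_{L^{2}}=\mathrm{vol}(C_{T})^{1/2}=(2T)^{N/2}$. For an arbitrary Borel set $B\subseteq\mathbb{R}^{N}$ I would split
\[
\mathbf{P}(B)\le\mathbf{P}(B\cap C_{T})+\mathbf{P}(C_{T}^{c})=\mathbf{P}(B\cap C_{T})+\varepsilon_{2},
\]
and estimate, by Cauchy--Schwarz in $L^{2}$ and the Plancherel identity (together with $(2T)^{N/2}(2\pi)^{-N/2}=(T/\pi)^{N/2}$),
\[
\mathbf{P}(B\cap C_{T})-\mathbf{Q}(B\cap C_{T})\le\int_{B\cap C_{T}}\vert p-q\vert\,dx\le\Vert\mathbf{1}_{C_{T}}\Vert_{L^{2}}\,\Vert p-q\Vert_{L^{2}}=(T/\pi)^{N/2}\Vert\widehat{p}-\widehat{q}\Vert_{L^{2}}=\varepsilon_{1}.
\]
Since $\mathbf{Q}(B\cap C_{T})\le\mathbf{Q}(B)\le\mathbf{Q}\big(B^{\varepsilon_{1}+\varepsilon_{2}}\big)$, combining the two displays gives $\mathbf{P}(B)\le\mathbf{Q}\big(B^{\varepsilon_{1}+\varepsilon_{2}}\big)+(\varepsilon_{1}+\varepsilon_{2})$ for every Borel $B$.

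Finally I would invoke the standard description of the Prokhorov distance, $\pi(\mathbf{P},\mathbf{Q})=\inf\{\varepsilon>0:\mathbf{P}(B)\le\mathbf{Q}(B^{\varepsilon})+\varepsilon\text{ for all Borel }B\}$, which is symmetric in $\mathbf{P}$ and $\mathbf{Q}$; hence the asymmetry of the tail term $\varepsilon_{2}$ in the bound just derived is harmless, and one concludes $\pi(\mathbf{P},\mathbf{Q})\le\varepsilon_{1}+\varepsilon_{2}$, which is exactly (\ref{smoo 001}). The main obstacle is the absolute-continuity/Plancherel step of the first paragraph; once densities are available the remainder is the elementary set-splitting plus Cauchy--Schwarz. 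The only other point to watch is keeping the Fourier normalization consistent, since it is the factor $(2\pi)^{-N/2}$ in Plancherel, combined with $\mathrm{vol}(C_{T})^{1/2}=(2T)^{N/2}$, that produces precisely the constant $(T/\pi)^{N/2}$ in the statement.
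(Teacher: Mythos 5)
Your proposal is correct and takes essentially the same route as the paper: split a Borel set over the cube $C_{T}=\{\Vert x\Vert_{\infty}\le T\}$, bound the difference of the measures on $C_{T}$ by Cauchy--Schwarz applied to the densities, and convert $\Vert p-q\Vert_{L^{2}}$ into $(2\pi)^{-N/2}\Vert\widehat{p}-\widehat{q}\Vert_{L^{2}}$ by Plancherel, the factor $(2T)^{N/2}(2\pi)^{-N/2}=(T/\pi)^{N/2}$ giving the stated constant. The only structural difference is that you dispose of the absolute-continuity issue up front, observing (correctly, via Gaussian smoothing and weak convergence) that $\widehat{p},\widehat{q}\in L^{2}$ already forces $\mathbf{P},\mathbf{Q}$ to have $L^{2}$ densities and then invoking the standard one-sided characterization of the Prokhorov distance, whereas the paper first proves the bound assuming densities and then transfers it to general measures by smoothing the inequality itself and letting the smoothing parameter tend to $0$; both treatments are sound.
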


\begin{proof}
Assume first that $\mathbf{P}, $ $\mathbf{Q}$ have square integrable
densities $p$ and $q$ respectively. Denote $C_{T}=\left\{ x
\in \mathbb{R}^{N}:\left\Vert x\right\Vert _{\infty }\leq T\right\} $ and $%
B_{T}=\mathbb{R}^{N}\smallsetminus C_{T}.$ Assume that $A\in \mathcal{B}^{N}$
and let $\varepsilon >0.$ Then
\begin{eqnarray*}
&&\left\vert \mathbf{P}\left( A\right) -\mathbf{Q}\left( A^{\varepsilon
}\right) \right\vert \\
&=&\left\vert \mathbf{P}\left( A^{\varepsilon }\cap C_{T}\right) +\mathbf{P}%
\left( A^{\varepsilon }\cap B_{T}\right) -\mathbf{Q}\left( A^{\varepsilon
}\right) \right\vert \\
&\leq &\left\vert \mathbf{P}\left( A^{\varepsilon }\cap C_{T}\right) -%
\mathbf{Q}\left( A^{\varepsilon }\cap C_{T}\right) \right\vert +\mathbf{P}%
\left( B_{T}\right) \\
&=&\left\vert \int_{A^{\varepsilon }\cap C_{T}}\left( p\left( x\right)
-q\left( x\right) \right) dx\right\vert +\mathbf{P}\left( B_{T}\right) \\
&\leq &\int_{\mathbb{R}^{N}}\left\vert p\left( x\right) -q\left( x\right)
\right\vert 1_{C_{T}}\left( x\right) dx+\mathbf{P}\left( \left\Vert
x\right\Vert _{\infty }>T\right).
\end{eqnarray*}%
Using H\"{o}lder's inequality,  we get%
\begin{eqnarray*}
\pi \left( \mathbf{P}, \mathbf{Q}\right) &\leq &\left\vert \mathbf{P}\left(
A\right) -\mathbf{Q}\left( A^{\varepsilon }\right) \right\vert \\
&\leq &\left( \int_{\mathbb{R}^{N}}\left\vert p\left( x\right) -q\left(
x\right) \right\vert ^{2}dx\right) ^{1/2}\left( \int_{\mathbb{R}%
^{N}}1_{C_{T}}\left( x\right) dx\right) ^{1/2}+\mathbf{P}\left( \left\Vert
x\right\Vert _{\infty }>T\right).
\end{eqnarray*}%
Since,  by Plancherel's identity%
\begin{equation*}
\int_{\mathbb{R}^{N}}\left\vert p\left( x\right) -q\left( x\right)
\right\vert ^{2}dx=\left( 2\pi \right) ^{-N}\int_{\mathbb{R}^{N}}\left\vert
\widehat{p}\left( t\right) -\widehat{q}\left( t\right) \right\vert ^{2}dt,
\end{equation*}%
we obtain (\ref{smoo 001}) for $\mathbf{P}$ and $\mathbf{Q}$ having square
integrable densities.

If $\mathbf{P}$ and $\mathbf{Q}$ do not have densities,  denote by $\mathbf{P}%
_{v}=\mathbf{P}\ast \mathbf{G}_{v}$ and $\mathbf{Q}_{v}=\mathbf{Q}\ast
\mathbf{G}_{v}$ the smoothed versions of $\mathbf{P}$ and $\mathbf{Q}, $
where $\mathbf{G}_{v}$ is the normal distribution of mean $0$ and variance $%
v^{2}.$ Using (\ref{smoo 001}) and the obvious inequality $\left\vert
\widehat{p}_{v}\left( t\right) -\widehat{q}_{v}\left( t\right) \right\vert
\leq \left\vert \widehat{p}\left( t\right) -%
\widehat{q}\left( t\right) \right\vert, $ we obtain%
\begin{equation*}
\pi \left( \mathbf{P}_{v}, \mathbf{Q}_{v}\right) \leq \left( 2\pi \right)
^{-N/2}\left( 2T\right) ^{N/2}\left( \int_{\mathbb{R}^{N}}\left\vert
\widehat{p}\left( t\right) -\widehat{q}\left( t\right) \right\vert
^{2}dt\right) ^{1/2}+\mathbf{P}_{v}\left( \left\Vert x\right\Vert _{\infty
}>T\right).
\end{equation*}
Since $\pi \left( \mathbf{P}_{v}, \mathbf{P}\right) \rightarrow 0\;$and $\pi
\left( \mathbf{Q}_{v}, \mathbf{Q}\right) \rightarrow 0$ it follows that $\pi
\left( \mathbf{P}_{v}, \mathbf{Q}_{v}\right) \rightarrow \pi \left( \mathbf{P}%
, \mathbf{Q}\right) $ as $v\rightarrow 0.$ Note also that $\lim
\sup_{v\rightarrow 0}\mathbf{P}_{v}\left( \left\Vert x\right\Vert _{\infty
}>T\right) \leq \mathbf{P}\left( \left\Vert x\right\Vert _{\infty }\geq
T\right).$ The inequality (\ref{smoo 001}) follows for arbitrary $\mathbf{P}%
, \mathbf{Q.}$
\end{proof}

\subsection{Coupling independent and Gaussian r.v.'s\label{subsectionSakh}}

The following result is proved in Theorem 5 of Sakhanenko \cite{Sakh85} (see
also \cite{Sakh83},  \cite{Sakh89},  \cite{Sakh00} for related results). Let $%
X_{1}, ..., X_{n}$ be a sequence of independent r.v.'s satisfying $\mathbb{E}%
X_{i}=0$ and $\mathbb{E}\left\vert X_{i}\right\vert ^{p}<\infty $ for some $%
p\geq 2$ and all $1\leq i\leq n.$

\begin{theorem}
\label{Th-KMT001}On some probability space $\left( \Omega ^{\prime }, %
\mathcal{F}^{\prime }, \mathbb{P}^{\prime }\right) $ there is a sequence of
independent normal r.v.'s $Y_{1}, ..., Y_{n}$ satisfying $\mathbb{E}^{\prime
}Y_{i}=0$ and $\mathbb{E}^{\prime }Y_{i}^{2}=\mathbb{E}X_{i}^{2}, $ $1\leq
i\leq n, $ and a sequence of independent r.v.'s $X_{1}^{\prime
}, ..., X_{n}^{\prime }$ satisfying $X_{i}^{\prime }\overset{d}{=}X_{i}, $ $%
1\leq i\leq n, $ such that%
\begin{equation*}
\mathbb{E}^{\prime }\left( \max_{1\leq k\leq n}\left\vert
\sum_{i=1}^{k}X_{i}^{\prime }-\sum_{i=1}^{k}Y_{i}\right\vert \right)
^{p}\leq c_{p}\sum_{i=1}^{n}\mathbb{E}\left\vert X_{i}\right\vert ^{p}.
\end{equation*}%
\end{theorem}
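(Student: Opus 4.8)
This is Sakhanenko's strong approximation theorem for independent summands; the complete argument is carried out in \cite{Sakh85} (see also \cite{Sakh83}, \cite{Sakh89}, \cite{Sakh00}), so for the present paper it suffices to invoke it. We only indicate the strategy one would follow. The plan is to construct the two sequences \emph{jointly}, by an inductive dyadic scheme on the index set, built from a one-dimensional coupling lemma as the elementary block. The elementary block produces, for a single mean-zero random variable $\xi$ with $\mathbb{E}|\xi|^p<\infty$ and $\sigma^2=\mathbb{E}\xi^2$, a copy $\xi'\overset{d}{=}\xi$ and a Gaussian $N\sim\mathcal{N}(0,\sigma^2)$ on a common probability space with $\mathbb{E}|\xi'-N|^p\le c_p\,\mathbb{E}|\xi|^p$; this is obtained from a suitably smoothed version of the quantile coupling, the smoothing (or a ``regularity'' assumption on the characteristic function) being exactly what is needed to obtain the sharp $p$-th moment comparison, after first truncating $\xi$ at a level depending on $p$ and estimating the tail via $\mathbb{E}|\xi|^p<\infty$. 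Note that $\xi'-N$ then automatically has mean zero.

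The real difficulty --- and the reason a \emph{summand-by-summand independent} coupling does not suffice --- is the control of $\max_{k\le n}\big|\sum_{i\le k}(X_i'-Y_i)\big|$. With independent couplings the error process is a sum of independent mean-zero increments $D_i=X_i'-Y_i$, so Doob's and Rosenthal's inequalities give $\mathbb{E}\big(\max_k|\sum_{i\le k}D_i|\big)^p\le c_p\big[(\sum_i\mathbb{E}D_i^2)^{p/2}+\sum_i\mathbb{E}|D_i|^p\big]$; the last term is $\le c_p\sum_i\mathbb{E}|X_i|^p$ as wanted, but the first is of order $\big(\sum_i(\mathbb{E}|X_i|^p)^{2/p}\big)^{p/2}$, which for $n$ i.i.d.\ summands is $n^{p/2}$ against the target $n$ --- a genuine loss. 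Sakhanenko's device is to correlate the couplings along the path so that the accumulated discrepancy $\sum_{i\le k}D_i$ is controlled \emph{directly}, without passing through its quadratic variation. Concretely: split $\{1,\dots,n\}$ into two halves, apply the construction recursively to each half to obtain a Gaussian approximation of each block-sum, then glue the two pieces by prescribing, via conditional quantile transforms, the joint law at the node so that the discrepancy of the whole block equals, up to a lower-order ``boundary'' correction, the discrepancy of the left half plus that of the right half. Since the blocks are disjoint and the relevant moments add subadditively, the induction closes with the linear bound $c_p\sum_i\mathbb{E}|X_i|^p$.

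The step I expect to be the main obstacle is precisely the design of this glueing: one must prescribe, at every node of the dyadic tree, the joint distribution of (a copy of the block-sum, its Gaussian approximation) so that (a) the marginals are exact, (b) the increments remain mean-zero, and (c) the $p$-th moment recursion carries no blow-up --- and it is here that the ``regular'' (smoothed) reduction of the elementary block enters, first to obtain an exponential estimate for regular variables and then, by truncation and collection of the remainders, the power bound in general. Once the maximal inequality $\mathbb{P}\big(\max_{k\le n}|\sum_{i\le k}(X_i'-Y_i)|>t\big)\le c_p\,t^{-p}\sum_i\mathbb{E}|X_i|^p$ is available, integrating the tail yields the asserted $L^p$ bound, and Berkes--Philipp's lemma (Lemma 2.1 of \cite{BerkPhilip}, used repeatedly above) places all the ingredients on a single probability space, completing the construction.
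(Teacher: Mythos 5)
Your proposal takes exactly the route the paper does: Theorem \ref{Th-KMT001} is not proved in the paper but is quoted as Theorem 5 of Sakhanenko \cite{Sakh85} (with \cite{Sakh83}, \cite{Sakh89}, \cite{Sakh00} as related references), so invoking that result is precisely the intended resolution. Your additional heuristic sketch of the dyadic coupling strategy and of why independent summand-by-summand couplings lose a factor is consistent with the known proof and does not affect the correctness of the citation-based argument.
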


In particular,  by Chebyshev's inequality,  for the same construction and any $%
a>0$ it holds%
\begin{equation}
\mathbb{P}^{\prime }\left( \max_{1\leq k\leq n}\left\vert
\sum_{i=1}^{k}X_{i}^{\prime }-\sum_{i=1}^{k}Y_{i}\right\vert >a\right) \leq
\frac{c_{p}}{a^{p}}\sum_{i=1}^{n}\mathbb{E}\left\vert X_{i}\right\vert ^{p}.
\label{SakhLp}
\end{equation}

\vskip10mm


\begin{thebibliography}{HD}






\normalsize
\baselineskip=17pt


\bibitem{BerkPhilip} Berkes,  I. and Philipp,  W. (1979). Approximation
Theorems for independent and weakly dependent random vectors. \textit{Ann.
Probab. }Vol. \textbf{7},  29-54. MR515811

\bibitem{bernst27} Bernstein,  S. (1927) Sur l'extension du th\'{e}or\`{e}me
limite du calcul des probabilit\'{e}s aux sommes de quantit\'{e}s d\'{e}%
pendantes. \textit{Mathematische Annalen} \textbf{97},  1-59.

\bibitem{Bor73} Borovkov,  A.A. (1973). On the rate of convergence in the
invariance principle. Probab. Theory and its Applications. XVIII,  Vol. 2,  p.
217-234.

\bibitem{Blngsl68} Billingsley,  P. (1968). \textit{Convergence of
Probability Measures.} Willey,  New York.

\bibitem{Cuny2011} Cuny,  C. (2011). Pointwise ergodic theorems with rate and
application to limit theorems for stationary processes. \textit{Stoch. Dyn.}
\textbf{11, } no. 1,  135-155.

\bibitem{DoukhLeonPort} Doukhan,  P.,  Leon,  J. R. and Portal,  F. (1987).
Principe d'invariance faible pour la mesure empirique d'une suite de
variables al\'{e}atoires m\'{e}langeante. \textit{Probab. Th. Rel. Fields}
\textbf{76},  51-70.

\bibitem{DedDoukhMerl2012} Dedecker,  J.,  Doukhan,  P. and Merlev\`{e}de,  F.
(2012) Rates of convergence in the strong invariance principle under
projective criteria. \textit{Electron. J. Probab.} \textbf{17},  no. 16,
1--31.

\bibitem{Dudley1968} Dudley,  R.\ M. (1968). Distances of probability
measures and random variables. \textit{Ann. Math. Statist.} \textbf{39}
1563-1572.

\bibitem{DunfordSchwartz58} Dunford,  N. and Schwartz,  J. T. (1958). \textit{%
Linear operators},  Part I,  Wiley-Interscience.

\bibitem{Einm1989} Einmahl,  U. (1989). Extensions of results of Koml\'{o}s,
Major and Tusn\'{a}dy to the multivariate case. \textit{J. Multivariate Anal.%
} 28,  20-68. MR9968984

\bibitem{Grama func89} Grama I. G. (1989). On the rate of convergence in the
functional central limit theorem for semimartingales. \textit{Statistics and
Control of Stochastic Processes. Proceedings of the Steklov seminar}
1985-1986,  Vol. 2,  ed. by N.V.Krylov and A.N.Shiryaev. Optimization
Software,  INC. Publication Division. New York. Los Angeles,  pp. 141-156.

\bibitem{GrNuss1998} Grama,  I. and Nussbaum,  M. (1998) Asymptotic
equivalence for nonparametric generalized linear model. \textit{Probab.
Theory Relat. Fields} \textbf{111},  167-214.

\bibitem{GrNuss2001} Grama,  I. and Nussbaum,  M. (2002) Asymptotic
equivalence for nonparametric regression.\textit{\ Mathematical methods of
statistics} \textbf{11},  No 1,  pp. 1-36.

\bibitem{GrNeumann2006} Grama,  I. and Neumann. M. (2006) Asymptotic
equivalence of nonparametric autoregression and nonparametric regression.
\textit{Ann. of Statist.} Vol. 34,  No. 4,  1701--1732. DOI:
10.1214/009053606000000560

\bibitem{Gouez} Gou\"{e}zel,  S. (2010). Almost sure invariance principle for
dynamical systems by spectral methods. \textit{Ann. Prob. \textbf{38},  }%
1639--1671. DOI: 10.1214/10-AOP525.

\bibitem{Guiv84} Guivarc'h,  Y. (1984). Application d'un th\'{e}or\`{e}me
limite local \`{a} la transcience et \`{a} la r\'{e}currence de marches al%
\'{e}atoires. \textit{Lecture Notes in Math. Springer, } 301-332.

\bibitem{guiv-lepage} Guivarc'h,  Y. and Le\ Page,  E. (2008). On spectral
properties of a family of transfer operators and convergence to stable laws
for affine random walks. \textit{Ergod. Th. \& Dynam. Sys.} \textbf{28},
423-446. DOI 10.1017/S0143385707001010

\bibitem{GuivLePage2004} Guivarc'h,  Y. and Le\ Page,  E. (2004). Simplicity
of the Lyapunov spectrum and spectral gap property for a family of transfer
operators on projective space. \textit{Random walks and geometry,  181-259,
Walter de Gruyter \& Co. KG,  Berlin.}

\bibitem{GuivLePage2013} Guivarc'h,  Y. and Le\ Page,  E. (2013). Spectral gap properties and asymptotics of stationary measures for affine random walks.
\textit{ arXiv:1204.6004.}

\bibitem{Hall Heyde book} Hall,  P. and Heyde,  C. C. (1980). \textit{%
Martingale Limit Theory and its Application.} Academic Press,  New York.

\bibitem{Kell-Liv99} Keller,  G. and Liverani,  C. (1999). Stability of the
spectrum for transfer operators. \textit{Ann. Sc. Norm. Super. Pisa Cl. Sci.}
(4) \textbf{28},  141-152. MR1679080

\bibitem{KMT} Koml\'{o}s,  J.,  Major,  P. and Tusn\'{a}dy,  G. (1976). An
approximation of partial sums of independent rv's and the sample df. II
\textit{Z. Wahrsch. verw. Gebiete.} \textbf{34},  33-58.

\bibitem{Kub} Kubilius,  K. (1994). Rate of convergence in the invariance
principle for martingale difference arrays. \textit{Lithuanian Mathematical
Journal} \textbf{34},  Number 4,  383-392,  DOI: 10.1007/BF0233688

\bibitem{Liu Lin} Liu,  W. and Lin,  Z. (2009). Strong approximation for a
class of stationary processes. \textit{Stochastic Process.} Appl. \textbf{119%
},  249-280. MR-2485027

\bibitem{Merl Rio} Merlev\`{e}de,  F. and Rio,  E. (2012). Strong
approximation of partial sums under dependence conditions with application
to dynamical systems. \textit{Stochastic Process. Appl.} \textbf{122},
386-417.

\bibitem{Nagaev57} Nagaev,  S. V. (1957). Some limit theorems for stationary
Markov chains. \textit{Theory of probability and its applications.} \textbf{%
11} (4) 378-406.

\bibitem{Nagaev61} Nagaev,  S. V. (1961) More exact statements of limit
theorems for homogeneous Markov chains. \textit{Theory of probability and
its applications.} \textbf{6} (1),  62-81.

\bibitem{Prokhorov54} Prokhorov,  Ju. V. (1956). Convergence of random
processes and limit theorems in probability. \textit{Theor. Probability Appl.%
} \textbf{1},  157-214.

\bibitem{Rio2000} Rio,  E. (2000). \textit{Th\'{e}orie asymptotique des
processus al\'{e}atoires faiblement d\'{e}pendants}. Math\'{e}matiques et
Applications \textbf{31},  Springer-Verlag,  Berlin,  2000. x+169 pp. MR-2117923

\bibitem{Rosenth} Rosenthal,  H. P.\ (1970) On the subspaces of $L^{p}$ $%
(p>2) $ spanned by sequences of independent random variables. \textit{Israel
J. Math.} \textbf{8, } 273-303. MR0271721

\bibitem{Sakh83} Sakhanenko,  A. (1983). The rate of convergence in the
invariance principle for non-identically distributed random variables with
exponential moments. \textit{Limit theorems for sums of random variables.
Trudy Inst. Matem.,  Acad. Nauk SSSR,  Sibirsk. Otdel.} (in Russian),
Novosibirsk,  Nauka,  Vol. 3,  3-49.

\bibitem{Sakh85} Sakhanenko,  A. (1985). Estimates in the invariance
principle. \textit{Trudy Inst. Matem.,  Acad. Nauk SSSR,  Sibirsk. Otdel.} (in
Russian),  Novosibirsk,  Nauka,  Vol. 5,  27-44.

\bibitem{Sakh89} Sakhanenko,  A. (1989). On the accuracy of normal
approximation in the invariance principle. \textit{Trudy Inst. Matem.,  Acad.
Nauk SSSR,  Sibirsk. Otdel.} (in Russian),  Novosibirsk,  Nauka,  Vol. 19,  4-49.

\bibitem{Sakh00} Sakhanenko,  A.I. (2000). A new way to obtain estimates in
the invariance principle. \textit{High dimensional probability, } (II
Seattle,  WA,  1999),  223-245,  Progr. Probab.,  47,  Birk\"{a}user Boston,  MA.

\bibitem{Serfl} Serfling,  R. J. (1970). Moments inequalities for the maximum
cumulative sums. \textit{Ann. Math. Stat.} Vol. \textbf{41},  N. 4,
1227-1234. MR0268938

\bibitem{strassen1965} Strassen,  V. (1965). The existence of probability
measures with given marginals. \textit{Ann. Math. Statist.} \textbf{36}
423--439.

\bibitem{Wu} Wu,  W. B. (2007) Strong invariance principles for dependent
random variables. \textit{Ann. Probab.} \textbf{35},  2294-2320. MR-2353389

\bibitem{Zai98} Za\u{\i}tsev,  A. Y. (1998). Multidimensional version of the
results of Koml\'{o}s,  Major and Tusn\'{a}dy for vectors with finite
exponential moments. \textit{ESAIM\ Probab. Stat.} \textbf{2},  41-108. MR
1616527

\bibitem{Zai07} Za\u{\i}tsev,  A. Y. (2007). Estimates for the rate of strong
approximation in the multidimensional invariance principle. \textit{J. Math.
Sci.} (N. Y.) 145 4856--4865. MR2355400

\bibitem{Zhao Woodr} Zhao,  O. and Woodroofe,  M. (2008). Law of the iterated
logarithm for stationary processes. \textit{Ann. Probab.} \textbf{36},
127-142. MR-2370600
\end{thebibliography}
\end{document}